\theoremstyle{plain}
\newtheorem{thm}{Theorem}[section]
\newtheorem{cor}[thm]{Corollary}
\newtheorem{lem}[thm]{Lemma}
\newtheorem{prop}[thm]{Proposition}
\theoremstyle{definition}
\newtheorem{defn}{Definition}[section]
\theoremstyle{remark}
\newtheorem{rem}{Remark}[section]
\newtheorem{ex}{Example}[section]
\numberwithin{equation}{section}
\newcommand{\ra}{\rightarrow}
\newcommand{\Ra}{\Rightarrow}
\newcommand{\Lra}{\Leftrightarrow}
\begin{document}

\title{Cutoffs for product chains}

\author[G.-Y. Chen]{Guan-Yu Chen$^1$}

\author[T. Kumagai]{Takashi Kumagai$^2$}

\address{$^1$Dept. of Appl. Math., National Chiao Tung University, Hsinchu 300, Taiwan}
\email{gychen@math.nctu.edu.tw}

\address{$^2$RIMS, Kyoto University, Kyoto 606-8502, Japan}\email{kumagai@kurims.kyoto-u.ac.jp}

\keywords{Product chains; Total variation; Hellinger distance; Cutoffs}

\subjclass[2000]{60J10, 60J27}

\begin{abstract}
In this article, we consider products of ergodic Markov chains and discuss their cutoffs in the total variation. Through
an
%a new
inequality relating the total variation and the Hellinger distance, we may identify the total variation cutoffs with cutoffs in the Hellinger distance. This provides a new scheme to study the total variation mixing of Markov chains, in particular, product chains. In the theoretical framework, a series of criteria are introduced to examine cutoffs and a comparison of mixing between the product chain and its coordinate chains is made in detail. For illustration, we consider products of two-state chains, cycles and other typical examples.
\end{abstract}

\maketitle

\section{Introduction}

Let $\mathcal{X}$ be a countable set, $K$ be an irreducible stochastic matrix indexed by $\mathcal{X}$ and $\pi$ be a probability on $\mathcal{X}$. We write the triple $(\mathcal{X},K,\pi)$ for a discrete time Markov chain on $\mathcal{X}$ with transition matrix $K$ and stationary distribution $\pi$. It is well-known that if $K$ is aperiodic, then $K^m(x,y)$ converges to $\pi(y)$ as $m$ tends to infinity for all $x,y\in\mathcal{X}$. To quantize the convergence of $K^m$ to $\pi$, we consider the (maximum) total variation and the (maximum) Hellinger distance, which are defined by
\begin{equation}\label{d-tv}
 d_{\text{\tiny TV}}(m):=\sup_{x\in\mathcal{X},A\subset\mathcal{X}}\{K^m(x,A)-\pi(A)\},
\end{equation}
and
\begin{equation}\label{d-hd}
 d_H(m):=\sup_{x\in\mathcal{X}}\left(\frac{1}{2}\sum_{y\in\mathcal{X}}
 \left(\sqrt{K^m(x,y)}-\sqrt{\pi(y)}\right)^2\right)^{1/2}.
\end{equation}
As the above distances are non-increasing in $m$, it is natural to consider the mixing times of $d_{\text{\tiny TV}}$ and $d_H$, which are respectively defined by
\[
 T_{\text{\tiny TV}}(\epsilon):=\inf\{m\ge 0|d_{\text{\tiny TV}}(m)\le\epsilon\},
 \quad T_H(\epsilon):=\inf\{m\ge 0|d_H(m)\le\epsilon\}.
\]

For the weak convergence of distributions, the total variation arose naturally from the view point of probability, while the importance of the Hellinger distance is exemplified from the proof of Kakutani's dichotomy theorem in \cite{K48} for the study of infinite product measures.
The following inequalities provide a comparison of the total variation and the Hellinger distance, which
are corollaries in \cite{Sh96} (see (25) on p.365 for the details) and say
\begin{equation}\label{eq-hdtv}
 1-\sqrt{1-d_{\text{\tiny TV}}^2(m)}\le d_H^2(m)\le d_{\text{\tiny TV}}(m).
\end{equation}
As a consequence, one obtains from (\ref{eq-hdtv}) the following comparison of mixing times,
\begin{equation}\label{eq-mixhdtv}
 T_{\text{\tiny TV}}(\epsilon\sqrt{2-\epsilon^2})\le T_H(\epsilon)\le T_{\text{\tiny TV}}(\epsilon^2),\quad\forall\epsilon\in(0,1).
\end{equation}
We can further compare the cutoffs, introduced below, in the total variation and the Hellinger distance. Such a comparison will play a key role through this article.

In this article, we focus on the study of product chains and their cutoffs. To see a definition of product chains, let $(\mathcal{X}_i,K_i,\pi_i)_{i=1}^n$ be irreducible Markov chains and set
\begin{equation}\label{eq-prodxpi}
 \mathcal{X}=\mathcal{X}_1\times\cdots\times\mathcal{X}_n,\quad \pi=\pi_1\times\cdots\times \pi_n,
\end{equation}
and
\begin{equation}\label{eq-prodk}
 K=\sum_{i=1}^np_iI_1\otimes\cdots\otimes I_{i-1}\otimes K_i\otimes I_{i+1}\otimes\cdots\otimes I_n,
\end{equation}
where $I_j$ is the identity matrix indexed by $\mathcal{X}_j$, $A\otimes B$ denotes the tensor product of matrices $A,B$ and $p_1,...,p_n$ are positive reals satisfying $p_1+\cdots+p_n=1$. It is obvious that $K$ is a transition matrix on $\mathcal{X}$ with stationary distribution $\pi$. Thereafter, we call $(\mathcal{X},K,\pi)$ the product chain of $(\mathcal{X}_i,K_i,\pi_i)_{i=1}^n$ according to the probability vector $(p_1,...,p_n)$. As the product chain $K^m$ has no simple expression, say in a formula of $(K^m_i)_{i=1}^n$, the study of its total variation and Hellinger distance can be challenging. However, when the diagonal entries in a transition matrix are bounded below by a positive constant, its mixing time is comparable with the mixing time of its associated continuous time Markov chain. As discussed below, when we consider product chains, it is more convenient to use continuous time Markov chains rather than discrete time ones. For a comparison of discrete and continuous time chains, see e.g. \cite{CSal13-1} for an early reference and Proposition \ref{p-comp2} for another.

For a discrete time chain $(\mathcal{X},K,\pi)$, let the triple $(\mathcal{X},H_t,\pi)$ be such that
$H_t=e^{-t(I-K)}$. Note that, if $(X_m)_{m=0}^\infty$ is a realization of $(\mathcal{X},K,\pi)$ and $(N_t)_{t\ge 0}$ is a Poisson process (with parameter $1$) independent of $(X_m)_{m=0}^\infty$, then $(X_{N_t})_{t\ge 0}$ is a continuous time Markov chain on $\mathcal{X}$ with transition matrices $(H_t)_{t\ge 0}$. Here, we write $(\mathcal{X},H_t,\pi)$ for $(X_{N_t})_{t\ge 0}$ and call it the continuous time Markov chain associated with $(\mathcal{X},K,\pi)$. To study the convergence of $(\mathcal{X},H_t,\pi)$, one may replace $K^m$ with $H_t$ in (\ref{d-tv}) and (\ref{d-hd}) to achieve its total variation and Hellinger distance, while the associated mixing times are defined in a similar way. By Lemma \ref{l-comp}, (\ref{eq-hdtv}) and (\ref{eq-mixhdtv}) are also valid in the continuous time case.
We write $d,T$ for the distance and mixing time of $(\mathcal{X},K,\pi)$, and write $d^{(c)},T^{(c)}$ for those of $(\mathcal{X},H_t,\pi)$.

Back to the product chain in (\ref{eq-prodxpi})-(\ref{eq-prodk}), let $(\mathcal{X}_i,H_{i,t},\pi_i)$ and $(\mathcal{X},H_t,\pi)$ be the continuous time chains associated with $(\mathcal{X}_i,K_i,\pi)$ and $(\mathcal{X},K,\pi)$. It follows immediately from the previous setting that
\begin{equation}\label{eq-prodcts}
 H_t=H_{1,p_1t}\otimes\cdots\otimes H_{n,p_nt}.
\end{equation}
In general, there is no similar form for $K^m$, and that is the reason we use continuous time Markov chains. Through (\ref{eq-prodcts}), one may express the Hellinger distance of $(\mathcal{X},H_t,\pi)$ as a formula of the Hellinger distance of $(\mathcal{X}_i,H_{i,t},\pi_i)$. See \cite[Exercise 20.5]{LPW08} for one version and also (\ref{eq-prodhd}) in Lemma \ref{l-prodmixing} for another. Note that the equality in (\ref{eq-prodhd}) can fail in the total variation but, along with (\ref{eq-hdtv}) and (\ref{eq-mixhdtv}), the total variation of $(\mathcal{X},H_t,\pi)$ can be closely related to the total variation of $(\mathcal{X}_i,H_{i,t},\pi_i)$ and this is discussed
in detail in Section \ref{s-dpc}.

The cutoff phenomenon of Markov chains was introduced by Aldous and Diaconis for the purpose of catching up the phase transit of the time to stationarity. To see a definition, let $\mathcal{F}=(\mathcal{X}_n,K_n,\pi_n)_{n=1}^\infty$ be a family of irreducible Markov chains and, for $n\ge 1$, let $d_{n,\text{\tiny TV}}$ and $T_{n,\text{\tiny TV}}$ be the total variation and corresponding mixing time of the $n$th chain in $\mathcal{F}$. Assume that $T_{n,\text{\tiny TV}}(\epsilon_0)\ra\infty$ for some $\epsilon_0\in(0,1)$. The family $\mathcal{F}$ is said to present a cutoff in the total variation if
\begin{equation}\label{eq-defcutoff1}
 \lim_{n\ra\infty}\frac{T_{n,\text{\tiny TV}}(\epsilon)}{T_{n,\text{\tiny TV}}(\delta)}=1,\quad\forall \epsilon,\delta\in(0,1).
\end{equation}
Note that, equivalently, $\mathcal{F}$ has a cutoff in the total variation if there is a sequence of positive reals $(t_n)_{n=1}^\infty$ such that
\begin{equation}\label{eq-defcutoff2}
 \lim_{n\ra\infty}d_{n,\text{\tiny TV}}(\lceil at_n\rceil)=0\quad\forall a>1,\quad\lim_{n\ra\infty}d_{n,\text{\tiny TV}}(\lfloor at_n\rfloor)=1,\quad\forall a\in(0,1).
\end{equation}
From (\ref{eq-defcutoff2}), one can see that the total variation of Markov chains in $\mathcal{F}$ have a phase transition at times $(t_n)_{n=1}^\infty$. When a cutoff exists, the sequence $(t_n)_{n=1}^\infty$, or briefly $t_n$, in (\ref{eq-defcutoff2}) is called a cutoff time and, by (\ref{eq-defcutoff1}), $T_{n,\text{\tiny TV}}(\epsilon)$ can be selected as a cutoff time for any $\epsilon\in(0,1)$.
In the continuous time case, we write $\mathcal{F}_c$ for the family of continuous time chains associated with $\mathcal{F}$ and use $d_{n,\text{\tiny TV}}^{(c)}$ and $T_{n,\text{\tiny TV}}^{(c)}$ to denote the total variation and mixing time of the $n$th chain in $\mathcal{F}_c$. The total variation cutoff of $\mathcal{F}_c$ is defined in the same way through (\ref{eq-defcutoff1}) or (\ref{eq-defcutoff2}) under the replacement of $T_{n,\text{\tiny TV}},d_{n,\text{\tiny TV}}$ with $T_{n,\text{\tiny TV}}^{(c)},d_{n,\text{\tiny TV}}^{(c)}$ and the removal of $\lceil\cdot\rceil,\lfloor\cdot\rfloor$ but without the prerequisite of $T_{n,\text{\tiny TV}}^{(c)}(\epsilon_0)\ra\infty$. The above definitions and discussions are applicable to the Hellinger distance and, in avoidance of any confusion, we use $d_{n,H},d_{n,H}^{(c)}$ and $T_{n,H},T_{n,H}^{(c)}$ to denote the Hellinger distances and mixing times of the $n$th chains in $\mathcal{F},\mathcal{F}_c$.

The study of mixing times and cutoff phenomena for Markov chains was initiated by Aldous, Diaconis and their collaborators in early 1980s. There are many literatures on related topics introduced in the past several decades and we refer readers to \cite{D96} for a concise introduction of cutoff phenomena, to \cite{AF} for classical probabilistic techniques on mixing times, to \cite{D88} for an application of group representation, to \cite{SC04} for random walks on finite groups and to \cite{LPW08} for a rich collection of well-developed techniques.

Based on (\ref{eq-hdtv}) and (\ref{eq-mixhdtv}), we may compare cutoffs in the total variation and in the Hellinger distance as follows.

\begin{prop}\label{t-comp}
Let $\mathcal{F}$ be a family of irreducible Markov chains with countable state spaces and $T_{n,\textnormal{\tiny TV}},T_{n,H}$ be the mixing times as before. Suppose that there is $\epsilon_0\in(0,1)$ such that $T_{n,\textnormal{\tiny TV}}(\epsilon_0)\ra\infty$ or $T_{n,H}(\epsilon_0)\ra\infty$. Then, $\mathcal{F}$ has a cutoff in the total variation if and only if $\mathcal{F}$ has a cutoff in the Hellinger distance. Further, if $\mathcal{F}$ has a cutoff in either the total variation or the Hellinger distance, then $T_{n,\textnormal{\tiny TV}}(\epsilon)/T_{n,H}(\delta)\ra 1$ for all $\epsilon,\delta\in(0,1)$. In the continuous time case, the above conclusion also holds for $\mathcal{F}_c$ without the assumption of $T^{(c)}_{n,\textnormal{\tiny TV}}(\epsilon_0)\ra \infty$ or $T^{(c)}_{n,H}(\epsilon_0)\ra\infty$.
\end{prop}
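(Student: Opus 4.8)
The plan is to leverage the two-sided comparison $(\ref{eq-mixhdtv})$ between the total variation and Hellinger mixing times, which reduces everything to squeezing ratios of mixing times at different thresholds. Throughout, fix an arbitrary pair $\epsilon,\delta\in(0,1)$; by the monotonicity of $d_{n,\textnormal{\tiny TV}}$ and $d_{n,H}$ in $m$, both $T_{n,\textnormal{\tiny TV}}(\cdot)$ and $T_{n,H}(\cdot)$ are non-increasing functions of the threshold, so it suffices to control ratios for thresholds close to $0$ and $1$.

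First I would record the elementary consequence of $(\ref{eq-mixhdtv})$ in the form we need. For any $\eta\in(0,1)$, writing $f(\eta)=\eta\sqrt{2-\eta^2}$ and $g(\eta)=\eta^2$, we have $T_{n,\textnormal{\tiny TV}}(f(\eta))\le T_{n,H}(\eta)\le T_{n,\textnormal{\tiny TV}}(g(\eta))$, and since $f(\eta),g(\eta)\in(0,1)$ with $f(\eta)\to 0$, $g(\eta)\to 0$ as $\eta\to 0$ and $f(\eta)\to 1$, $g(\eta)\to 1$ as $\eta\to 1$, these sandwich $T_{n,H}$ between two total-variation mixing times at thresholds that can be pushed toward either endpoint. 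The next step is to verify that the cutoff hypotheses transfer: if $T_{n,\textnormal{\tiny TV}}(\epsilon_0)\to\infty$ for some $\epsilon_0\in(0,1)$, then from $T_{n,H}(\eta)\ge T_{n,\textnormal{\tiny TV}}(f(\eta))$, choosing $\eta$ so that $f(\eta)\ge\epsilon_0$ (possible since $f$ is continuous, increasing, onto $(0,1)$) and using monotonicity gives $T_{n,H}(\eta)\ge T_{n,\textnormal{\tiny TV}}(\epsilon_0)\to\infty$; symmetrically, $T_{n,H}(\epsilon_0')\to\infty$ for some $\epsilon_0'$ forces $T_{n,\textnormal{\tiny TV}}\to\infty$ at a suitable threshold via $T_{n,\textnormal{\tiny TV}}(g(\eta))\ge T_{n,H}(\eta)$. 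Hence the two divergence hypotheses are equivalent, and whichever one we assume, both mixing-time families diverge.

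Now I would prove the main equivalence together with the sharp ratio statement in one stroke. Suppose $\mathcal{F}$ has a total variation cutoff, so $T_{n,\textnormal{\tiny TV}}(\epsilon)/T_{n,\textnormal{\tiny TV}}(\delta)\to 1$ for all $\epsilon,\delta\in(0,1)$. Given $\epsilon,\delta\in(0,1)$, pick any $\eta\in(0,1)$; then
\[
 \frac{T_{n,\textnormal{\tiny TV}}(f(\eta))}{T_{n,\textnormal{\tiny TV}}(\delta)}\le\frac{T_{n,H}(\eta)}{T_{n,\textnormal{\tiny TV}}(\delta)}\le\frac{T_{n,\textnormal{\tiny TV}}(g(\eta))}{T_{n,\textnormal{\tiny TV}}(\delta)},
\]
and both outer ratios tend to $1$ by the total variation cutoff (the denominators all diverge, so the quotients are eventually well-defined). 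Thus $T_{n,H}(\eta)/T_{n,\textnormal{\tiny TV}}(\delta)\to 1$ for every $\eta\in(0,1)$; taking $\eta=\epsilon$ yields $T_{n,\textnormal{\tiny TV}}(\epsilon)/T_{n,H}(\delta)\to 1$ after swapping roles, and taking two different values of $\eta$ and dividing gives $T_{n,H}(\epsilon)/T_{n,H}(\delta)\to 1$, i.e.\ the Hellinger cutoff. The converse implication is identical with the roles of TV and $H$ interchanged, using the inclusions $T_{n,H}(f^{-1}(\epsilon'))\le T_{n,\textnormal{\tiny TV}}(\epsilon')$-type bounds obtained by inverting $f$ and $g$ (equivalently, re-reading $(\ref{eq-mixhdtv})$ as $T_{n,\textnormal{\tiny TV}}(\epsilon')\le T_{n,H}(\sqrt{\epsilon'})$ and $T_{n,\textnormal{\tiny TV}}(\epsilon')\ge T_{n,H}(g^{-1}$-image$)$). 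Finally, for the continuous time statement, Lemma \ref{l-comp} asserts that $(\ref{eq-hdtv})$ and $(\ref{eq-mixhdtv})$ hold verbatim for $d_n^{(c)},T_n^{(c)}$, so every estimate above applies with superscripts $(c)$; the only change is that the prerequisite $T_{n,\textnormal{\tiny TV}}(\epsilon_0)\to\infty$ is not needed because in continuous time $d_{n,\textnormal{\tiny TV}}^{(c)}(t)$ and $d_{n,H}^{(c)}(t)$ are continuous in $t$ and the cutoff definition via $(\ref{eq-defcutoff2})$ without floors or ceilings does not require divergence of the mixing time — one simply reruns the sandwich argument directly at the level of the defining limits.

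The main obstacle is bookkeeping rather than any genuine difficulty: one must be careful that the denominators in all ratios are eventually positive and finite (guaranteed once we know the relevant mixing-time family diverges, hence is eventually $\ge 1$), and one must be mildly attentive to the fact that for the converse direction the natural substitutions are $\epsilon\mapsto\sqrt{\epsilon}$ and $\epsilon\mapsto$ the inverse of $\eta\mapsto\eta\sqrt{2-\eta^2}$, both of which map $(0,1)$ onto $(0,1)$ continuously, so no threshold is lost. No new idea beyond $(\ref{eq-mixhdtv})$ and monotonicity is required.
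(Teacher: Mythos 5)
Your proposal is correct and is essentially the paper's own argument: the paper obtains Proposition \ref{t-comp} precisely as an immediate consequence of the mixing-time sandwich (\ref{eq-mixhdtv}) combined with the ratio characterization of cutoff in (\ref{eq-defcutoff1}) (resp.\ Lemma \ref{l-cutoff}), which is exactly your squeeze of $T_{n,H}$ between total variation mixing times at thresholds $\eta\sqrt{2-\eta^2}$ and $\eta^2$ and vice versa. Only two direction slips should be fixed in the write-up: to transfer the divergence hypothesis you need $f(\eta)\le\epsilon_0$ (so that $T_{n,\textnormal{\tiny TV}}(f(\eta))\ge T_{n,\textnormal{\tiny TV}}(\epsilon_0)$), and in the converse the sandwich reads $T_{n,H}(\sqrt{\epsilon'})\le T_{n,\textnormal{\tiny TV}}(\epsilon')\le T_{n,H}(f^{-1}(\epsilon'))$; neither affects the argument.
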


Proposition \ref{t-comp} is an easy consequence of (\ref{eq-mixhdtv}) and (\ref{eq-defcutoff1}). We refer readers to Proposition \ref{p-comp0} for more comparisons of cutoffs. By Proposition \ref{t-comp}, the total variation cutoff of product chains can be analyzed using their Hellinger distances and the following two examples suitably illustrate this scheme.

\begin{ex}\label{ex-Lacoin}
For $n\ge 1$ and $1\le i\le n$, let $(\mathcal{X}_{n,i},K_{n,i},\pi_{n,i})$ be the Markov chain on $\{0,1,...,2n\}$ with transition matrix given by
\begin{equation}\label{eq-LacoinK}
 \begin{cases}K_{n,i}(j,j+1)=1-a_{n,i},\quad\forall j\notin\{n,2n\},\\
 K_{n,i}(0,0)=K_{n,i}(j,j-1)=a_{n,i},\quad\forall j\notin\{0,2n\},\\
 K_{n,i}(n,n+1)=b_{n,i}n^{-\beta},\,\, K_{n,i}(n,2n)=1-a_{n,i}-b_{n,i}n^{-\beta},\\
 K_{n,i}(2n,n)=c_{n,i},\,\, K_{n,i}(2n,2n)=1-a_{n,i}-c_{n,i},\end{cases}
\end{equation}
where $\beta>0$. See Figure \ref{fig-Lacoin2} for the graph associated with $K_{n,i}$. In the above setting, it is easy to check that $K_{n,i}$ is reversible if and only if
\begin{equation}\label{eq-LacoinRev}
 b_{n,i}c_{n,i}(1-a_{n,i})^{n-1}n^{-\beta}=(1-a_{n,i}-b_{n,i}n^{-\beta})a_{n,i}^n.
\end{equation}
Furthermore, $\pi_{n,i}$ will be concentrated in a neighborhood of $2n$ if the transitions toward $2n$ in $K_{n,i}$ are strong enough.

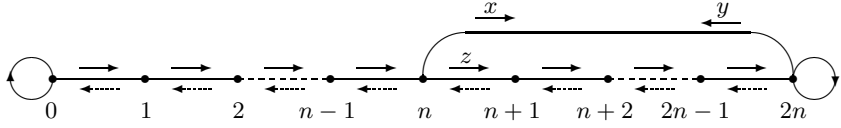
\begin{figure}[htp]

\begin{center}
\begin{picture}(300,60)(-10,-30)

\put(0,0){\put(-8,0){\circle{16}}\put(-16,3){\vector(0,1){0}}}
\put(0,0){\put(288,0){\circle{16}}\put(296,-3){\vector(0,-1){0}}}
\multiput(0,0)(35,0){9}{\put(0,0){\circle*{3}}}
\multiput(0,0)(35,0){2}{\put(0,0){\line(1,0){35}}}
\multiput(70,0)(4,0){9}{\put(0,0){\line(1,0){2}}}
\multiput(105,0)(35,0){3}{\put(0,0){\line(1,0){35}}}
\multiput(210,0)(4,0){9}{\put(0,0){\line(1,0){2}}}
\put(245,0){\line(1,0){35}}
\put(210,0){\oval(140,35)[t]}
\put(-3,-15){{\small$0$}}\put(33,-15){{\small$1$}}\put(68,-15){{\small$2$}}
\put(93,-15){{\small$n-1$}}\put(138,-15){{\small$n$}}
\put(163,-15){{\small$n+1$}}\put(198,-15){{\small$n+2$}}
\put(230,-15){{\small$2n-1$}}\put(275,-15){{\small$2n$}}
\put(163,25){{\small $x$}}\put(251,25){{\small $y$}}
\put(154,6){{\small $z$}}

\multiput(0,0)(35,0){4}{\put(10,4){\line(1,0){12}}
\put(25,4){\vector(1,0){0}}}

\multiput(175,0)(35,0){3}{\put(10,4){\line(1,0){12}}
\put(25,4){\vector(1,0){0}}}

\multiput(0,0)(35,0){8}{\multiput(25,-4)(-2,0){6}{\line(-1,0){1}}
\put(10,-4){\vector(-1,0){0}}}

\put(140,0){\put(10,4){\line(1,0){12}}

\put(25,4){\vector(1,0){0}}}

\put(160,23){\put(0,0){\line(1,0){12}}
\put(15,0){\vector(1,0){0}}}

\put(260,23){\put(0,-2){\line(-1,0){12}}
\put(-15,-2){\vector(-1,0){0}}}

\end{picture}

\caption{The above graph describes the transition matrix in (\ref{eq-LacoinK}). For those
innominate transits, the solid rightward arrows are of probability $1-a_{n,i}$, while the dashed leftward ones are of probability $a_{n,i}$. The nominated transits are respectively $x=1-a_{n,i}-b_{n,i}n^{-\beta}$, $y=c_{n,i}$ and $z=b_{n,i}n^{-\beta}$, while the loops are set to make $K_{n,i}$ stochastic. \label{fig-Lacoin2}}

\end{center}
\end{figure}

This model was first introduced by Lacoin in \cite{L15} for the purpose of illustrating product chains without cutoffs in the total variation and separation. Here, we refine partial results in \cite{L15} by showing the sensitivity of cutoffs with respect to the transition probabilities in $K_{n,i}$. In Lemma \ref{l-Lacoin1}, we provides sharp bounds on the Hellinger distance of the product chain of $(\mathcal{X}_{n,i},K_{n,i},\pi_{n,i})_{i=1}^n$. As a consequence, we obtain simple criteria to determine the total variation cutoff in Proposition \ref{p-Lacoin2} and Corollary \ref{c-Lacoin}. The following proposition treats a special case of (\ref{eq-LacoinK}) and is a
consequence of Proposition \ref{p-Lacoin2} and Corollary \ref{c-Lacoin}. Its proof is placed in the appendix for completion.

\begin{prop}\label{p-Lacoin3}
Let $p_{n,i}>0$, $(\mathcal{X}_{n,i},K_{n,i},\pi_{n,i})$ be the Markov chain satisfying \textnormal{(\ref{eq-LacoinK})-(\ref{eq-LacoinRev})} and $q_n=p_{n,1}+\cdots+p_{n,n}$. Consider the family $\mathcal{G}=(\mathcal{X}_n,K_n,\pi_n)_{n=1}^\infty$, where $(\mathcal{X}_n,K_n,\pi_n)$ is the product chain of $(\mathcal{X}_{n,i},K_{n,i},\pi_{n,i})_{i=1}^n$ according to the probability vector $(p_{n,1}/q_n,...,p_{n,n}/q_n)$. Suppose there is $C>1$ such that
\begin{equation}\label{eq-Lacoin4}
 \sum_{i=1}^na_{n,i}\le Cn^{-\beta-1},\quad C^{-1}\le b_{n,i}\le C,\quad\forall 1\le i\le n,\,n\ge 1.
\end{equation}
\begin{itemize}
\item[(1)] For $p_{n,i}=1+2^{i-n}$, $\mathcal{G}_c$ has a total variation cutoff if and only if $\beta\ne 1$. Further, if $\beta\in(0,1)$, then the cutoff time is $2n^2$; if $\beta\in(1,\infty)$, then the cutoff time is $n^2$.

\item[(2)] For $p_{n,i}=1+(i/n)^\alpha$ with $\alpha>0$, $\mathcal{G}_c$ has a total variation cutoff if and only if $\beta\ne 1$. Further, if $\beta\in(0,1)$, then the cutoff time is $2[(\alpha+2)/(\alpha+1)]n^2$; if $\beta\in(1,\infty)$, then the cutoff time is $[(\alpha+2)/(\alpha+1)]n^2$.

\item[(3)] For $p_{n,i}=1+\log i/\log n$, $\mathcal{G}_c$ has a total variation cutoff with cutoff time $4n^2/(1+\min\{\beta,1\})$ for all $\beta>0$.
\end{itemize}
\end{prop}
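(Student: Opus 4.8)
The plan is to obtain Proposition \ref{p-Lacoin3} from Proposition \ref{p-Lacoin2} and Corollary \ref{c-Lacoin} by an asymptotic analysis of the three weight families; the genuine inputs are the sharp two-sided bounds on the Hellinger distance $d_{n,H}^{(c)}$ of the product chain in Lemma \ref{l-Lacoin1}, and Proposition \ref{t-comp}, which lets us argue with the Hellinger distance in continuous time. The structural picture is as follows. In $\mathcal{G}_c$ the $i$th coordinate moves at speed $p_{n,i}/q_n$, and by (\ref{eq-Lacoin4}) the bound $\sum_i a_{n,i}\le Cn^{-\beta-1}$ forces each coordinate chain to be a right-biased walk on $\{0,\dots,2n\}$ with drift $1-o(1)$ and stationary mass concentrated at $2n$; from $0$ it reaches $2n$ after a ballistic passage of length $(1+o(1))n$, except for a mass $\asymp b_{n,i}n^{-\beta}\asymp n^{-\beta}$ that is diverted through the bottleneck vertex $n$ and clears $2n$ only after a second passage of length $(1+o(1))n$. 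Hence the residual of coordinate $i$ (its own Hellinger distance squared, in its own time) is $\asymp 1$ before its own time reaches $n$, is $\asymp n^{-\beta}$ while that own time lies in $(n,2n)$, and is negligible afterwards, so in product time these thresholds sit at $\asymp nq_n/p_{n,i}$ and $\asymp 2nq_n/p_{n,i}$; Lemma \ref{l-Lacoin1} says $d_{n,H}^{(c)}(t)^2$ is comparable to $\min\{1,\,\sum_i(\text{residual of }i\text{ at }t)\}$.

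First I would compute $q_n=\sum_{i=1}^n p_{n,i}$: in case (1), $q_n=n+\sum_{i=1}^n 2^{i-n}=n+2-2^{1-n}\sim n$; in case (2), $q_n=n+n^{-\alpha}\sum_{i=1}^n i^\alpha\sim n+n^{-\alpha}\,n^{\alpha+1}/(\alpha+1)=\tfrac{\alpha+2}{\alpha+1}n$; in case (3), $q_n=n+(\log n)^{-1}\log(n!)\sim 2n$ by Stirling. In each case $\min_i p_{n,i}=p_{n,1}\to 1$, so the slowest coordinate finishes its ballistic passage at product time $(1+o(1))nq_n$, equal to $n^2$, $\tfrac{\alpha+2}{\alpha+1}n^2$ and $2n^2$ respectively: these are the cutoff times in the regime where the $n^{-\beta}$ residuals never pile up.

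The decisive step is the dichotomy of Proposition \ref{p-Lacoin2}: at $t=cn^2$ one sums the per-coordinate residuals, i.e. one counts the coordinates that are simultaneously active, meaning $p_{n,i}\in(q_n/(cn),\,2q_n/(cn))$; once $c>q_n/n$ the lower constraint is vacuous and the count is $\#\{i:p_{n,i}<2q_n/(cn)\}$, so the residual sum is $\asymp n^{-\beta}\cdot\#\{i:p_{n,i}<2q_n/(cn)\}$. In cases (1) and (2) a positive fraction of the weights lie in every fixed neighbourhood of $1$ (trivially in (1); in (2) because $p_{n,i}<1+\eta$ for all $i<n\eta^{1/\alpha}$), so just below the second threshold $2nq_n$ there are $\asymp n$ active coordinates and the residual sum is $\asymp n^{1-\beta}$ across the whole multiplicative window between the two thresholds. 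This tends to $0$ for $\beta>1$ (so $d_{n,H}^{(c)}$ drops from $1$ to $0$ sharply at the first threshold $nq_n$), tends to $\infty$ for $\beta<1$ (pushing the drop to the second threshold $2nq_n$, hence the factor $2$), and stays $\asymp 1$ throughout the window for $\beta=1$ (so the transition spreads over a multiplicative factor $2$ and there is no cutoff): this is parts (1) and (2). In case (3) the weights are only logarithmically dense near $1$, $p_{n,i}<1+\eta$ only for $i<n^\eta$, so with $q_n\sim 2n$ the active count at $t=cn^2$ with $c\in(2,4)$ is $\#\{i:p_{n,i}<4/c\}=\#\{i:i<n^{4/c-1}\}\asymp n^{4/c-1}$ and the residual sum is $\asymp n^{4/c-1-\beta}$; the exponent $4/c-1-\beta$ is strictly decreasing in $c$ and vanishes at $c=4/(1+\min\{\beta,1\})$ (the $\min$ reflects that for $\beta\ge1$ this root lies outside the range $c\in(2,4)$ in which the count formula holds, and the transition is then governed by the slowest coordinate finishing its ballistic passage at $c=2$, i.e. at $nq_n\sim2n^2$), so the transition is sharp there and $\mathcal{G}_c$ has a cutoff for every $\beta>0$ with cutoff time $\bigl(4/(1+\min\{\beta,1\})\bigr)n^2$, which is part (3).

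I expect the main obstacle to be the precise constant in part (3): one must track that the bottleneck-clearing threshold of coordinate $i$ really depends on $p_{n,i}$, combine this $i$-dependence with the count of still-active coordinates to pin down the exponent $4/c-1-\beta$, and verify that nothing else contributes at leading order — in particular that the residual mass still sitting on the tiny-measure left segment $\{0,\dots,n\}$ and on the coordinates that have not yet cleared the bottleneck does not perturb the count, and that the $O(\sqrt n)$-scale passage windows are negligible against $n^2$. This sharpness is where the two-sided bounds of Lemma \ref{l-Lacoin1} are used, and it is also the mechanism distinguishing the genuine cutoff in (3) from its failure at $\beta=1$ in (1) and (2).
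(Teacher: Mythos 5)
Your proposal is correct and follows essentially the same route as the paper's own (appendix) proof: check that (\ref{eq-Lacoin4}) places the chains within the scope of Lemma \ref{l-Lacoin1}, compute $q_n\sim n$, $\tfrac{\alpha+2}{\alpha+1}n$, $2n$ together with the count of coordinates satisfying $p_{n,i}<(1+\delta)\hat p_n$ (equivalently $B_n(\delta)\asymp n^{1-\beta}$, $n^{1-\beta}\delta^{1/\alpha}$, $n^{\delta-\beta}$ in the three cases), and then conclude via Proposition \ref{p-Lacoin2} and Corollary \ref{c-Lacoin}. Your ``active-coordinate count'' at time $cn^2$ is exactly the paper's $B_n(\delta)$ evaluated at $1+\delta=2q_n/(c n\hat p_n)$, so apart from the probabilistic heuristics the two arguments coincide.
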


In \cite{L15}, Lacoin creates the continuous time Markov chains without cutoff by directly assigning their $Q$-matrices. To our setting, the transition matrices have $\beta=1$ and, roughly, $a_{n,i}=2^{-n^2}$, $b_{n,i}=1$ and $c_{n,i}=n^{-1}2^{-n^3}$. It is easy to check that (\ref{eq-Lacoin4}) is satisfied and, by Proposition \ref{p-Lacoin3}, no cutoff exists in the total variation.

\end{ex}

Next, we consider some specific type of product chains and do its framework on the comparison of cutoffs between product chains and original chains. In detail, let $\mathcal{F}=(\mathcal{X}_n,K_n,\pi_n)_{n=1}^\infty$ be a family of Markov chains and $\mathcal{P}=(p_n)_{n=1}^\infty$ be a sequence of positive reals. For $n\ge 1$, let $q_n=\sum_{i=1}^np_i$ and $(\mathcal{Y}_n,L_n,\nu_n)$ be the product of $(\mathcal{X}_i,K_i,\pi_i)_{i=1}^n$ according to the probability vector $(p_1/q_n,...,p_n/q_n)$. We write $\mathcal{F}^\mathcal{P}$ for the family $(\mathcal{Y}_n,L_n,\nu_n)_{n=1}^\infty$ and write $\mathcal{F}^{\mathcal{P}}_c$ for the family of continuous time chains associated with $\mathcal{F}^{\mathcal{P}}$. When we say a subfamily of $\mathcal{F}$, we mean $(\mathcal{X}_{\xi_n},K_{\xi_n},\pi_{\xi_n})_{n=1}^\infty$, where $(\xi_n)_{n=1}^\infty$ is an increasing sequence of positive integers. The following theorem provides criteria on the cutoff of $\mathcal{F}^\mathcal{P}_c$ with specific $\mathcal{P}$.

\begin{thm}\label{t-main}
Let $\mathcal{F}^\mathcal{P}$ be the family introduced above, $\epsilon_n$ be a sequence satisfying $0<\inf_n\epsilon_n\le\sup_n\epsilon_n<1/2$ and set
\[
 D_n:=\log\frac{T_{n,\textnormal{TV}}^{(c)}(\epsilon_n)}{p_n}=A_nn+B_n+C_n.
\]
Assume that:
\begin{itemize}
\item[(I)] Either $0<A_n\le A_{n+1}$ for all $n$ or $n|A_n-A|$ is bounded for some $A>0$.

\item[(II)] $B_n$ is nondecreasing, $C_n$ is bounded and $D_n$ is nondecreasing for $n$ large enough.
\end{itemize}
In the total variation:
\begin{itemize}
\item[(1)] If $\mathcal{F}_c$ has a cutoff with cutoff time $t_n$, then $\mathcal{F}^\mathcal{P}_c$ has a cutoff with cutoff time $(p_1+\cdots+p_n)t_n/p_n$.

\item[(2)] If no subfamily of $\mathcal{F}_c$ has a cutoff, then $\mathcal{F}^\mathcal{P}_c$ has no cutoff.
\end{itemize}

The above conclusions also hold in the Hellinger distance if $\sup_n\epsilon_n<1/4$ is assumed further and $T_{n,\textnormal{\tiny TV}}^{(c)}$ is replaced by $T_{n,H}^{(c)}$.
\end{thm}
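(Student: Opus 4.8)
The plan is to use the product identity (\ref{eq-prodhd}) for the Hellinger distance as the only source of multiplicativity, to control each coordinate with the total‑variation (resp. Hellinger) tail bound matching the normalization in $D_n$, and to move between the two distances via (\ref{eq-hdtv})--(\ref{eq-mixhdtv}) and Proposition \ref{t-comp}. Since the $i$-th coordinate of the $n$-th product runs at speed $p_i/q_n$, (\ref{eq-prodcts}) and (\ref{eq-prodhd}) give, writing $d^{(c),\mathcal{P}}$ for distances of $\mathcal{F}^\mathcal{P}_c$ and $d^{(c)}$ for those of $\mathcal{F}_c$,
\[
 1-d_{n,H}^{(c),\mathcal{P}}(t)^2=\prod_{i=1}^n\Big(1-d_{i,H}^{(c)}(p_it/q_n)^2\Big).
\]
The structural content of (I)--(II) is that the coordinate mixing scales are geometrically separated: from $D_n=A_nn+B_n+C_n$ with $B_n$ nondecreasing, $C_n$ bounded and either $0<A_n\le A_{n+1}$ or $n|A_n-A|=O(1)$, one gets a constant $c>0$ with $D_n-D_i\ge c(n-i)-O(1)$ for $i\le n$, while $D_n$ eventually nondecreasing gives $D_n\ge D_i$. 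Since $p_ie^{D_i}$ is essentially the $\epsilon_i$-mixing time $T^{(c)}_i(\epsilon_i)$ of the $i$-th coordinate, by real time $q_ne^{D_n}$ that coordinate has been run a factor $e^{D_n-D_i}\ge e^{c(n-i)-O(1)}$ of its own mixing times.

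For part (1) I take $w_n:=q_ne^{D_n}$; since $\mathcal{F}_c$ has a cutoff and $\epsilon_n$ is bounded away from $0$ and $1$, the $\epsilon_n$-mixing time of the $n$-th chain is $\sim t_n$, so $w_n\sim q_nt_n/p_n$ as required. For the upper bound ($a>1$) I split the product into a fixed window $\{n-K+1,\dots,n\}$ and the rest. On the window $p_iaw_n/q_n\ge a\,T^{(c)}_i(\epsilon_i)(1-o(1))$ with $a>1$, so the cutoff of $\mathcal{F}_c$ makes each of these $K$ factors tend to $1$; off the window $D_n-D_i\ge cK-O(1)$, so submultiplicativity of the total variation distance — where $\sup_n\epsilon_n<1/2$ enters (and $\sup_n\epsilon_n<1/4$ in the Hellinger version, to retain the analogous tail bound after passing through (\ref{eq-hdtv})) — bounds each factor below by $1-(2\sup_n\epsilon_n)^{\lfloor ae^{D_n-D_i}\rfloor}$, a doubly‑exponentially small correction, so the tail product is $1-o_K(1)$ uniformly in $n$. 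Letting $n\to\infty$ then $K\to\infty$ gives $d_{n,H}^{(c),\mathcal{P}}(aw_n)\to0$, hence $d_{n,\textnormal{\tiny TV}}^{(c),\mathcal{P}}(aw_n)\to0$. For the lower bound ($a<1$) I keep only the $n$-th factor: $d_{n,H}^{(c),\mathcal{P}}(aw_n)^2\ge d_{n,H}^{(c)}(ap_ne^{D_n})^2$ with $ap_ne^{D_n}=aT^{(c)}_n(\epsilon_n)<T^{(c)}_n(\epsilon_n)$, and the cutoff of $\mathcal{F}_c$ forces $d_{n,\textnormal{\tiny TV}}^{(c)}(ap_ne^{D_n})\to1$, so $d_{n,\textnormal{\tiny TV}}^{(c),\mathcal{P}}(aw_n)\to1$; this is the claimed cutoff at $q_nt_n/p_n$.

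For part (2) I argue contrapositively: if $\mathcal{F}^\mathcal{P}_c$ has a cutoff with time $w_n$, I produce a cutoff in $\mathcal{F}_c$ itself (a subfamily of $\mathcal{F}_c$). Evaluating the product at $q_ne^{D_n}$: the $n$-th factor is $1-d_{n,H}^{(c)}(T^{(c)}_n(\epsilon_n))^2$, bounded away from $1$ because $\epsilon_n\ge\inf_n\epsilon_n>0$, which forces $d_{n,H}^{(c),\mathcal{P}}(q_ne^{D_n})$ away from $0$; and the window/tail split, used now with no cutoff assumption, shows the whole product is bounded below by a positive constant (a bounded number of coordinates at their mixing scale times a factor $1-o(1)$ from the doubly‑exponentially mixed tail), which forces $d_{n,H}^{(c),\mathcal{P}}(q_ne^{D_n})$ away from $1$. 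Comparing with the cutoff at $w_n$ pins $w_n\sim q_ne^{D_n}$. Then $s_n:=p_ne^{D_n}=T^{(c)}_n(\epsilon_n)$ is a cutoff time for the $n$-th chain of $\mathcal{F}_c$: for $a>1$, $as_n=bp_nw_n/q_n$ with $b\to a>1$, so $d_{n,H}^{(c)}(as_n)\le d_{n,H}^{(c),\mathcal{P}}(bw_n)\to0$; for $a<1$, $as_n<T^{(c)}_n(\epsilon_n)$ keeps $d_{n,H}^{(c)}(as_n)$ bounded below. So $\mathcal{F}_c$ has a cutoff, contradicting the hypothesis.

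The step I expect to be the real obstacle is the two‑sided pinning $w_n\sim q_ne^{D_n}$ in part (2): its ``bounded away from $1$'' half requires that only boundedly many coordinates sit near their mixing scale at time $q_ne^{D_n}$, which is exactly what the decomposition $D_n=A_nn+B_n+C_n$ under (I)--(II) buys through $D_n-D_i\ge c(n-i)-O(1)$, and the whole argument must be organized so that the absence of a product formula for total variation is bypassed cleanly through (\ref{eq-hdtv})--(\ref{eq-mixhdtv}) — this is where the thresholds $1/2$ and $1/4$ on $\sup_n\epsilon_n$ are used.
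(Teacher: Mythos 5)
Your part (1) is essentially the paper's own route: the computation you sketch from (I)--(II), namely $D_n-D_i\ge \mathrm{const}\,(n-i)-O(1)$ up to bounded errors, is exactly what the paper uses to verify the tail condition ($R(c)=0$ in the notation of Theorem \ref{t-maxcuttv}/Proposition \ref{p-prodseqtv}), and your window/tail split of the Hellinger product at $aq_ne^{D_n}$, with the off-window factors controlled by submultiplicativity (Proposition \ref{p-prodmaxtv}, Lemma \ref{l-hdsub}, which is where the thresholds $1/2$ and $1/4$ enter) and the window factors controlled by the coordinate cutoff, is a self-contained re-derivation of Theorem \ref{t-maxcuttv}(1) in this special case. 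Modulo the routine remark that $T^{(c)}_{n}(\epsilon_n)\sim t_n$ because $\epsilon_n$ is bounded away from $0$ and $1$, this half is sound.

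The genuine gap is in part (2). What the contrapositive requires is a \emph{subfamily} of coordinate chains whose distance tends to $1$ strictly below, and to $0$ strictly above, a candidate time. Your pinning $w_n\sim q_ne^{D_n}$ and the bound $d^{(c)}_{n,H}(as_n)\le d^{(c),\mathcal{P}}_{n,H}(bw_n)\to0$ for $a>1$ are fine, but for $a<1$ you only obtain that $d^{(c)}_n(as_n)$ stays bounded away from $0$ (by $1-\sqrt{1-(\inf_n\epsilon_n)^2}$ in the Hellinger metric); that is a pre-cutoff statement, and ``so $\mathcal{F}_c$ has a cutoff'' is a non sequitur --- a cutoff needs the distance to tend to $1$ below $s_n$, which nothing in your argument supplies. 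Moreover, the conclusion you aim for (a cutoff for the \emph{whole} family $\mathcal{F}_c$) is stronger than the theorem asserts and should not be expected: (I)--(II) put no upper bound constraint forcing $D_n-D_{n-1}$ to stay bounded away from $0$, and when $D_n-D_{n-1}\to0$ along a subsequence the sharp transition of the product near $q_ne^{D_n}$ can be produced by coordinate $n-1$ (or another coordinate within a bounded window) rather than by coordinate $n$ itself; this is precisely why the statement is phrased with subfamilies and why Theorem \ref{t-maxcuttv}(2) only extracts indices $J_n$ with $|k_{j_n}-J_n|=O(1)$. The missing ingredient is the paper's extraction argument: from the product upper bound (\ref{eq-ub}), the tail contributes at most $1-e^{-\beta(c)/(2\alpha)}$, so a cutoff of $\mathcal{F}^\mathcal{P}_c$ forces $\max_{k_n-M'<i\le k_n}d^{(c)}_{n,i,\textnormal{\tiny TV}}(cp_{n,i}w_n/q_n)\to1$ for each fixed $c<1$, and a diagonal choice $c_r\uparrow1$ of levels together with maximizing indices yields a subfamily whose distance is $\ge 1-2^{-r}$ below and $\to0$ above the rescaled time, i.e.\ a genuine cutoff. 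Without this (or an equivalent device), part (2) of your proposal is unproven.
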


A general version of Theorem \ref{t-main} is discussed in Subsection \ref{ss-sar} and readers are referred to Theorem \ref{t-prodseqtv} for more details. To see a practical application, we consider products of random walks on finite cycles.

\begin{prop}\label{p-psrw}
Refer to the family $\mathcal{F}^\mathcal{P}$ in Theorem \ref{t-main} and let
$\mathcal{X}_n=\mathbb{Z}_{n+1}$, $K_n(x,y)=1/2$ for $|x-y|=1$ and $p_n=n^2\exp\{-n^{\gamma}\}$ with $\gamma>0$. If $\gamma>1$, then $\mathcal{F}^\mathcal{P}_c$ has no cutoff in the total variation.
\end{prop}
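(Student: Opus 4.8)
The plan is to invoke Theorem~\ref{t-main}(2) with $\mathcal{F}=(\mathbb{Z}_{n+1},K_n,\pi_n)_{n=1}^\infty$ the family of simple random walks on cycles and $\mathcal{P}=(p_n)_{n=1}^\infty$, $p_n=n^2e^{-n^\gamma}$. Two ingredients are needed: (a) no subfamily of $\mathcal{F}_c$ has a total variation cutoff, and (b) the quantity
\[
 D_n=\log\frac{T_{n,\textnormal{TV}}^{(c)}(\epsilon_n)}{p_n}
\]
admits a decomposition $D_n=A_nn+B_n+C_n$ satisfying hypotheses~(I)--(II) for any sequence $\epsilon_n$ with $0<\inf_n\epsilon_n\le\sup_n\epsilon_n<1/2$. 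Once both are in place, Theorem~\ref{t-main}(2) yields directly that $\mathcal{F}_c^{\mathcal{P}}$ has no total variation cutoff.

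For (a) I would use that the continuous-time simple random walk on $\mathbb{Z}_{m+1}$ is reversible, with spectral gap $1-\cos(2\pi/(m+1))\sim 2\pi^2/(m+1)^2$ and total variation mixing time both of order $m^2$. Since a total variation cutoff for a family of reversible chains forces the ratio of relaxation time to mixing time to tend to $0$, no family of cycles of growing size can have a cutoff; in particular this applies to any subfamily $(\mathbb{Z}_{\xi_n+1},K_{\xi_n},\pi_{\xi_n})_{n=1}^\infty$ of $\mathcal{F}$. Alternatively one may invoke the classical fact that, for the cycle, $d^{(c)}_{\textnormal{TV}}(\theta m^2)$ converges as $m\to\infty$ to a continuous, strictly decreasing function of $\theta\in(0,\infty)$ with values in $(0,1)$; by~(\ref{eq-defcutoff2}) such a limit profile rules out a cutoff, along every subsequence.

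For (b) the essential input is the uniform estimate $T_{n,\textnormal{TV}}^{(c)}(\epsilon_n)\asymp n^2$, which holds because $\inf_n\epsilon_n>0$ and $\sup_n\epsilon_n<1/2$: this follows from the spectral decomposition of $K_n$ together with the limit profile above, since the preimage of any compact subset of $(0,1)$ under a fixed continuous strictly decreasing surjection $(0,\infty)\to(0,1)$ is a compact subset of $(0,\infty)$. Write $T_{n,\textnormal{TV}}^{(c)}(\epsilon_n)=\rho_n n^2$ with $0<\inf_n\rho_n\le\sup_n\rho_n<\infty$; then
\[
 D_n=\log\frac{\rho_n n^2}{n^2e^{-n^\gamma}}=n^\gamma+\log\rho_n,
\]
and I would take $A_n=n^{\gamma-1}$, $B_n=0$, $C_n=\log\rho_n$. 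Because $\gamma>1$, $A_n$ is positive and nondecreasing, so~(I) holds; $B_n$ is nondecreasing and $C_n$ is bounded; and $D_{n+1}-D_n\ge(n+1)^\gamma-n^\gamma-2\sup_m|\log\rho_m|\ge\gamma n^{\gamma-1}-2\sup_m|\log\rho_m|\to\infty$, so $D_n$ is nondecreasing for $n$ large, giving~(II). Feeding this into Theorem~\ref{t-main}(2) completes the argument.

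The main obstacle is the uniform lower bound $T_{n,\textnormal{TV}}^{(c)}(\epsilon_n)\ge c\,n^2$, which has to survive $\epsilon_n$ approaching $1/2$; proving it cleanly is exactly where one needs the explicit spectrum of the cycle (or its diffusive scaling limit), whereas the matching upper bound is a routine $\ell^2$ estimate. Beyond that the proof is bookkeeping, and it is precisely the hypothesis $\gamma>1$ that simultaneously makes $A_n=n^{\gamma-1}$ nondecreasing and forces $D_n$ to be eventually nondecreasing despite the bounded fluctuation $\log\rho_n$; for $\gamma\le 1$ the decomposition fails hypothesis~(I), consistent with the statement being restricted to $\gamma>1$.
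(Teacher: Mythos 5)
Your proposal is correct and takes essentially the same route as the paper, which obtains Proposition \ref{p-psrw} by feeding the well-known order-$n^2$ total variation mixing time of the cycle (hence absence of a cutoff along every subsequence, e.g.\ via the spectral-gap product criterion or the diffusive limit profile) into Theorem \ref{t-main}(2), using exactly the observation $(n+1)^\gamma-n^\gamma\ge n^{\gamma-1}$ that you invoke to verify (I)--(II) with $A_n=n^{\gamma-1}$, $B_n=0$, $C_n=\log\rho_n$. The only quibble is your closing aside: for $\gamma=1$ hypothesis (I) does not in fact fail (one may take $A_n\equiv 1$), but this is immaterial to the stated case $\gamma>1$.
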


It is well-known that the total variation mixing time of the $n$th chain in $\mathcal{F}_c$ has order $n^2$. Noting this, Proposition \ref{p-psrw} is a consequence of Theorem \ref{t-main} and the observation of $(n+1)^\gamma-n^\gamma\ge n^{\gamma-1}$. In the forthcoming paper \cite{CK17}, we have more advanced analysis on the cutoff of  product chains for finite groups with moderate growth, which is a generalization of Proposition \ref{p-psrw}. It is shown in \cite{CK17} that, when the pre-cutoff (a concept weaker than the cutoff) is considered, the family $\mathcal{F}^\mathcal{P}_c$ in Proposition \ref{p-psrw} presents a pre-cutoff in the total variation for $\gamma\in(0,1)$, but
does not for $\gamma\ge 1$. This means that Theorem \ref{t-main} could be sharp in judging cutoffs.

As is revealed in Theorem \ref{t-main}, the cutoffs for $\mathcal{F}_c$ and $\mathcal{F}_c^{\mathcal{P}}$ are consistent under some mild conditions. However, this can fail in general and we provide counterexamples in Subsection \ref{ss-noncnsst} to highlight the observation of the following theorem.

\begin{thm}\label{thm:counterexs}
None of cutoffs for $\mathcal{F}_c$ or $\mathcal{F}_c^\mathcal{P}$ implies the other.
\end{thm}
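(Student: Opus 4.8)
The plan is to establish Theorem \ref{thm:counterexs} by producing two explicit families of Markov chains: one, call it $\mathcal{F}$, such that $\mathcal{F}_c$ has a total variation cutoff but $\mathcal{F}_c^\mathcal{P}$ (for a suitable weight sequence $\mathcal{P}$) does not, and a second family $\mathcal{F}'$ such that $\mathcal{F}'_c$ has \emph{no} cutoff while $\mathcal{F}_c'^{\mathcal{P}'}$ \emph{does} have a cutoff for a suitable $\mathcal{P}'$. Both constructions should be built from very simple building blocks---two-state chains, or small perturbations thereof---so that $H_{i,t}$ has a closed form and, via (\ref{eq-prodcts}) and the product formula for the Hellinger distance (\ref{eq-prodhd}), the Hellinger distance $d_{n,H}^{(c)}$ of the product chain can be written down essentially exactly. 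Then, by Proposition \ref{t-comp}, it suffices to control cutoffs in the Hellinger distance rather than the total variation, which is the whole point of the paper's scheme.

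For the first direction (cutoff for $\mathcal{F}_c$, none for $\mathcal{F}_c^\mathcal{P}$), I would take $(\mathcal{X}_n,K_n,\pi_n)$ to be two-state chains with relaxation time (spectral gap) $\lambda_n^{-1}$ chosen so that $\mathcal{F}_c$ has a cutoff (this happens automatically for families of two-state chains with $\lambda_n^{-1}\to\infty$ and separating mixing times), but then choose the weights $p_n$ growing rapidly enough---along the lines of the $p_n = n^2 e^{-n^\gamma}$ choice in Proposition \ref{p-psrw}, or the $a_{n,i}=2^{-n^2}$-type choice from Example \ref{ex-Lacoin}---so that in the product chain a single slow coordinate (or a small cluster of comparably slow coordinates) dominates the Hellinger sum and no phase transition occurs. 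Concretely, one arranges the sequence $D_n=\log(T_{n,\mathrm{TV}}^{(c)}(\epsilon_n)/p_n)$ so that condition (II) of Theorem \ref{t-main} (specifically monotonicity of $D_n$) \emph{fails}, which is exactly the situation Lacoin's construction was designed to exploit; the earlier sharp Hellinger bounds (Lemma \ref{l-Lacoin1}, Proposition \ref{p-Lacoin2}) then give the absence of cutoff for $\mathcal{F}_c^\mathcal{P}$ directly. For the reverse direction (no cutoff for $\mathcal{F}_c$, but cutoff for $\mathcal{F}_c^\mathcal{P}$), the idea is to make $\mathcal{F}$ itself oscillate---e.g.\ interleave blocks of fast chains and slow chains so that no subfamily has a cutoff---but then pick the weights $p_n$ so that the re-weighted timescales $q_n t_n/p_n = (p_1+\cdots+p_n)t_n/p_n$ become smoothed out and the Hellinger distance of the product concentrates at a single time; essentially, the averaging over coordinates inherent in $q_n$ damps the oscillation present in the individual $t_n$. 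Again the product formula (\ref{eq-prodhd}) reduces everything to estimating a sum of one-dimensional Hellinger distances, and one checks the cutoff criterion (\ref{eq-defcutoff2}) for that sum.

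The main obstacle I anticipate is the second direction: turning a genuinely non-convergent family into a product with a clean cutoff requires a delicate balance, because the Hellinger distance of a product is at least as large as that of any single coordinate, so one cannot simply "wash out" a slow coordinate---the construction must ensure that at the putative cutoff time \emph{every} coordinate is already well-mixed (so the total Hellinger sum drops to $0$) while just before that time \emph{enough} coordinates are far from stationarity (so the sum is bounded below). This forces one to control both the individual timescales and their multiplicities simultaneously, and the interaction with the weights $p_i/q_n$ entering as $H_{i,p_i t/q_n}$ in (\ref{eq-prodcts}) makes the bookkeeping intricate. A secondary technical point is verifying that the prerequisite $T^{(c)}_{n,\mathrm{TV}}(\epsilon_0)\to\infty$ (or its Hellinger analogue) holds, so that Proposition \ref{t-comp} applies and the total variation and Hellinger cutoff statements are genuinely equivalent for the constructed families; this is usually routine once the timescales are chosen to diverge, but it must be stated. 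Finally, I would present the two constructions as separate propositions or explicit examples within Subsection \ref{ss-noncnsst}, with the theorem following immediately by combining them.
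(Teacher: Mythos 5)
Your overall scheme (two explicit constructions, one per direction, analyzed through the Hellinger product formula and Proposition \ref{t-comp}) is the same as the paper's, but your first construction has a fatal flaw. A family of two-state chains can never present a cutoff: starting from either state the distance is a single exponential, $d_{n,\textnormal{\tiny TV}}^{(c)}(t)=\frac{\max(\alpha_n,\beta_n)}{\alpha_n+\beta_n}e^{-(\alpha_n+\beta_n)t}$, and no sequence $t_n$ can make $C_ne^{-\lambda_nat_n}$ tend to $1$ for $a<1$ and to $0$ for $a>1$ simultaneously (this is precisely why the hypercube has a cutoff while its one-dimensional factors do not). So the premise ``$\mathcal{F}_c$ has a cutoff, automatically, for two-state chains with $\lambda_n^{-1}\to\infty$'' is false, and the direction ``cutoff for $\mathcal{F}_c$ but not for $\mathcal{F}_c^{\mathcal{P}}$'' cannot be realized with two-state coordinates at all (by Theorem \ref{t-l1cutoff}, killing the product cutoff forces $p_n\gtrsim\log n$, which does not help since $\mathcal{F}_c$ itself has no cutoff). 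Moreover, your proposed mechanism for destroying the product cutoff does not constitute a proof: the failure of condition (II) of Theorem \ref{t-main} proves nothing, since that theorem gives sufficient conditions only, and a single dominating coordinate whose family has a cutoff would in fact \emph{force} a cutoff for the product (Theorem \ref{t-maxcuttv}(1)). The paper's actual counterexample for this direction is built from the Lacoin chains: each coordinate family has a cutoff at time $n$ but leaves a residual of size $\asymp 1/n$ until time $2n$, and the absence of cutoff for the product comes from the accumulation of $n$ such residuals into a constant plateau (Lemma \ref{l-Lacoin1}, Proposition \ref{p-Lacoin2}, Corollary \ref{c-Lacoin}), together with a flattening of the triangular array (via $\xi_n=n(n+1)/2$ and weights $u_{\xi_n+i}=p_{n+1,i}r_{n+1}/q_{n+1}$) so that the product family along a subsequence coincides with the Lacoin product; the bounds you cite apply to these chains, not to two-state chains or small perturbations thereof.

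Your second direction is closer to being workable, but as written it is only a sketch and is internally inconsistent in one place: if genuinely no subfamily of $\mathcal{F}_c$ has a cutoff and the weights satisfy the hypotheses of Theorem \ref{t-prodseqtv}, then part (2) of that theorem says $\mathcal{F}^{\mathcal{P}}_c$ has \emph{no} cutoff, so you must rely on the aggregation mechanism (many coordinates of comparable timescale, as in Example \ref{ex-2p}, where identical two-state factors with equal weights already give the hypercube-type cutoff) rather than on ``smoothing by $q_n$'' in the abstract, and you must verify quantitatively that at time $(1+\epsilon)t_n$ the Hellinger sum vanishes while at $(1-\epsilon)t_n$ it diverges. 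The paper instead interleaves the Ehrenfest-type chain (cutoff at $\frac14 n\log n$) with the simple random walk on a segment (no cutoff, mixing time $\asymp n^2$) and chooses geometric weights $p_{2n-1}=r^{n-1}$, $p_{2n}=1$, so that the cutoff coordinates dominate the product timescale and the no-cutoff coordinates are negligible there (via the submultiplicativity bound of Proposition \ref{p-prodmaxtv}); either route can be made to work, but your proposal does not yet carry out the required estimates.
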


The remaining sections of this article are organized in the following way. In Section \ref{s-cc}, a comparison between the total variation and the Hellinger distance is introduced to relate the cutoff in one measurement with the cutoff in the other, where Proposition \ref{t-comp} is a typical result in the framework. In Section \ref{s-dpc}, we consider product chains in the continuous time case and, based on (\ref{eq-prodcts}), create a list of bounds on their mixing times. In Section \ref{s-cpc}, the combination of the comparison technique and the bounds for product chains leads to a series of criteria on the existence of cutoffs and related materials. In Section \ref{s-examples}, we consider the family in Theorem \ref{t-main} and determine its cutoff to some extent. For illustration, we consider products of two-state chains and a general family of chains in Proposition \ref{p-Lacoin3}. Besides, two examples are introduced to reveal the non-consistency of cutoffs, which provide the proof of Theorem \ref{thm:counterexs}.
We would like to emphasize that those heuristic examples in Section \ref{s-examples} are helpful to understand the theoretic development in this paper though the discussion within the section and the auxiliary proofs relegated in the appendix occupy a significantly large part.

We end the introduction by quoting a list of mathematical notations to be used throughout this article. Let $x,y\in\mathbb{R}$ and $a_n,b_n$ be sequences of positive reals. We write $x\vee y$ and $x\wedge y$ for the maximum and minimum of $x$ and $y$. When $a_n/b_n$ is bounded, we write $a_n=O(b_n)$; when $a_n/b_n\ra 0$, we write $a_n=o(b_n)$. In the case of $a_n=O(b_n)$ and $b_n=O(a_n)$, we simply say $a_n\asymp b_n$. If $a_n/b_n\ra 1$, we write $a_n\sim b_n$. When writing $O(a_n)$ and $o(b_n)$ as a single term, we mean sequences, $c_n$ and $d_n$, satisfying $|c_n/a_n|=O(1)$ and $|d_n/b_n|=o(1)$ respectively.

\section{Comparison of cutoffs}\label{s-cc}

In this section, we consider the total variation and the Hellinger distance in a more general setting and provides a comparison of mixing times in both measurements.

\subsection{Comparisons of the total variation and Hellinger distance}

Let $\mathcal{X}$ be a set equipped with $\sigma$-field $\mathcal{A}$. For any two probabilities $\mu,\nu$ on $(\mathcal{X},\mathcal{A})$, the total variation and the Hellinger distance are defined by
\begin{equation}\label{eq-tv0}
 \|\mu-\nu\|_{\text{\tiny TV}}:=\sup_{A\in\mathcal{A}}\{\mu(A)-\nu(A)\},
\end{equation}
and
\begin{equation}\label{eq-hd0}
 \|\mu-\nu\|_H:=\sqrt{\frac{1}{2}\int_\mathcal{X}\left(\sqrt{\frac{d\mu}{d\lambda}}
 -\sqrt{\frac{d\nu}{d\lambda}}\right)^2d\lambda}=\sqrt{1-\int_\mathcal{X}
 \sqrt{\frac{d\mu}{d\lambda}\frac{d\nu}{d\lambda}}d\lambda},
\end{equation}
where $\lambda$ is a probability on $(\mathcal{X},\mathcal{A})$ such that $d\mu/d\lambda$ and $d\nu/d\lambda$ exist. The total variation is clearly well-defined in (\ref{eq-tv0}), while the Hellinger distance requires the existence and independence of $\lambda$ in (\ref{eq-hd0}). To see (\ref{eq-hd0}) is well-defined, let $(P,N)$ be a Hahn decomposition of $\mu-\nu$ satisfying $\mu(P)\ge \nu(P)$, $\mu(N)\le \nu(N)$ and define $\pi$ by
\begin{equation}\label{eq-pi}
 \pi(A)=\mu(P\cap A)+\nu(N\cap A),\quad\forall A\in\mathcal{A}.
\end{equation}
By setting $c=\mu(P)+\nu(N)$, it is easy to see that $c^{-1}\pi$ is a probability and $\mu,\nu$ are absolutely continuous with respect to $\pi$. This provides a candidate of $\lambda$. Next, let $f,g$ be Radon derivatives of $\mu,\nu$ with respective to $\pi$ and let $\lambda$ be a probability with respect to which $\mu$ and $\nu$ are absolutely continuous. Obviously, $\pi$ is absolutely continuous with respect to $\lambda$ since $\pi\le\mu+\nu$. As a consequence, (\ref{eq-hd0}) can be rewritten as
\begin{equation}\label{eq-hd}
 1-\|\mu-\nu\|_H^2=\int_\mathcal{X}\sqrt{fg}d\pi.
\end{equation}
This proves the independence of $\lambda$ in (\ref{eq-hd0}).

The following lemma is known (see for instance \cite[Equation (25) on p.365]{Sh96}) and we give its proof for reader's convenience.

\begin{lem}\label{l-comp}
For any two probabilities $\mu,\nu$, one has
\[
 1-\sqrt{1-\|\mu-\nu\|_{\textnormal{\tiny TV}}^2}\le \|\mu-\nu\|_H^2\le \|\mu-\nu\|_{\textnormal{\tiny TV}}.
\]
\end{lem}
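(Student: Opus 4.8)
The plan is to work with the representation in \eqref{eq-hd} and \eqref{eq-pi}. Let $(P,N)$ be the Hahn decomposition of $\mu-\nu$ with $\mu(P)\ge\nu(P)$, $\mu(N)\le\nu(N)$, let $\pi$ be as in \eqref{eq-pi}, and let $f=d\mu/d\pi$, $g=d\nu/d\pi$. First I would record the elementary facts $f=g$ $\pi$-a.e.\ outside $P\cup N$ is not needed, but rather: on $P$ we have $f\ge g$ and on $N$ we have $f\le g$, and moreover $\|\mu-\nu\|_{\textnormal{\tiny TV}}=\mu(P)-\nu(P)=\int_P(f-g)\,d\pi=\int_N(g-f)\,d\pi$. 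Also $\int_{\mathcal X}f\,d\pi=\int_{\mathcal X}g\,d\pi=1$. These give the bookkeeping identities that drive both inequalities.

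For the upper bound $\|\mu-\nu\|_H^2\le\|\mu-\nu\|_{\textnormal{\tiny TV}}$, I would start from $1-\|\mu-\nu\|_H^2=\int\sqrt{fg}\,d\pi$ and use the pointwise inequality $\sqrt{fg}\ge f\wedge g$ (since the geometric mean dominates the minimum) to get $1-\|\mu-\nu\|_H^2\ge\int (f\wedge g)\,d\pi=1-\|\mu-\nu\|_{\textnormal{\tiny TV}}$, where the last equality is the standard identity $\|\mu-\nu\|_{\textnormal{\tiny TV}}=1-\int(f\wedge g)\,d\pi=\int(f-g)^+\,d\pi$. Rearranging yields the claim.

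For the lower bound $1-\sqrt{1-\|\mu-\nu\|_{\textnormal{\tiny TV}}^2}\le\|\mu-\nu\|_H^2$, equivalently $\sqrt{1-\|\mu-\nu\|_{\textnormal{\tiny TV}}^2}\ge 1-\|\mu-\nu\|_H^2=\int\sqrt{fg}\,d\pi$, I would square and instead prove $\left(\int\sqrt{fg}\,d\pi\right)^2\le 1-\|\mu-\nu\|_{\textnormal{\tiny TV}}^2$. Write $t:=\|\mu-\nu\|_{\textnormal{\tiny TV}}$. The idea is to split $\int\sqrt{fg}\,d\pi$ according to whether $f\ge g$ or $f<g$ and apply Cauchy--Schwarz on each piece: $\int\sqrt{fg}\,d\pi=\int\sqrt{f}\sqrt{g}\,d\pi$, and by Cauchy--Schwarz (with the two pieces as a $2$-vector) $\left(\int\sqrt{fg}\,d\pi\right)^2\le\left(\int_{f\ge g}f\,d\pi+\int_{f<g}g\,d\pi\right)\left(\int_{f\ge g}g\,d\pi+\int_{f<g}f\,d\pi\right)$. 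Now $\int_{f\ge g}f\,d\pi=\int_{f\ge g}g\,d\pi+t$ and $\int_{f<g}g\,d\pi=\int_{f<g}f\,d\pi+t$ while $\int_{f\ge g}g\,d\pi+\int_{f<g}f\,d\pi=\int(f\wedge g)\,d\pi=1-t$; calling this quantity $s=1-t$, the right-hand side is $(s+2t)\cdot s=(1-t+2t)(1-t)=(1+t)(1-t)=1-t^2$, which is exactly what we want. (A small care point: the Hahn sets $P$, $N$ coincide $\pi$-a.e.\ with $\{f>g\}$, $\{f<g\}$ up to the null set $\{f=g\}$, so the bookkeeping with $t$ is legitimate; one should note $\{f\ge g\}$ versus $\{f>g\}$ differ only by $\{f=g\}$ which contributes equally to both sides.)

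The main obstacle, and the only genuinely nontrivial step, is organizing the lower bound: choosing the right Cauchy--Schwarz splitting so that the cross terms telescope into $1-t^2$. Once one sees that the split must be along $\{f\ge g\}$ versus $\{f<g\}$ (not along $P$, $N$ directly, though these agree up to null sets) and tracks the three integrals $\int_{f\ge g}g\,d\pi$, $\int_{f<g}f\,d\pi$, and $t$, the algebra closes cleanly. Everything else is the standard dictionary between $\|\mu-\nu\|_{\textnormal{\tiny TV}}$ and densities, which is already set up in the paragraph preceding the lemma.
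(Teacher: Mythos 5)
Your proof is correct and follows essentially the same route as the paper: the same Hahn decomposition and measure $\pi$ from (\ref{eq-pi}), the upper bound by comparing $\sqrt{fg}$ pointwise with the overlap (your $f\wedge g$ equals $fg$ for this $\pi$, which is what the paper uses), and the lower bound by the same Cauchy--Schwarz step producing $(1+t)(1-t)=1-t^2$, since your split along $\{f\ge g\}$ versus $\{f<g\}$ amounts to $\int\sqrt{(f\vee g)(f\wedge g)}\,d\pi\le\sqrt{\int(f\vee g)\,d\pi\int(f\wedge g)\,d\pi}$ with $\int(f\vee g)\,d\pi=\pi(\mathcal{X})=1+t$ and $\int(f\wedge g)\,d\pi=1-t$, exactly the paper's (\ref{eq-tv1})--(\ref{eq-tv2}). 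The only cosmetic difference is that you work with measure-independent identities for $f\wedge g$ and $f\vee g$ instead of the explicit densities $f=1$ on $P$, $g=1$ on $N$.
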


\begin{rem}\label{r-hdtv}
The first inequality in Lemma \ref{l-comp} implies
\[
 \|\mu-\nu\|_{\text{\tiny TV}}\le \|\mu-\nu\|_H\sqrt{2-\|\mu-\nu\|_H^2}
 \le\sqrt{2}\|\mu-\nu\|_H,
\]
while the fact of $\|\mu-\nu\|_{\text{\tiny TV}}\le\sqrt{2}\|\mu-\nu\|_H$ is also derived in \cite{LPW08,SC97}.
\end{rem}

\begin{proof}
Let $f,g$ be as before. Observe that
\[
 f=\begin{cases}1&\text{on }P\\\frac{d\mu|_N}{d\nu|_N}&\text{on }N\end{cases},\quad g=\begin{cases}\frac{d\nu|_P}{d\mu|_P}&\text{on }P\\1&\text{on }N\end{cases}.
\]
where $\mu|_A$ denotes the restriction of $\mu$ to set $A$. This implies
\begin{equation}\label{eq-tv1}
 1-\|\mu-\nu\|_{\text{\tiny TV}}=\mu(N)+\nu(P)=\int_\mathcal{X}fgd\pi.
\end{equation}
Besides, by the definition in (\ref{eq-tv0}) and the setting in (\ref{eq-pi}), it is easy to see that
\begin{equation}\label{eq-tv2}
 1+\|\mu-\nu\|_{\text{\tiny TV}}=\mu(P)+\nu(N)=\pi(\mathcal{X}).
\end{equation}
Since $f,g$ are bounded by $1$, one has $0\le fg\le 1$. By (\ref{eq-hd}) and (\ref{eq-tv1}), this yields $1-\|\mu-\nu\|_H^2\ge 1-\|\mu-\nu\|_{\text{\tiny TV}}$ and
\[
 1-\|\mu-\nu\|_H^2\le\sqrt{\pi(\mathcal{X})\int_{\mathcal{X}}fgd\pi}
 =\sqrt{1-\|\mu-\nu\|^2_{\text{\tiny TV}}},
\]
where the first inequality is exactly the Cauchy-Schwarz inequality and the last equality applies (\ref{eq-tv2}).
\end{proof}

To see an application of Lemma \ref{l-comp}, we consider products of probabilities.

\begin{prop}\label{p-comp}
Fix $n\in\mathbb{N}$. For $1\le i\le n$, let $\mu_i,\nu_i$ be probabilities on the same measurable space and set $\mu=\mu_1\times\cdots\times\mu_n$ and $\nu=\nu_1\times\cdots\times\nu_n$. In the Hellinger distance, one has
\begin{equation}\label{eq-phd}
 \|\mu-\nu\|_H^2=1-\prod_{i=1}^n(1-\|\mu_i-\nu_i\|_H^2)\ge\max_{1\le i\le n}\|\mu_i-\nu_i\|_H^2.
\end{equation}
In the total variation, one has $\|\mu-\nu\|_{\textnormal{\tiny TV}}\ge\max\{\|\mu_i-\nu_i\|_{\textnormal{\tiny TV}}:1\le i\le n\}$ and
\begin{equation}\label{eq-ptv}
 1-\prod_{i=1}^n\left(1-\|\mu_i-\nu_i\|^2_{\textnormal{\tiny TV}}\right)^{1/2}\le \|\mu-\nu\|_{\textnormal{\tiny TV}}\le 1-\prod_{i=1}^n(1-\|\mu_i-\nu_i\|_{\textnormal{\tiny TV}}).
\end{equation}
\end{prop}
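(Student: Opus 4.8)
The plan is to establish the Hellinger identity \eqref{eq-phd} first, since it is the cleanest, and then derive all the total variation statements from it by combining with Lemma \ref{l-comp} and the submultiplicativity that is already visible in the Hellinger picture. For the Hellinger identity, I would use the second expression in \eqref{eq-hd0}: if $\lambda_i$ is a dominating probability for the pair $(\mu_i,\nu_i)$ with densities $f_i=d\mu_i/d\lambda_i$, $g_i=d\nu_i/d\lambda_i$, then $\lambda=\lambda_1\times\cdots\times\lambda_n$ dominates both $\mu$ and $\nu$, and by Fubini the affinity factorizes:
\[
 1-\|\mu-\nu\|_H^2=\int\sqrt{\tfrac{d\mu}{d\lambda}\tfrac{d\nu}{d\lambda}}\,d\lambda
 =\int\prod_{i=1}^n\sqrt{f_ig_i}\,d(\lambda_1\times\cdots\times\lambda_n)
 =\prod_{i=1}^n\int\sqrt{f_ig_i}\,d\lambda_i
 =\prod_{i=1}^n\bigl(1-\|\mu_i-\nu_i\|_H^2\bigr).
\]
The inequality $\|\mu-\nu\|_H^2\ge\max_i\|\mu_i-\nu_i\|_H^2$ is then immediate because each factor $1-\|\mu_i-\nu_i\|_H^2$ lies in $[0,1]$, so the product is bounded above by any single factor.

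For the total variation, the lower bound $\|\mu-\nu\|_{\textnormal{\tiny TV}}\ge\max_i\|\mu_i-\nu_i\|_{\textnormal{\tiny TV}}$ is a monotonicity/projection fact: for each $i$, the coordinate projection $\mathcal{X}\to\mathcal{X}_i$ pushes $\mu$ to $\mu_i$ and $\nu$ to $\nu_i$, and total variation cannot increase under a pushforward (equivalently, restrict the sup in \eqref{eq-tv0} to sets of the form $\mathcal{X}_1\times\cdots\times A_i\times\cdots\times\mathcal{X}_n$). For the two-sided bound \eqref{eq-ptv}, I would feed the Hellinger identity through Lemma \ref{l-comp}. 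For the left inequality, Lemma \ref{l-comp} gives $1-\|\mu_i-\nu_i\|_H^2\ge 1-\|\mu_i-\nu_i\|_{\textnormal{\tiny TV}}\ge\sqrt{1-\|\mu_i-\nu_i\|_{\textnormal{\tiny TV}}^2}$... actually more directly, Lemma \ref{l-comp} gives $1-\sqrt{1-\|\mu-\nu\|_{\textnormal{\tiny TV}}^2}\le\|\mu-\nu\|_H^2$, hence
\[
 \sqrt{1-\|\mu-\nu\|_{\textnormal{\tiny TV}}^2}\ge 1-\|\mu-\nu\|_H^2=\prod_{i=1}^n(1-\|\mu_i-\nu_i\|_H^2)\ge\prod_{i=1}^n\sqrt{1-\|\mu_i-\nu_i\|_{\textnormal{\tiny TV}}^2},
\]
where the last step uses $\|\mu_i-\nu_i\|_H^2\le\|\mu_i-\nu_i\|_{\textnormal{\tiny TV}}$ (again Lemma \ref{l-comp}) so that $1-\|\mu_i-\nu_i\|_H^2\ge 1-\|\mu_i-\nu_i\|_{\textnormal{\tiny TV}}\ge\sqrt{1-\|\mu_i-\nu_i\|_{\textnormal{\tiny TV}}^2}$; rearranging yields the left inequality of \eqref{eq-ptv}.

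For the right inequality of \eqref{eq-ptv}, the natural route is a direct coupling/induction argument rather than going through Hellinger. By induction on $n$ it suffices to prove $\|\mu_1\times\mu_2-\nu_1\times\nu_2\|_{\textnormal{\tiny TV}}\le 1-(1-\|\mu_1-\nu_1\|_{\textnormal{\tiny TV}})(1-\|\mu_2-\nu_2\|_{\textnormal{\tiny TV}})$, and then apply it with the second factor being a product of $n-1$ terms together with the elementary fact that $1-\prod_{i=2}^n(1-a_i)$ is itself of the form $1-(1-a)$. The two-factor estimate follows from the coupling characterization of total variation: take optimal couplings $(X_1,Y_1)$ and $(X_2,Y_2)$ of $(\mu_1,\nu_1)$ and $(\mu_2,\nu_2)$ respectively, run them independently, and note that $\{(X_1,X_2)\ne(Y_1,Y_2)\}\subset\{X_1\ne Y_1\}\cup\{X_2\ne Y_2\}$, so $\|\mu-\nu\|_{\textnormal{\tiny TV}}\le P(X_1\ne Y_1)+P(X_2\ne Y_2)-P(X_1\ne Y_1)P(X_2\ne Y_2)$ by inclusion–exclusion and independence, which is exactly $1-(1-\|\mu_1-\nu_1\|_{\textnormal{\tiny TV}})(1-\|\mu_2-\nu_2\|_{\textnormal{\tiny TV}})$. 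The main thing to be careful about is the bookkeeping in the induction for the right-hand bound and making sure the direction of every inequality from Lemma \ref{l-comp} is used correctly in the chain above; none of the individual steps is deep, but the right inequality genuinely needs the coupling argument since the Hellinger identity only controls the $\|\cdot\|_H$ products, not the $\|\cdot\|_{\textnormal{\tiny TV}}$ product, in the wrong direction.
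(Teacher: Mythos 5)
Most of your proposal is sound: the Fubini factorization of the Hellinger affinity over a product dominating measure is correct (and legitimate here, since the paper shows \eqref{eq-hd0} does not depend on the choice of $\lambda$), the projection argument for $\|\mu-\nu\|_{\textnormal{\tiny TV}}\ge\max_i\|\mu_i-\nu_i\|_{\textnormal{\tiny TV}}$ is fine, and your proof of the right inequality of \eqref{eq-ptv} by running independent maximal couplings is a correct, standard alternative to the paper's argument, which instead compares the densities of $\mu$, $\nu$ and the product $\hat\pi$ of the overlap measures with respect to the Hahn-based dominating measure; both encode the same fact that the overlap mass factorizes.

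The genuine gap is your treatment of the left inequality of \eqref{eq-ptv}. First, the elementary inequality you invoke is reversed: for $t\in(0,1)$ one has $1-t\le\sqrt{1-t^2}$, not $\ge$, so the per-factor bound you need, $1-\|\mu_i-\nu_i\|_H^2\ge\sqrt{1-\|\mu_i-\nu_i\|_{\textnormal{\tiny TV}}^2}$, is exactly the reverse of the first inequality of Lemma \ref{l-comp} and is false in general. Second, the chained conclusion $\sqrt{1-\|\mu-\nu\|_{\textnormal{\tiny TV}}^2}\ge\prod_i\sqrt{1-\|\mu_i-\nu_i\|_{\textnormal{\tiny TV}}^2}$ is itself false: with $n=2$, $\mu_i=\delta_0$ and $\nu_i=(1-t)\delta_0+t\delta_1$ on $\{0,1\}$, it is equivalent to $t(2-t)=\|\mu-\nu\|_{\textnormal{\tiny TV}}\le t\sqrt{2-t^2}$, which fails for all $t\in(0,1)$ since $(2-t)^2-(2-t^2)=2(1-t)^2>0$. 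Third, even if that chain held, it would rearrange to the upper bound $\|\mu-\nu\|_{\textnormal{\tiny TV}}\le\bigl(1-\prod_i(1-\|\mu_i-\nu_i\|_{\textnormal{\tiny TV}}^2)\bigr)^{1/2}$, not the claimed lower bound $\|\mu-\nu\|_{\textnormal{\tiny TV}}\ge 1-\prod_i(1-\|\mu_i-\nu_i\|_{\textnormal{\tiny TV}}^2)^{1/2}$. The repair is to pair the two parts of Lemma \ref{l-comp} the other way around, as the paper does: apply the second inequality to the product pair and the first inequality to each factor in the rearranged form $1-\|\mu_i-\nu_i\|_H^2\le\sqrt{1-\|\mu_i-\nu_i\|_{\textnormal{\tiny TV}}^2}$, so that
\[
 \|\mu-\nu\|_{\textnormal{\tiny TV}}\ \ge\ \|\mu-\nu\|_H^2\ =\ 1-\prod_{i=1}^n\bigl(1-\|\mu_i-\nu_i\|_H^2\bigr)\ \ge\ 1-\prod_{i=1}^n\bigl(1-\|\mu_i-\nu_i\|_{\textnormal{\tiny TV}}^2\bigr)^{1/2},
\]
which is precisely the left inequality of \eqref{eq-ptv}.
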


The equality in (\ref{eq-phd}) was early introduced in \cite{LPW08} (see Exercise 20.5) and we display a proof in this article for completion.

\begin{proof}[Proof of Proposition \ref{p-comp}]
For convenience, let $(\mathcal{X}_i,\mathcal{A}_i)$ be the measurable space on which $\mu_i,\nu_i$ are defined and set $\mathcal{X}=\prod_{i=1}^n\mathcal{X}_i$ and $\mathcal{A}=\bigotimes_{i=1}^n\mathcal{A}_i$. We first prove the equality in (\ref{eq-phd}). For $1\le i\le n$, let $(P_i,N_i)$ be a Hahn decomposition of $\mu_i-\nu_i$ such that $\mu_i(P_i)\ge \nu_i(P_i)$ and $\mu_i(N_i)\le \nu_i(N_i)$. By (\ref{eq-hd}), one has
\[
 1-\|\mu_i-\nu_i\|_H^2=\int_{\mathcal{X}_i}\sqrt{\frac{d\mu_i}{d\pi_i}
 \frac{d\nu_i}{d\pi_i}}d\pi_i,
\]
where $\pi_i(A)=\mu_i(P_i\cap A)+\nu_i(N_i\cap A)$ for $A\in\mathcal{A}_i$. Set $\pi=\pi_1\times\cdots\times\pi_n$. Clearly, $\mu$ and $\nu$ are absolutely continuous with respect to $\pi$ and
\[
 \frac{d\mu}{d\pi}(x_1,...,x_n)=\prod_{i=1}^n\frac{d\mu_i}{d\pi_i}(x_i),\quad
 \frac{d\nu}{d\pi}(x_1,...,x_n)=\prod_{i=1}^n\frac{d\nu_i}{d\pi_i}(x_i).
\]
As a result, (\ref{eq-hd0}) implies
\[
 1-\|\mu-\nu\|_H^2=\int_\mathcal{X}\sqrt{\frac{d\mu}{d\pi}\frac{d\nu}{d\pi}}d\pi
 =\prod_{i=1}^n\int_{\mathcal{X}_i}\sqrt{\frac{d\mu_i}{d\pi_i}\frac{d\nu_i}{d\pi_i}}
 d\pi_i=\prod_{i=1}^n(1-\|\mu_i-\nu_i\|_H^2).
\]
The inequality in (\ref{eq-phd}) is obvious and skipped.

Next, we show (\ref{eq-ptv}). Note that the first inequality follows immediately from (\ref{eq-phd}) and Lemma \ref{l-comp}. To see the second inequality, we set $\hat{\pi}_i(A)=\mu_i(N_i\cap A)+\nu_i(P_i\cap A)$ for $A\in\mathcal{A}_i$, $\hat{\pi}=\hat{\pi}_1\times\cdots\times\hat{\pi}_n$ and let $(P,N)$ be a Hahn decomposition of $\mu-\nu$ satisfying $\mu(P)\ge \nu(P)$ and $\mu(N)\le \nu(N)$.
As $\hat{\pi}(\mathcal{X})=\prod_{i=1}^n(1-\|\mu_i-\nu_i\|_{\text{\tiny TV}})$ and $1-\|\mu-\nu\|_{\text{\tiny TV}}=\mu(N)+\nu(P)$, the second inequality in (\ref{eq-ptv}) becomes
\begin{equation}\label{eq-pn}
 \hat{\pi}(\mathcal{X})\le\mu(N)+\nu(P).
\end{equation}
Observe that, on $D=\prod_{i=1}^nD_i$ with $D_i\in\{P_i,N_i\}$,
\[
 \frac{d\mu}{d\pi}(x_1,...,x_n)=\prod_{i:D_i=N_i}\frac{d\mu_i}{d\nu_i}(x_i),\quad
 \frac{d\nu}{d\pi}(x_1,...,x_n)=\prod_{i:D_i=P_i}\frac{d\nu_i}{d\mu_i}(x_i)
\]
and
\[
 \frac{d\hat{\pi}}{d\pi}(x_1,...,x_n)=\prod_{i:D_i=N_i}\frac{d\mu_i}{d\nu_i}(x_i)\times
 \prod_{i:D_i=P_i}\frac{d\nu_i}{d\mu_i}(x_i).
\]
As $d\mu_i/d\nu_i\le 1$ on $N_i$ and $d\nu_i/d\mu_i\le 1$ on $P_i$, the above identities imply
\[
 \frac{d\hat{\pi}}{d\pi}=\frac{d\mu}{d\pi}\frac{d\nu}{d\pi}\le\frac{d\mu}{d\pi}
 \wedge\frac{d\nu}{d\pi}=\frac{d\mu}{d\pi}\mathbf{1}_N+\frac{d\nu}{d\pi}\mathbf{1}_P,
\]
which leads to (\ref{eq-pn}).

To prove the other lower bound of the total variation, let $A_i=\{x\in\mathcal{X}_i|\mu_i(x)\ge\nu_i(x)\}$ and $B_i=\{x=(x_1,...,x_n)\in\mathcal{X}|x_i\in A_i\}$. Then, one has
\[
 \|\mu-\nu\|_{\textnormal{\tiny TV}}\ge\mu(B_i)-\nu(B_i)=\mu_i(A_i)-\nu_i(A_i)=\|\mu_i-\nu_i\|_{\textnormal{\tiny TV}},\quad\forall 1\le i\le n.
\]
\end{proof}

\subsection{Mixing times of Markov chains and their comparisons}

Let $(\mathcal{X},K,\pi)$ be an irreducible Markov chain on a countable set $\mathcal{X}$ with transition matrix $K$ and stationary distribution $\pi$ and let $(\mathcal{X},H_t,\pi)$ be the continuous time Markov chain associated with $(\mathcal{X},K,\pi)$, where $H_t=e^{-t(I-K)}$. If those Markov chains have $\mu$ as the initial distribution, we write $(\mu,\mathcal{X},K,\pi)$ and $(\mu,\mathcal{X},H_t,\pi)$ instead. When $\mu=\delta_x$, a probability concentrated at state $x$, we simply write $(x,\mathcal{X},K,\pi)$ and $(x,\mathcal{X},H_t,\pi)$.

Referring to (\ref{eq-tv0})-(\ref{eq-hd0}), we define the total variation and the Hellinger distance of $(\mu,\mathcal{X},K,\pi)$ by
\begin{equation}\label{eq-hdtvdf}
 d_{\text{\tiny TV}}(\mu,m)=\|\mu K^m-\pi\|_{\text{\tiny TV}},\quad d_H(\mu,m)=\|\mu K^m-\pi\|_H,
\end{equation}
and define those of $(\mathcal{X},K,\pi)$ by
\begin{equation}\label{eq-maxhdtvdf}
 d_{\text{\tiny TV}}(m)=\sup_{\mu}d_{\text{\tiny TV}}(\mu,m),\quad
 d_H(m)=\sup_\mu d_H(\mu,m).
\end{equation}
For simplicity, we also call the distances in (\ref{eq-maxhdtvdf}) the maximum total variation and the maximum Hellinger distance. The mixing times associated with $d_{\text{\tiny TV}}$ and $d_H$ are set to be
\[
 T_{\text{\tiny TV}}(\mu,\epsilon):=\inf\{m\ge 0|d_{\text{\tiny TV}}(\mu,m)\le\epsilon\},\quad T_{\text{\tiny TV}}(\epsilon):=\inf\{m\ge 0|d_{\text{\tiny TV}}(m)\le\epsilon\},
\]
and
\[
 T_H(\mu,\epsilon):=\inf\{m\ge 0|d_H(\mu,m)\le\epsilon\},\quad T_H(\epsilon):=\inf\{m\ge 0|d_H(m)\le\epsilon\}.
\]
When $\mu=\delta_x$, we write $d_{\text{\tiny TV}}(x,m)$, $d_H(x,m),T_{\text{\tiny TV}}(x,\epsilon)$ and $T_H(x,\epsilon)$ for short. Concerning the continuous time case, we change $K^m$ into $H_t$ in the above definitions and, to avoid confusion, replace $d_{\text{\tiny TV}},T_{\text{\tiny TV}},d_H,T_H$ with $d^{(c)}_{\text{\tiny TV}},T^{(c)}_{\text{\tiny TV}},d_H^{(c)},T_H^{(c)}$. Note that the total variation, the Hellinger distance and their corresponding mixing times are non-increasing.

As a result of Lemma \ref{l-comp}, we provide in the following lemma a comparison between the total variation and the Hellinger distance. It is remarkable that the two distances
are simultaneously close to $0$ and $1$, which is useful to identify cutoffs, introduced in the next subsection, in either measurements.
\begin{lem}\label{l-comp2}
Let $d_{\textnormal{\tiny TV}}(\mu,\cdot),d_H(\mu,\cdot)$ be distances in \textnormal{(\ref{eq-hdtvdf})} and $T_{\textnormal{\tiny TV}}(\mu,\cdot),T_{H}(\mu,\cdot)$ be their corresponding mixing times. Then, one has
\begin{equation}\label{eq-hdtv0}
 1-\sqrt{1-d_{\textnormal{\tiny TV}}^2(\mu,m)}\le d_H^2(\mu,m)\le d_{\textnormal{\tiny TV}}(\mu,m),\quad\forall m\ge 0,
\end{equation}
and
\begin{equation}\label{eq-mixhdtv0}
 T_{\textnormal{\tiny TV}}(\mu,\epsilon\sqrt{2-\epsilon^2})\le T_H(\mu,\epsilon)\le T_{\textnormal{\tiny TV}}(\mu,\epsilon^2),\quad\forall \epsilon\in(0,1).
\end{equation}
The above inequalities also hold in the distances of \textnormal{(\ref{eq-maxhdtvdf})} and in the continuous time case.
\end{lem}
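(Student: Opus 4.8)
The plan is to obtain everything as a corollary of Lemma~\ref{l-comp} by specializing the two probabilities there to $\mu K^m$ (resp.\ $\mu H_t$) and $\pi$. For the pointwise bound (\ref{eq-hdtv0}), take the two measures in Lemma~\ref{l-comp} to be $\mu K^m$ and $\pi$; since $d_{\mathrm{TV}}(\mu,m)=\|\mu K^m-\pi\|_{\mathrm{TV}}$ and $d_H(\mu,m)=\|\mu K^m-\pi\|_H$ by (\ref{eq-hdtvdf}), the inequality $1-\sqrt{1-d_{\mathrm{TV}}^2(\mu,m)}\le d_H^2(\mu,m)\le d_{\mathrm{TV}}(\mu,m)$ is immediate for every $m\ge 0$.

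For the mixing-time bound (\ref{eq-mixhdtv0}) I would combine (\ref{eq-hdtv0}) with the monotonicity of $m\mapsto d_{\mathrm{TV}}(\mu,m)$ and $m\mapsto d_H(\mu,m)$ and with the elementary observation that $\phi(x)=x\sqrt{2-x^2}$ and $\psi(x)=1-\sqrt{1-x^2}$ are nondecreasing on $[0,1]$ (for $\phi$, note $\phi(x)^2=2x^2-x^4$). For the right-hand inequality: at $m=T_{\mathrm{TV}}(\mu,\epsilon^2)$ one has $d_{\mathrm{TV}}(\mu,m)\le\epsilon^2$ because the distance is non-increasing, so the defining infimum is attained; then $d_H^2(\mu,m)\le d_{\mathrm{TV}}(\mu,m)\le\epsilon^2$ by (\ref{eq-hdtv0}), whence $T_H(\mu,\epsilon)\le m$. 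For the left-hand inequality, rewrite the lower bound in (\ref{eq-hdtv0}) as $d_{\mathrm{TV}}(\mu,m)\le\phi(d_H(\mu,m))$ — this is exactly the manipulation recorded in Remark~\ref{r-hdtv} — so at $m=T_H(\mu,\epsilon)$, where $d_H(\mu,m)\le\epsilon$, monotonicity of $\phi$ gives $d_{\mathrm{TV}}(\mu,m)\le\phi(\epsilon)=\epsilon\sqrt{2-\epsilon^2}$, i.e.\ $T_{\mathrm{TV}}(\mu,\epsilon\sqrt{2-\epsilon^2})\le m$.

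To pass to the maximum distances of (\ref{eq-maxhdtvdf}), take $\sup_\mu$ in (\ref{eq-hdtv0}). The upper bound is immediate since $d_H^2(\mu,m)\le d_{\mathrm{TV}}(\mu,m)\le d_{\mathrm{TV}}(m)$ for each $\mu$; for the lower bound, $d_H^2(m)\ge\sup_\mu\psi(d_{\mathrm{TV}}(\mu,m))=\psi(\sup_\mu d_{\mathrm{TV}}(\mu,m))=\psi(d_{\mathrm{TV}}(m))$, using that $\psi$ is nondecreasing and continuous. The mixing-time inequalities for the maximum distances then follow word for word from the argument of the previous paragraph. The continuous-time case is identical: Lemma~\ref{l-comp} concerns two arbitrary probability measures, so applying it to $\mu H_t$ and $\pi$ yields (\ref{eq-hdtv0}) with $m$ replaced by $t$, and $t\mapsto d^{(c)}_{\mathrm{TV}}(\mu,t)$, $t\mapsto d^{(c)}_H(\mu,t)$ are non-increasing and right-continuous in $t$, which is all the mixing-time deduction uses.

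There is essentially no obstacle here; the only places requiring a line of justification are (i) that $\phi$ and $\psi$ are nondecreasing on $[0,1]$, so that the pointwise bounds transfer to mixing times and to suprema over $\mu$, and (ii) that at a mixing time the relevant distance is genuinely $\le$ the target level, which is where monotonicity — and right-continuity in the continuous-time setting — of the distance in the time parameter is invoked.
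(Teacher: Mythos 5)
Your proposal is correct and follows exactly the route the paper intends: Lemma~\ref{l-comp2} is stated as a direct specialization of Lemma~\ref{l-comp} to the pair $\mu K^m$ (or $\mu H_t$) and $\pi$, with the mixing-time bounds obtained from the pointwise inequalities via monotonicity of the distances in time and of $x\mapsto x\sqrt{2-x^2}$ and $x\mapsto 1-\sqrt{1-x^2}$, the latter rearrangement being precisely Remark~\ref{r-hdtv}. Your added care about attainment of the infimum and the passage to the maximum distances fills in the details the paper leaves implicit, with no deviation in substance.
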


Concerning (\ref{eq-hdtv0}), it's interesting to explore whether there is a universal constant $C>0$ independent of the Markov chain such that
\[
  1-\sqrt{1-d_{\text{\tiny TV}}^2(\mu,m)}\ge Cd_H^2(\mu,m),\quad\forall m\ge 0,
\]
or
\[
 d_{\text{\tiny TV}}(\mu,m)\le Cd_H^2(\mu,m),\quad\forall m\ge 0.
\]
In the following example, we demonstrate that none of the above inequalities can hold.

\begin{ex}
Let $(\mathcal{X},K,\pi)$ be a Markov chain with
\begin{equation}\label{eq-2sc}
 \mathcal{X}=\{0,1\},\quad K=\left(\begin{array}{cc}1-\alpha&\alpha\\\beta&1-\beta\end{array}\right),\quad
 \pi=\left(\frac{\beta}{\alpha+\beta},\frac{\alpha}{\alpha+\beta}\right)
\end{equation}
It is easy to see that $K$ is reversible and to show that
\begin{equation}\label{eq-2pK}
 K^m(0,0)=\frac{\beta}{\alpha+\beta}+\frac{\alpha}{\alpha+\beta}(1-\alpha-\beta)^m.
\end{equation}
This implies
\[
 d_H(0,m)^2=1-\frac{\beta}{\alpha+\beta}\sqrt{1+\frac{\alpha}{\beta}(1-\alpha-\beta)^m}
 -\frac{\alpha}{\alpha+\beta}\sqrt{1-(1-\alpha-\beta)^m}
\]
and
\[
 d_{\text{\tiny TV}}(0,m)=\frac{\alpha}{\alpha+\beta}(1-\alpha-\beta)^m.
\]
By the fact of $\sqrt{1+u}=1+u/2-u^2/8+O(u^3)$ as $u\ra 0$, one may derive
\[
 d_H(0,m)^2\sim \frac{\alpha(1-\alpha-\beta)^{2m}}{8\beta},\quad
 1-\sqrt{1-d_{\text{\tiny TV}}^2(0,m)}\sim\frac{\alpha^2(1-\alpha-\beta)^{2m}}{2(\alpha+\beta)^2},
\]
as $m\ra\infty$. As a consequence, we obtain
\begin{equation}\label{eq-hdtvcomp}
 \frac{d_{\text{\tiny TV}}(0,m)}{d_H^2(0,m)}\sim\frac{8\beta(1-\alpha-\beta)^{-m}}{\alpha+\beta},\quad
 \frac{1-\sqrt{1-d_{\text{\tiny TV}}^2(0,m)}}{d_H^2(0,m)}\sim\frac{4\alpha\beta}{(\alpha+\beta)^2},
\end{equation}
as $m\ra\infty$. Clearly, the former sequence in (\ref{eq-hdtvcomp}) tends to infinity, while the limit of the latter sequence can be arbitrarily close to zero when $\alpha\beta$ is small.
\end{ex}

\subsection{Cutoffs for Markov chains and their comparisons}

When discussing cutoffs, we refer to a family of Markov chains. To see a precise definition, we introduce the following notations. Let $\mathcal{F}=(\mathcal{X}_n,K_n,\pi_n)_{n=1}^\infty$ be a family of irreducible Markov chain and write $\mathcal{F}_c$ for $(\mathcal{X}_n,H_{n,t},\pi_n)_{n=1}^\infty$, where $H_{n,t}=e^{-t(I-K_n)}$. Here, we call $\mathcal{F}_c$ the family of continuous time Markov chains associated with $\mathcal{F}$. When dealing with $(\mu_n,\mathcal{X}_n,K_n,\pi_n)_{n=1}^\infty$, we call it a family of irreducible Markov chains with initial distributions $(\mu_n)_{n=1}^\infty$. For $n\ge 1$, we write $d_{n,\text{\tiny TV}}$ and $d_{n,H}$ for the total variation and the Hellinger distance of the $n$th chain in $\mathcal{F}$ and let $T_{n,\text{\tiny TV}}$ and $T_{n,H}$ be the corresponding mixing times.

\begin{defn}\label{d-cutoff}
A family $\mathcal{F}$ of irreducible Markov chains with initial distributions $(\mu_n)_{n=1}^\infty$ is said to present
\begin{itemize}
\item[(1)] a cutoff in the total variation if there is $t_n>0$ such that
\[
 \lim_{n\ra\infty}d_{n,\text{\tiny TV}}(\mu_n,\lceil at_n\rceil)=0,\,\,\forall a>1,\quad \lim_{n\ra\infty}d_{n,\text{\tiny TV}}(\mu_n,\lfloor at_n\rfloor)=1,\,\,\forall 0<a<1.
\]

\item[(2)] a $(t_n,b_n)$ cutoff in the total variation if $t_n>0$, $b_n>0$, $b_n=o(t_n)$ and
\[
 \lim_{c\ra\infty}\overline{f}(c)=0,\quad \lim_{c\ra-\infty}\underline{f}(c)=1,
\]
where
\[
 \overline{f}(c):=\limsup_{n\ra\infty}d_{n,\text{\tiny TV}}(\mu_n,\lceil t_n+cb_n\rceil),\quad \underline{f}(c):=\liminf_{n\ra\infty}d_{n,\text{\tiny TV}}(\mu_n,\lfloor t_n+cb_n\rfloor).
\]
\end{itemize}
In the above setting, $t_n$ is called a cutoff time, $b_n$ is called a cutoff window corresponding to $t_n$ and $\overline{f},\underline{f}$ are called the $(t_n,b_n)$ cutoff profiles.
\end{defn}

Referring to Definition \ref{d-cutoff}, the cutoff in the Hellinger distance is defined by replacing $d_{n,\text{\tiny TV}}$ with $d_{n,H}$. If the initial distributions are not specified, the cutoff is understood in the distance of (\ref{eq-maxhdtvdf}) and defined by replacing $d_{n,\text{\tiny TV}}(\mu_n,\cdot),d_{n,H}(\mu_n,\cdot)$ with $d_{n,\text{\tiny TV}}(\cdot),d_{n,H}(\cdot)$. In the continuous time case, the cutoff of $\mathcal{F}_c$ is defined by using $d_{n,\text{\tiny TV}}^{(c)},d_{n,H}^{(c)}$ instead and removing $\lceil\cdot\rceil,\lfloor\cdot\rfloor$.

The following lemma provides another variant of cutoffs using the mixing times.

\begin{lem}{\rm (\cite[Propositions 2.3-2.4]{CSal08})}
\label{l-cutoff}
Let $\mathcal{F}$ be a family of irreducible Markov chains with initial distributions $(\mu_n)_{n=1}^\infty$. Suppose $T_{n,\textnormal{\tiny TV}}(\mu_n,\epsilon_0)\ra\infty$ for some $\epsilon_0\in(0,1)$.
\begin{itemize}
\item[(1)] $\mathcal{F}$ has a cutoff in the total variation if and only if
\[
 T_{n,\textnormal{\tiny TV}}(\mu_n,\epsilon)\sim T_{n,\textnormal{\tiny TV}}(\mu_n,1-\epsilon),\quad \forall \epsilon\in(0,1).
\]
In particular, if $\mathcal{F}$ has cutoff time $t_n$, then $T_{n,\textnormal{\tiny TV}}(\mu_n,\epsilon)\sim t_n$ for $\epsilon\in(0,1)$.

\item[(2)] Assume that $\inf_nb_n>0$. Then, $\mathcal{F}$ has a $(t_n,b_n)$ cutoff in the total variation if and only if $b_n=o(t_n)$ and
\[
 |T_{n,\textnormal{\tiny TV}}(\mu_n,\epsilon)-t_n|=O(b_n), \quad\forall \epsilon\in(0,1).
\]
In particular, for $\epsilon_1\in(0,1)$ and $t_n=T_{n,\textnormal{\tiny TV}}(\mu_n,\epsilon_1)$, $\mathcal{F}$ has a $(t_n,b_n)$ cutoff in the total variation if and only if $b_n=o(t_n)$ and
\[
  |T_{n,\textnormal{\tiny TV}}(\mu_n,\epsilon)-T_{n,\textnormal{\tiny TV}}(\mu_n,1-\epsilon)|=O(b_n),\quad\forall\epsilon\in(0,1).
\]
\end{itemize}

The above statements are also valid for cutoffs in the Hellinger distance and in the distances of \textnormal{(\ref{eq-maxhdtvdf})}, and for $\mathcal{F}_c$, where the assumptions of $T^{(c)}_{n,\textnormal{\tiny TV}}(\mu_n,\epsilon_0)\ra\infty$ and $\inf_nb_n>0$ are not required in the continuous time case.
\end{lem}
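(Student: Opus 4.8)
The statement to prove is Lemma \ref{l-cutoff}, which characterizes cutoffs and cutoff windows via mixing times. The plan is as follows.

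\medskip

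\textbf{Setup and strategy.} The key tool is monotonicity: $d_{n,\textnormal{\tiny TV}}(\mu_n,\cdot)$ is non-increasing in its second argument, so $T_{n,\textnormal{\tiny TV}}(\mu_n,\epsilon)$ is non-increasing in $\epsilon$, and moreover $m < T_{n,\textnormal{\tiny TV}}(\mu_n,\epsilon)$ iff $d_{n,\textnormal{\tiny TV}}(\mu_n,m) > \epsilon$ (with the obvious adjustment for the ceiling/floor rounding). I would first reduce everything to the discrete-time maximum-distance-free formulation and handle the continuous-time and Hellinger variants at the end by the same argument, noting only where the hypotheses $T^{(c)}_{n,\textnormal{\tiny TV}}(\mu_n,\epsilon_0)\to\infty$ and $\inf_n b_n>0$ are actually used (they are needed to rule out degenerate rounding in discrete time but become vacuous once $\lceil\cdot\rceil,\lfloor\cdot\rfloor$ are removed).

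\medskip

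\textbf{Part (1).} For the ``only if'' direction, suppose $\mathcal{F}$ has a cutoff with cutoff time $t_n$. Fix $\epsilon\in(0,1)$ and $a\in(0,1)$; since $d_{n,\textnormal{\tiny TV}}(\mu_n,\lfloor at_n\rfloor)\to 1 > \epsilon$ eventually, monotonicity gives $T_{n,\textnormal{\tiny TV}}(\mu_n,\epsilon) > \lfloor at_n\rfloor$, hence $\liminf_n T_{n,\textnormal{\tiny TV}}(\mu_n,\epsilon)/t_n \ge a$; letting $a\uparrow 1$ yields $\liminf \ge 1$. Symmetrically, using $d_{n,\textnormal{\tiny TV}}(\mu_n,\lceil at_n\rceil)\to 0 < \epsilon$ for $a>1$ gives $\limsup_n T_{n,\textnormal{\tiny TV}}(\mu_n,\epsilon)/t_n \le 1$. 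Thus $T_{n,\textnormal{\tiny TV}}(\mu_n,\epsilon)\sim t_n$ for every $\epsilon\in(0,1)$, and in particular $T_{n,\textnormal{\tiny TV}}(\mu_n,\epsilon)\sim T_{n,\textnormal{\tiny TV}}(\mu_n,1-\epsilon)$. For the ``if'' direction, assume $T_{n,\textnormal{\tiny TV}}(\mu_n,\epsilon)\sim T_{n,\textnormal{\tiny TV}}(\mu_n,1-\epsilon)$ for all $\epsilon$; by the pigeonhole/telescoping trick over $\epsilon\in\{2^{-k}\}$ (or directly, since $\epsilon\mapsto T_{n,\textnormal{\tiny TV}}(\mu_n,\epsilon)$ is monotone and sandwiched) one gets $T_{n,\textnormal{\tiny TV}}(\mu_n,\epsilon)\sim T_{n,\textnormal{\tiny TV}}(\mu_n,\delta)$ for all $\epsilon,\delta\in(0,1)$. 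Set $t_n := T_{n,\textnormal{\tiny TV}}(\mu_n,\epsilon_0)$; one must check $t_n\to\infty$, which follows from the standing hypothesis. For $a>1$: given $\delta>0$, $\lceil at_n\rceil \ge a t_n \ge T_{n,\textnormal{\tiny TV}}(\mu_n,\delta)$ eventually (since the latter is $\sim t_n$ and $a>1$), so $d_{n,\textnormal{\tiny TV}}(\mu_n,\lceil at_n\rceil)\le\delta$; let $\delta\downarrow 0$ after taking $\limsup$. For $a\in(0,1)$: $\lfloor at_n\rfloor < T_{n,\textnormal{\tiny TV}}(\mu_n,\delta)$ eventually for $\delta$ close to $1$ with $a<1$, giving $d_{n,\textnormal{\tiny TV}}(\mu_n,\lfloor at_n\rfloor) > \delta$, hence $\liminf \ge \delta$; let $\delta\uparrow 1$. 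The ``in particular'' clause is immediate from what was shown.

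\medskip

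\textbf{Part (2).} This is the analogous window statement and runs along the same lines with $at_n$ replaced by $t_n + c b_n$. For ``only if'': if $\mathcal{F}$ has a $(t_n,b_n)$ cutoff, fix $\epsilon\in(0,1)$. Pick $c_0$ large with $\overline{f}(c_0)<\epsilon$ and $c_1$ large (negative) with $\underline{f}(c_1)>\epsilon$; then for $n$ large, $d_{n,\textnormal{\tiny TV}}(\mu_n,\lceil t_n+c_0 b_n\rceil)\le\epsilon$ and $d_{n,\textnormal{\tiny TV}}(\mu_n,\lfloor t_n + c_1 b_n\rfloor) > \epsilon$, whence $t_n + c_1 b_n \le T_{n,\textnormal{\tiny TV}}(\mu_n,\epsilon) \le t_n + c_0 b_n + 1$, i.e. $|T_{n,\textnormal{\tiny TV}}(\mu_n,\epsilon) - t_n| = O(b_n)$ using $\inf_n b_n>0$ to absorb the $+1$. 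For ``if'': assume $b_n=o(t_n)$ and $|T_{n,\textnormal{\tiny TV}}(\mu_n,\epsilon)-t_n|\le M_\epsilon b_n$ for all $\epsilon$. Given $c$, for $n$ large $\lceil t_n + c b_n\rceil \ge t_n + c b_n \ge t_n + M_\epsilon b_n \ge T_{n,\textnormal{\tiny TV}}(\mu_n,\epsilon)$ once $c \ge M_\epsilon$, so $d_{n,\textnormal{\tiny TV}}(\mu_n,\lceil t_n+cb_n\rceil)\le\epsilon$; taking $\limsup_n$ then $c\to\infty$ gives $\overline{f}(c)\to 0$. Symmetrically for $\underline{f}(c)\to 1$ as $c\to-\infty$ using the lower bound $T_{n,\textnormal{\tiny TV}}(\mu_n,\epsilon)\ge t_n - M_\epsilon b_n$. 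The ``in particular'' clause combines this with the triangle inequality $|T_{n,\textnormal{\tiny TV}}(\mu_n,\epsilon) - T_{n,\textnormal{\tiny TV}}(\mu_n,1-\epsilon)| \le |T_{n,\textnormal{\tiny TV}}(\mu_n,\epsilon)-t_n| + |T_{n,\textnormal{\tiny TV}}(\mu_n,1-\epsilon)-t_n|$ in one direction, and in the other direction one takes $t_n = T_{n,\textnormal{\tiny TV}}(\mu_n,\epsilon_1)$ and bounds $|T_{n,\textnormal{\tiny TV}}(\mu_n,\epsilon)-t_n|$ by splitting at $\epsilon_1$, $1-\epsilon_1$, $\epsilon$, $1-\epsilon$ and summing finitely many window estimates.

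\medskip

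\textbf{Main obstacle.} The only subtlety is bookkeeping with the floor/ceiling operations: the naive inequalities $\lfloor x\rfloor \le x \le \lceil x\rceil \le x+1$ introduce $O(1)$ errors that are harmless for part (1) (since $t_n\to\infty$) but for part (2) require $\inf_n b_n>0$ so that $O(1)=O(b_n)$. I would state this once and reuse it. Everything else is a routine application of monotonicity of $d_{n,\textnormal{\tiny TV}}$ and the definition of the mixing time. Since this is \cite[Propositions 2.3--2.4]{CSal08}, I would keep the writeup brief, or simply cite it, giving the argument above only to the extent needed for self-containedness.
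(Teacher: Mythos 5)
Your proposal is correct: the paper itself gives no proof of this lemma, quoting it directly from \cite[Propositions 2.3--2.4]{CSal08}, and your monotonicity argument (distance non-increasing in time, mixing time non-increasing in level, with the floor/ceiling errors absorbed via $t_n\to\infty$ in part (1) and $\inf_n b_n>0$ in part (2)) is essentially the standard proof given in that reference. No gaps beyond minor bookkeeping you already flag, so citing \cite{CSal08} or including your sketch would both be acceptable.
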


The following proposition provides a comparison of cutoffs in the total variation and the Hellinger distance.

\begin{prop}\label{p-comp0}
Let $\mathcal{F}$ be a family of irreducible Markov chains with initial distributions $(\mu_n)_{n=1}^\infty$.
\begin{itemize}
\item[(1)] $\mathcal{F}$ has a cutoff in the total variation with cutoff time $t_n$ if and only if $\mathcal{F}$ has a cutoff in the Hellinger distance with cutoff time $t_n$. Further, if $t_n\ra\infty$, then $T_{n,\textnormal{\tiny TV}}(\mu_n,\epsilon)\sim T_{n,H}(\mu_n,\delta)$ for all $\epsilon,\delta\in(0,1)$.

\item[(2)] $\mathcal{F}$ has a $(t_n,b_n)$ cutoff in the total variation if and only if $\mathcal{F}$ has a $(t_n,b_n)$ cutoff in the Hellinger distance. Further, if $\inf_nb_n>0$, then $|T_{n,\textnormal{\tiny TV}}(\mu_n,\epsilon)-T_{n,H}(\mu_n,\delta)|=O(b_n)$ for all $\epsilon,\delta\in(0,1)$.

\item[(3)] Assume that $\mathcal{F}$ has a $(t_n,b_n)$ cutoff in the total variation and the Hellinger distance and let $\overline{f}_{\textnormal{\tiny TV}},\underline{f}_{\textnormal{\tiny TV}}$ and $\overline{f}_H,\underline{f}_H$ be $(t_n,b_n)$ cutoff profiles in respective distances. Then, one has
\[
 1-\sqrt{1-\overline{f}_{\textnormal{\tiny TV}}^2(c)}\le\overline{f}_H^2(c)\le\overline{f}_{\textnormal{\tiny TV}}(c),\quad
 1-\sqrt{1-\underline{f}_{\textnormal{\tiny TV}}^2(c)}\le\underline{f}_H^2(c)\le\underline{f}_{\textnormal{\tiny TV}}(c)
\]
\end{itemize}
The above also holds in the distance of \textnormal{(\ref{eq-maxhdtvdf})} and in the continuous time case, where $t_n\ra\infty$ and $\inf_nb_n>0$ are not required for $\mathcal{F}_c$.
\end{prop}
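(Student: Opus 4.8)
The plan is to reduce all three parts to the pointwise comparison in Lemma~\ref{l-comp2}, which holds for $d_{n,\textnormal{\tiny TV}}(\mu_n,\cdot),d_{n,H}(\mu_n,\cdot)$, for the maximal distances of \textnormal{(\ref{eq-maxhdtvdf})}, and in continuous time. The structural remark that makes everything work is that the two maps $\phi(x)=x^2$ and $\psi(x)=1-\sqrt{1-x^2}$ are continuous, strictly increasing homeomorphisms of $[0,1]$ fixing both $0$ and $1$, and that Lemma~\ref{l-comp2} reads $\psi\bigl(d_{\textnormal{\tiny TV}}(\mu,m)\bigr)\le d_H^2(\mu,m)\le d_{\textnormal{\tiny TV}}(\mu,m)$. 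Consequently, for any sequence of times $(m_n)$ one has $d_{n,\textnormal{\tiny TV}}(\mu_n,m_n)\to 0$ iff $d_{n,H}(\mu_n,m_n)\to 0$, and $d_{n,\textnormal{\tiny TV}}(\mu_n,m_n)\to 1$ iff $d_{n,H}(\mu_n,m_n)\to 1$; more generally, because $\phi$ and $\psi$ are continuous and nondecreasing, the inequality passes through $\limsup$ and $\liminf$.

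Part~(1) is then immediate: applying the remark with $m_n=\lceil at_n\rceil$ (for $a>1$) and $m_n=\lfloor at_n\rfloor$ (for $0<a<1$) shows that $d_{n,\textnormal{\tiny TV}}(\mu_n,\cdot)$ displays the cutoff behaviour at $t_n$ exactly when $d_{n,H}(\mu_n,\cdot)$ does, so the two cutoffs are equivalent with the common cutoff time $t_n$. For the ``Further'' clause, if $t_n\to\infty$ then, taking any $a<1$ in the definition, $d_{n,\textnormal{\tiny TV}}(\mu_n,\lfloor at_n\rfloor)\to 1>\epsilon_0$ forces $T_{n,\textnormal{\tiny TV}}(\mu_n,\epsilon_0)>\lfloor at_n\rfloor\to\infty$; hence Lemma~\ref{l-cutoff}(1) applies in both measurements and gives $T_{n,\textnormal{\tiny TV}}(\mu_n,\epsilon)\sim t_n\sim T_{n,H}(\mu_n,\delta)$ for all $\epsilon,\delta\in(0,1)$.

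For parts~(2) and~(3), suppose $\mathcal{F}$ has a $(t_n,b_n)$ cutoff in the total variation with profiles $\overline{f}_{\textnormal{\tiny TV}},\underline{f}_{\textnormal{\tiny TV}}$, and set $\overline{f}_H(c):=\limsup_n d_{n,H}(\mu_n,\lceil t_n+cb_n\rceil)$ and $\underline{f}_H(c):=\liminf_n d_{n,H}(\mu_n,\lfloor t_n+cb_n\rfloor)$. Applying Lemma~\ref{l-comp2} at each $n$ and then passing to $\limsup$, resp.\ $\liminf$, using continuity and monotonicity of $\phi$ and $\psi$, yields
\[
 1-\sqrt{1-\overline{f}_{\textnormal{\tiny TV}}^2(c)}\le\overline{f}_H^2(c)\le\overline{f}_{\textnormal{\tiny TV}}(c),\qquad
 1-\sqrt{1-\underline{f}_{\textnormal{\tiny TV}}^2(c)}\le\underline{f}_H^2(c)\le\underline{f}_{\textnormal{\tiny TV}}(c),
\]
which is exactly part~(3). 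From the right-hand bound, $\overline{f}_H^2(c)\le\overline{f}_{\textnormal{\tiny TV}}(c)\to 0$ as $c\to\infty$; from the left-hand bound, $\underline{f}_H^2(c)\ge 1-\sqrt{1-\underline{f}_{\textnormal{\tiny TV}}^2(c)}\to 1$ as $c\to-\infty$; and $b_n=o(t_n)$ is inherited. Hence $\mathcal{F}$ has a $(t_n,b_n)$ cutoff in the Hellinger distance, the converse following by reading the displayed inequalities from right to left. The ``Further'' clause of part~(2) then follows from Lemma~\ref{l-cutoff}(2): since $b_n=o(t_n)$ together with $\inf_n b_n>0$ force $t_n\to\infty$, hence $T_{n,\textnormal{\tiny TV}}(\mu_n,\epsilon_0)\to\infty$ as in part~(1), we obtain $|T_{n,\textnormal{\tiny TV}}(\mu_n,\epsilon)-t_n|=O(b_n)$ and $|T_{n,H}(\mu_n,\delta)-t_n|=O(b_n)$ for all $\epsilon,\delta\in(0,1)$, and the triangle inequality closes the argument.

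All of the above carries over verbatim to the maximal distances of \textnormal{(\ref{eq-maxhdtvdf})} and to $\mathcal{F}_c$ once $\lceil\cdot\rceil,\lfloor\cdot\rfloor$ are deleted, both Lemma~\ref{l-comp2} and Lemma~\ref{l-cutoff} being available there; in the continuous time case Lemma~\ref{l-cutoff} needs no growth hypothesis, so the restrictions $t_n\to\infty$ and $\inf_n b_n>0$ are dropped. I do not expect a genuine obstacle here: the only step that needs care is the interchange of $\limsup/\liminf$ with the nonlinear maps $\phi$ and $\psi$ when deriving part~(3), and that is legitimate precisely because $\phi$ and $\psi$ are continuous and nondecreasing on $[0,1]$.
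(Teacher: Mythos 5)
Your argument is correct and is essentially the paper's intended proof: the paper simply states that Proposition \ref{p-comp0} "follows immediately from Lemmas \ref{l-comp2}--\ref{l-cutoff}," and your write-up is exactly that reduction, using the pointwise comparison of Lemma \ref{l-comp2} (passed through the monotone homeomorphisms $x\mapsto x^2$ and $x\mapsto 1-\sqrt{1-x^2}$ and through $\limsup/\liminf$) together with Lemma \ref{l-cutoff} for the mixing-time clauses.
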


\begin{proof}
The proof follows immediately from Lemmas \ref{l-comp2}-\ref{l-cutoff} and is skipped.
\end{proof}

\subsection{Comparisons of cutoffs: Continuous time vs. Discrete time}

In \cite{CSal13-1}, Chen and Saloff-Coste compare the total variation cutoffs between the continuous time chains and lazy discrete time chains, while the next
proposition also provides a similar comparison of cutoffs in the Hellinger distance.

\begin{prop}\label{p-comp2}
Let $\mathcal{F}=(\mu_n,\mathcal{X}_n,K_n,\pi_n)_{n=1}^\infty$ be a family of irreducible Markov chains and $\mathcal{F}_c$ be the family of continuous time chains associated with $\mathcal{F}$. For any sequence $\theta=(\theta_n)_{n=1}^\infty$ in $(0,1)$, set $\mathcal{F}_\theta=(\mu_n,\mathcal{X}_n,K_{n,\theta_n},\pi_n)_{n=1}^\infty$, where
\[
 K_{n,\theta_n}=\theta_nI+(1-\theta_n)K_n.
\]
For $n\ge 1$, let $T_{n,\textnormal{\tiny TV}}^{(c)},T_{n,\textnormal{\tiny TV}}^{(\theta)}$ be the total variation mixing times of the $n$th chains in $\mathcal{F}_c,\mathcal{F}_\theta$. Suppose $\inf_n\theta_n>0$ and there is $\epsilon_0\in(0,1)$ such that $T_{n,\textnormal{\tiny TV}}^{(c)}(\mu_n,\epsilon_0)\ra\infty$ or $T_{n,\textnormal{\tiny TV}}^{(\theta)}(\mu_n,\epsilon_0)\ra\infty$. In the total variation,
\begin{itemize}
\item[(1)] $\mathcal{F}_c$ has a cutoff if and only if $\mathcal{F}_\theta$ has a cutoff. Further, if $t_n$ is a cutoff time for $\mathcal{F}_c$, then $t_n/(1-\theta_n)$ is a cutoff time for $\mathcal{F}_\theta$.

\item[(2)] $\mathcal{F}_c$ has a $(t_n,b_n)$ cutoff if and only if $\mathcal{F}_\theta$ has a $(t_n/(1-\theta_n),b_n)$ cutoff. Further, if $\mathcal{F}_c$ has a $(t_n,b_n)$ cutoff, then $\sqrt{t_n}=O(b_n)$.
\end{itemize}
The above also holds for families without prescribed initial distributions and in the Hellinger distance.
\end{prop}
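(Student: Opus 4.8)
The plan is to derive the total variation statements in (1)--(2) from the known discrete-versus-continuous comparison, to obtain the Hellinger ones for free from Proposition \ref{p-comp0}, and to settle the window bound $\sqrt{t_n}=O(b_n)$ by a short Poisson estimate.

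\emph{Total variation.} The starting point is the exact identity $I-K_{n,\theta_n}=(1-\theta_n)(I-K_n)$, which shows that the continuous-time chain attached to $\mathcal{F}_\theta$ has semigroup $e^{-s(I-K_{n,\theta_n})}=H_{n,(1-\theta_n)s}$, i.e.\ is $\mathcal{F}_c$ slowed down by the constant factor $1-\theta_n$. Consequently, by Definition \ref{d-cutoff}, the family $\widetilde{\mathcal{F}}$ of these chains has a cutoff (resp.\ a $(t_n,b_n)$ cutoff) in total variation iff $\mathcal{F}_c$ does, with cutoff time and window multiplied by $1/(1-\theta_n)$; and it remains to compare $\mathcal{F}_\theta$ with $\widetilde{\mathcal{F}}$, i.e.\ the lazy discrete chain against its own associated continuous chain. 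That is exactly the comparison of Chen and Saloff-Coste \cite{CSal13-1}, valid here because $\inf_n\theta_n>0$ bounds all holding probabilities below; the mechanism is that $\mu_nK_{n,\theta_n}^m$ and $\mu_n\widetilde H_{n,m}$ realize the nonincreasing $[0,1]$-valued sequence $k\mapsto\|\mu_nK_n^k-\pi_n\|_{\mathrm{TV}}$ evaluated, respectively, at a $\mathrm{Bin}(m,1-\theta_n)$ and a $\mathrm{Pois}((1-\theta_n)m)$ number of steps — both concentrated about the common mean $(1-\theta_n)m$ at scale $\sqrt{(1-\theta_n)m}$ — so that, feeding in $m\approx(t_n+cb_n)/(1-\theta_n)$ and sending $c\to\pm\infty$ and then the Chebyshev truncation level to infinity, the cutoff and its window are transferred between the two families. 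The standing hypothesis (that $T^{(c)}_{n,\mathrm{TV}}(\mu_n,\epsilon_0)\to\infty$ or $T^{(\theta)}_{n,\mathrm{TV}}(\mu_n,\epsilon_0)\to\infty$) enters only through Lemma \ref{l-cutoff}, to exclude a degenerate cutoff with bounded cutoff time — so that the rescaled window stays $o$ of the rescaled time — and to pin the cutoff time down up to $\sim$.

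\emph{Hellinger and the window bound.} Proposition \ref{p-comp0} says that a family of Markov chains has a cutoff (resp.\ a $(t_n,b_n)$ cutoff) in the Hellinger distance iff it has one in the total variation, with the same cutoff time and window; applying this to $\mathcal{F}_c$ and to $\mathcal{F}_\theta$ turns the Hellinger statements into the total variation ones just proved, and the last assertion $\sqrt{t_n}=O(b_n)$ is literally the same statement in either distance. To prove that assertion, note that $\mu_nH_{n,t}-\pi_n=\sum_{j\ge0}\mathbb{P}(N_t=j)(\mu_nK_n^j-\pi_n)$ for $N_t\sim\mathrm{Pois}(t)$, whence, using $\|\mu_nK_n^j-\pi_n\|_{\mathrm{TV}}\le1$, $\big|d^{(c)}_{n,\mathrm{TV}}(\mu_n,t)-d^{(c)}_{n,\mathrm{TV}}(\mu_n,s)\big|\le\|\mathrm{Pois}(t)-\mathrm{Pois}(s)\|_{\mathrm{TV}}$; and since $\|\mathrm{Pois}(t)-\mathrm{Pois}(s)\|_H^2=1-e^{-(\sqrt t-\sqrt s)^2/2}$, Remark \ref{r-hdtv} gives $\|\mathrm{Pois}(t)-\mathrm{Pois}(s)\|_{\mathrm{TV}}\le|\sqrt t-\sqrt s|$. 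If $\sqrt{t_n}$ were not $O(b_n)$, then along a subsequence $b_n/\sqrt{t_n}\to0$; as $b_n=o(t_n)$, for any fixed $c_1<c_2$ the times $t_n+c_ib_n$ are $\sim t_n$ and differ by $o(\sqrt{t_n})$, so $d^{(c)}_{n,\mathrm{TV}}(\mu_n,t_n+c_1b_n)-d^{(c)}_{n,\mathrm{TV}}(\mu_n,t_n+c_2b_n)\to0$ along that subsequence, contradicting $\lim_{c\to\infty}\overline f(c)=0$ and $\lim_{c\to-\infty}\underline f(c)=1$; hence $\liminf_nb_n/\sqrt{t_n}>0$. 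The statements without prescribed initial distributions follow by inserting a supremum over initial distributions throughout. The one genuinely delicate point is the bookkeeping of the two time changes — the exact slowdown by $1-\theta_n$ and the approximate Poissonization — so that the cutoff windows transform compatibly and the degenerate bounded-time cutoffs are correctly excluded; all probabilistic inputs are routine and, in the total variation, already contained in \cite{CSal13-1}.
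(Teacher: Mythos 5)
Your overall architecture (exact time-change identity $e^{-s(I-K_{n,\theta_n})}=H_{n,(1-\theta_n)s}$, reduction of the Hellinger statements to the total variation ones via Proposition \ref{p-comp0}, and an appeal to \cite{CSal13-1} for the discrete-versus-continuous comparison) is the same as the paper's, but you skip the one step that the paper's proof actually supplies. The theorems of \cite{CSal13-1} that are being invoked (Theorems 3.1, 3.3, 3.4) compare a \emph{given} family with its $\delta$-lazy modification for a \emph{constant} laziness $\delta$; this is why the paper states that the constant-$\theta$ case is ``exactly'' \cite{CSal13-1} and then, for variable $\theta_n$, introduces the auxiliary chains $K_n'=\frac{(\theta_n-\theta_0)I+(1-\theta_n)K_n}{1-\theta_0}$ with $\theta_0=\inf_n\theta_n$, so that $K_{n,\theta_n}=\theta_0I+(1-\theta_0)K_n'$ is a constant-laziness modification of the family $\mathcal{F}'$ and $H'_{n,t}=H_{n,\frac{1-\theta_n}{1-\theta_0}t}$. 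Your claim that the comparison between $\mathcal{F}_\theta$ and its own continuization ``is exactly the comparison of Chen and Saloff-Coste, valid because $\inf_n\theta_n>0$'' is not a citation of any stated theorem: the family $(K_{n,\theta_n})_n$ is not presented as a fixed family plus a constant laziness, and making it so is precisely the $K_n'$ trick you never perform. Your fallback — the Binomial/Poisson concentration sketch with ``Chebyshev truncation'' — is not a proof, and it is weakest exactly where care is needed: in part (2) one must track how the window transforms under the $n$-dependent rescaling by $1/(1-\theta_n)$ (which the hypotheses do not force to be bounded) and under the Poisson/Binomial blur at scale $\sqrt{t_n}$; moreover, for the implication from a $(t_n/(1-\theta_n),b_n)$ cutoff of $\mathcal{F}_\theta$ back to a $(t_n,b_n)$ cutoff of $\mathcal{F}_c$ one also needs a window lower bound of the type $\sqrt{\,\cdot\,}=O(b_n)$ for the \emph{lazy discrete} family, which you never establish (your Poisson--Lipschitz argument only covers $\mathcal{F}_c$). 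You yourself flag this bookkeeping as ``the one genuinely delicate point,'' but flagging it does not discharge it; in the paper this is exactly what the combination of the $K_n'$ reduction and the quoted theorems of \cite{CSal13-1} is for.

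On the positive side, two ingredients of your write-up are correct and worth keeping. The identity $e^{-s(I-K_{n,\theta_n})}=H_{n,(1-\theta_n)s}$ is the same as the paper's second identity in its proof, and your derivation of $\sqrt{t_n}=O(b_n)$ is a genuine, self-contained alternative to citing \cite{CSal13-1}: the bound $\bigl|d^{(c)}_{n,\textnormal{\tiny TV}}(\mu_n,t)-d^{(c)}_{n,\textnormal{\tiny TV}}(\mu_n,s)\bigr|\le\|\mathrm{Pois}(t)-\mathrm{Pois}(s)\|_{\textnormal{\tiny TV}}$, the exact computation $\|\mathrm{Pois}(t)-\mathrm{Pois}(s)\|_H^2=1-e^{-(\sqrt t-\sqrt s)^2/2}$, Remark \ref{r-hdtv}, and the ensuing contradiction with the cutoff profiles are all valid (and the argument does not even need $t_n\to\infty$, only $b_n=o(t_n)$). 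Likewise, deducing the Hellinger statements from the total variation ones via Proposition \ref{p-comp0} is how the paper concludes. So the proposal is sound in its outer shell, but the core of parts (1)--(2) — handling a laziness sequence that varies with $n$, and in particular the two-sided transfer of the $(t_n,b_n)$ cutoff with the stated windows — rests on an appeal to \cite{CSal13-1} outside its stated scope plus a heuristic, and that is a genuine gap relative to the paper's proof.
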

\begin{proof}
For the total variation, we discuss (2) in detail, while (1) can be shown similarly. In the case that $\theta$ is a constant sequence, Proposition \ref{p-comp2} is exactly the combination of Theorems 3.1, 3.3 and 3.4 in \cite{CSal13-1}. For any sequence $\theta=(\theta_n)_{n=1}^\infty$, we set
\[
 \theta_0:=\inf_{n\ge 1}\theta_n,\quad K_n'=\frac{(\theta_n-\theta_0)I+(1-\theta_n)K_n}{1-\theta_0},\quad H_{n,t}'=e^{-t(I-K_n')t}.
\]
Clearly, one has
\begin{equation}\label{eq-kn'}
 K_{n,\theta_n}=\theta_0I+(1-\theta_0)K_n',\quad H_{n,t}'=H_{n,\frac{1-\theta_n}{1-\theta_0}t}.
\end{equation}
By setting $\zeta=(\zeta_n)_{n=1}^\infty$, where $\zeta_n=\theta_0$, and $\mathcal{F}'=(\mu_n,\mathcal{X}_n,K_n',\pi_n)_{n=1}^\infty$, the first identity in (\ref{eq-kn'}) implies $\mathcal{F}_\theta=\mathcal{F}'_\zeta$, which leads to
\[
 \mathcal{F}_\theta\text{ has a $(r_n,b_n)$ cutoff}\quad\Lra\quad
 \mathcal{F}'_c\text{ has a $((1-\theta_0)r_n,b_n)$ cutoff},
\]
and the second identity yields
\[
 \mathcal{F}_c \text{ has a $(t_n,b_n)$ cutoff} \quad\Lra\quad \mathcal{F}'_c\text{ has a $(\tfrac{1-\theta_0}{1-\theta_n}t_n,b_n)$ cutoff}.
\]
The desired equivalence is then given by the setting of $r_n=t_n/(1-\theta_n)$.

The conclusion for the Hellinger distance follows immediately from Proposition \ref{p-comp0} and what is proved above.
\end{proof}

\section{Distances of product chains}\label{s-dpc}
In this section, we consider product chains and provide bounds on their total variation and Hellinger distance. Let $(\mathcal{X}_i,K_i,\pi_i)_{i=1}^n$ be irreducible Markov chains and $p_1,...,p_n$ be positive reals satisfying $p_1+\cdots+p_n=1$. Referring to the setting in (\ref{eq-prodxpi})-(\ref{eq-prodk}), we call $(\mathcal{X},K,\pi)$ the product chain of $(\mathcal{X}_i,K_i,\pi_i)_{i=1}^n$ according to the probability vector $(p_1,...,p_n)$, call $(\mathcal{X}_i,K_i,\pi_i)$ the $i$th coordinate chain of $(\mathcal{X},K,\pi)$ and name $n$ as its dimension. In the continuous time case, we write $H_{i,t}=e^{-t(I-K_i)}$ and $H_t=e^{-t(I-K)}$. As is stated in the introduction, one has (\ref{eq-prodcts}) but this could fail in the discrete time case.

Throughout this section, we concentrate on the study of continuous time chains. Recall that $d_H^{(c)},d_{i,H}^{(c)}$ and $d_{\textnormal{\tiny TV}}^{(c)},d_{i,\textnormal{\tiny TV}}^{(c)}$ refer to the Hellinger distances and the total variations of $(\mathcal{X},H_t,\pi)$ and $(\mathcal{X}_i,H_{i,t},\pi_i)$ and that $T_H^{(c)},T_{i,H}^{(c)}$ and $T_{\textnormal{\tiny TV}}^{(c)},T_{i,\textnormal{\tiny TV}}^{(c)}$ denote the corresponding mixing times.

\subsection{Distances with prescribed initial distributions}

Our first result is to bound distances of product chains using those of their coordinate chains.

\begin{lem}\label{l-prodmixing}
Let $(\mathcal{X},K,\pi)$ be the product chain of $(\mathcal{X}_i,K_i,\pi_i)_{i=1}^n$ according to the probability vector $(p_1,...,p_n)$. For probability distributions $\mu_1,...,\mu_n$ on $\mathcal{X}_1,...,\mathcal{X}_n$ and the product measure $\mu=\mu_1\times\cdots\times\mu_n$, one has
\begin{equation}\label{eq-prodhd}
 d_H^{(c)}(\mu,t)^2=1-\prod_{i=1}^n\left(1-d_{i,H}^{(c)}(\mu_i,p_it)^2\right)\ge\max_{1\le i\le n}d_{i,H}^{(c)}(\mu_i,p_it)^2.
\end{equation}
and $d_{\textnormal{\tiny TV}}^{(c)}(\mu,t)\ge \max\{d_{\textnormal{\tiny TV}}^{(c)}(\mu_i,p_it):1\le i\le n\}$ and
\[
 1-\prod_{i=1}^n\left(1-d_{i,\textnormal{\tiny TV}}^{(c)}(\mu_i,p_it)^2\right)^{1/2}\le d_{\textnormal{\tiny TV}}^{(c)}(\mu,t)\le 1-\prod_{i=1}^n\left(1-d_{i,\textnormal{\tiny TV}}^{(c)}(\mu_i,p_it)\right).
\]
The above also holds for the maximum total variation and Hellinger distance.
\end{lem}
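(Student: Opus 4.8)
The plan is to reduce everything to Proposition \ref{p-comp} by exploiting the tensor factorization \eqref{eq-prodcts} of the continuous-time product semigroup. First I would observe that, since $\mu=\mu_1\times\cdots\times\mu_n$ and $H_t=H_{1,p_1t}\otimes\cdots\otimes H_{n,p_nt}$, the distribution of the product chain at time $t$ factorizes as
\[
 \mu H_t=(\mu_1 H_{1,p_1t})\times\cdots\times(\mu_n H_{n,p_nt}),
\]
and likewise $\pi=\pi_1\times\cdots\times\pi_n$. Thus $\mu H_t-\pi$ is exactly a difference of two product probabilities whose $i$th factors are $\mu_i H_{i,p_it}$ and $\pi_i$. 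Then I would simply apply Proposition \ref{p-comp} with these choices of $\mu^{(i)}=\mu_i H_{i,p_it}$ and $\nu^{(i)}=\pi_i$: the Hellinger identity in \eqref{eq-phd} gives
\[
 d_H^{(c)}(\mu,t)^2=\|\mu H_t-\pi\|_H^2=1-\prod_{i=1}^n\bigl(1-\|\mu_i H_{i,p_it}-\pi_i\|_H^2\bigr)=1-\prod_{i=1}^n\bigl(1-d_{i,H}^{(c)}(\mu_i,p_it)^2\bigr),
\]
and the lower bound by the maximum is the corresponding inequality in \eqref{eq-phd}. The two total-variation bounds, together with the lower bound $d_{\textnormal{\tiny TV}}^{(c)}(\mu,t)\ge\max_i d_{i,\textnormal{\tiny TV}}^{(c)}(\mu_i,p_it)$, follow in the same way from \eqref{eq-ptv} and the accompanying statement in Proposition \ref{p-comp}, again with $\nu^{(i)}=\pi_i$.

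For the last sentence, concerning the maximum total variation and Hellinger distance, I would argue as follows. Taking the supremum over product initial distributions $\mu=\mu_1\times\cdots\times\mu_n$ on the right-hand side of \eqref{eq-prodhd} — which is achieved coordinatewise because the product $\prod_i(1-d_{i,H}^{(c)}(\mu_i,p_it)^2)$ is minimized by maximizing each factor's Hellinger distance separately — one obtains $1-\prod_i(1-d_{i,H}^{(c)}(p_it)^2)$; the point is that restricting to product initial distributions loses nothing for the maximum distance, since for any starting point $x=(x_1,\dots,x_n)\in\mathcal{X}$ the delta mass $\delta_x$ is itself the product $\delta_{x_1}\times\cdots\times\delta_{x_n}$, and the maximum Hellinger (resp. total variation) distance of an irreducible chain is attained at a point mass. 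The same observation handles the total-variation bounds after taking suprema.

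The only genuinely delicate point is the justification that the maximum distances of the product chain are attained at point masses and that these are product measures — but this is immediate since $\delta_{(x_1,\dots,x_n)}=\delta_{x_1}\times\cdots\times\delta_{x_n}$, so in fact every point mass on $\mathcal{X}$ is already of product form, and the sup over all initial distributions reduces to the sup over point masses by convexity of $\mu\mapsto\|\mu H_t-\pi\|$ in each of the two distances. Everything else is a direct substitution into Proposition \ref{p-comp}, so I expect no real obstacle; the proof should be short and I would present it essentially as "apply Proposition \ref{p-comp} to $\mu_i H_{i,p_it}$ and $\pi_i$, using \eqref{eq-prodcts}," followed by the remark about point masses for the maximum-distance statement.
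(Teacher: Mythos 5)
Your proposal is correct and is essentially the paper's own proof: apply Proposition \ref{p-comp} to the factors $\mu_iH_{i,p_it}$ and $\pi_i$ via the factorization (\ref{eq-prodcts}), and reduce the maximum-distance statement to product initial points using $\delta_x=\delta_{x_1}\times\cdots\times\delta_{x_n}$ together with the fact that the maximum distances are suprema over point masses. (Only a cosmetic caveat: on countable spaces the supremum over point masses need not be attained, but equality of the suprema is all that is used.)
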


\begin{proof}
For distances with prescribed initial distributions, the proof is given by Proposition \ref{p-comp} and (\ref{eq-prodcts}) and, for the maximum distances, the proof follows immediately from the fact of $d_{\textnormal{\tiny TV}}^{(c)}(t)=\sup_xd_{\textnormal{\tiny TV}}^{(c)}(\delta_x,t)$, $d_H^{(c)}(t)=\sup_xd_H^{(c)}(\delta_x,t)$ and $\delta_x=\delta_{x_1}\times\cdots\times\delta_{x_n}$ for $x=(x_1,...,x_n)\in\mathcal{X}$.
\end{proof}

The next proposition is an extension of Lemma \ref{l-prodmixing} and could be more applicable to practical computations.

\begin{prop}\label{p-prodmixing}
Let $(\mu_i,\mathcal{X}_i,K_i,\pi_i)_{i=1}^n$ and $(\mu,\mathcal{X},K,\pi)$ be the Markov chains in Lemma \ref{l-prodmixing} and set $\varrho_H=2\varrho_{\textnormal{\tiny TV}}=2$. For $*\in\{H,\textnormal{\tiny TV}\}$, one has
\[
 d_*^{(c)}(\mu,t)^{\varrho_*}\le 1-\exp\left\{-\sum_{i=1}^n\frac{d_{i,*}^{(c)}(\mu_i,p_it)^{\varrho_*}}
 {1-d_{i,*}^{(c)}(\mu_i,p_it)^{\varrho_*}}\right\}
\]
and
\[
 d_*^{(c)}(\mu,t)^{\varrho_*}\ge 1-\exp\left\{-\frac{\varrho_*}{2}\sum_{i=1}^nd_{i,*}^{(c)}(\mu_i,p_it)^2\right\}\wedge\left(1-\max_{1\le i\le n}d_{i,*}^{(c)}(\mu_i,p_it)^{\varrho_*}\right).
\]
In particular, for $A\in (0,1)$,
\begin{equation}\label{eq-prodmixupper}
 d_*^{(c)}(\mu,t)^{\varrho_*}
 \le 1-\exp\left\{-c_A\sum_{i=1}^nd_{i,*}^{(c)}(\mu_i,p_it)^{\varrho_*}\right\},\quad\forall t\ge t_*^{(c)}(A^{1/\varrho_*}),
\end{equation}
where $c_A=1/(1-A)$ and $t_*^{(c)}(A)=\max\{T_{i,*}^{(c)}(\mu_i,A)/p_i|1\le i\le n\}$.

The above also holds for the maximum Hellinger distance and total variation.
\end{prop}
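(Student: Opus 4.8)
The plan is to derive all of the stated bounds from the exact product formula in Lemma~\ref{l-prodmixing}, namely $d_H^{(c)}(\mu,t)^2 = 1 - \prod_{i=1}^n\bigl(1-d_{i,H}^{(c)}(\mu_i,p_it)^2\bigr)$, together with the inequality $d_{\textnormal{\tiny TV}}^{(c)}(\mu,t)^2 \ge 1 - \prod_{i=1}^n\bigl(1-d_{i,\textnormal{\tiny TV}}^{(c)}(\mu_i,p_it)^2\bigr)$ that follows from the first inequality in \eqref{eq-ptv}. In both cases, writing $r_i := d_{i,*}^{(c)}(\mu_i,p_it)^{\varrho_*} \in [0,1)$, these give $d_*^{(c)}(\mu,t)^{\varrho_*} \ge 1 - \prod_{i=1}^n (1-r_i)$ (with equality when $*=H$). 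So the entire proposition reduces to purely elementary inequalities bounding $1-\prod_i(1-r_i)$ from above and below in terms of the quantities $\sum_i r_i/(1-r_i)$, $\sum_i r_i^{2/\varrho_*}$, and $\max_i r_i$; the probabilistic content is entirely contained in Lemma~\ref{l-prodmixing}.

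For the upper bound, I would use $1 - u \ge e^{-u/(1-u)}$ for $u\in[0,1)$ (equivalently $\log(1-u) \ge -u/(1-u)$, which is standard), applied with $u = r_i$. This yields $\prod_i(1-r_i) \ge \exp\{-\sum_i r_i/(1-r_i)\}$, hence $d_*^{(c)}(\mu,t)^{\varrho_*} \le 1 - \exp\{-\sum_i r_i/(1-r_i)\}$, which is the first displayed inequality. For the specialization \eqref{eq-prodmixupper}: when $t \ge t_*^{(c)}(A^{1/\varrho_*})$, by definition $t \ge T_{i,*}^{(c)}(\mu_i,A^{1/\varrho_*})/p_i$ for every $i$, so monotonicity of the distance gives $d_{i,*}^{(c)}(\mu_i,p_it) \le A^{1/\varrho_*}$, i.e. $r_i \le A$, whence $r_i/(1-r_i) \le r_i/(1-A) = c_A r_i$; substituting into the first bound gives \eqref{eq-prodmixupper}.

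For the lower bound, I would split into two cases. First, $1 - \prod_i(1-r_i) \ge \max_i r_i = \max_i d_{i,*}^{(c)}(\mu_i,p_it)^{\varrho_*}$ simply because each factor $1-r_j \le 1$, which gives the second term in the minimum. Second, for the exponential term: using $1 - u \le e^{-u}$ we get $\prod_i(1-r_i) \le \exp\{-\sum_i r_i\}$, so $d_*^{(c)}(\mu,t)^{\varrho_*} \ge 1 - \exp\{-\sum_i r_i\}$. It remains to see $\sum_i r_i \ge \tfrac{\varrho_*}{2}\sum_i d_{i,*}^{(c)}(\mu_i,p_it)^2$: for $*=H$ this is an equality ($\varrho_H=2$, $r_i = d_{i,H}^{(c)}(\mu_i,p_it)^2$); for $*=\textnormal{\tiny TV}$ we have $\varrho_{\textnormal{\tiny TV}}=1$ and $r_i = d_{i,\textnormal{\tiny TV}}^{(c)}(\mu_i,p_it)$, so the claim is $d_{i,\textnormal{\tiny TV}}^{(c)} \ge \tfrac12 (d_{i,\textnormal{\tiny TV}}^{(c)})^2$, i.e. $d_{i,\textnormal{\tiny TV}}^{(c)} \le 2$, which is trivially true since total variation is at most $1$. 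Taking the minimum of the two lower bounds completes the argument.

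The main (and only real) obstacle is the bookkeeping of the two cases $*\in\{H,\textnormal{\tiny TV}\}$ with the exponent $\varrho_*$ and, in the total variation case, the asymmetry between the exact formula available for $H$ and the one-sided inequality \eqref{eq-ptv} available for total variation. Once one observes that in \emph{both} cases the relevant inequality is of the form $d_*^{(c)}(\mu,t)^{\varrho_*} \ge 1 - \prod_i(1-r_i)$ with the appropriate reading of $r_i$, and that the lower bound on $\sum_i r_i$ in terms of $\sum_i d_{i,*}^2$ holds trivially in each case, the proof is a matter of invoking the two elementary inequalities $e^{-u/(1-u)} \le 1-u \le e^{-u}$ on $[0,1)$. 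Finally, the assertion about the maximum distances follows verbatim, since Lemma~\ref{l-prodmixing} and \eqref{eq-ptv} both hold for the maximum total variation and Hellinger distance and $T_{i,*}^{(c)}(\mu_i,\cdot)$ is replaced by $T_{i,*}^{(c)}(\cdot)$ throughout.
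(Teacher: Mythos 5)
Your handling of the Hellinger case, of the specialization \eqref{eq-prodmixupper}, and of the maximum-distance remark is correct and is essentially the paper's argument (exact product formula from Lemma \ref{l-prodmixing} plus $-u/(1-u)\le\log(1-u)\le-u$ and monotonicity of the distances). The genuine gap is in the total variation case. The inequality you take as your starting point, $d_{\textnormal{\tiny TV}}^{(c)}(\mu,t)^2\ge 1-\prod_{i}\bigl(1-d_{i,\textnormal{\tiny TV}}^{(c)}(\mu_i,p_it)^2\bigr)$, does not follow from the first inequality in \eqref{eq-ptv}: squaring that inequality only yields $d_{\textnormal{\tiny TV}}^2\ge(1-P)^2$ with $P=\prod_i(1-d_{i,\textnormal{\tiny TV}}^2)^{1/2}$, which is weaker than $1-P^2$. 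In fact the inequality is false: take $n=2$ and in each coordinate $\mu_i=(1/2,1/2)$, $\nu_i=(1/2+\epsilon,1/2-\epsilon)$, so each coordinate distance is $\epsilon$ while the product measures are at total variation $\epsilon+\epsilon^2$; then $d_{\textnormal{\tiny TV}}^2=\epsilon^2(1+\epsilon)^2<\epsilon^2(2-\epsilon^2)=1-\prod_i(1-d_{i,\textnormal{\tiny TV}}^2)$ for small $\epsilon$. Worse, your reduction ``$d_*^{\varrho_*}\ge 1-\prod_i(1-r_i)$'' reads, for $*=\textnormal{\tiny TV}$ and $r_i=d_{i,\textnormal{\tiny TV}}$, as $d_{\textnormal{\tiny TV}}\ge 1-\prod_i(1-d_{i,\textnormal{\tiny TV}})$, the reverse of the second inequality in \eqref{eq-ptv}, and it fails in the same example ($\epsilon+\epsilon^2<2\epsilon-\epsilon^2$); consequently your intermediate bound $d_{\textnormal{\tiny TV}}^{(c)}(\mu,t)\ge 1-\exp\{-\sum_i d_{i,\textnormal{\tiny TV}}^{(c)}(\mu_i,p_it)\}$ is also false there, since $\epsilon+\epsilon^2<1-e^{-2\epsilon}$ for small $\epsilon$. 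So both halves of your total variation lower bound rest on an invalid product inequality, even though the final stated bound is true.

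The repair is to use the total variation inequalities of Lemma \ref{l-prodmixing} as they stand. For the lower bound, use $d_{\textnormal{\tiny TV}}^{(c)}(\mu,t)\ge 1-\prod_i\bigl(1-d_{i,\textnormal{\tiny TV}}^{(c)}(\mu_i,p_it)^2\bigr)^{1/2}$ and apply $1-u\le e^{-u}$ with $u=d_{i,\textnormal{\tiny TV}}^{(c)}(\mu_i,p_it)^2$; the exponent $1/2$ is exactly the source of the factor $\varrho_*/2=1/2$ in the statement, rather than slack from $d\ge d^2/2$ as in your write-up (a sign that your starting inequality was too strong). The term $1-\max_i d_{i,\textnormal{\tiny TV}}^{(c)}(\mu_i,p_it)^{\varrho_*}$ in the minimum comes directly from the separate inequality $d_{\textnormal{\tiny TV}}^{(c)}(\mu,t)\ge\max_i d_{i,\textnormal{\tiny TV}}^{(c)}(\mu_i,p_it)$ of the lemma, not from a product bound. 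For the total variation upper bound you must also invoke explicitly the second inequality of the lemma, $d_{\textnormal{\tiny TV}}^{(c)}(\mu,t)\le 1-\prod_i\bigl(1-d_{i,\textnormal{\tiny TV}}^{(c)}(\mu_i,p_it)\bigr)$, since the ingredients you list only give the opposite direction; with it, your $1-u\ge e^{-u/(1-u)}$ step and the monotonicity argument for \eqref{eq-prodmixupper} go through unchanged, which is precisely the paper's (``similar and skipped'') treatment of the total variation case.
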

\begin{proof}
In the Hellinger distance, the proofs for the first two inequalities are given by Lemma \ref{l-prodmixing} and the following fact,
\[
 \frac{-u}{1-u}\le -\log\left(1+\frac{u}{1-u}\right)=\log(1-u)\le -u,\quad\forall u\in(0,1),
\]
while the last inequality is implied by the first one with the additional observation $d_{i,H}^{(c)}(\mu_i,p_it)\le \sqrt{A}$ for $t\ge t_H^{(c)}(\sqrt{A})$ and $1\le i\le n$. In the total variation, the proofs are similar and skipped.
\end{proof}

\begin{rem}
By Lemma \ref{l-prodmixing}, one may use the following inequality
\[
 1-(1-a_1)\times\cdots\times(1-a_n)\le a_1+\cdots+a_n,\quad\forall a_1,...,a_n\in[0,1]
\]
to obtain
\begin{equation}\label{eq-distsum}
 d_H^{(c)}(\mu,t)^2\le\sum_{i=1}^nd_{i,H}^{(c)}(\mu_i,t)^2,\quad d_{\textnormal{\tiny TV}}^{(c)}(\mu,t)\le\sum_{i=1}^nd_{i,\textnormal{\tiny TV}}^{(c)}(\mu_i,t).
\end{equation}
Compared with the last inequality in Proposition \ref{p-prodmixing}, (\ref{eq-distsum}) provides simpler upper bounds without the requirement of $t\ge t_H^{(c)}(A)$ and $t\ge t_{\textnormal{\tiny TV}}^{(c)}(A)$.
\end{rem}

The following example is an illustration of Lemma \ref{l-prodmixing} and Proposition \ref{p-prodmixing}.

\begin{ex}\label{ex-2p}
Let $n\in\mathbb{N}$ and $\alpha,\beta\in(0,1]$. For $1\le i\le n$, let $(\mathcal{X}_i,K_i,\pi_i)$ be a Markov chain with
\begin{equation}\label{eq-twopoint}
 \mathcal{X}_i=\{0,1\},\quad K_i=\left(\begin{array}{cc}1-\alpha&\alpha\\\beta&1-\beta\end{array}\right),\quad
 \pi_i=\left(\frac{\beta}{\alpha+\beta},\frac{\alpha}{\alpha+\beta}\right)
\end{equation}
and $(\mathcal{X}_i,H_{i,t},\pi_i)$ be the continuous time Markov chain associated with $(\mathcal{X}_i,K_i,\pi_i)$. By (\ref{eq-2pK}), one has
\[
 H_{i,t}(0,0)=\frac{\beta}{\alpha+\beta}+\frac{\alpha}{\alpha+\beta}e^{-(\alpha+\beta)t}.
\]
This implies
\begin{equation}\label{eq-2phd}
 d_{i,H}^{(c)}(0,t)^2=1-\frac{\beta}{\alpha+\beta}\sqrt{1+\frac{\alpha}{\beta}e^{-(\alpha+\beta)t}}-\frac{\alpha}{\alpha+\beta}
 \sqrt{1-e^{-(\alpha+\beta)t}}
\end{equation}
and
\begin{equation}\label{eq-2ptv}
 d_{i,\text{\tiny TV}}^{(c)}(0,t)=\frac{\alpha}{\alpha+\beta}e^{-(\alpha+\beta)t}.
\end{equation}

Let $K$ be the product chain of $(\mathcal{X}_i,K_i,\pi_i)_{i=1}^n$ according to the probability vector $(1/n,...,1/n)$ and consider the case $\alpha=\beta$. Note that, from Lemma \ref{l-comp2} and (\ref{eq-2ptv}),
\[
 d_{i,H}^{(c)}(0,t)^2\le d_{i,\text{\tiny TV}}^{(c)}(0,t)=\frac{1}{2}e^{-2\alpha t}\le\frac{1}{2},\quad\forall t\ge 0.
\]
By applying (\ref{eq-prodmixupper}) with $A=1/2$, one has
\[
 1-\exp\left\{-\frac{n(ne^c)^{-4a\alpha}}{8}\right\}\le d_{\text{\tiny TV}}^{(c)}(\mathbf{0},an(\log n+c))\le 1-\exp\left\{-n(ne^c)^{-2a\alpha}\right\},
\]
and
\[
 1-\exp\left\{-f_n(c)\right\}\le d_H^{(c)}\left(\mathbf{0},(4\alpha)^{-1}n(\log n+c)\right)^2\le1-\exp\left\{-2f_n(c)\right\},
\]
for $a>0$ and $c>-\log n$, where $\mathbf{0}=(0,...,0)$ and
\begin{align}
 f_n(c)&=\frac{n}{2}\left(2-\sqrt{1+(ne^c)^{-1/2}}-\sqrt{1-(ne^c)^{-1/2}}\right)\notag\\
 &=\frac{e^{-c}}{\left(2+\sqrt{2+2\sqrt{1-(ne^c)^{-1}}}\right)
 \left(1+\sqrt{1-(ne^c)^{-1}}\right)}.\notag
\end{align}
The last equality yields $e^{-c}/8\le f_n(c)\le e^{-c}/(2\sqrt{2})$ and the bounds for distances lead to
\[
 \frac{n}{4\alpha}\left(\log n-\log\log\frac{1}{1-\epsilon}-3\right)\le T_{\text{\tiny TV}}^{(c)}(\mathbf{0},\epsilon)\le\frac{n}{2\alpha}\left(\log n-\log\log\frac{1}{1-\epsilon}\right)
\]
and
\[
 \frac{n}{4\alpha}\left(\log n-\log\log\frac{1}{1-\epsilon}-3\right)\le T_H^{(c)}(\mathbf{0},\epsilon)\le \frac{n}{4\alpha}\left(\log n-\log\log\frac{1}{1-\epsilon}\right),
\]
for $0<\epsilon<1-e^{-n}$. Consequently, we may conclude that, when $n$ tends to infinity, the total variation mixing time has order $n\log n$, while the mixing time in the Hellinger distance is asymptotically $(4\alpha)^{-1}n\log n$.

Next, we make a more precise estimation of the total variation using Lemma \ref{l-comp2} and Lemma \ref{l-prodmixing}. By (\ref{eq-prodhd}) and (\ref{eq-2phd}), one has
\begin{align}
 d_H^{(c)}\left(\mathbf{0},(4\alpha)^{-1} n(\log n+c)\right)^2&=1-\left(\frac{\sqrt{1+(e^cn)^{-1/2}}
 +\sqrt{1-(e^cn)^{-1/2}}}{2}\right)^n\notag\\
 &=1-\exp\left\{-\frac{1}{8e^c}\right\}+O\left(\frac{1}{n}\right),\notag
\end{align}
for $c\in\mathbb{R}$, where the last equality is the result of the fact that, as $t\ra 0$,
\[
 \frac{\sqrt{1+t}+\sqrt{1-t}}{2}=1-\frac{t^2}{8}+O(t^4),\,\, \log(1-t)=1-t-O(t^2),\,\, e^{-t}=1-O(t).
\]
By (\ref{eq-hdtv0}), this implies
\[
 d_{\text{\tiny TV}}^{(c)}\left(\mathbf{0},(4\alpha)^{-1} n(\log n+c)\right)\le\sqrt{1-\exp\left\{-\frac{1}{4e^c}\right\}}+O\left(\frac{1}{n}\right)
\]
and
\[
 d_{\text{\tiny TV}}^{(c)}\left(\mathbf{0},(4\alpha)^{-1} n(\log n+c)\right)\ge
 1-\exp\left\{-\frac{1}{8e^c}\right\}+O\left(\frac{1}{n}\right).
\]
Consequently, we may conclude that the total variation mixing time is also asymptotically $(4\alpha)^{-1}n\log n$.
\end{ex}

\subsection{Maximum distances of product chains}

In this subsection, we consider distances of product chains in the sense of (\ref{eq-hdtvdf}) and our first result is the application of Proposition \ref{p-prodmixing} to the total variation.

\begin{prop}\label{p-prodmaxtv}
Let $(\mathcal{X},K,\pi)$ be the product chain of $(\mathcal{X}_i,K_i,\pi_i)_{i=1}^n$ according to the probability vector $(p_1,...,p_n)$. For $\epsilon_i\in(0,1/2)$ and $u_i\ge T_{i,\textnormal{\tiny TV}}^{(c)}(\epsilon_i)$ with $1\le i\le n$, one has
\[
 d_{\textnormal{\tiny TV}}^{(c)}(t)\le 1-\exp\left\{-\sum_{i=1}^n(2\epsilon_i)^{\lfloor p_it/u_i\rfloor}\right\},\quad\forall t\ge \max_{1\le i\le n}\frac{u_i}{p_i}.
\]
\end{prop}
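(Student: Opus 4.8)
The plan is to reduce the bound to Proposition \ref{p-prodmixing}, specifically to the upper estimate \eqref{eq-prodmixupper} with $*=\textnormal{\tiny TV}$, after which the only remaining task is to control each factor $d_{i,\textnormal{\tiny TV}}^{(c)}(\delta_{x_i},p_it)$ along the product. First I would recall that the maximum total variation is attained at a point mass, so it suffices to bound $d_{\textnormal{\tiny TV}}^{(c)}(\delta_x,t)$ uniformly over $x=(x_1,\dots,x_n)\in\mathcal{X}$; by \eqref{eq-prodcts} the $i$th coordinate chain runs at speed $p_i$, so each coordinate contributes $d_{i,\textnormal{\tiny TV}}^{(c)}(\delta_{x_i},p_it)$. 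Applying \eqref{eq-prodmixupper} with $\varrho_{\textnormal{\tiny TV}}=1$ and $A=1/2$ (legitimate because the hypothesis $\epsilon_i<1/2$ guarantees $d_{i,\textnormal{\tiny TV}}^{(c)}\le 1/2$ on the relevant time range, so $c_A=2$) already yields
\[
 d_{\textnormal{\tiny TV}}^{(c)}(t)\le 1-\exp\left\{-2\sum_{i=1}^n d_{i,\textnormal{\tiny TV}}^{(c)}(\delta_{x_i},p_it)\right\}
\]
for $t\ge \max_i u_i/p_i$, provided $u_i\ge T_{i,\textnormal{\tiny TV}}^{(c)}(1/2)$; since $\epsilon_i<1/2$ we have $T_{i,\textnormal{\tiny TV}}^{(c)}(1/2)\le T_{i,\textnormal{\tiny TV}}^{(c)}(\epsilon_i)\le u_i$, so this side condition is met.

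Next I would convert the single-time bound $d_{i,\textnormal{\tiny TV}}^{(c)}(\delta_{x_i},u_i)\le\epsilon_i$ into the geometric-decay estimate
\[
 d_{i,\textnormal{\tiny TV}}^{(c)}(\delta_{x_i},p_it)\le (2\epsilon_i)^{\lfloor p_it/u_i\rfloor}/2
\]
(or, more conveniently, $\le (2\epsilon_i)^{\lfloor p_it/u_i\rfloor}$). This is the standard submultiplicativity of the maximum total variation under the semigroup property $H_{i,s+r}=H_{i,s}H_{i,r}$: writing $p_it=ku_i+r$ with $k=\lfloor p_it/u_i\rfloor$ and $0\le r<u_i$, monotonicity gives $d_{i,\textnormal{\tiny TV}}^{(c)}(\delta_{x_i},p_it)\le d_{i,\textnormal{\tiny TV}}^{(c)}(ku_i)\le 2^{k-1}d_{i,\textnormal{\tiny TV}}^{(c)}(u_i)^k\le 2^{k-1}\epsilon_i^k=(2\epsilon_i)^k/2$. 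Combining this with the previous display and absorbing the harmless factor (replacing $2\cdot(2\epsilon_i)^k/2=(2\epsilon_i)^k$) gives exactly the claimed inequality.

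The only genuinely delicate point is bookkeeping the constant in \eqref{eq-prodmixupper}: one must check that the exponent really can be taken with coefficient $1$ rather than $2$ in front of $\sum_i(2\epsilon_i)^{\lfloor p_it/u_i\rfloor}$. This works because the factor $c_A=2$ from \eqref{eq-prodmixupper} is exactly cancelled by the factor $1/2$ produced in the submultiplicativity step $d_{i,\textnormal{\tiny TV}}^{(c)}(ku_i)\le 2^{k-1}\epsilon_i^k$; alternatively, one can bypass \eqref{eq-prodmixupper} and argue directly from Lemma \ref{l-prodmixing} via $1-\prod_i(1-a_i)\le 1-\prod_i e^{-a_i/(1-a_i)}=1-\exp\{-\sum_i a_i/(1-a_i)\}$ with $a_i=d_{i,\textnormal{\tiny TV}}^{(c)}(\delta_{x_i},p_it)\le (2\epsilon_i)^{\lfloor p_it/u_i\rfloor}/2\le 1/2$, so that $a_i/(1-a_i)\le 2a_i\le(2\epsilon_i)^{\lfloor p_it/u_i\rfloor}$. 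I expect this second route to be the cleanest to write out, and it makes the appearance of the exponent $\lfloor p_it/u_i\rfloor$ transparent. I would finish by taking the supremum over $x\in\mathcal{X}$, which leaves the right-hand side unchanged since it does not depend on $x$.
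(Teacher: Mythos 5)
Your proposal is correct and follows essentially the same route as the paper: apply the product bound (\ref{eq-prodmixupper}) with $A=1/2$ (so $c_A=2$, valid since $u_i\ge T_{i,\textnormal{\tiny TV}}^{(c)}(\epsilon_i)\ge T_{i,\textnormal{\tiny TV}}^{(c)}(1/2)$), then use the submultiplicativity of $s\mapsto 2d_{i,\textnormal{\tiny TV}}^{(c)}(s)$ — which is exactly your estimate $d_{i,\textnormal{\tiny TV}}^{(c)}(ku_i)\le 2^{k-1}d_{i,\textnormal{\tiny TV}}^{(c)}(u_i)^k$ — so that the factor $2$ is absorbed and the exponent $\lfloor p_it/u_i\rfloor$ appears. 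Your cancellation bookkeeping and side-condition checks match the paper's proof, and your alternative direct route via Lemma \ref{l-prodmixing} is only a cosmetic variant of the same argument.
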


\begin{proof}
By (\ref{eq-prodmixupper}), one has
\[
 d^{(c)}_{\text{\tiny TV}}(t)\le 1-\exp\left\{-2\sum_{i=1}^nd^{(c)}_{i,\text{\tiny TV}}(p_it)\right\},\quad\forall t\ge\max_{1\le i\le n}\frac{T^{(c)}_{i,\text{\tiny TV}}(1/2)}{p_i}.
\]
Since $s\mapsto 2d^{(c)}_{i,\text{\tiny TV}}(s)$ is submultiplicative, we have
\[
 2d^{(c)}_{i,\text{\tiny TV}}(p_it)\le 2d^{(c)}_{i,\text{\tiny TV}}\left(u_i\left\lfloor\frac{p_it}{u_i}\right\rfloor\right)\le \left(2d^{(c)}_{i,\text{\tiny TV}}(u_i)\right)^{\lfloor p_it/u_i\rfloor}\le
 (2\epsilon_i)^{\lfloor p_it/u_i\rfloor}.
\]
As $u_i\ge T^{(c)}_{i,\text{\tiny TV}}(1/2)$ for all $1\le i\le n$, the above inequalities combine to the desired one.
\end{proof}

To get a variant of Proposition \ref{p-prodmaxtv} in the maximum Hellinger distance, one may follow the same reasoning as before but needs the quasi-submultiplicativity of $d_H^{(c)}(t)$, which is the submultiplicativity of $Cd_H^{(c)}(t)$ for some $C>0$. To see a lower bound on $C$, let's consider the two-state chain in (\ref{eq-2sc}). By (\ref{eq-2phd}), when $\alpha\ge\beta$, one has
\[
 d^{(c)}_H(t)=d^{(c)}_H(0,t)\sim \sqrt{\frac{\alpha}{8\beta}}e^{-2(\alpha+\beta)t},\quad\text{as }t\ra\infty.
\]
Note that if $t\mapsto Cd^{(c)}_H(t)$ is submultiplicative, then $d_H^{(c)}(2t)/(d_H^{(c)}(t))^2\le C$ for all $t\ge 0$. This implies
\[
 C\ge \lim_{t\ra\infty}\frac{d_H^{(c)}(2t)}{(d_H^{(c)}(t))^2}=\sqrt{\frac{8\beta}{\alpha}},\quad\forall \alpha\ge \beta.
\]
Taking $\alpha=\beta$ yields $C\ge \sqrt{8}$.

\begin{lem} {\rm(\cite[Lemma A.3]{CK17})}\label{l-hdsub}
Let $d_H,d_H^{(c)}$ be the maximum Hellinger distances of a discrete time irreducible Markov chain and its associated continuous time one. Then, the following mappings
\[
 m\mapsto 4d_H(m),\quad t\mapsto 4d_H^{(c)}(t),
\]
are non-increasing and submultiplicative.
\end{lem}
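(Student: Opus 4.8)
The plan is to verify the two properties separately; monotonicity is routine, and submultiplicativity carries all the content.

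\emph{Monotonicity.} By (\ref{eq-hd}), the Hellinger affinity $\rho(\mu,\nu):=1-\|\mu-\nu\|_H^2=\int\sqrt{(d\mu/d\lambda)(d\nu/d\lambda)}\,d\lambda$ obeys the data-processing inequality $\rho(\mu P,\nu P)\ge\rho(\mu,\nu)$ for any stochastic kernel $P$ (Cauchy--Schwarz, or joint concavity of $\rho$), i.e. $\|\mu P-\nu P\|_H\le\|\mu-\nu\|_H$. Applying this with $P=K$ (resp. $P=H_u$), $\mu=K^m(x,\cdot)$ (resp. $H_s(x,\cdot)$), $\nu=\pi$, and $\pi K=\pi$ (resp. $\pi H_u=\pi$), gives $d_H(x,m+1)\le d_H(x,m)$ (resp. $d_H^{(c)}(x,s+u)\le d_H^{(c)}(x,s)$); a supremum over $x$ finishes, and scaling by $4$ is immaterial.

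\emph{Submultiplicativity.} First I would reduce the range of exponents: by monotonicity, if $4d_H(m)\ge1$ or $4d_H(n)\ge1$ then $4d_H(m+n)\le\min\{4d_H(m),4d_H(n)\}\le(4d_H(m))(4d_H(n))$ trivially, so it remains to prove $d_H(m+n)\le4\,d_H(m)d_H(n)$ when $d_H(m),d_H(n)<1/4$ (and likewise in continuous time). In this regime the comparison with total variation does not suffice: $2d_{\textnormal{\tiny TV}}$ is submultiplicative (split $K^{m+n}(x,\cdot)-\pi=\sum_z(K^m(x,z)-\pi(z))(K^n(z,\cdot)-\pi)$ by the sign of $K^m(x,z)-\pi(z)$), and then $d_H^2\le d_{\textnormal{\tiny TV}}\le\sqrt2\,d_H$ (Lemma~\ref{l-comp} and Remark~\ref{r-hdtv}) only gives $d_H(m+n)^2\le4\,d_H(m)d_H(n)$, weaker than the target once $d_H(m)d_H(n)<1/4$. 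So one must use the semigroup identity $K^{m+n}=K^mK^n$ (resp. $H_{s+t}=H_sH_t$) directly. Fixing $x$ and putting $\nu=K^m(x,\cdot)$, so $K^{m+n}(x,\cdot)=\nu K^n$ with $\|\nu-\pi\|_H\le d_H(m)$, I would use $(\nu K^n)(y)=\sum_z\nu(z)K^n(z,y)$ and concavity of $\sqrt{\,\cdot\,}$ to write
\[
 \sqrt{(\nu K^n)(y)}-\sqrt{\pi(y)}=\bar g(y)+E(y),\qquad \bar g(y)=\sum_z\nu(z)\big(\sqrt{K^n(z,y)}-\sqrt{\pi(y)}\big),\qquad E(y)=\sqrt{(\nu K^n)(y)}-\sum_z\nu(z)\sqrt{K^n(z,y)}\ge0,
\]
so $2\,d_H(x,m+n)^2=\sum_y(\bar g(y)+E(y))^2$, and bound the two pieces using: (i) $E$ is a Jensen gap, $E(y)\le V(y)/\big(2\sum_z\nu(z)\sqrt{K^n(z,y)}\big)$ with $V(y)=\mathrm{Var}_\nu\big(\sqrt{K^n(\cdot,y)}\big)$, and $\sum_yV(y)=2\sum_z\nu(z)d_H(z,n)^2-\sum_y\bar g(y)^2$; (ii) $\bar g(y)=-A(y)+B(y)$ with $A(y)=\sqrt{\pi(y)}-\sum_z\pi(z)\sqrt{K^n(z,y)}\ge0$ (a Jensen gap at the stationarity identity $\sum_z\pi(z)K^n(z,y)=\pi(y)$) and $B(y)=\sum_z(\nu(z)-\pi(z))(\sqrt{K^n(z,y)}-\sqrt{\pi(y)})$, where $\sum_yB(y)^2$ absorbs a factor $d_H(m)^2$ via Cauchy--Schwarz in $z$ against the weight $\pi(z)$ together with $\sum_z\pi(z)K^n(z,y)=\pi(y)$, and $\sum_yA(y)^2$, $\sum_yV(y)$ are controlled through $\sum_y(\sqrt{K^n(z,y)}-\sqrt{\pi(y)})^2=2d_H(z,n)^2\le2d_H(n)^2$. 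Assembling these should yield $d_H(x,m+n)^2\le C\,d_H(m)^2d_H(n)^2$ with $\sqrt C\le4$; a supremum over $x$ gives the lemma, and the continuous-time case is verbatim with $H_{s+t}=H_sH_t$.

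The step I expect to be the main obstacle is precisely this last assembly over a countably infinite state space. The naive simplification of replacing the denominator $\sum_z\nu(z)\sqrt{K^n(z,y)}$ in the bound for $E(y)$ by $\sqrt{\pi(y)}$ introduces a $\chi^2$-type quantity that may be infinite, so one has to keep that denominator and route everything through the three elementary facts $\sum_z\nu(z)=1$, $\sum_z\pi(z)K^n(z,y)=\pi(y)$ and $\sum_y(\sqrt{K^n(z,y)}-\sqrt{\pi(y)})^2=2d_H(z,n)^2$; moreover the near-cancellation between $\bar g(y)$ and $E(y)$ (each is of order $A(y)$, while their sum is of order $A(y)^2/\sqrt{\pi(y)}$ when $A(y)\ll\sqrt{\pi(y)}$) must be exploited so that the final bound is quadratically small in $d_H(m)$ \emph{and} in $d_H(n)$. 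Pushing the numerical constants through this argument to reach exactly $4$ --- consistent with the lower bound $\sqrt8$ recorded just before the lemma --- is where the care is needed.
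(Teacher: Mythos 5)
There is nothing in the paper to compare against here: Lemma \ref{l-hdsub} is quoted from \cite[Lemma A.3]{CK17} (in preparation) and no proof is given in this article, so your argument has to stand entirely on its own. The parts of it that are complete are fine: monotonicity via the data-processing inequality for the Hellinger affinity together with $\pi K=\pi$ (resp.\ $\pi H_u=\pi$), the reduction to the regime $d_H(m),d_H(n)<1/4$, and the bound on the linear term, since Cauchy--Schwarz with $\nu-\pi=(\sqrt{\nu}-\sqrt{\pi})(\sqrt{\nu}+\sqrt{\pi})$ and $\sum_y(\sqrt{K^n(z,y)}-\sqrt{\pi(y)})^2=2d_H(z,n)^2$ does give $\sum_yB(y)^2\le 16\,d_H(m)^2d_H(n)^2$.

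The genuine gap is the remainder term. Writing $h(y)=\sqrt{(\nu K^n)(y)}-\sqrt{\pi(y)}=B(y)+\bigl(E(y)-A(y)\bigr)$, the whole lemma is the claim that the difference of the two Jensen gaps satisfies $\bigl(\sum_y(E(y)-A(y))^2\bigr)^{1/2}\le c\,d_H(m)\,d_H(n)$ with $c$ small, and your outline never establishes any bound of this form: every estimate you actually record --- $E(y)\le V(y)/\bigl(2\sum_z\nu(z)\sqrt{K^n(z,y)}\bigr)$, $\sum_yV(y)\le 2d_H(n)^2$, $\sum_yA(y)^2\le 2d_H(n)^2$ --- is of size $O(d_H(n)^2)$ uniformly in $\nu$, carries no factor of $d_H(m)$, and so, assembled as stated, only recovers $d_H(m+n)=O(d_H(n))$, i.e.\ essentially monotonicity. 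The ``near-cancellation'' of $E$ against $A$ is asserted, not proved, and it is not a generic fact about two measures that are Hellinger-close: converting $\|\nu-\pi\|_H\le d_H(m)$ into quadratic smallness of $E-A$ requires controlling denominators of the type $\sum_z\nu(z)\sqrt{K^n(z,y)}$ against $\sqrt{\pi(y)}$ (the $\chi^2$-type obstruction you yourself flag), and any argument reaching the product form must use stationarity more quantitatively than the single identity $\sum_z\pi(z)K^n(z,y)=\pi(y)$ is used in your sketch (for instance through its pointwise consequence $\pi(z)K^n(z,y)\le\pi(y)$); the cheap comparisons really are insufficient, since $d_H^2\le d_{\textnormal{\tiny TV}}$ plus submultiplicativity of $2d_{\textnormal{\tiny TV}}$ only yields $d_H(m+n)\le 2\sqrt{d_H(m)d_H(n)}$. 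Finally, even granting $\|E-A\|_{\ell^2}\le c\,d_H(m)d_H(n)$, the triangle inequality requires $4+c\le 4\sqrt{2}$, i.e.\ $c\le 4(\sqrt2-1)\approx 1.66$, and nothing in the proposal controls $c$; so the explicit constant $4$ --- the feature of the lemma the paper actually uses in Proposition \ref{p-prodmaxhd} --- is claimed rather than derived. As it stands the central estimate of the lemma is missing.
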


The next proposition follows immediately from (\ref{eq-prodmixupper}) and Lemma \ref{l-hdsub}, of which proof is similar to that of Proposition \ref{p-prodmaxtv}.

\begin{prop}\label{p-prodmaxhd}
Referring to the product chain in Proposition \ref{p-prodmaxtv}, one has
\[
 d_H^{(c)}(t)^2\le 1-\exp\left\{-\frac{1}{8}\sum_{i=1}^n(4\epsilon_i)^{2\lfloor p_it/u_i\rfloor}\right\},\quad \forall t\ge \max_{1\le i\le n}\frac{u_i}{p_i},
\]
where $\epsilon_i\in(0,1/\sqrt{2})$ and $u_i\ge T_{i,H}^{(c)}(\epsilon_i)$ for $1\le i\le n$.
\end{prop}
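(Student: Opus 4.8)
The plan is to mimic the proof of Proposition \ref{p-prodmaxtv} essentially verbatim, substituting Lemma \ref{l-hdsub} for the submultiplicativity of $2d_{\textnormal{\tiny TV}}^{(c)}$ and using the $\varrho_H=2$ version of (\ref{eq-prodmixupper}). First I would invoke (\ref{eq-prodmixupper}) with $*=H$ and $A=1/2$, which gives
\[
 d_H^{(c)}(t)^2\le 1-\exp\left\{-2\sum_{i=1}^nd_{i,H}^{(c)}(p_it)^2\right\},\quad\forall t\ge\max_{1\le i\le n}\frac{T_{i,H}^{(c)}(1/\sqrt 2)}{p_i},
\]
since $c_A=1/(1-A)=2$ when $A=1/2$ and $A^{1/\varrho_H}=1/\sqrt2$.

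Next I would control each term $d_{i,H}^{(c)}(p_it)^2$. By Lemma \ref{l-hdsub}, the map $s\mapsto 4d_{i,H}^{(c)}(s)$ is non-increasing and submultiplicative, so for $t\ge u_i/p_i$,
\[
 4d_{i,H}^{(c)}(p_it)\le 4d_{i,H}^{(c)}\!\left(u_i\left\lfloor\frac{p_it}{u_i}\right\rfloor\right)\le\left(4d_{i,H}^{(c)}(u_i)\right)^{\lfloor p_it/u_i\rfloor}\le(4\epsilon_i)^{\lfloor p_it/u_i\rfloor},
\]
using $u_i\ge T_{i,H}^{(c)}(\epsilon_i)$ in the last step. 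Squaring and dividing by $16$ yields $d_{i,H}^{(c)}(p_it)^2\le\frac{1}{16}(4\epsilon_i)^{2\lfloor p_it/u_i\rfloor}$. Finally I would check that the hypothesis $\epsilon_i<1/\sqrt2$ together with $u_i\ge T_{i,H}^{(c)}(\epsilon_i)$ forces $u_i\ge T_{i,H}^{(c)}(1/\sqrt2)$ (because mixing times are non-increasing in the error tolerance, so a larger tolerance gives a smaller mixing time, and $\epsilon_i<1/\sqrt2$), hence the range $t\ge\max_i u_i/p_i$ is inside the validity range of the displayed bound from (\ref{eq-prodmixupper}). Substituting the per-coordinate estimate into the exponential sum gives
\[
 d_H^{(c)}(t)^2\le 1-\exp\left\{-2\sum_{i=1}^n\frac{1}{16}(4\epsilon_i)^{2\lfloor p_it/u_i\rfloor}\right\}=1-\exp\left\{-\frac18\sum_{i=1}^n(4\epsilon_i)^{2\lfloor p_it/u_i\rfloor}\right\},
\]
which is the claim. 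For the maximum distances the same computation applies since Lemma \ref{l-hdsub} and (\ref{eq-prodmixupper}) both cover that case.

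There is no real obstacle here; the only point requiring a little care is the bookkeeping of constants — tracking the factor $4$ from Lemma \ref{l-hdsub} through the squaring so that the prefactor comes out as exactly $1/8$, and confirming that the threshold $T_{i,H}^{(c)}(1/\sqrt2)$ demanded by (\ref{eq-prodmixupper}) is genuinely dominated by the $u_i\ge T_{i,H}^{(c)}(\epsilon_i)$ we are given. The argument is otherwise a direct transcription of the proof of Proposition \ref{p-prodmaxtv}, with $(2\epsilon_i)^{\lfloor p_it/u_i\rfloor}$ replaced by its squared, $4\epsilon_i$-based analogue, and the proof can simply be stated as such.
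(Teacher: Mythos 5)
Your proof is correct and is exactly the route the paper intends: the paper states that Proposition \ref{p-prodmaxhd} follows from (\ref{eq-prodmixupper}) (with $A=1/2$, so $c_A=2$ and threshold $t_H^{(c)}(1/\sqrt{2})$) together with the submultiplicativity of $4d_H^{(c)}$ from Lemma \ref{l-hdsub}, in the same way as Proposition \ref{p-prodmaxtv}. Your constant bookkeeping ($2\cdot\tfrac{1}{16}=\tfrac18$) and the check that $u_i\ge T_{i,H}^{(c)}(\epsilon_i)\ge T_{i,H}^{(c)}(1/\sqrt{2})$ covers the required time range are both accurate, so nothing is missing.
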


\section{Cutoffs for product chains}\label{s-cpc}

In this section, we consider families of product chains and discuss their cutoffs. Let $(k_n)_{n=1}^\infty$ be a sequence of positive integers and
\begin{equation}\label{eq-fp}
 \mathcal{F}=\{(\mathcal{X}_{n,i},K_{n,i},\pi_{n,i})|1\le i\le k_n,n\ge 1\},\quad
 \mathcal{P}=\{p_{n,i}|1\le i\le k_n,n\ge 1\},
\end{equation}
be a family of irreducible Markov chains and a triangular array of positive reals. For $n\ge 1$, let $(\mathcal{X}_n,K_n,\pi_n)$ be the product chain of $(\mathcal{X}_{n,i},K_{n,i},\pi_{n,i})_{i=1}^{k_n}$ according to the probability vector $(p_{n,1}/q_n,...,p_{n,k_n}/q_n)$, where $q_n=p_{n,1}+\cdots+p_{n,k_n}$. We write $\mathcal{F}^\mathcal{P}$ for the family $(\mathcal{X}_n,K_n,\pi_n)_{n=1}^\infty$ and call it the family of product chains of $\mathcal{F}$ according to $\mathcal{P}$. In the continuous time case, we set $H_{n,i,t}=e^{-t(I-K_{n,i})}$, $H_{n,t}=e^{-t(I-K_n)}$ and $\mathcal{F}^\mathcal{P}_c=(\mathcal{X}_n,H_{n,t},\pi_n)_{n=1}^\infty$.
For the Markov chains, $(\mathcal{X}_{n,i},H_{n,i,t},\pi_{n,i})$ and $(\mathcal{X}_n,H_{n,t},\pi_n)$, we use $d_{n,i,H}^{(c)},d_{n,H}^{(c)}$ and $d_{n,i,\text{\tiny TV}}^{(c)},d_{n,\text{\tiny TV}}^{(c)}$ to denote their Hellinger distances and total variations and write $T_{n,i,H}^{(c)},T_{n,H}^{(c)}$ and $T_{n,i,\text{\tiny TV}}^{(c)},T_{n,\text{\tiny TV}}^{(c)}$ for their corresponding mixing times.

\subsection{Cutoff of product chains in the Hellinger distance}

The following theorem provides equivalent conditions for cutoffs in the Hellinger distance.

\begin{thm}\label{t-prodcutoffhd}
Let $\mathcal{F},\mathcal{P}$ be families in \textnormal{(\ref{eq-fp})}. For $n\ge 1$ and $1\le i\le k_n$, let $\mu_{n,i}$ be a probability on $\mathcal{X}_{n,i}$, $\mu_n=\mu_{n,1}\times\cdots\times\mu_{n,k_n}$, $q_n=p_{n,1}+\cdots+p_{n,k_n}$ and set
\[
 F_n(t)=\frac{\sum_{i=1}^{k_n}d_{n,i,H}^{(c)}\left(\mu_{n,i},p_{n,i}t/q_n
 \right)^2}{1-\max\limits_{1\le i\le k_n}d_{n,i,H}^{(c)}\left(\mu_{n,i},p_{n,i}t/q_n\right)^2},\quad
 G_n(t)=\max_{1\le i\le k_n}d_{n,i,H}^{(c)}\left(\mu_{n,i},p_{n,i}t/q_n
 \right),
\]
where $1/0:=\infty$. Then, the family $\mathcal{F}^\mathcal{P}_c$ with initial distributions $(\mu_n)_{n=1}^\infty$ has:
\begin{itemize}
\item[(1)] a cutoff in the Hellinger distance with cutoff time $t_n$ if and only if
\[
 \lim_{n\ra\infty}F_n(at_n)
 =\begin{cases}0&\text{for }a>1,\\\infty&\text{for }0<a<1,\end{cases}
\]
\item[(2)] a $(t_n,b_n)$ cutoff in the Hellinger distance if and only if $t_n>0$, $b_n>0$, $b_n=o(t_n)$ and
\[
 \lim_{c\ra\infty}\limsup_{n\ra\infty}F_n(t_n+cb_n)=0,\quad
 \lim_{c\ra-\infty}\liminf_{n\ra\infty}F_n(t_n+cb_n)=\infty.
\]
\end{itemize}

In particular, when $\sup_nk_n<\infty$, the equivalences in (1) and (2) remain true under the replacement of $F_n$ and $\infty$ with $G_n$ and $1$.
\end{thm}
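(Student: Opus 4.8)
The plan is to deduce the final assertion from parts (1) and (2) of the theorem, which we may take as already proved, by showing that when $K:=\sup_n k_n<\infty$ the conditions on $F_n$ appearing there are equivalent to the stated conditions on $G_n$ (with $\infty$ replaced by $1$). Write $a_{n,i}(t):=d_{n,i,H}^{(c)}(\mu_{n,i},p_{n,i}t/q_n)^2\in[0,1]$, so that $G_n(t)^2=\max_{1\le i\le k_n}a_{n,i}(t)$ and the numerator of $F_n(t)$ is $\sum_{i=1}^{k_n}a_{n,i}(t)$. Since $\max_i a_{n,i}(t)\le\sum_i a_{n,i}(t)\le k_n\max_i a_{n,i}(t)\le K\max_i a_{n,i}(t)$, dividing by $1-\max_i a_{n,i}(t)=1-G_n(t)^2$ and using the convention $1/0=\infty$ yields the uniform two-sided bound
\[
 \frac{G_n(t)^2}{1-G_n(t)^2}\ \le\ F_n(t)\ \le\ \frac{K\,G_n(t)^2}{1-G_n(t)^2},\qquad t>0,
\]
so that, in particular, $F_n(t)=\infty$ exactly when $G_n(t)=1$.

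Next I would record the consequences of this bound for limits along an arbitrary sequence of times $(s_n)$. From $G_n(s_n)^2\le F_n(s_n)$ one gets $F_n(s_n)\to 0\Rightarrow G_n(s_n)\to 0$, and once $G_n(s_n)^2<\tfrac12$ the upper bound reads $F_n(s_n)\le 2K\,G_n(s_n)^2$, giving the converse; hence also $\limsup_n F_n(s_n)=0$ if and only if $\limsup_n G_n(s_n)=0$. For the upper end, set $\phi(y):=y/(1+y)$ on $[0,\infty]$, an increasing homeomorphism onto $[0,1]$ with $\phi(\infty)=1$, and note $G_n(t)^2=\phi\!\big(G_n(t)^2/(1-G_n(t)^2)\big)$; combining this with the displayed bounds and the elementary fact that for an increasing continuous (extended-real-valued) $h$ one has $\liminf_n h(x_n)=h(\liminf_n x_n)$ when the $x_n$ lie in a compact interval, I obtain $\liminf_n F_n(s_n)=\infty$ if and only if $\liminf_n G_n(s_n)=1$, and similarly $F_n(s_n)\to\infty$ if and only if $G_n(s_n)\to 1$.

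Finally I would substitute $s_n=at_n$ into the first batch of equivalences to identify the condition on $F_n$ in part (1) with the corresponding condition on $G_n$, and substitute $s_n=t_n+cb_n$ and let $c\to\pm\infty$ in the second batch to do the same for part (2); the passage $c\to\pm\infty$ is harmless because all the quantities involved are monotone in $c$ over the relevant ranges and the bounds above are uniform in $n$. Together with parts (1) and (2) this gives the final claim. I expect no real obstacle; the only point requiring care is the bookkeeping with the iterated limits $\lim_{c}\limsup_n$ and $\lim_{c}\liminf_n$ in the presence of the value $+\infty$, which is precisely what the homeomorphism $\phi$ and the sandwiching above are designed to handle.
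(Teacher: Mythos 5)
Your proposal addresses only the closing sentence of the theorem: you explicitly take parts (1) and (2) as already proved and then show that, when $\sup_nk_n<\infty$, the conditions on $F_n$ may be exchanged for the stated conditions on $G_n$. That reduction is correct as far as it goes --- the two-sided bound $G_n(t)^2/(1-G_n(t)^2)\le F_n(t)\le k_nG_n(t)^2/(1-G_n(t)^2)$ does give, along any sequence of times, $F_n\ra 0\Leftrightarrow G_n\ra 0$ and $F_n\ra\infty\Leftrightarrow G_n\ra 1$ (together with the limsup/liminf versions), and this is precisely the step the paper dismisses with ``the case of bounded dimensions can be treated similarly''. But (1) and (2) are the substance of the statement, and nothing in your argument relates $F_n$, or $G_n$, to the Hellinger distance $d_{n,H}^{(c)}(\mu_n,\cdot)$ of the product chain itself, so the cutoff criteria remain unproved; assuming them is assuming the theorem.

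The missing ingredient is the product structure of the Hellinger distance. By (\ref{eq-prodcts}) and Lemma \ref{l-prodmixing}, $1-d_{n,H}^{(c)}(\mu_n,t)^2=\prod_{i=1}^{k_n}\bigl(1-d_{n,i,H}^{(c)}(\mu_{n,i},p_{n,i}t/q_n)^2\bigr)$, and the elementary bounds $-u/(1-u)\le\log(1-u)\le-u$ (as in Proposition \ref{p-prodmixing}) then sandwich the product chain's distance by $F_n$: writing $g_n(t)=\sum_{i=1}^{k_n}d_{n,i,H}^{(c)}(\mu_{n,i},p_{n,i}t/q_n)^2$, one has $G_n^2\le g_n\le\log\frac{1}{1-d_{n,H}^{(c)}(\mu_n,\cdot)^2}$, $d_{n,H}^{(c)}(\mu_n,\cdot)^2\le 1-e^{-F_n}$, and $1-d_{n,H}^{(c)}(\mu_n,\cdot)^2\le e^{-g_n}\wedge(1-G_n^2)\le\tfrac{1}{g_n}\wedge(1-G_n^2)\le\sqrt{1/F_n}$. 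These inequalities show, for any time sequence $(s_n)$, that $d_{n,H}^{(c)}(\mu_n,s_n)\ra 0$ if and only if $F_n(s_n)\ra 0$, and $d_{n,H}^{(c)}(\mu_n,s_n)\ra 1$ if and only if $F_n(s_n)\ra\infty$; substituting $s_n=at_n$, respectively $s_n=t_n+cb_n$ with $c\ra\pm\infty$, into Definition \ref{d-cutoff} (in its continuous-time form) is what proves (1) and (2). Until you supply this step, or an equivalent argument, your write-up establishes only the final clause of the theorem, not its main assertions.
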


\begin{rem}\label{r-prodcutofftv}
In Theorem \ref{t-prodcutoffhd}, when $\sup_nk_n<\infty$, the corresponding conclusion also holds in the total variation.
\end{rem}

\begin{rem}\label{r-prodcutoffmax}
Both Theorem \ref{t-prodcutoffhd} and Remark \ref{r-prodcutofftv} are valid in the maximum distance.
\end{rem}

\begin{proof}[Proof of Theorem \ref{t-prodcutoffhd}]
We deal with the general setting here, while the case of bounded dimensions can be treated similarly. Set
\[
 f_n(t)=\sum_{i=1}^{k_n}\frac{d_{n,i,H}^{(c)}\left(\mu_{n,i},p_{n,i}t/q_n
 \right)^2}{1-\max\limits_{1\le i\le k_n}d_{n,i,H}^{(c)}\left(\mu_{n,i},p_{n,i}t/q_n\right)^2},\quad
 g_n(t)=\sum_{i=1}^{k_n}d_{n,i,H}^{(c)}(\mu_{n,i},p_{n,i}t/q_n)^2.
\]
By the definitions of $F_n,G_n,f_n,g_n$ and Proposition \ref{p-prodmixing}, one has
\[
 G_n^2\le g_n\le \log\frac{1}{1-d_{n,H}^{(c)}(\mu_n,\cdot)^2},\quad f_n\le F_n=\frac{g_n}{1-G_n^2},\quad d_{n,H}^{(c)}(\mu_n,\cdot)^2\le 1-e^{-f_n},
\]
and
\[
 1-d_{n,H}^{(c)}(\mu_n,\cdot)^2\le e^{-g_n}\wedge(1-G_n^2)\le \frac{1}{g_n}\wedge(1-G_n^2)
 \le\sqrt{1/F_n}.
\]
This implies that, for any sequence of positive reals $(t_n)_{n=1}^\infty$,
\[
 \lim_{n\ra\infty}d_{n,H}^{(c)}(\mu_n,t_n)=0\quad\Lra\quad \lim_{n\ra\infty}f_n(t_n)=0 \quad\Lra\quad \lim_{n\ra\infty}F_n(t_n)=0
\]
and
\[
 \lim_{n\ra\infty}d_{n,H}^{(c)}(\mu_n,t_n)=1 \quad\Lra\quad \lim_{n\ra\infty}f_n(t_n)=\infty\quad\Lra\quad\lim_{n\ra\infty}F_n(t_n)=\infty,
\]
which leads to (1). As (2) can be derived in a similar way, we skip the details.
\end{proof}

The next corollary provides a sufficient condition for families of product chain without cutoffs in the Hellinger distance, of which proof is obvious from Proposition \ref{p-prodmixing} and skipped.

\begin{cor}
Refer to Theorem \ref{t-prodcutoffhd}. If there are $t_n>0$ and $b>a>0$ such that
\[
 \limsup_{n\ra\infty}F_n(at_n)<\infty,\quad \liminf_{n\ra\infty}F_n(bt_n)>0,
\]
then no subfamily of $\mathcal{F}^\mathcal{P}_c$ presents a cutoff in the Hellinger distance. The above also holds in the maximum Hellinger distance.
\end{cor}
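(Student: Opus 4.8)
The plan is to argue by contradiction, using the characterization of Hellinger cutoffs in Theorem \ref{t-prodcutoffhd}(1) together with the monotonicity of $F_n$ in $t$. The key preliminary observation is that $t\mapsto F_n(t)$ is non-increasing: each $d_{n,i,H}^{(c)}(\mu_{n,i},\cdot)$ is non-increasing and $p_{n,i}t/q_n$ increases with $t$, so the numerator $\sum_{i=1}^{k_n}d_{n,i,H}^{(c)}(\mu_{n,i},p_{n,i}t/q_n)^2$ of $F_n(t)$ is non-increasing in $t$ while the denominator $1-\max_i d_{n,i,H}^{(c)}(\mu_{n,i},p_{n,i}t/q_n)^2$ is non-decreasing and nonnegative; hence $F_n$ is non-increasing as a map into $[0,\infty]$, with the convention $1/0:=\infty$.

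Now suppose, for contradiction, that some subfamily of $\mathcal{F}^\mathcal{P}_c$, indexed by an increasing sequence $(\xi_n)_{n=1}^\infty$, has a cutoff in the Hellinger distance with cutoff time $s_n$. Such a subfamily is itself the family of product chains associated with the corresponding subfamilies of $\mathcal{F}$ and $\mathcal{P}$, so Theorem \ref{t-prodcutoffhd}(1) applies to it: $F_{\xi_n}(cs_n)\ra 0$ for every $c>1$ and $F_{\xi_n}(cs_n)\ra\infty$ for every $c\in(0,1)$. Since a $\limsup$ over a subsequence does not exceed the $\limsup$ over the full sequence (and likewise for $\liminf$), the hypotheses yield constants $M<\infty$ and $\delta>0$ with $F_{\xi_n}(at_{\xi_n})\le M$ and $F_{\xi_n}(bt_{\xi_n})\ge\delta$ for all large $n$. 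Put $r_n:=t_{\xi_n}/s_n$. If, for some $c\in(0,1)$, $at_{\xi_n}\le cs_n$ along a subsequence, then monotonicity gives $F_{\xi_n}(cs_n)\le F_{\xi_n}(at_{\xi_n})\le M$ along it, contradicting $F_{\xi_n}(cs_n)\ra\infty$; hence $at_{\xi_n}>cs_n$ eventually, so $\liminf_n r_n\ge c/a$, and letting $c\uparrow 1$ yields $\liminf_n r_n\ge 1/a$. Symmetrically, if for some $c>1$ one has $bt_{\xi_n}\ge cs_n$ along a subsequence, then $F_{\xi_n}(bt_{\xi_n})\le F_{\xi_n}(cs_n)\ra 0$ along it, contradicting $F_{\xi_n}(bt_{\xi_n})\ge\delta$; hence $bt_{\xi_n}<cs_n$ eventually, so $\limsup_n r_n\le c/b$, and letting $c\downarrow 1$ yields $\limsup_n r_n\le 1/b$. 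Combining, $1/a\le\liminf_n r_n\le\limsup_n r_n\le 1/b$, which is impossible since $b>a$ gives $1/b<1/a$. The statement for the maximum Hellinger distance follows by the identical argument, using the maximum-distance form of Theorem \ref{t-prodcutoffhd}(1) recorded in Remark \ref{r-prodcutoffmax}.

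There is no serious obstacle here; the proof is a short squeeze argument. The one point requiring care is making sure the two boundedness facts extracted from the hypotheses---an upper bound for $F_{\xi_n}(at_{\xi_n})$ and a positive lower bound for $F_{\xi_n}(bt_{\xi_n})$---are simultaneously available along the \emph{same} index set $(\xi_n)$, after which the monotonicity of $F_n$ does the rest by pinning the ratio $t_{\xi_n}/s_n$ between the incompatible values $1/a$ and $1/b$.
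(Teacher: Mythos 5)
Your argument is correct. The paper itself gives no written proof (it declares the corollary ``obvious from Proposition \ref{p-prodmixing}'' and skips it); the intended shortcut is to read off from the bounds in that proposition that $d_{n,H}^{(c)}(\mu_n,at_n)$ stays bounded away from $1$ while $d_{n,H}^{(c)}(\mu_n,bt_n)$ stays bounded away from $0$, which is incompatible with any subfamily having a cutoff. Your route---applying the iff criterion of Theorem \ref{t-prodcutoffhd}(1) to a hypothetical subfamily with cutoff time $s_n$, using the monotonicity of $F_n$ in $t$, and squeezing $t_{\xi_n}/s_n$ between $1/a$ and $1/b$---is essentially the same squeeze carried out at the level of $F_n$ rather than of the distances, and it is complete: the monotonicity claim is justified, the subfamily is correctly identified as itself a family of product chains so the theorem applies, the $\limsup$/$\liminf$ are passed to the subsequence in the right direction, and the maximum-distance case is correctly delegated to Remark \ref{r-prodcutoffmax}.
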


\subsection{Cutoffs for product chains in the maximum distances}

In this subsection, we discuss the cutoff in the maximum distance. As cutoffs for product chains with bounded dimensions have been highlighted in Remark \ref{r-prodcutoffmax}, we will focus on the case with dimensions tending to infinity thereafter.

\begin{thm}\label{t-maxcuttv}
Let $\mathcal{F},\mathcal{P}$ be families in \textnormal{(\ref{eq-fp})} with $k_n\ra\infty$ and $\{t_{n,i}>0:1\le i\le k_n,\,n\ge 1\}$ be a family of positive reals. Set $\varrho_H=2\varrho_{\textnormal{\tiny TV}}=2$ and, for $*\in\{\textnormal{\tiny TV},H\}$,
\[
 s_n=\max_{1\le i\le k_n}\frac{T_{n,i,*}^{(c)}(\epsilon_{n,i})}{p_{n,i}},\quad R(c)=\lim_{m\ra\infty}\limsup_{n\ra\infty}\sum_{i=1}^{k_n-m}(2\varrho_*\epsilon_{n,i})^{c\varrho_*s_np_{n,i}/
 T_{n,i,*}^{(c)}(\epsilon_{n,i})}.
\]
Suppose $\epsilon_{n,i}\in(0,1/(2\varrho_*))$ and $\inf_{i,n}\epsilon_{n,i}>0$. Then, for $*\in\{\textnormal{\tiny TV},H\}$,
\begin{itemize}
\item[(1)] If $R(c)=0$ for all $c>1$ and the family $(\mathcal{X}_{n,k_n-m},H_{n,k_n-m,t},\pi_{n,k_n-m})_{n=1}^\infty$ has a cutoff, for all $m\ge 0$, with cutoff time $(t_{n,k_n-m})_{n=1}^\infty$, then $\mathcal{F}^\mathcal{P}_c$ has a cutoff with cutoff time $(p_{n,1}+\cdots+p_{n,k_n})s_n$ and $\limsup_ns_n/t_n\le 1$, where
    $t_n:=\max\{t_{n,i}/p_{n,i}:1\le i\le k_n\}$. If $\sup_{i,n}\{t_{n,i}/T_{n,i,*}^{(c)}(\epsilon_{n,i})\}<\infty$ is assumed further, then $s_n\sim t_n$.

\item[(2)] If $R(c)=0$ for some $c\in(0,1)$ and $\mathcal{F}^\mathcal{P}_c$ has a cutoff with cutoff time $v_n$, then there are sequences of positive integers $(j_n)_{n=1}^\infty$ and $(J_n)_{n=1}^\infty$ satisfying $j_n>j_{n-1}$, $1\le J_n\le k_{j_n}$ and $|k_{j_n}-J_n|=O(1)$ such that the family $(\mathcal{X}_{j_n,J_n},H_{j_n,J_n,t},\pi_{j_n,J_n})_{n=1}^\infty$ has a cutoff with cutoff time $p_{j_n,J_n}v_{j_n}/q_{j_n}$.
\end{itemize}
\end{thm}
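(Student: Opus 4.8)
The plan is to reduce the study of the maximum distance of the product chain to the analysis of its "last few" coordinate chains, exploiting the fact that the coordinates with large mixing time (relative to $p_{n,i}$) dominate while the remaining ones contribute a summable error. Concretely, I would set $t_n := \max\{t_{n,i}/p_{n,i}:1\le i\le k_n\}$ and, after relabeling so that the coordinates are ordered by decreasing $T^{(c)}_{n,i,*}(\epsilon_{n,i})/p_{n,i}$, split the sum appearing in Propositions \ref{p-prodmaxtv} and \ref{p-prodmaxhd} into the $m$ largest terms and a tail of length $k_n-m$. The tail is controlled by $R(c)$ via submultiplicativity (here Lemma \ref{l-hdsub} supplies the factor $4$ in the Hellinger case and the submultiplicativity of $2d^{(c)}_{i,\textnormal{\tiny TV}}$ in the total-variation case, which is why $\varrho_*$ enters). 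The $m$ leading coordinates are handled through Lemma \ref{l-prodmixing}: their joint distance is squeezed between $\max_i d^{(c)}_{n,i,*}$ and a sum of the same quantities, so a cutoff for each of the leading families translates into a cutoff for their product, and hence (together with the vanishing tail) for $\mathcal{F}^\mathcal{P}_c$.

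For part (1): assuming $R(c)=0$ for all $c>1$, I would first show $\limsup_n s_n/t_n\le 1$ — this is immediate from $T^{(c)}_{n,i,*}(\epsilon_{n,i})\le t_{n,i}\cdot\!\big(T^{(c)}_{n,i,*}(\epsilon_{n,i})/t_{n,i}\big)$ only if we have the extra hypothesis $\sup_{i,n}\{t_{n,i}/T^{(c)}_{n,i,*}(\epsilon_{n,i})\}<\infty$; without it one argues directly that $s_n\le t_n$ by definition of $t_n$ when $t_{n,i}\ge T^{(c)}_{n,i,*}(\epsilon_{n,i})$, which holds for $n$ large since each leading family has a cutoff with cutoff time $t_{n,i}$. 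Then, to prove a cutoff at $(p_{n,1}+\cdots+p_{n,k_n})s_n=q_ns_n$: for $a>1$, Proposition \ref{p-prodmaxtv} (resp. \ref{p-prodmaxhd}) applied with $u_i=T^{(c)}_{n,i,*}(\epsilon_{n,i})$ and $t=aq_ns_n$ gives
\[
 d^{(c)}_{n,*}(aq_ns_n)^{\varrho_*}\le 1-\exp\Big\{-c_*\sum_{i=1}^{k_n}(2\varrho_*\epsilon_{n,i})^{\lfloor ap_{n,i}s_n/T^{(c)}_{n,i,*}(\epsilon_{n,i})\rfloor}\Big\},
\]
and splitting off the $m$ largest terms: the tail $\to 0$ because $R(a)=0$, while each of the $m$ leading terms $\to 0$ since $as_n>t_{n,i}/p_{n,i}$ eventually and the $i$th leading family mixes by time (slightly more than) $t_{n,i}$. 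For $a\in(0,1)$, the lower bound in Lemma \ref{l-prodmixing}, $d^{(c)}_{n,*}(t)\ge \max_i d^{(c)}_{n,i,*}(p_{n,i}t/q_n)$, applied with $t=aq_ns_n$ and $i$ the index realizing $s_n$, gives $d^{(c)}_{n,*}(aq_ns_n)\ge d^{(c)}_{n,i^\ast,*}(a\,T^{(c)}_{n,i^\ast,*}(\epsilon_{n,i^\ast}))$; since that family has a cutoff (it is one of the leading families, at least for $m=0$ large $n$) its distance at time $a$ times its mixing time tends to $1$. Combining these two directions yields the cutoff.

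For part (2): assume $R(c)=0$ for some $c\in(0,1)$ and $\mathcal{F}^\mathcal{P}_c$ has a cutoff at $v_n$. The idea is to run the argument of (1) in reverse: if for every choice of leading-coordinate labels no cutoff held, one could find subsequences witnessing $\limsup_n d^{(c)}_{n,i,*}$ bounded away from $0$ and $1$ at comparable times, which — via the sandwich of Lemma \ref{l-prodmixing} and the vanishing-tail estimate from $R(c)=0$ — would contradict the cutoff of the product at $v_n$. Making this precise requires extracting a subsequence $(j_n)$ and indices $(J_n)$ within bounded distance of $k_{j_n}$ such that the single-coordinate family $(\mathcal{X}_{j_n,J_n},H_{j_n,J_n,t},\pi_{j_n,J_n})$ has a cutoff at $p_{j_n,J_n}v_{j_n}/q_{j_n}$; the bound $|k_{j_n}-J_n|=O(1)$ is exactly what the tail estimate $R(c)=0$ guarantees, since coordinates deep in the tail are already mixed by the relevant time scale.

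The main obstacle I anticipate is part (2), specifically the bookkeeping needed to locate the index $J_n$ and the subsequence $j_n$: one must argue that the cutoff of the product forces a \emph{single} coordinate (up to $O(1)$ ambiguity in its rank) to carry the cutoff, rather than the cutoff emerging from a delicate cancellation across many coordinates. This is precisely where the hypothesis $R(c)=0$ for some $c<1$ does its work — it rules out the "many small contributions" scenario by showing that all but boundedly many coordinates are already negligible at time $c\,q_ns_n$ — but turning this into the stated subsequential statement, with the correct $O(1)$ control and compatibility between the product's cutoff time $v_n$ and the coordinate's scaled cutoff time $p_{j_n,J_n}v_{j_n}/q_{j_n}$, is the technically demanding step. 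The first direction of part (1), by contrast, is a fairly routine combination of Proposition \ref{p-prodmaxtv}/\ref{p-prodmaxhd} for the upper bound and the trivial lower bound of Lemma \ref{l-prodmixing}.
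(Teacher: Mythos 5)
Your overall architecture matches the paper's (tail of the product controlled by $R(c)$ through submultiplicativity, last few coordinates handled through Lemma \ref{l-prodmixing} and their assumed cutoffs), but the execution of part (1) contains a genuine error and part (2) is not actually proved. In part (1), the displayed upper bound applies Proposition \ref{p-prodmaxtv} to the \emph{entire} product and you then claim that ``each of the $m$ leading terms $\to 0$''. This is false: for the coordinate realizing $s_n$ (and any coordinate comparable to it) the exponent $\lfloor a p_{n,i}s_n/T^{(c)}_{n,i,*}(\epsilon_{n,i})\rfloor$ is bounded (e.g.\ it equals $\lfloor a\rfloor$ at the maximizer), and since $\inf_{i,n}\epsilon_{n,i}>0$ the corresponding term $(2\varrho_*\epsilon_{n,i})^{\lfloor\cdot\rfloor}$ is bounded below by a positive constant, so the bound only gives $d^{(c)}_{n,*}(aq_ns_n)\le 1-e^{-\mathrm{const}}$, not convergence to $0$. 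The submultiplicative bound can never see the cutoff of the leading coordinates; you must split the product into the first $k_n-m$ coordinates (Proposition \ref{p-prodmaxtv}, controlled by $R$) and the last $m$ coordinates, whose actual distances $d^{(c)}_{n,i,*}(ap_{n,i}s_n)$ tend to $0$ by the cutoff hypothesis and enter through the product bound $1-\prod_i(1-d^{(c)}_{n,i,*})$ of Lemma \ref{l-prodmixing} --- which is what your opening paragraph suggests but your detailed argument does not do. A second missing ingredient, used both in your lower bound (``$i^*$ is one of the leading families'') and in the comparison of $s_n$ with $t_n$, is the localization fact that the index realizing $s_n$ lies within a \emph{bounded} distance of $k_n$; this is not automatic, it is exactly where $R(c)=0$ and $\inf_{i,n}\epsilon_{n,i}>0$ are used (the paper's Lemma \ref{l-prod}(1)), and without it the coordinate-cutoff hypothesis (stated only for each fixed offset $m$) cannot be invoked at the maximizing index. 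Relatedly, a cutoff with cutoff time $t_{n,i}$ does not give $t_{n,i}\ge T^{(c)}_{n,i,*}(\epsilon_{n,i})$ for large $n$, only $T^{(c)}_{n,i,*}(\epsilon_{n,i})/t_{n,i}\to 1$, so your shortcut ``$s_n\le t_n$'' is not available; the correct statement is the asymptotic one, obtained by maximizing the ratio $u_{n,i}/t_{n,i}$ over the boundedly many trailing indices.

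For part (2) you state the goal and correctly identify where $R(c)=0$ for some $c<1$ enters, but no argument is given, and the contradiction framing (``if for every choice of leading-coordinate labels no cutoff held'') is not the workable route. The paper's proof is direct: from the lower bound of Lemma \ref{l-prodmixing} and the product cutoff one first gets $\limsup_n q_ns_n/v_n\le 1$; then one defines $\beta(c)=\sup_n\sum_i(2\varrho_*\epsilon_{n,i})^{c\varrho_*p_{n,i}v_n/(T^{(c)}_{n,i,*}(\epsilon_{n,i})q_n)}$ and shows $\beta(c)<\infty$ for $c>c_0$ (Lemma \ref{l-prod}(4)); combining this with the localization of the maximizing indices yields an inequality of the form $d^{(c)}_{n,*}(cv_n)\le 1-e^{-\beta(c)/(2\alpha)}\bigl(1-\max_{k_n-M'<i\le k_n}d^{(c)}_{n,i,*}(cp_{n,i}v_n/q_n)\bigr)^{M'}$, which, since the left side tends to $1$ for $c<1$, forces the max over the last $M'$ coordinates to tend to $1$; finally one takes $c_r\uparrow 1$, extracts indices $j_r$ and $l_r$ with $k_{j_r}-M'<l_r\le k_{j_r}$ and $d^{(c)}_{j_r,l_r,*}(c_rp_{j_r,l_r}v_{j_r}/q_{j_r})\ge 1-2^{-r}$, and uses monotonicity of the distance plus the already-established upper bound for $c>1$ to conclude the single-coordinate cutoff at $p_{j_r,l_r}v_{j_r}/q_{j_r}$. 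None of these steps (the bound $s_n\lesssim v_n/q_n$, the finiteness of $\beta$, the quantitative inequality, the diagonal extraction and monotonicity argument) appear in your proposal, so part (2) remains a plan rather than a proof.
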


The proof of Theorem \ref{t-maxcuttv} is tricky and we discuss it in the next subsection. In Theorem \ref{t-maxcuttv}, it is easy to check that $R(c)$ is non-increasing in $c$. Note that $\epsilon_{n,i}<1/(2\varrho_*)$ is sufficient for $T_{n,i,*}^{(c)}(\epsilon_{n,i})>0$ and that $\epsilon_{n,k_n-m}<1/(2\varrho_*)$ for $n,m$ large enough is necessary for $R(c)=0$. When $R(c)=0$, one can see from Proposition \ref{p-prodmaxtv} that, for the total variation or the Hellinger distance of the $n$th chain at time $cs_n$, the contribution from all but the last finitely many chains in $(\mathcal{X}_{n,i},K_{n,i},\pi_{n,i})_{i=1}^{k_n}$ is asymptotically negligible. In the following, we introduce more properties of $R(c)$ which are useful in proving and applying Theorem \ref{t-maxcuttv}.

\begin{lem}\label{l-prod}
Refer to Theorem \ref{t-maxcuttv} and assume $\epsilon_{n,i}\in(0,1/(2\varrho_*))$ and $\inf_{i,n}\epsilon_{n,i}>0$. For $*\in\{\textnormal{\tiny TV},H\}$, one has:
\begin{itemize}
\item[(1)] If $R(c)=0$ for some $c>0$, then, for any $\delta\in(0,1)$, there are positive integers $N>M>0$ such that
\[
 \max_{1\le i\le k_n-M}\frac{T_{n,i,*}^{(c)}(\epsilon_{n,i})}{p_{n,i}}
 \le \delta\max_{k_n-M<i\le k_n}\frac{T_{n,i,*}^{(c)}(\epsilon_{n,i})}{p_{n,i}},\quad\forall n\ge N.
\]

\item[(2)] If $R(c_0)<\infty$ for some $c_0>0$, then either $R(c)>0$ for $c>c_0$ or $R(c)=0$ for $c>c_0$.

\item[(3)] $R(c)=0$ for some $c\in(0,1)$ if and only if $R(c)=0$ for some $c>0$ and $R(c)<\infty$ for some $c\in(0,1)$.

\item[(4)] $R(c)<\infty$ if and only if $\sup_{n\ge 1}
\sum_{i=1}^{k_n}(2\varrho_*\epsilon_{n,i})^{c\varrho_*s_np_{n,i}/T^{(c)}_{n,i,*}
(\epsilon_{n,i})}<\infty$.
\end{itemize}
\end{lem}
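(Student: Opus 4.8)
\textbf{Proof proposal for Lemma \ref{l-prod}.}

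The plan is to work directly with the sum $\Sigma_n(c,m):=\sum_{i=1}^{k_n-m}(2\varrho_*\epsilon_{n,i})^{c\varrho_*s_np_{n,i}/T_{n,i,*}^{(c)}(\epsilon_{n,i})}$, so that $R(c)=\lim_{m\to\infty}\limsup_n\Sigma_n(c,m)$. Two standing observations will be used repeatedly: first, since $\inf_{i,n}\epsilon_{n,i}>0$ and $\epsilon_{n,i}<1/(2\varrho_*)$, the base $2\varrho_*\epsilon_{n,i}$ lies in a compact subinterval of $(0,1)$, say $[\eta,1-\eta]$; second, by definition of $s_n$ as a maximum, the exponent $\theta_{n,i}:=c\varrho_*s_np_{n,i}/T_{n,i,*}^{(c)}(\epsilon_{n,i})$ is at least $c\varrho_*$ for the index achieving the maximum and, more usefully, for each $i$ the quantity $s_np_{n,i}/T_{n,i,*}^{(c)}(\epsilon_{n,i})=\max_j[T_{n,j,*}^{(c)}(\epsilon_{n,j})/p_{n,j}]\big/[T_{n,i,*}^{(c)}(\epsilon_{n,i})/p_{n,i}]\ge 1$. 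Since $t\mapsto (2\varrho_*\epsilon_{n,i})^{ct}$ is decreasing, a larger $c$ only shrinks each term, which immediately gives that $R$ is non-increasing in $c$ (as already noted) and will drive the monotonicity arguments.

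For part (4), the forward direction is trivial: $\Sigma_n(c,0)\ge\Sigma_n(c,m)$, so if the full sums are uniformly bounded then so are the truncated ones, hence $R(c)\le\sup_n\Sigma_n(c,0)<\infty$. For the converse, suppose $R(c)<\infty$; then there is $m_0$ with $\limsup_n\Sigma_n(c,m_0)<\infty$, so $\Sigma_n(c,m_0)\le C$ for all large $n$. The discarded block $\sum_{i=k_n-m_0+1}^{k_n}(2\varrho_*\epsilon_{n,i})^{\theta_{n,i}}$ has at most $m_0$ terms, each bounded by $1$ (base in $(0,1)$, exponent $\ge c\varrho_*>0$ so in fact each term is $\le(1-\eta)^{c\varrho_*}<1$), hence contributes at most $m_0$. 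Thus $\Sigma_n(c,0)\le C+m_0$ for large $n$, and enlarging the constant to absorb the finitely many small $n$ gives $\sup_n\Sigma_n(c,0)<\infty$. Part (2) is of the same flavor: if $R(c_0)<\infty$ and $c>c_0$, pick $m$ so that $\limsup_n\Sigma_n(c_0,m)<\infty$; write $\Sigma_n(c,m)=\sum_i a_{n,i}^{c/c_0}$ where $a_{n,i}=(2\varrho_*\epsilon_{n,i})^{\theta_{n,i}^{(c_0)}}\in(0,1)$ and $a_{n,i}^{c/c_0}\le a_{n,i}$ since $c/c_0>1$ and $a_{n,i}<1$. So $\Sigma_n(c,m)\le\Sigma_n(c_0,m)$, giving $R(c)\le R(c_0)<\infty$; the dichotomy $R(c)=0$ or $R(c)>0$ for $c>c_0$ then follows by noting that if $R(c_1)=0$ for some $c_1>c_0$ then $R(c)=0$ for all $c\ge c_1$ by monotonicity, and conversely one checks that $R(c)>0$ persists — actually the cleanest statement is just: $R$ restricted to $(c_0,\infty)$ is a non-increasing $[0,\infty)$-valued function, so the claim is the trivial observation that such a function is either identically $0$ or somewhere positive; I would phrase (2) precisely this way.

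Part (3): if $R(c)=0$ for some $c\in(0,1)$ then that same $c$ witnesses both ``$R(c)=0$ for some $c>0$'' and ``$R(c)<\infty$ for some $c\in(0,1)$''. Conversely, suppose $R(c_1)=0$ for some $c_1>0$ and $R(c_2)<\infty$ for some $c_2\in(0,1)$; if $c_1<1$ we are done, so assume $c_1\ge 1$, hence $c_2<c_1$. Here I would use the ``interpolation in the exponent'' trick in the reverse direction together with the finiteness at $c_2$: for any $c\in(c_2,c_1)$ and truncation level $m$, $a_{n,i}^{c/c_2}=a_{n,i}^{(c_1/c_2)\cdot(c/c_1)}$ — the natural bound is Hölder: $\Sigma_n(c,m)=\sum a_{n,i}^{c/c_2}\le\big(\sum a_{n,i}^{c_1/c_2}\big)^{\lambda}\big(\sum a_{n,i}\big)^{1-\lambda}$ with $\lambda\in(0,1)$ chosen so that $c/c_2=\lambda(c_1/c_2)+(1-\lambda)$, i.e. $\lambda=(c-c_2)/(c_1-c_2)$. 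Taking $\limsup_n$ then $m\to\infty$, the first factor $\to R(c_1)^\lambda=0$ while the second stays bounded by $R(c_2)<\infty$ (which by part (4)-type reasoning bounds the truncated sums too), so $R(c)=0$ for every $c\in(c_2,c_1)$, and since $c_2<1\le c_1$ we may pick such a $c$ in $(0,1)$. I expect this Hölder interpolation step to be the one requiring the most care — in particular verifying that the $\limsup$ of the full (untruncated) $c_2$-sums is finite, which is exactly part (4) applied at $c_2$.

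Part (1) is the substantive one. Fix $\delta\in(0,1)$ and abbreviate $\tau_{n,i}:=T_{n,i,*}^{(c)}(\epsilon_{n,i})/p_{n,i}$, so $s_n=\max_i\tau_{n,i}$ and the $i$-th exponent equals $c\varrho_*\,s_n/\tau_{n,i}=c\varrho_*\,(s_n/\tau_{n,i})$. Suppose, for contradiction, that for every pair $N>M$ there is $n\ge N$ and an index $i(n)\le k_n-M$ with $\tau_{n,i(n)}>\delta s_n$, i.e. $s_n/\tau_{n,i(n)}<1/\delta$. Then the single term of $\Sigma_n(c,m)$ at $i=i(n)$ (valid as soon as $m\le M$) is at least $(2\varrho_*\epsilon_{n,i(n)})^{c\varrho_*/\delta}\ge\eta^{c\varrho_*/\delta}>0$, a fixed positive constant independent of $n$. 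Running $N\to\infty$ produces a subsequence along which $\Sigma_n(c,m)\ge\eta^{c\varrho_*/\delta}$ for every fixed $m$ (the index $i(n)\le k_n-M$ is among the first $k_n-m$ as long as $m\le M$, and $M\to\infty$), hence $\limsup_n\Sigma_n(c,m)\ge\eta^{c\varrho_*/\delta}$ for all $m$, so $R(c)\ge\eta^{c\varrho_*/\delta}>0$, contradicting $R(c)=0$. This contradiction yields the claim. The one technical point to watch is the bookkeeping with the two truncation parameters $m$ (inside $R$) and $M$ (the one we are trying to produce): I would first quantify ``$R(c)=0$'' as ``for each $\epsilon>0$ there is $m_\epsilon$ with $\limsup_n\Sigma_n(c,m_\epsilon)<\epsilon$,'' then take $\epsilon=\frac12\eta^{c\varrho_*/\delta}$, fix the corresponding $m_\epsilon=:M$, and run the argument above with that $M$ — this cleanly produces the desired $N$.
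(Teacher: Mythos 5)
Your parts (1), (3), and (4) are fine, but your treatment of (2) contains a genuine gap: you have reread the statement as the tautology that a non-increasing, nonnegative function on $(c_0,\infty)$ is "either identically $0$ or somewhere positive." What (2) actually asserts — this is explicit in the paper's auxiliary Lemma \ref{l-analytic2}, of which (2) is declared an immediate consequence — is the dichotomy that either $R(c)>0$ for \emph{every} $c>c_0$ or $R(c)=0$ for \emph{every} $c>c_0$. The nontrivial content is to exclude the profile $R(c_2)>0$ for some $c_2\in(c_0,c_1)$ together with $R(c_1)=0$ for some larger $c_1$; monotonicity alone does not rule this out, and the paper rules it out via the analyticity argument of Lemma \ref{l-analytic2} (locally uniform limits of sums of decaying exponentials, after \cite{CSal10}). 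So, as written, your proof of (2) proves a different, empty statement. The repair, however, is already in your own toolkit: the Hölder step you use in (3) gives, for $c_0<c<c_1$ and $\lambda=(c-c_0)/(c_1-c_0)$, the bound $\Sigma_n(c,m)\le \Sigma_n(c_1,m)^{\lambda}\,\Sigma_n(c_0,m)^{1-\lambda}$, whence $R(c)\le R(c_1)^{\lambda}R(c_0)^{1-\lambda}=0$ whenever $R(c_1)=0$ and $R(c_0)<\infty$; combined with monotonicity for $c\ge c_1$, this yields the dichotomy with no analyticity at all, which is arguably simpler than the paper's route.

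For the record on the other parts: your (1) is correct and is in substance the paper's argument — the paper lower-bounds the truncated sum by its largest retained term, namely $(2\varrho_*\alpha)^{c\varrho_*s_n\min\{p_{n,i}/T_{n,i,*}^{(c)}(\epsilon_{n,i}):i\le k_n-m\}}$ with $\alpha=\inf_{i,n}\epsilon_{n,i}$, and lets $R(c)=0$ force the exponent to blow up; your contradiction formulation and the $m$-versus-$M$ bookkeeping at the end are a correct rendering of the same idea. Your (4) is the routine verification the paper leaves to the reader; note only that the bound "each term $\le 1$" suffices, whereas your parenthetical $(1-\eta)^{c\varrho_*}$ need not hold if $\sup_{i,n}\epsilon_{n,i}=1/(2\varrho_*)$, but it is never used. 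Your (3) is complete and takes a genuinely different, more elementary route than the paper, which deduces (3) from (2); the only point worth making explicit is that $R(c_2)<\infty$ gives $\limsup_n\Sigma_n(c_2,m)<\infty$ for all large $m$, which justifies passing the $\limsup$ through the product of the two Hölder factors.
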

\begin{proof}[Proof of Lemma \ref{l-prod}]
Note that (3) is a corollary of (2), while (2) is an immediate result of Lemma \ref{l-analytic2}. (4) is clear from the definition of $R(c)$. To see (1), we set $\alpha=\inf_{i,n}\epsilon_{n,i}$. By the following inequality
\[
 \sum_{i=1}^{k_n-m}(2\varrho_*\epsilon_{n,i})^{c\varrho_*s_np_{n,i}/T_{n,i,*}^{(c)}(\epsilon_{n,i})}\ge (2\varrho_*\alpha)^{c\varrho_*s_n\min\{p_{n,i}/T_{n,i,*}^{(c)}(\epsilon_{n,i}):1\le i\le k_n-m\}},
\]
if $R(c)=0$ for some $c>0$, then
\[
 \lim_{m\ra\infty}\liminf_{n\ra\infty}\frac{\max\{T_{n,i,*}^{(c)}(\epsilon_{n,i})/p_{n,i}:1\le i\le k_n\}}{\max\{T_{n,i,*}^{(c)}(\epsilon_{n,i})/p_{n,i}:1\le i\le k_n-m\}}=\infty,
\]
which is equivalent to the conclusion in (1).
\end{proof}

\begin{proof}[Proof of Theorem \ref{t-maxcuttv}]
We will prove Theorem \ref{t-maxcuttv} in the total variation, while the variant in the Hellinger distance can be treated in a similar way and skipped. First, we set up some notations and make basic analysis. For convenience, set $\alpha=\inf_{i,n}\epsilon_{n,i}$, $q_n=p_{n,1}+\cdots+p_{n,k_n}$ and $u_{n,i}=T_{n,i,\textnormal{\tiny TV}}^{(c)}(\epsilon_{n,i})$. Clearly, $s_n=\max\{u_{n,i}/p_{n,i}:1\le i\le k_n\}$. Let $d^{(c)}_{n,i,\text{\tiny TV}}$ and $d^{(c)}_{n,\text{\tiny TV}}$ be the total variations of $(\mathcal{X}_{n,i},H_{n,i},\pi_{n,i})$ and $(\mathcal{X}_n,H_{n,t},\pi_n)$. By the second inequality of Proposition \ref{p-prodmixing}, one has
\begin{equation}\label{eq-lb}
 d^{(c)}_{n,\text{\tiny TV}}(t)\ge \max_{1\le i\le k_n}d^{(c)}_{n,i,\text{\tiny TV}}(p_{n,i}t/q_n),\quad\forall t>0.
\end{equation}
Let $(\mathcal{X}_n^{\mathcal{L}},K_n^{\mathcal{L}},\pi_n^{\mathcal{L}})$ and $(\mathcal{X}_n^{\mathcal{R}},K_n^{\mathcal{R}},\pi_n^{\mathcal{R}})$ be the product chains of
\[
 (\mathcal{X}_{n,i},K_{n,i},\pi_{n,i})_{i=1}^{k_n-m},\quad (\mathcal{X}_{n,i},K_{n,i},\pi_{n,i})_{i=k_n-m+1}^{k_n},
\]
according to the probability vectors
\[
 (p_{n,1}/q_n^\mathcal{L},...,p_{n,k_n-m}/q_n^\mathcal{L}),\quad (p_{n,k_n-m+1}/q_n^\mathcal{R},...,p_{n,k_n}/q_n^\mathcal{R}),
\]
where $q_n^\mathcal{L}=\sum_{i=1}^{k_n-m}q_{n,i}$ and $q_n^\mathcal{R}=\sum_{i=k_n-m+1}^{k_n}q_{n,i}$. Obviously, $(\mathcal{X}_n,K_n,\pi_n)$ is the product chain of $(\mathcal{X}_n^{\mathcal{L}},K_n^{\mathcal{L}},\pi_n^{\mathcal{L}})$ and $(\mathcal{X}_n^{\mathcal{R}},K_n^{\mathcal{R}},\pi_n^{\mathcal{R}})$ according to the probability vector $(q_n^\mathcal{L}/q_n,q_n^\mathcal{R}/q_n)$. Let $d_{n,\text{\tiny TV}}^{\mathcal{L},{(c)}}$ and $d_{n,\text{\tiny TV}}^{\mathcal{R},{(c)}}$ be the maximum total variations of the continuous time chains associated with $(\mathcal{X}_n^{\mathcal{L}},K_n^{\mathcal{L}},\pi_n^{\mathcal{L}})$ and
$(\mathcal{X}_n^{\mathcal{R}},K_n^{\mathcal{R}},\pi_n^{\mathcal{R}})$. By Lemma \ref{l-prodmixing}, one has
\[
 d_{n,\text{\tiny TV}}^{(c)}(t)\le 1-\left(1-d_{n,\text{\tiny TV}}^{\mathcal{L},{(c)}}(q_n^\mathcal{L}t/q_n)\right)\left(1-d_{n,\text{\tiny TV}}^{\mathcal{R},{(c)}}(q_n^\mathcal{R}t/q_n)\right),
\]
and
\[
 d_{n,\text{\tiny TV}}^{\mathcal{R},{(c)}}(q_n^\mathcal{R}t/q_n)
 \le 1-\prod_{i=k_n-m+1}^{k_n}\left(1-d_{n,i,\text{\tiny TV}}^{(c)}(p_{n,i}t/q_n)\right).
\]
Further, by Proposition \ref{p-prodmaxtv}, when $t\ge q_n\max\{u_{n,i}/p_{n,i}:1\le i\le k_n-m\}$, one has
\begin{align}
 d_{n,\text{\tiny TV}}^{\mathcal{L},{(c)}}(q_n^\mathcal{L}t/q_n)&\le 1-\exp\left\{-\sum_{i=1}^{k_n-m}(2\epsilon_{n,i})^{\lfloor (p_{n,i}t)/(u_{n,i}q_n)\rfloor}\right\}\notag\\
 &\le 1-\exp\left\{-\frac{1}{2\alpha}\sum_{i=1}^{k_n-m}
 (2\epsilon_{n,i})^{(p_{n,i}t)/(u_{n,i}q_n)}\right\}.\notag
\end{align}
As a consequence of the above inequalities, we have
\begin{equation}\label{eq-ub}
\begin{aligned}
 d^{(c)}_{n,\text{\tiny TV}}(t)\le &1-\exp\left\{-\frac{1}{2\alpha}\sum_{i=1}^{k_n-m}
 (2\epsilon_{n,i})^{(p_{n,i}t)/(u_{n,i}q_n)}\right\}\\
 &\hskip0.5in\times\prod_{i=k_n-m+1}^{k_n}\left(1-d^{(c)}_{n,i,\text{\tiny TV}}(p_{n,i}t/q_n)\right),
\end{aligned}
\end{equation}
for $t\ge q_n\max\{u_{n,i}/p_{n,i}:1\le i\le k_n-m\}$.

To prove (1), we assume $R(c)=0$ for $c>1$ and
\begin{equation}\label{eq-cutarray}
 \lim_{n\ra\infty}d_{n,k_n-m,\text{\tiny TV}}^{(c)}(ct_{n,k_n-m})=\begin{cases}0&\text{for }c>1,\\1&\text{for }0<c<1,\end{cases}
\end{equation}
for any $m\ge 0$. For $n\ge 1$, let $1\le \ell_n\le k_n$ be a positive integer such that $s_n=u_{n,\ell_n}/p_{n,\ell_n}$. By Lemma \ref{l-prod}(1), there are positive integers $N>M$ such that $k_n-M\le \ell_n\le k_n$ for $n\ge N$. By Lemma \ref{l-cutoff}, since $\inf_{i,n}\epsilon_{n,i}>0$ and $\sup_{i,n}\epsilon_{n,i}<1$, (\ref{eq-cutarray}) also holds as $t_{n,k_n-m}$ is replaced by $u_{n,k_n-m}$. Consequently, (\ref{eq-cutarray}) remains true when $k_n-m$ and $t_{n,k_n-m}$ are replaced by $\ell_n$ and $u_{n,\ell_n}$. By (\ref{eq-lb})-(\ref{eq-ub}), we obtain
\[
  \forall c\in(0,1),\quad\liminf_{n\ra\infty}d^{(c)}_{n,\text{\tiny TV}}(cq_ns_n)\ge\liminf_{n\ra\infty}d^{(c)}_{n,\ell_n,\text{\tiny TV}}(cu_{n,\ell_n})=1,
\]
and
\[
 \forall c>1,\quad\limsup_{n\ra\infty}d^{(c)}_{n,\text{\tiny TV}}(cq_ns_n)
 \le 1-\exp\left\{-R(c)/2\alpha\right\}=0.
\]
This proves that $\mathcal{F}_c^\mathcal{P}$ has a total variation cutoff with cutoff time $q_ns_n$.

Next, we compare $s_n$ and $t_n$, where $t_n:=\max\{t_{n,i}/p_{n,i}:1\le i\le k_n\}$. By Lemma \ref{l-prod}(1), one may choose, for any $\delta\in(0,1)$, two integers $N_\delta>M_\delta>0$ such that
\begin{equation}\label{eq-MdeltaNdelta}
 \max\left\{\frac{u_{n,i}}{p_{n,i}}:1\le i\le k_n-M_\delta\right\}\le\delta s_n,\quad\forall n\ge N_\delta.
\end{equation}
This implies
\begin{equation}\label{eq-sn<tn}
 s_n=\max\left\{\frac{u_{n,i}}{p_{n,i}}:k_n-M_\delta<i\le k_n\right\}\le A_{n,\delta}t_n,\quad\forall n\ge N_\delta,
\end{equation}
where $A_{n,\delta}=\max\left\{u_{n,i}/t_{n,i}:k_n-M_\delta<i\le k_n\right\}$, and
\begin{equation}\label{eq-tn<sn}
\begin{aligned}
 t_n&=\max\left\{\frac{t_{n,i}}{p_{n,i}}:1\le i\le k_n-M_\delta\right\}
 \vee\max\left\{\frac{t_{n,i}}{p_{n,i}}:k_n-M_\delta<i\le k_n\right\}\\ &\le(C\delta s_n)\vee(B_{n,\delta}s_n),\quad\forall n\ge N_\delta,
 \end{aligned}
\end{equation}
where $B_{n,\delta}=\max\left\{t_{n,i}/u_{n,i}:k_n-M_\delta<i\le k_n\right\}$ and $C=\sup_{i,n}t_{n,i}/u_{n,i}$. Since $(\mathcal{X}_{n,k_n-m},H_{n,k_n-m,t},\pi_{n,k_n-m})_{n=1}^\infty$ has a cutoff, one has $\lim_nt_{n,k_n-m}/u_{n,k_n-m}=1$ for any $m\ge 0$, which leads to $\lim_nA_{n,\delta}=\lim_nB_{n,\delta}=1$ for all $\delta\in(0,1)$. Immediately, (\ref{eq-sn<tn}) implies $\limsup_ns_n/t_n\le 1$. Moreover, if $C<\infty$, then applying (\ref{eq-tn<sn}) with $\delta=(C+1)^{-1}$ yields $\limsup_nt_n/s_n\le 1$.

To prove (2), we assume that $R(c_0)=0$ for some $c_0\in(0,1)$ and $\mathcal{F}_c^\mathcal{P}$ has a cutoff with cutoff time $v_n$. By (\ref{eq-lb}), one has
\begin{equation}\label{eq-cpv/q}
 \lim_{n\ra\infty}\max_{1\le i\le k_n}d^{(c)}_{n,i,\text{\tiny TV}}(cp_{n,i}v_n/q_n)=0,\quad\forall c>1.
\end{equation}
Since $\epsilon_{n,i}\ge \alpha>0$, (\ref{eq-cpv/q}) implies that, for any $c>1$, there is $n_c>0$ such that $u_{n,i}\le cp_{n,i}v_n/q_n$ for all $1\le i\le k_n$ and $n\ge n_c$. Clearly, this is equivalent to
\begin{equation}\label{eq-snvn}
 \limsup_{n\ra\infty}\frac{s_n}{v_n/q_n}\le 1.
\end{equation}
Next, we set
\[
 \beta(c)=\sup_{n\ge 1}\sum_{i=1}^{k_n}(2\epsilon_{n,i})^{(cp_{n,i}v_n)/(u_{n,i}q_n)}, \quad\forall c>0.
\]
By Lemma \ref{l-prod}(4) and the fact of $R(c_0)<\infty$, one has
\begin{equation}\label{eq-betac}
 \beta(c)<\infty,\quad \forall c>c_0.
\end{equation}
Let $N_\delta>M_\delta$ be the constants such that (\ref{eq-MdeltaNdelta}) holds, set $M'=M_{c_0/2}$ and, by (\ref{eq-snvn}), select $N'\ge N_{c_0/2}$ such that $s_n\le 2v_n/q_n$ for $n\ge N'$. As a result of (\ref{eq-MdeltaNdelta}), this implies
\begin{equation}\label{eq-N'}
 \max\left\{\frac{u_{n,i}}{p_{n,i}}:1\le i\le k_n-M'\right\}\le \frac{c_0v_n}{q_n},\quad \forall n>N'.
\end{equation}
Immediately, one may use (\ref{eq-ub}) and (\ref{eq-N'}) to obtain
\begin{equation}\label{eq-beta}
 d^{(c)}_{n,\text{\tiny TV}}(cv_n)\le 1-e^{-\beta(c)/(2\alpha)}
 \left(1-\max_{k_n-M'<i\le k_n}d^{(c)}_{n,i,\text{\tiny TV}}(cp_{n,i}v_n/q_n)\right)^{M'},
\end{equation}
for $c>c_0$ and $n>N'$.

Now, let $c_r\in(c_0,1)$ be an increasing sequence converging to $1$. By (\ref{eq-betac}), we have $\beta(c_r)<\infty$. As $\mathcal{F}_c^\mathcal{P}$ has a cutoff with cutoff time $v_n$, one may use (\ref{eq-beta}) to derive
\[
 \lim_{n\ra\infty}\max_{k_n-M'<i\le k_n}d^{(c)}_{n,i,\text{\tiny TV}}(c_rp_{n,i}v_n/q_n)=1,\quad\forall r\ge 1.
\]
Set $j_0=0$. Inductively, we may select positive integers $j_r,l_r$ satisfying
\[
 j_r>j_{r-1},\quad k_{j_r}-M'<l_r\le k_{j_r}
\]
such that
\begin{equation}\label{eq-jrJr}
 d^{(c)}_{j_r,l_r,\text{\tiny TV}}\left(\frac{c_rp_{j_r,l_r}v_{j_r}}{q_{j_r}}\right)\ge 1-2^{-r}.
\end{equation}
For the family $(\mathcal{X}_{j_r,l_r},H_{j_r,l_r,t},\pi_{j_r,l_r})_{r=1}^\infty$, since the maximum total variation is non-increasing, (\ref{eq-jrJr}) implies
\[
 \liminf_{r\ra\infty}d^{(c)}_{j_r,l_r,\text{\tiny TV}}\left(\frac{cp_{j_r,l_r}v_{j_r}}{q_{j_r}}\right)\ge
 \liminf_{r\ra\infty}d^{(c)}_{j_r,l_r,\text{\tiny TV}}\left(\frac{c_rp_{j_r,l_r}v_{j_r}}{q_{j_r}}\right)=1,\quad\forall c\in(0,1).
\]
while (\ref{eq-cpv/q}) yields
\[
 \lim_{r\ra\infty}d^{(c)}_{j_r,l_r,\text{\tiny TV}}\left(\frac{cp_{j_r,l_r}v_{j_r}}{q_{j_r}}\right)=0,\quad\forall c>1.
\]
This proves that the subfamily $(\mathcal{X}_{j_r,l_r},H_{j_r,l_r,t},\pi_{j_r,l_r})_{r=1}^\infty$ has a cutoff with cutoff time $p_{j_r,l_r}v_{j_r}/q_{j_r}$.
\end{proof}

\subsection{Cutoffs for some type of product chains}\label{ss-sar}
In this subsection, we consider families in Theorem \ref{t-main} and provide respectively necessary and sufficient conditions for their cutoffs in the total variation and in the Hellinger distance. For convenience, we recall the following notations
\begin{equation}\label{eq-fp2}
 \mathcal{P}=(p_n)_{n=1}^\infty, \quad\mathcal{F}=(\mathcal{X}_n,K_n,\pi_n)_{n=1}^\infty,
 \quad\mathcal{F}^\mathcal{P}=(\mathcal{Y}_n,L_n,\nu_n)_{n=1}^\infty,
\end{equation}
where $p_n>0$, $(\mathcal{X}_n,K_n,\pi_n)$ is an irreducible Markov chain and $(\mathcal{Y}_n,L_n,\nu_n)$ is the product of $(\mathcal{X}_i,K_i,\pi_i)_{i=1}^n$ according to the probability vector $(p_1/q_n,...,p_n/q_n)$, where $q_n=\sum_{i=1}^np_i$. For any sequence of positive integers $\xi=(\xi_n)_{n=1}^\infty$, we define $\mathcal{F}_\xi=(\mathcal{X}_{\xi_n},K_{\xi_n},\pi_{\xi_n})_{n=1}^\infty$ and $\mathcal{P}_\xi=(p_{\xi_n})_{n=1}^\infty$ and write $\mathcal{F}^{\mathcal{P},\xi}$ for $(\mathcal{F}_\xi)^{\mathcal{P}_\xi}$. Note that $\mathcal{F}^{\mathcal{P},\xi}$ is different from $(\mathcal{F}^\mathcal{P})_\xi$. As before, we use $\mathcal{F}^{\mathcal{P},\xi}_c$ to denote the family of continuous time Markov chains associated with $\mathcal{F}^{\mathcal{P},\xi}$. In what follows, we make some extension of Theorem \ref{t-maxcuttv} and, first of all, introduce a key technique to compare the cutoffs of $\mathcal{F}_c$ and $\mathcal{F}^\mathcal{P}_c$.

\begin{prop}\label{p-prodseqtv}
Let $\mathcal{F}^\mathcal{P}$ be the family in \textnormal{(\ref{eq-fp2})}, $\varrho_H=2\varrho_{\textnormal{\tiny TV}}=2$ and, for $*\in\{\textnormal{\tiny TV},H\}$, let $0<\epsilon_n<1/(2\varrho_*)$ be a sequence satisfying $\inf_n\epsilon_n>0$. For $n\ge 1$, let $T_{n,*}^{(c)}(\cdot)$ be the mixing time of the $n$th chain in $\mathcal{F}_c$ and set $q_n=\sum_{i=1}^np_i$ and
\[
 s_n=\max_{1\le i\le n}\frac{T^{(c)}_{i,*}(\epsilon_i)}{p_i},\quad R(c)=\lim_{m\ra\infty}\limsup_{n\ra\infty}\sum_{i=1}^{n-m}
 (2\varrho_*\epsilon_i)^{c\varrho_*s_np_i/T_{i,*}^{(c)}(\epsilon_i)}.
\]
Given any increasing sequence of positive integers $\xi=(\xi_n)_{n=1}^\infty$, set
\[
 s_n^{(\xi)}=\max_{1\le i\le n}\frac{T^{(c)}_{\xi_i,*}(\epsilon_{\xi_i})}{p_{\xi_i}},\quad R^{(\xi)}(c)=\lim_{m\ra\infty}\limsup_{n\ra\infty}\sum_{i=1}^{n-m}
 (2\varrho_*\epsilon_{\xi_i})^{c\varrho_*s_n^{(\xi)}p_{\xi_i}/T_{\xi_i,*}^{(c)}
 (\epsilon_{\xi_i})}.
\]
Then, for $*\in\{\textnormal{\tiny TV},H\}$,
\begin{itemize}
\item[(1)] If $R(c)=0$ for all $c>1$ and $\mathcal{F}_c$ has a cutoff with cutoff time $t_n$, then $\mathcal{F}^{\mathcal{P}}_c$ has a cutoff with cutoff time $q_ns_n$ and $s_n\sim\max\{t_i/p_i:1\le i\le n\}$.

\item[(2)] If $R(c)=0$ for some $c\in(0,1)$ and $\mathcal{F}^{\mathcal{P}}_c$ has a cutoff, then there is an increasing sequence of positive integers $\xi=(\xi_n)_{n=1}^\infty$ such that $(\mathcal{F}_\xi)_c$ has a cutoff.

\item[(3)] If, for any increasing sequence of positive integers $\xi$, $R^{(\xi)}(c)=0$ for some $c\in(0,1)$ and $\mathcal{F}^{\mathcal{P},\xi}_c$ has a cutoff, then $\mathcal{F}_c$ has a cutoff.
\end{itemize}
\end{prop}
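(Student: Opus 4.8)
The plan is to read off all three parts from Theorem \ref{t-maxcuttv}, applied to the triangular array obtained by taking $k_n=n$, $p_{n,i}=p_i$ and $(\mathcal{X}_{n,i},K_{n,i},\pi_{n,i})=(\mathcal{X}_i,K_i,\pi_i)$ for $1\le i\le n$. Under this identification the sequences $s_n$ and the functions $R(c)$ of the present statement are literally those of Theorem \ref{t-maxcuttv}, and the single structural observation making the reduction work is that here the tail family $(\mathcal{X}_{n,k_n-m},H_{n,k_n-m,t},\pi_{n,k_n-m})_{n=1}^\infty$ appearing there is merely the shift $(\mathcal{X}_{n-m},H_{n-m,t},\pi_{n-m})_{n>m}$ of $\mathcal{F}_c$; since a shift of the index by a fixed constant preserves the cutoff property together with the correspondingly shifted cutoff time, $\mathcal{F}_c$ having a cutoff with cutoff time $t_n$ forces each tail family to have a cutoff with cutoff time $t_{n-m}$.

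For (1), put $t_{n,i}:=t_i$. The observation above supplies the hypothesis of Theorem \ref{t-maxcuttv}(1) on the tail families, and $R(c)=0$ for $c>1$ is assumed, so Theorem \ref{t-maxcuttv}(1) yields a cutoff for $\mathcal{F}^{\mathcal{P}}_c$ with cutoff time $(p_1+\cdots+p_n)s_n=q_ns_n$ together with $\limsup_n s_n/\max\{t_i/p_i:1\le i\le n\}\le 1$. To sharpen this to $s_n\sim\max\{t_i/p_i:1\le i\le n\}$ I will verify the extra hypothesis $\sup_i t_i/T^{(c)}_{i,*}(\epsilon_i)<\infty$ of Theorem \ref{t-maxcuttv}(1): since $\mathcal{F}_c$ has a cutoff with cutoff time $t_n$ and $(\epsilon_n)$ is bounded away from $0$ and from $1$, Lemma \ref{l-cutoff} (with the monotonicity of $\epsilon\mapsto T^{(c)}_{n,*}(\epsilon)$) gives $T^{(c)}_{n,*}(\epsilon_n)\sim t_n$, so that quotient is bounded.

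For (2), let $v_n$ be a cutoff time for $\mathcal{F}^{\mathcal{P}}_c$. Theorem \ref{t-maxcuttv}(2) returns a strictly increasing sequence $(j_n)$ and integers $1\le J_n\le j_n$ with $|j_n-J_n|=O(1)$ such that $(\mathcal{X}_{J_n},H_{J_n,t},\pi_{J_n})_{n=1}^\infty$ has a cutoff (with cutoff time $p_{J_n}v_{j_n}/q_{j_n}$). As $j_n\to\infty$ and $|j_n-J_n|$ is bounded, $J_n\to\infty$, so one may pick $n_1<n_2<\cdots$ greedily with $\xi_k:=J_{n_k}$ strictly increasing; then $(\mathcal{F}_\xi)_c$ is a subfamily of $(\mathcal{X}_{J_n},H_{J_n,t},\pi_{J_n})_{n=1}^\infty$, and a subfamily of a family with a cutoff inherits the cutoff, so $(\mathcal{F}_\xi)_c$ has a cutoff.

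For (3), I will argue by contraposition, the engine being part (2) and the stability of cutoffs under passing to subfamilies. Suppose $\mathcal{F}_c$ has no cutoff. By Lemma \ref{l-cutoff} there are $\epsilon_0\in(0,1/2)$ and $\eta>0$ such that $T^{(c)}_{n,*}(\epsilon_0)\ge(1+\eta)\,T^{(c)}_{n,*}(1-\epsilon_0)$ for infinitely many $n$; enumerating these $n$ increasingly as $\zeta$, the inequality persists along every subsequence of $\zeta$, so neither $\mathcal{F}_\zeta$ nor any of its subfamilies has a cutoff. Now invoke the hypothesis of (3) with $\xi=\zeta$: $R^{(\zeta)}(c)=0$ for some $c\in(0,1)$ and $\mathcal{F}^{\mathcal{P},\zeta}_c=(\mathcal{F}_\zeta)^{\mathcal{P}_\zeta}_c$ has a cutoff. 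Since $R^{(\zeta)}$ and $\mathcal{P}_\zeta$ are precisely the data attached to the family $\mathcal{F}_\zeta$, part (2) applied with $\mathcal{F}_\zeta$ in place of $\mathcal{F}$ produces an increasing sequence $\eta$ such that $((\mathcal{F}_\zeta)_\eta)_c$ has a cutoff---contradicting the previous sentence, since $(\mathcal{F}_\zeta)_\eta$ is a subfamily of $\mathcal{F}_\zeta$. Hence $\mathcal{F}_c$ has a cutoff. The genuinely non-mechanical point is this logical scaffolding of (3)---converting ``$\mathcal{F}_c$ has no cutoff'' into a subfamily none of whose subfamilies has a cutoff and then routing it through (2); everything else is bookkeeping, namely checking that $s_n$, $R(c)$ and the tail families of Theorem \ref{t-maxcuttv} specialize as claimed and that the mixing-time criterion for cutoffs is well behaved under passage to subsequences.
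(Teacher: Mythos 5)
Your proposal is correct, and for parts (1) and (2) it is the paper's own argument: specialize Theorem \ref{t-maxcuttv} with $k_n=n$, $p_{n,i}=p_i$, $\epsilon_{n,i}=\epsilon_i$, $t_{n,i}=t_i$, so that the tail families become the shifts $(\mathcal{X}_{n-m},H_{n-m,t},\pi_{n-m})_n$ of $\mathcal{F}_c$, and extract an increasing subsequence from the sequence $(J_n)$ produced by Theorem \ref{t-maxcuttv}(2). You in fact supply a detail the paper leaves implicit, namely the verification of $\sup_i t_i/T^{(c)}_{i,*}(\epsilon_i)<\infty$ (via Lemma \ref{l-cutoff} and monotonicity in $\epsilon$, using $0<\inf_n\epsilon_n\le\sup_n\epsilon_n<1/(2\varrho_*)$), which is what upgrades $\limsup_n s_n/\max_i\{t_i/p_i\}\le 1$ to $s_n\sim\max_i\{t_i/p_i\}$ in Theorem \ref{t-maxcuttv}(1).

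The only genuine divergence is in (3). The paper invokes the criterion from \cite{CSal10} that a family has a cutoff if and only if every subfamily admits a further subfamily with a cutoff, and then applies part (2) to $\mathcal{F}_\xi$ for an arbitrary $\xi$. You instead argue by contraposition and re-derive the needed half of that criterion from Lemma \ref{l-cutoff}: from ``no cutoff'' you extract $\epsilon_0\in(0,1/2)$, $\eta>0$ and a subsequence $\zeta$ along which $T^{(c)}_{n,*}(\epsilon_0)\ge(1+\eta)T^{(c)}_{n,*}(1-\epsilon_0)$, so that no subfamily of $\mathcal{F}_\zeta$ can have a cutoff, and then part (2) applied to $\mathcal{F}_\zeta$ (whose attached data are exactly $\mathcal{P}_\zeta$ and $R^{(\zeta)}$) gives the contradiction. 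This is a legitimate self-contained substitute for the citation; the same logical route through (2) is used in both proofs. One small caveat in your version: in continuous time the inequality $T^{(c)}_{n,*}(\epsilon_0)\ge(1+\eta)T^{(c)}_{n,*}(1-\epsilon_0)$ is vacuous on indices where $T^{(c)}_{n,*}(1-\epsilon_0)=0$, so by itself it does not preclude a cutoff there; but in that degenerate case $d^{(c)}_{n,*}(0)\le 1-\epsilon_0<1$, so the requirement $d^{(c)}_{n,*}(at_n)\ra 1$ for $a\in(0,1)$ fails along any subsequence containing infinitely many such indices, and the conclusion ``no subfamily of $\mathcal{F}_\zeta$ has a cutoff'' still holds. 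With that one-line patch your argument is complete; the paper's appeal to \cite{CSal10} sidesteps this bookkeeping at the cost of an external reference.
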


\begin{proof}[Proof of Proposition \ref{p-prodseqtv}]
Referring to Theorem \ref{t-maxcuttv}, the following replacement,
\[
 k_n=n,\quad \epsilon_{n,i}=\epsilon_i,\quad t_{n,i}=t_i,\quad p_{n,i}=p_i,\quad (\mathcal{X}_{n,i},K_{n,i},\pi_{n,i})=(\mathcal{X}_i,K_i,\pi_i),
\]
leads to
\[
 (\mathcal{X}_{n,k_n-m},K_{n,k_n-m},\pi_{n,k_n-m})
 =(\mathcal{X}_{n-m},K_{n-m},\pi_{n-m}).
\]
Clearly, the notations of $\mathcal{F}^\mathcal{P}$ and $R(c)$ are consistent in Theorem \ref{t-maxcuttv} and Proposition \ref{p-prodseqtv}. As a result, (1) is given by Theorem \ref{t-maxcuttv}(1), while Theorem \ref{t-maxcuttv}(2) provides a sequence of positive integers $J$ tending to infinity such that $(\mathcal{F}_J)_c$ has a cutoff. Selecting $\xi$ as an increasing subsequence of $J$ yields (2). For (3), to show the cutoff of $\mathcal{F}_c$, it is equivalent to prove that any subfamily of $\mathcal{F}_c$ has a further subfamily that presents a cutoff. (See, for instance, \cite{CSal10} for a reference.) Let $\xi$ be an increasing sequence of positive integers. As a consequence of (2), since $\mathcal{F}^{\mathcal{P},\xi}_c$ has a cutoff and $R^{(\xi)}(c)=0$ for some $c\in(0,1)$, there is a subfamily of $(\mathcal{F}_\xi)_c$ that presents a cutoff, as desired.
\end{proof}

\begin{rem}\label{r-Rxi}
Let $N>0$ and $\xi=(\xi_n)_{n=1}^\infty$ be an increasing sequence of positive integers. Referring to the setting in Proposition \ref{p-prodseqtv}, if $s_n=T^{(c)}_{n,*}(\epsilon_n)/p_n$ for $n\ge N$, then $s_n^{(\xi)}=s_{\xi_n}$ for $\xi_n\ge N$. This implies
\[
  \sum_{i=1}^{n-m}(2\varrho_*\epsilon_{\xi_i})^{c\varrho_*s_n^{(\xi)}p_{\xi_i}/T^{(c)}_{\xi_i,*}(\epsilon_{\xi_i})}\le
  \sum_{i=1}^{\xi_n-m}(2\varrho_*\epsilon_i)^{c\varrho_*s_{\xi_n}p_i/T^{(c)}_{i,*}(\epsilon_i)},\quad\forall \xi_n\ge N,
\]
which leads to $R^{(\xi)}(c)\le R(c)$.
\end{rem}

The following is the main theorem in this subsection, which provides criteria to determine cutoffs for $\mathcal{F}^\mathcal{P}_c$.

\begin{thm}\label{t-prodseqtv}
Let $\mathcal{F}^\mathcal{P}$ be the family in \textnormal{(\ref{eq-fp2})}, $T_{n,*}^{(c)}$ be the mixing time of the $n$th chain in $\mathcal{F}_c$ and $\varrho_H=2\varrho_{\textnormal{\tiny TV}}=2$. Assume that, for $*\in\{\textnormal{\tiny TV},H\}$, there are constants $c_0\in(0,1)$, $N>0$ and a sequence $(\epsilon_n)_{n=1}^\infty$ satisfying $0<\inf_n\epsilon_n\le\sup_n\epsilon_n<1/(2\varrho_*)$ such that
\begin{equation}\label{eq-prod1}
 \max_{1\le i\le n}\frac{T_{i,*}^{(c)}(\epsilon_i)}{p_i}=\frac{T_{n,*}^{(c)}(\epsilon_n)}{p_n},\quad\forall n\ge N,
\end{equation}
and
\begin{equation}\label{eq-prod2}
 \lim_{m\ra\infty}\limsup_{n\ra\infty}\sum_{i=1}^{n-m}\left(2\varrho_*\sup_{n\ge 1}\epsilon_n\right)^{c_0\varrho_*T_{n,*}^{(c)}(\epsilon_n)p_i/
 (T_{i,*}^{(c)}(\epsilon_i)p_n)}=0.
\end{equation}
Then, for $*\in\{\textnormal{\tiny TV},H\}$,
\begin{itemize}
\item[(1)] $\mathcal{F}_c$ has a cutoff if and only if, for any increasing sequence of positive integers $\xi$, $\mathcal{F}^{\mathcal{P},\xi}_c$ has a cutoff. In particular, if $\mathcal{F}_c$ has a cutoff, then $\mathcal{F}^\mathcal{P}_c$ has a cutoff.

\item[(2)] No subfamily of $\mathcal{F}_c$ has a cutoff if and only if, for any increasing sequence of positive integers $\xi$, $\mathcal{F}^{\mathcal{P},\xi}_c$ has no cutoff. In particular, if $\mathcal{F}_c$ has no subfamily presenting cutoff, then $\mathcal{F}^\mathcal{P}_c$ has no cutoff.
\end{itemize}
Further, if $\mathcal{F}_c$ has cutoff time $t_n$, then $\mathcal{F}^{\mathcal{P}}_c$ has cutoff time $q_nt_n/p_n$, where $q_n=p_1+\cdots+p_n$.
\end{thm}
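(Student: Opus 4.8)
The plan is to reduce the statement to Proposition \ref{p-prodseqtv} after converting the hypotheses \textnormal{(\ref{eq-prod1})--(\ref{eq-prod2})} into the two assertions (a) $R(c)=0$ for every $c>1$ and (b) $R^{(\xi)}(c)=0$ for every $c\ge c_{0}$ and every increasing sequence $\xi$ of positive integers. Put $\epsilon^{*}=\sup_{n}\epsilon_{n}$, so $2\varrho_{*}\epsilon^{*}<1$. Hypothesis \textnormal{(\ref{eq-prod1})} says exactly that $s_{n}=T^{(c)}_{n,*}(\epsilon_{n})/p_{n}$ for $n\ge N$, which is the hypothesis of Remark \ref{r-Rxi}; hence $R^{(\xi)}(c)\le R(c)$ for every increasing $\xi$. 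Substituting this value of $s_{n}$ into the exponent defining $R(c_{0})$ and using $2\varrho_{*}\epsilon_{i}\le 2\varrho_{*}\epsilon^{*}<1$ (so that $(2\varrho_{*}\epsilon_{i})^{a}\le(2\varrho_{*}\epsilon^{*})^{a}$ for $a\ge0$) shows that $R(c_{0})$ does not exceed the limit in \textnormal{(\ref{eq-prod2})}, whence $R(c_{0})=0$. Since $R$ is non-increasing in $c$, this gives $R(c)=0$ for all $c\ge c_{0}$; as $c_{0}<1$ this is (a), and combining it with $R^{(\xi)}\le R$ gives (b). Finally, each subsequenced triple $(\mathcal{F}_{\xi},\mathcal{P}_{\xi},(\epsilon_{\xi_{n}})_{n})$ again satisfies $0<\inf_{n}\epsilon_{\xi_{n}}$ and $\epsilon_{\xi_{n}}<1/(2\varrho_{*})$, so Proposition \ref{p-prodseqtv} applies verbatim to it, its $R$ becoming $R^{(\xi)}$ and its $\mathcal{F}^{\mathcal{P}}_{c}$ becoming $\mathcal{F}^{\mathcal{P},\xi}_{c}=(\mathcal{F}_{\xi})^{\mathcal{P}_{\xi}}_{c}$.

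The single observation driving both (1) and (2) is: \emph{for every increasing $\xi$ for which $(\mathcal{F}_{\xi})_{c}$ has a cutoff, $\mathcal{F}^{\mathcal{P},\xi}_{c}$ has a cutoff}. Indeed $R^{(\xi)}(c)=0$ for all $c>1$ by (b), so Proposition \ref{p-prodseqtv}(1) applied to $(\mathcal{F}_{\xi},\mathcal{P}_{\xi})$ produces the cutoff. For part (1): if $\mathcal{F}_{c}$ has a cutoff then so does every subfamily $(\mathcal{F}_{\xi})_{c}$, hence every $\mathcal{F}^{\mathcal{P},\xi}_{c}$ has a cutoff by the observation; conversely, if every $\mathcal{F}^{\mathcal{P},\xi}_{c}$ has a cutoff then, since $R^{(\xi)}(c_{0})=0$ with $c_{0}\in(0,1)$ for every $\xi$, Proposition \ref{p-prodseqtv}(3) yields a cutoff for $\mathcal{F}_{c}$. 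The ``in particular'' clause of (1) is the case $\xi=(n)_{n\ge 1}$. For the cutoff time: assuming $\mathcal{F}_{c}$ has cutoff time $t_{n}$, Lemma \ref{l-cutoff}(1), in the maximum distance and continuous time, gives $T^{(c)}_{n,*}(\epsilon)\sim t_{n}$ for each fixed $\epsilon\in(0,1)$, so the monotonicity of $T^{(c)}_{n,*}(\cdot)$ together with $0<\inf_{n}\epsilon_{n}\le\sup_{n}\epsilon_{n}<1$ gives $T^{(c)}_{n,*}(\epsilon_{n})\sim t_{n}$, hence $s_{n}\sim t_{n}/p_{n}$ by \textnormal{(\ref{eq-prod1})}; Proposition \ref{p-prodseqtv}(1) then gives $q_{n}s_{n}$ as a cutoff time for $\mathcal{F}^{\mathcal{P}}_{c}$, and since $q_{n}s_{n}\sim q_{n}t_{n}/p_{n}$ and cutoff times are determined only up to asymptotic equivalence, $q_{n}t_{n}/p_{n}$ is a cutoff time as well.

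For part (2), the direction ``$\Leftarrow$'' is immediate from the observation: if some subfamily $(\mathcal{F}_{\xi})_{c}$ of $\mathcal{F}_{c}$ had a cutoff, so would $\mathcal{F}^{\mathcal{P},\xi}_{c}$, against the hypothesis. For ``$\Rightarrow$'', suppose no subfamily of $\mathcal{F}_{c}$ has a cutoff while some $\mathcal{F}^{\mathcal{P},\xi}_{c}$ does; since $R^{(\xi)}(c_{0})=0$ with $c_{0}\in(0,1)$, Proposition \ref{p-prodseqtv}(2) applied to $(\mathcal{F}_{\xi},\mathcal{P}_{\xi})$ produces an increasing sequence $\eta$ such that $((\mathcal{F}_{\xi})_{\eta})_{c}$ has a cutoff, and $((\mathcal{F}_{\xi})_{\eta})_{c}$ is a subfamily of $\mathcal{F}_{c}$, a contradiction. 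The ``in particular'' clause of (2) is again $\xi=(n)_{n\ge 1}$, and the ``Further'' clause was established in the previous paragraph.

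The only genuine difficulty is the bookkeeping in the first paragraph: extracting (a) and (b) from \textnormal{(\ref{eq-prod1})--(\ref{eq-prod2})} and checking that Proposition \ref{p-prodseqtv} transfers to each subsequenced triple. The decisive leverage is that \textnormal{(\ref{eq-prod1})} makes $s_{n}$ coincide with $T^{(c)}_{n,*}(\epsilon_{n})/p_{n}$ for large $n$ --- precisely the input of Remark \ref{r-Rxi} --- and that the monotonicity of $R$ upgrades the single-point vanishing $R(c_{0})=0$ to $R\equiv 0$ on $[c_{0},\infty)$. Once these are in place, the rest is a routine assembly of Proposition \ref{p-prodseqtv}(1)--(3) with the elementary fact that a cutoff is inherited by every subfamily.
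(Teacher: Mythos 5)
Your proposal is correct and follows essentially the same route as the paper: it converts (\ref{eq-prod1})--(\ref{eq-prod2}) via Remark \ref{r-Rxi} and the monotonicity of $R$ into $R^{(\xi)}(c)\le R(c)=0$ for $c\ge c_0$, and then assembles parts (1)--(3) of Proposition \ref{p-prodseqtv} exactly as the paper does, with the subfamily-inheritance of cutoffs handling both directions. The only difference is that you spell out the cutoff-time identification $q_ns_n\sim q_nt_n/p_n$ via Lemma \ref{l-cutoff}, a detail the paper leaves implicit.
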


\begin{proof}
Let $\xi$ be an increasing sequence of positive integers, $c_0,N$ be the constants in Theorem \ref{t-prodseqtv} and $s_n,R(c),R^{(\xi)}(c)$ be as in Proposition \ref{p-prodseqtv}. By (\ref{eq-prod1}), one has $s_n=T^{(c)}_{n,*}(\epsilon_n)/p_n$ for $n\ge N$ and, by (\ref{eq-prod2}) and Remark \ref{r-Rxi}, this implies
\[
 R^{(\xi)}(c)\le R(c)=0,\quad\forall c\ge c_0,\,\xi.
\]
For (1), based on the above observation and Proposition \ref{p-prodseqtv}(3), it is obvious that if $\mathcal{F}^{\mathcal{P},\xi}_c$ has a cutoff for any $\xi$, then $\mathcal{F}_c$ has a cutoff. Conversely, if $\mathcal{F}_c$ has a cutoff, then $(\mathcal{F}_\xi)_c$ has a cutoff for all $\xi$ and, as a consequence of Proposition \ref{p-prodseqtv}(1), $\mathcal{F}^{\mathcal{P},\xi}_c$ has a cutoff. Note that, when $\mathcal{F}_c$ has a cutoff, the desired cutoff time for $\mathcal{F}^\mathcal{P}_c$ is given by Proposition \ref{p-prodseqtv}(1).

Next, we discuss (2). By Proposition \ref{p-prodseqtv}(1), if $(\mathcal{F}_\xi)_c$ has a cutoff, then $\mathcal{F}^{\mathcal{P},\xi}_c$ has a cutoff. Conversely, by Proposition \ref{p-prodseqtv}(2), if $\mathcal{F}^{\mathcal{P},\xi}_c$ has a cutoff, then there is a subsequence of $\xi$, say $\xi'$, such that $(\mathcal{F}_{\xi'})_c$ has a cutoff. This proves (2).
\end{proof}

\begin{rem}
We would like to point out the non-consistency of cutoffs for $\mathcal{F}_c$ and $\mathcal{F}^{\mathcal{P}}_c$ and illustrate this observation with examples in Subsection \ref{ss-noncnsst}.
\end{rem}

\begin{proof}[Proof of Theorem \ref{t-main}]
The proofs for the total variation and the Hellinger distance are similar and we deal with the case of the total variation. Set $\epsilon:=2\sup_n\epsilon_n$ and $C:=\sup_n|C_n|$. Since $D_n$ is nondecreasing for $n$ large enough and $D_n\ra\infty$, (\ref{eq-prod1}) holds. By Theorem \ref{t-prodseqtv}, it remain to show that there is $c_0\in(0,1)$ such that
\[
 \lim_{m\ra\infty}\limsup_{n\ra\infty}\sum_{i=1}^{n-m}\epsilon^{c_0\exp\{D_n-D_i\}}=0.
\]
Since $B_n$ is nondecreasing and $|C_n|\le C$, one has $D_n-D_i\ge A_nn-A_ii-2C$. Note that, if $A_n$ is nondecreasing, then $A_nn-A_ii\ge A_1(n-i)$. If $A_n=A+O(1/n)$ for some $A>0$, then
\[
 A_nn-A_ii=A(n-i)-|A_n-A|n-|A_i-A|i\ge A(n-i)-2A',
\]
where $A'=\sup_nn|A_n-A|$. As a result, we obtain $e^{D_n-D_i}\ge (A_1\wedge A)e^{-2(A'+C)}(n-i)$ and, by setting $\epsilon'=\exp\{c_0(A_1\wedge A)e^{-2(A'+C)}\log\epsilon\}\in(0,1)$, this leads to
\[
 \limsup_{n\ra\infty}\sum_{i=1}^{n-m}\epsilon^{c_0\exp\{D_n-D_i\}}\le \sum_{j=m}^\infty (\epsilon')^j=\frac{(\epsilon')^m}{1-\epsilon'}\ra 0,\quad\text{as }m\ra\infty,
\]
for all $c_0>0$.
\end{proof}

\section{Examples}\label{s-examples}

In this section, we consider practical examples for families in Theorem \ref{t-main}, which are exactly families in Subsection \ref{ss-sar}, and determine their cutoffs.

\subsection{Products of two-state chains}

Let $(\mu,\mathcal{X},K,\pi)$ be an irreducible Markov chain and $(\mu,\mathcal{X},H_t,\pi)$ be the associated continuous time chain. Define the $L^2$-distance and the $L^2$-mixing time of $(\mu,\mathcal{X},H_t,\pi)$ by
\[
 d_2^{(c)}(\mu,t)=\left(\sum_{x\in\mathcal{X}}\left|\frac{\mu H_t(x)}{\pi(x)}-1\right|^2\pi(x)\right)^{1/2},\quad
 T_2^{(c)}(\mu,\epsilon)=\min\{t\ge 0|d_2^{(c)}(\mu,t)\le\epsilon\}.
\]
For two-state chains, we have the following precise computations.

\begin{lem}\label{l-2state}
Let $(\mathcal{X},H_t,\pi)$ be a continuous time Markov chain associated with $(\mathcal{X},K,\pi)$, where
\[
 \mathcal{X}=\{0,1\},\quad K=\left(\begin{array}{cc}1-\alpha&\alpha\\\beta&1-\beta\end{array}\right),\quad
 \pi=\left(\frac{\beta}{\alpha+\beta},\frac{\alpha}{\alpha+\beta}\right).
\]
For $t\ge 0$, one has
\[
 d_2^{(c)}(0,t)^2=\frac{\alpha}{\beta}e^{-2(\alpha+\beta)t},\quad d_H^{(c)}(0,t)^2=\frac{d_2^{(c)}(0,t)^2}{r(t)},
\]
where $r(t)=[1+A(t)][1+B(t)][A(t)+B(t)]$ and
\[
 A(t)=\sqrt{1+\frac{\alpha}{\beta}e^{-(\alpha+\beta)t}},\quad B(t)=\sqrt{1-e^{-(\alpha+\beta)t}}.
\]
In particular,
\[
 \frac{d_2^{(c)}(0,t)^2}{4[2+(\alpha/\beta)e^{-(\alpha+\beta)t}]}\le d_H^{(c)}(0,t)^2\le \frac{d_2^{(c)}(0,t)^2}{2+(\alpha/\beta)e^{-(\alpha+\beta)t}}.
\]
\end{lem}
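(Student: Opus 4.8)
The plan is to reduce the whole statement to elementary one-variable algebra. Abbreviate $u:=e^{-(\alpha+\beta)t}\in(0,1]$, $A:=A(t)=\sqrt{1+(\alpha/\beta)u}$ and $B:=B(t)=\sqrt{1-u}$, so that $A\ge 1$ and $0\le B<1$, together with the bookkeeping identities $A^2-1=(\alpha/\beta)u$, $1-B^2=u$ and hence $\beta A^2+\alpha B^2=\alpha+\beta$. First I would compute $d_2^{(c)}(0,t)^2$ straight from the definition: recalling $H_t(0,0)=\tfrac{\beta}{\alpha+\beta}+\tfrac{\alpha}{\alpha+\beta}u$ (and $H_t(0,1)=1-H_t(0,0)$), one gets $H_t(0,0)/\pi(0)-1=(\alpha/\beta)u$ and $H_t(0,1)/\pi(1)-1=-u$, so
\[
 d_2^{(c)}(0,t)^2=\pi(0)\Big(\tfrac{\alpha}{\beta}u\Big)^2+\pi(1)u^2=\frac{\beta}{\alpha+\beta}\cdot\frac{\alpha^2}{\beta^2}u^2+\frac{\alpha}{\alpha+\beta}u^2=\frac{\alpha}{\beta}u^2 ,
\]
which is the first assertion; note in passing that $d_2^{(c)}(0,t)^2=(A^2-1)(1-B^2)$.

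Next I would prove $d_H^{(c)}(0,t)^2=d_2^{(c)}(0,t)^2/r(t)$. Starting from the closed form in (\ref{eq-2phd}), namely $d_H^{(c)}(0,t)^2=1-\tfrac{\beta}{\alpha+\beta}A-\tfrac{\alpha}{\alpha+\beta}B=\tfrac1{\alpha+\beta}\big(\beta(1-A)+\alpha(1-B)\big)$, I would use $1-A=(1-A^2)/(1+A)=-(\alpha/\beta)u/(1+A)$ and $1-B=(1-B^2)/(1+B)=u/(1+B)$ to obtain
\[
 d_H^{(c)}(0,t)^2=\frac{\alpha u}{\alpha+\beta}\Big(\frac{1}{1+B}-\frac{1}{1+A}\Big)=\frac{\alpha u}{\alpha+\beta}\cdot\frac{A-B}{(1+A)(1+B)}=\frac{\alpha u}{\alpha+\beta}\cdot\frac{A^2-B^2}{(1+A)(1+B)(A+B)} .
\]
Since $A^2-B^2=(A^2-1)+(1-B^2)=(\alpha/\beta)u+u=\tfrac{\alpha+\beta}{\beta}u$, the last fraction collapses to $\tfrac{\alpha}{\beta}u^2/r(t)=d_2^{(c)}(0,t)^2/r(t)$, as claimed. (Equivalently, one can clear denominators in the target identity and check it reduces to $\beta A^2+\alpha B^2=\alpha+\beta$.)

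Finally, for the two-sided estimate it is enough to show $1+A^2\le r(t)\le 4(1+A^2)$, i.e. $2+(\alpha/\beta)u\le r(t)\le 4\big(2+(\alpha/\beta)u\big)$, and then divide the exact value of $d_2^{(c)}(0,t)^2$ by $r(t)$. The lower bound is immediate: $1+B\ge 1$ gives $r(t)\ge(1+A)(A+B)=A+B+A^2+AB\ge A+A^2\ge 1+A^2$, using $A\ge 1$. The upper bound is the one place that needs a little care — it is tight as $t\to 0$ (where $A,B\to 1$ and $r(t)\to 8=4\cdot 2$), so a crude factorwise bound is too lossy; instead I would use $0\le B<1$ to write $r(t)\le 2(1+A)^2$ and then invoke the elementary inequality $(1+A)^2\le 2(1+A^2)$, equivalently $(A-1)^2\ge 0$, giving $r(t)\le 4(1+A^2)$. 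All quantities stay finite because $\beta>0$, and $B(t)=0$ at $t=0$ causes no difficulty in either bound, so the argument covers all $t\ge 0$.
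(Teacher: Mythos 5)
Your proposal is correct and follows essentially the same route as the paper: the exact value of $d_2^{(c)}(0,t)$ comes from the explicit two-state transition probabilities (equivalently the spectral data of $K$), the identity $d_H^{(c)}(0,t)^2=d_2^{(c)}(0,t)^2/r(t)$ is exactly the algebraic consequence of (\ref{eq-2phd}) that the paper invokes, and the two-sided bound is elementary algebra in $A(t),B(t)$. The only slip is the parenthetical claim that the upper bound is tight as $t\to 0$: the regime where $A(t),B(t)\to 1$ and $r(t)\to 8$ is $t\to\infty$ (since $e^{-(\alpha+\beta)t}\to 0$), but this remark is unused and does not affect the argument.
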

\begin{proof}
The $L^2$-distance is given by the spectral information of $K$, while the Hellinger distance follows immediately from (\ref{eq-2phd}).
\end{proof}

Clearly, one can see from the above lemma that the Hellinger distance and the $L^2$-distance of two-state chains are comparable with each other.

Next, we consider the cutoff in the $L^2$-distance. A family of continuous time Markov chains $\mathcal{F}_c=(\mu_n,\mathcal{X}_n,H_{n,t},\pi_n)_{n=1}^\infty$ is said to present a $L^2$-cutoff if there is a sequence $t_n>0$ such that
\[
 \lim_{n\ra\infty}d_{n,2}^{(c)}(\mu_n,at_n)=\begin{cases}0&\text{for }a\in(1,\infty),\\\infty&\text{for }a\in(0,1).\end{cases}
\]
$\mathcal{F}_c$ is said to present a $(t_n,b_n)$ $L^2$-cutoff if $t_n>0$, $b_n>0$, $b_n=o(t_n)$ and
\[
 \lim_{c\ra\infty}\limsup_{n\ra\infty}d_{n,2}^{(c)}(\mu_n,t_n+cb_n)=0,\quad
 \lim_{c\ra-\infty}\liminf_{n\ra\infty}d_{n,2}^{(c)}(\mu_n,t_n+cb_n)=\infty.
\]
For product chains, Chen, Hsu and Sheu declare the following observation in \cite{CHS16}.

\begin{lem}{\rm(\cite[Proposition 4.1]{CHS16})}\label{l-l2prod}
Let $\mathcal{F}$ and $\mathcal{F}^\mathcal{P}$ be families in \textnormal{(\ref{eq-fp2})} with initial distributions $(\mu_n)_{n=1}^\infty$ and $(\sigma_n)_{n=1}^\infty$, where $\sigma_n=\mu_1\times\cdots\times\mu_n$.
\begin{itemize}
\item[(1)] $\mathcal{F}^\mathcal{P}_c$ has a $L^2$-cutoff with cutoff time $t_n$ if and only if
\[
 \lim_{n\ra\infty}\sum_{i=1}^nd_{i,2}^{(c)}(\mu_i,ap_it_n/q_n)=\begin{cases}
 0&\text{for }a>1,\\\infty&\text{for }a\in(0,1).\end{cases}
\]

\item[(2)] $\mathcal{F}^\mathcal{P}_c$ has a $(t_n,b_n)$ $L^2$-cutoff if and only if $t_n>0$, $b_n>0$, $b_n=o(t_n)$ and
\[
 \lim_{c\ra\infty}\limsup_{n\ra\infty}\sum_{i=1}^nd_{i,2}^{(c)}
 (\mu_i,(p_i/q_n)(t_n+cb_n))^2=0,
\]
and
\[
 \lim_{c\ra-\infty}\liminf_{n\ra\infty}\sum_{i=1}^nd_{i,2}^{(c)}
 (\mu_i,(p_i/q_n)(t_n+cb_n))^2=\infty.
\]
\end{itemize}
\end{lem}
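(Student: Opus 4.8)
The plan is to reduce the whole statement to one algebraic identity: the $L^2$-analogue, for product chains, of the Hellinger product formula in Proposition~\ref{p-comp}. By (\ref{eq-prodcts}) the continuous-time semigroup of the $n$th chain in $\mathcal{F}^{\mathcal{P}}$ factors as $L_{n,t}=H_{1,p_1t/q_n}\otimes\cdots\otimes H_{n,p_nt/q_n}$, so $\sigma_nL_{n,t}/\nu_n=\prod_{i=1}^n g_{n,i}$ with $g_{n,i}(x_i)=\mu_iH_{i,p_it/q_n}(x_i)/\pi_i(x_i)$; expanding $\|\prod_i g_{n,i}-1\|_{L^2(\nu_n)}^2$ and using $\langle g_{n,i},1\rangle_{\pi_i}=1$ together with $\langle g_{n,i},g_{n,i}\rangle_{\pi_i}=1+d_{i,2}^{(c)}(\mu_i,p_it/q_n)^2$ gives
\[
 1+d_{n,2}^{(c)}(\sigma_n,t)^2=\prod_{i=1}^n\Bigl(1+d_{i,2}^{(c)}(\mu_i,p_it/q_n)^2\Bigr).
\]
This is the only structural input; note that, in contrast with the total variation, here one gets an exact identity, just as in the Hellinger case. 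Everything else is elementary bookkeeping.

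Next I would translate the cutoff conditions into statements about this product. For nonnegative $a_i$ one has $1+\sum_i a_i\le\prod_i(1+a_i)\le\exp(\sum_i a_i)$, so for any sequence $\tau_n>0$,
\[
 d_{n,2}^{(c)}(\sigma_n,\tau_n)\ra\infty\quad\Lra\quad\sum_{i=1}^n d_{i,2}^{(c)}(\mu_i,p_i\tau_n/q_n)^2\ra\infty,
\]
and, adding the estimate $\log(1+a)\ge a/2$ for $a\in[0,1]$,
\[
 d_{n,2}^{(c)}(\sigma_n,\tau_n)\ra 0\quad\Lra\quad\sum_{i=1}^n d_{i,2}^{(c)}(\mu_i,p_i\tau_n/q_n)^2\ra 0.
\]
Plugging $\tau_n=at_n$ into these equivalences proves part~(1) with the summands $d_{i,2}^{(c)}(\mu_i,ap_it_n/q_n)^2$ in place of $d_{i,2}^{(c)}(\mu_i,ap_it_n/q_n)$, and plugging $\tau_n=t_n+cb_n$ proves part~(2) verbatim, since (2) is already phrased with squared summands.

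What remains — and what I expect to be the main obstacle — is to pass, in part~(1), from the squared sums to the unsquared sums of the statement, i.e. to show that for every $a>1$ one has $\sum_i d_{i,2}^{(c)}(\mu_i,ap_it_n/q_n)\ra0$ iff $\sum_i d_{i,2}^{(c)}(\mu_i,ap_it_n/q_n)^2\ra0$, and likewise for $\ra\infty$ with $a<1$. One implication is immediate (if the unsquared sum tends to $0$ the summands are eventually $\le1$, whence $d^2\le d$); the substantive implication exploits the "for all $a$'' quantifier together with the decay of a single chain's $L^2$-distance. For a reversible coordinate chain $d_{i,2}^{(c)}(\mu_i,\cdot)^2$ is a decreasing, log-convex function of time dominated by $C_ie^{-2\lambda_{i,2}t}$ with $\lambda_{i,2}$ the spectral gap, so once $\sum_i d_{i,2}^{(c)}(\mu_i,a'p_it_n/q_n)^2\ra0$ for some $a'\in(1,a)$ — which forces every coordinate past its mixing time at the multiple $a'$ — the ratio $d_{i,2}^{(c)}(\mu_i,ap_it_n/q_n)/d_{i,2}^{(c)}(\mu_i,a'p_it_n/q_n)$ is bounded by a fixed positive power of $\max_i d_{i,2}^{(c)}(\mu_i,a'p_it_n/q_n)\ra0$, turning an unsquared sum into a squared one; iterating along finitely many multiples $1<\cdots<a'<a$ completes the argument. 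Making this uniform in $i$ — controlling the relevant spectral data (the slowest nonvanishing mode and its coefficient) across coordinates — is the delicate point, and is exactly where the argument of \cite[Proposition~4.1]{CHS16} does its real work; with it in hand the lemma follows.
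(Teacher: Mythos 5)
The paper never proves this lemma---it is quoted from \cite[Proposition 4.1]{CHS16}---so the only question is whether your blind argument is sound. Its core is: the identity $1+d_{n,2}^{(c)}(\sigma_n,t)^2=\prod_{i=1}^n\bigl(1+d_{i,2}^{(c)}(\mu_i,p_it/q_n)^2\bigr)$ is correct (and needs no reversibility), and combined with $1+\sum_ia_i\le\prod_i(1+a_i)\le\exp\bigl(\sum_ia_i\bigr)$ it does prove part (2) as stated and part (1) with \emph{squared} summands; the limsup/liminf bookkeeping you wave at is indeed elementary. That squared form is also what the paper actually uses: in the proof of Proposition \ref{p-l1l2comp} the quantities $D_2(a)$, $\widetilde{D}_2(a)$, $\overline{D}_2(a)$ are all built from $d_{i,2}^{(c)}(0,\cdot)^2$. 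So the missing exponent in part (1) of the quoted statement is a typo, and up to that typo your proof is complete and is the expected argument.

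The genuine problem is the step you flag as ``the main obstacle'': the passage from squared to unsquared sums cannot be carried out, because part (1) with unsquared summands is false as literally stated. Take every coordinate to be the two-state chain of Lemma \ref{l-2state} with $\alpha=\beta=1/2$ started at $0$, and $p_i\equiv1$, so $q_n=n$ and $d_{i,2}^{(c)}(0,t)=e^{-t}$ exactly. With $t_n=\tfrac12n\log n$ one gets $\sum_{i=1}^nd_{i,2}^{(c)}(0,ap_it_n/q_n)^2=n^{1-a}$, so $\mathcal{F}^{\mathcal{P}}_c$ has an $L^2$-cutoff with cutoff time $t_n$; yet $\sum_{i=1}^nd_{i,2}^{(c)}(0,ap_it_n/q_n)=n^{1-a/2}\to\infty$ for every $a\in(1,2)$, contradicting the ``only if'' direction. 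The same example shows why your sketched repair fails: for $1<a'<a<2$ the ratio $d_i(a)/d_i(a')=n^{-(a-a')/2}$ is indeed a fixed positive power of $\max_id_i(a')$, but inserting it into $\sum_id_i(a')=n^{1-a'/2}$ still leaves $n^{1-a/2}\to\infty$, and no iteration over finitely many multiples reaches exponents $a<2$. So do not look to \cite{CHS16} to close this step: the statement to be proved there is the squared one, and your argument already proves it.
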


As a consequence of Lemmas \ref{l-2state}-\ref{l-l2prod}, Proposition \ref{p-comp0} and Theorem \ref{t-prodcutoffhd}, we achieve the following proposition.

\begin{prop}\label{p-l1l2comp}
Let $\mathcal{F}^\mathcal{P}$ be the family in \textnormal{(\ref{eq-fp2})} with
\[
 \mathcal{X}_n=\{0,1\},\quad K_n=\left(\begin{array}{cc}1-\alpha_n&\alpha_n\\\beta_n&1-\beta_n\end{array}\right),\quad
 \pi_n=\left(\frac{\beta_n}{\alpha_n+\beta_n},\frac{\alpha_n}{\alpha_n+\beta_n}\right).
\]
Suppose the $n$th chain in $\mathcal{F}^\mathcal{P}$ starts at $\mathbf{0}$, the zero vector in $\mathcal{Y}_n$, and assume that $\sup_n\{\alpha_n/\beta_n\}<\infty$. Then,
\begin{itemize}
\item[(1)] $\mathcal{F}^\mathcal{P}_c$ has a total variation cutoff if and only if $\mathcal{F}^\mathcal{P}_c$ has a $L^2$-cutoff. Furthermore, $T^{(c)}_{n,\textnormal{\tiny TV}}(\mathbf{0},\epsilon)\sim T_{n,2}^{(c)}(\mathbf{0},\delta)$ for all $\epsilon\in(0,1)$ and $\delta>0$.

\item[(2)] $\mathcal{F}^\mathcal{P}_c$ has a $(t_n,b_n)$ total variation cutoff if and only if $\mathcal{F}^\mathcal{P}_c$ has a $(t_n,b_n)$ $L^2$-cutoff.
\end{itemize}
\end{prop}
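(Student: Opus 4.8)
The strategy is to reduce all three cutoff criteria for $\mathcal{F}^\mathcal{P}_c$ --- in total variation, in the Hellinger distance, and in $L^2$ --- to a single scalar condition on the sequence $S_n(t):=\sum_{i=1}^n d_{i,2}^{(c)}(0,p_it/q_n)^2$, using two comparisons that are \emph{uniform in $n$} precisely because $M:=\sup_n(\alpha_n/\beta_n)<\infty$. First, Lemma \ref{l-2state} writes $d_{n,H}^{(c)}(0,t)^2=d_{n,2}^{(c)}(0,t)^2/r(t)$ with $r(t)\in[\,2+(\alpha_n/\beta_n)e^{-(\alpha_n+\beta_n)t},\ 4(2+(\alpha_n/\beta_n)e^{-(\alpha_n+\beta_n)t})\,]\subseteq[\,2,\ 4(2+M)\,]$, so there are constants $0<c_1\le c_2$ depending only on $M$ with
\[
 c_1\,d_{n,2}^{(c)}(0,t)^2\ \le\ d_{n,H}^{(c)}(0,t)^2\ \le\ c_2\,d_{n,2}^{(c)}(0,t)^2,\qquad\forall\,n\ge1,\ t\ge0.
\]
Second, $d_{n,2}^{(c)}(0,t)^2=(\alpha_n/\beta_n)e^{-2(\alpha_n+\beta_n)t}$ is non-increasing and bounded by $\alpha_n/\beta_n\le M$, and the Hellinger distance is non-increasing, so by Lemma \ref{l-2state} at $t=0$,
\[
 d_{n,H}^{(c)}(0,t)^2\ \le\ d_{n,H}^{(c)}(0,0)^2\ =\ 1-(1+\alpha_n/\beta_n)^{-1/2}\ \le\ 1-(1+M)^{-1/2}\ =:\ \rho\ <\ 1,
\]
uniformly in $n$ and $t$. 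These two facts are the whole point; everything else is bookkeeping.

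Next I would linearize the product formulas. By Lemma \ref{l-prodmixing}, $d_{n,H}^{(c)}(\mathbf{0},t)^2=1-\prod_{i=1}^n\bigl(1-d_{i,H}^{(c)}(0,p_it/q_n)^2\bigr)$, and since each exponent lies in $[0,\rho]$ the elementary inequalities $e^{-u}\ge1-u\ge e^{-c'u}$ on $[0,\rho]$ (with $c'=-\log(1-\rho)/\rho$) give $1-e^{-g_n(t)}\le d_{n,H}^{(c)}(\mathbf{0},t)^2\le1-e^{-c'g_n(t)}$, where $g_n(t):=\sum_i d_{i,H}^{(c)}(0,p_it/q_n)^2$ differs from $S_n(t)$ only by the factors $c_1,c_2$. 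The same computation applied to the standard multiplicativity $d_{n,2}^{(c)}(\mathbf{0},t)^2=\prod_{i=1}^n\bigl(1+d_{i,2}^{(c)}(0,p_it/q_n)^2\bigr)-1$ (the identity underlying Lemma \ref{l-l2prod}), together with $\log(1+u)\asymp u$ on $[0,M]$, sandwiches $d_{n,2}^{(c)}(\mathbf{0},t)^2$ between $e^{a_1S_n(t)}-1$ and $e^{a_2S_n(t)}-1$ for positive $a_1,a_2$ depending only on $M$. Hence, for any time sequence $(r_n)$,
\[
 d_{n,H}^{(c)}(\mathbf{0},r_n)\to0\ \Longleftrightarrow\ S_n(r_n)\to0\ \Longleftrightarrow\ d_{n,2}^{(c)}(\mathbf{0},r_n)\to0,
\]
and likewise $d_{n,H}^{(c)}(\mathbf{0},r_n)\to1\Leftrightarrow S_n(r_n)\to\infty\Leftrightarrow d_{n,2}^{(c)}(\mathbf{0},r_n)\to\infty$. (One may instead quote Theorem \ref{t-prodcutoffhd} and Lemma \ref{l-l2prod}; the uniform bound $\rho<1$ is exactly what makes the quantity $F_n$ of Theorem \ref{t-prodcutoffhd} comparable to $g_n$, hence to $S_n$.)

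For part (1) I substitute $r_n=at_n$: the displayed equivalences turn the definition of cutoff into the same statement about $S_n(at_n)$ whether read in the Hellinger distance or in $L^2$, so $\mathcal{F}^\mathcal{P}_c$ has a Hellinger cutoff with cutoff time $t_n$ iff it has an $L^2$-cutoff with cutoff time $t_n$, and Proposition \ref{p-comp0}(1) equates this with a total variation cutoff with the same $t_n$. When such a cutoff exists, Lemma \ref{l-cutoff}(1) (continuous-time form, which needs no divergence hypothesis) gives $T_{n,\textnormal{\tiny TV}}^{(c)}(\mathbf{0},\epsilon)\sim t_n$ for every $\epsilon\in(0,1)$, and a direct squeeze from the definition of $L^2$-cutoff plus monotonicity of $d_{n,2}^{(c)}(\mathbf{0},\cdot)$ gives $T_{n,2}^{(c)}(\mathbf{0},\delta)\sim t_n$ for every $\delta>0$, whence $T_{n,\textnormal{\tiny TV}}^{(c)}(\mathbf{0},\epsilon)\sim T_{n,2}^{(c)}(\mathbf{0},\delta)$. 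Part (2) is the identical argument with $r_n=t_n+cb_n$ and the plain limits replaced by the iterated quantities $\lim_{c\to\infty}\limsup_{n}$ and $\lim_{c\to-\infty}\liminf_{n}$ of Definition \ref{d-cutoff}(2); the sandwiches transfer these because $x\mapsto 1-e^{-x}$ and $x\mapsto e^{x}-1$ are continuous, strictly increasing, and have the right limiting behavior at $0$ and $\infty$, and Proposition \ref{p-comp0}(2) adds the total-variation leg.

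The only genuine obstacle is securing the two comparisons uniformly in $n$, and this is exactly where $\sup_n(\alpha_n/\beta_n)<\infty$ is indispensable: if some coordinate chain had its Hellinger distance creeping up to $1$, the product $1-\prod_i(1-d_{i,H}^{(c)}(0,\cdot)^2)$ could not be linearized by a quantity of the form $1-e^{-\Theta(\sum_i d_{i,H}^{(c)}(0,\cdot)^2)}$ with constants independent of $n$, and the reduction to the single scalar $S_n$ would collapse. With those comparisons in place, the remaining steps are routine applications of Lemmas \ref{l-2state}, \ref{l-prodmixing}, \ref{l-l2prod}, \ref{l-cutoff} and Proposition \ref{p-comp0}, requiring no new estimates.
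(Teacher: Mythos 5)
Your proof is correct, and it reaches the conclusion by a genuinely different (and shorter) route than the paper at the one place where the argument is delicate. Both approaches start from Lemma \ref{l-2state}, which under $M:=\sup_n\alpha_n/\beta_n<\infty$ makes $\sum_i d_{i,H}^{(c)}(0,p_it/q_n)^2$ and $S_n(t)=\sum_i d_{i,2}^{(c)}(0,p_it/q_n)^2$ comparable with constants depending only on $M$ (this is (\ref{eq-hl2comp})), and both finish with Proposition \ref{p-comp0}. The difference is how one gets the implication that a Hellinger cutoff forces the $L^2$ sums to blow up below the cutoff time: the paper does not control the denominator $1-\max_i d_{i,H}^{(c)}(0,\cdot)^2$ in the criterion $F_n$ of Theorem \ref{t-prodcutoffhd}, so $D_H(a)=\infty$ does not immediately give $D_2(a)=\infty$, and it closes this gap (in both parts (1) and (2)) by a contradiction argument along subsequences using the analyticity Lemma \ref{l-analytic3}, exploiting that the $L^2$ sums are linear combinations of exponentials. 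You instead note that $d_{n,H}^{(c)}(0,t)^2\le d_{n,H}^{(c)}(0,0)^2=1-(1+\alpha_n/\beta_n)^{-1/2}\le 1-(1+M)^{-1/2}<1$ uniformly in $n$ and $t$ (equivalently, directly from (\ref{eq-2phd}), $d_{n,H}^{(c)}(0,t)^2\le\alpha_n/(\alpha_n+\beta_n)\le M/(1+M)$), so $F_n$ is uniformly comparable to $\sum_i d_{i,H}^{(c)}(0,\cdot)^2$ and hence to $S_n$; together with $d_{i,2}^{(c)}(0,\cdot)^2\le M$, which justifies your exponential sandwich for the product $L^2$ distance, all three cutoff criteria collapse to statements about the single scalar sequence $S_n$, and no analyticity is needed. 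What each route buys: yours is elementary and self-contained (and, by working from the exact product identities rather than Lemma \ref{l-l2prod}(1) as stated, avoids the squared-versus-unsquared discrepancy in that statement); the paper's follows the general machinery of \cite{CSal10,CHS16} and is the mechanism that would survive when coordinate distances are not uniformly bounded away from $1$, a regime which the hypothesis $\sup_n\alpha_n/\beta_n<\infty$ here excludes. Your treatment of the ``furthermore'' claim in (1), read under the existence of a cutoff and using monotonicity of $d_{n,2}^{(c)}(\mathbf{0},\cdot)$ and Lemma \ref{l-cutoff}, is also fine.
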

\begin{proof}
Note that, by Proposition \ref{p-comp0}, it suffices to show the equivalence of cutoffs in the Hellinger distance and the $L^2$-distance. Set $r=\sup_n\{\alpha_n/\beta_n\}$. By Lemma \ref{l-2state}, one has
\begin{equation}\label{eq-hl2comp}
 \frac{1}{4(2+r)}\sum_{i=1}^nd_{i,2}^{(c)}(0,p_it/q_n)^2\le
 \sum_{i=1}^nd_{i,H}^{(c)}(0,p_it/q_n)^2\le\frac{1}{2}\sum_{i=1}^nd_{i,2}^{(c)}(0,p_it/q_n)^2.
\end{equation}
The proof is based on the above inequalities. We first consider (1) and set, for $a>0$,
\[
 D_2(a)=\lim_{n\ra\infty}\sum_{i=1}^nd_{i,2}^{(c)}(0,ap_it_n/q_n)^2,\quad
 D_H(a)=\lim_{n\ra\infty}\frac{\sum_{i=1}^nd_{i,H}^{(c)}(0,ap_it_n/q_n)^2}{1-\max\limits_{1\le i\le n}d_{i,H}^{(c)}(0,ap_it_n/q_n)^2}.
\]
By (\ref{eq-hl2comp}), one has
\[
 D_2(a)=0\quad\Lra\quad D_H(a)=0,\quad D_2(a)=\infty\quad\Ra\quad D_H(a)=\infty.
\]
As a result of Theorem \ref{t-prodcutoffhd} and Lemma \ref{l-l2prod}, if $\mathcal{F}^\mathcal{P}_c$ has a $L^2$-cutoff with cutoff time $t_n$, then $\mathcal{F}^\mathcal{P}_c$ has a cutoff in the Hellinger distance with cutoff time $t_n$. Further, if $\mathcal{F}^\mathcal{P}_c$ has a cutoff in the Hellinger distance with cutoff time $t_n$, then $D_2(a)=0$ for $a>1$. To finish the proof of (1), it remains to show that $D_2(a)=\infty$ for $0<a<1$. Assume the inverse that there are $a_0\in(0,1)$ and a subsequence $\xi=(\xi_n)_{n=1}^\infty$ such that
\[
 \lim_{n\ra\infty}\sum_{i=1}^{\xi_n}d_{i,2}^{(c)}(0,a_0p_it_{\xi_n}/q_{\xi_n})^2<\infty,
\]
and set
\[
 \widetilde{D}_2(a)=\limsup_{n\ra\infty}
 \sum_{i=1}^{\xi_n}d_{i,2}^{(c)}(0,ap_it_{\xi_n}/q_{\xi_n})^2,\quad\forall a>a_0.
\]
Since $D_2(a)=0$ for $a>1$, one has $\widetilde{D}_2(a)=0$ for $a>1$. It is easy to see from Lemma \ref{l-2state} that the summation defining $\widetilde{D}_2$ is a linear combination of exponential functions with positive coefficients. As a consequence of Lemma \ref{l-analytic3}, $\widetilde{D}_2(a)=0$ for $a>a_0$ and, by (\ref{eq-hl2comp}), this leads to
\begin{equation}\label{eq-cnt1}
 \lim_{n\ra\infty}\frac{\sum_{i=1}^{\xi_n}d_{i,H}^{(c)}(0,ap_it_{\xi_n}/q_{\xi_n})^2}
 {1-\max\limits_{1\le i\le \xi_n}d_{i,H}^{(c)}(0,ap_it_{\xi_n}/q_{\xi_n})^2}=0
 ,\quad\forall a>a_0.
\end{equation}
However, by Theorem \ref{t-prodcutoffhd}, the cutoff of $\mathcal{F}^\mathcal{P}_c$in the Hellinger distance with cutoff time $t_n$ yields $D_H(a)=\infty$ for $0<a<1$, which contradicts (\ref{eq-cnt1}).

Next, we consider (2). In a similar reasoning, one can show that a $(t_n,b_n)$ $L^2$-cutoff implies a $(t_n,b_n)$ cutoff in the Hellinger distance. Further, a $(t_n,b_n)$ cutoff in the Hellinger distance implies
\[
 \lim_{c\ra\infty}\limsup_{n\ra\infty}\sum_{i=1}^nd_{i,2}^{(c)}(0,(t_n+cb_n)p_i/q_n)^2=0.
\]
To finish the proof of (2), one needs to show that
\[
 \lim_{c\ra-\infty}\liminf_{n\ra\infty}\sum_{i=1}^nd_{i,2}^{(c)}(0,(t_n+cb_n)p_i/q_n)^2
 =\infty,
\]
when $\mathcal{F}^\mathcal{P}_c$ has a $(t_n,b_n)$ cutoff in the Hellinger distance.
Assume the inverse that there are $c_n\ra\infty$ and a subsequence $\xi=(\xi_n)_{n=1}^\infty$ such that
\begin{equation}\label{eq-cnt2}
 t_{\xi_n}/(c_nb_{\xi_n})\ra\infty,\quad\limsup_{n\ra\infty}
 \sum_{i=1}^{\xi_n}d_{i,2}^{(c)}(0,(t_{\xi_n}-c_nb_{\xi_n})p_i/q_{\xi_n})^2<\infty,
\end{equation}
and set
\[
 \overline{D}_2(a)=\limsup_{n\ra\infty}
 \sum_{i=1}^{\xi_n}d_{i,2}^{(c)}(0,(t_{\xi_n}+ac_nb_{\xi_n})p_i/q_{\xi_n})^2.
\]
By the former of (\ref{eq-cnt2}), it is clear that $\overline{D}_2(a)$ is defined for $a\in\mathbb{R}$. Since $(\mathcal{F}^\mathcal{P}_c)_\xi$ has a $(t_{\xi_n},b_{\xi_n})$ cutoff in the Hellinger distance and $c_n\ra\infty$, one has
\begin{equation}\label{eq-cnt3}
 \overline{D}_2(1)=0,\quad \lim_{n\ra\infty}\frac{\sum_{i=1}^{\xi_n}d_{i,H}^{(c)}(0,(t_{\xi_n}+ac_nb_{\xi_n}))^2}
 {1-\max\limits_{1\le i\le \xi_n}d_{i,H}^{(c)}(0,(t_{\xi_n}+ac_nb_{\xi_n}))^2}=\infty,
 \quad\forall a<0.
\end{equation}
By Lemma \ref{l-analytic3}, the latter of (\ref{eq-cnt2}) and the former of (\ref{eq-cnt3}) imply $\overline{D}_2(a)=0$ for $a>-1$. Consequently, (\ref{eq-hl2comp}) yields
\[
 \lim_{n\ra\infty}\frac{\sum_{i=1}^{\xi_n}d_{i,H}^{(c)}(0,(t_{\xi_n}+ac_nb_{\xi_n}))^2}
 {1-\max\limits_{1\le i\le \xi_n}d_{i,H}^{(c)}(0,(t_{\xi_n}+ac_nb_{\xi_n}))^2}=0,
 \quad\forall a>-1,
\]
which contradicts the latter of (\ref{eq-cnt3}).
\end{proof}

\begin{rem}
In fact, one may derive a similar version of Lemma \ref{l-2state} to compare the total variation and the $L^2$-distance of two-state
chains. However, this is not sufficient to prove Proposition \ref{p-l1l2comp} due to the lack of a similar version of Theorem \ref{t-prodcutoffhd} in the total variation.
\end{rem}

Concerning families of reversible Markov chains, Chen and Saloff-Coste obtain an equivalent condition for the $L^2$-cutoff in \cite{CSal10}, while Chen, Hsu and Sheu polish their result in \cite{CHS16}. The following theorem is a combination of \cite[Theorem 4.3]{CHS16} and Proposition \ref{p-l1l2comp}.

\begin{thm}\label{t-l1cutoff}
Let $\mathcal{F}^\mathcal{P}$ be the family in Proposition \ref{p-l1l2comp} and assume $\inf_n\alpha_n\wedge\beta_n>0$ and $p_n\le p_{n+1}$. Then, $\mathcal{F}^\mathcal{P}_c$ has a total variation cutoff if and only if
\begin{equation}\label{eq-l2cut}
 \sup_{n\ge 1}\frac{\log(1+n)}{p_n}=\infty.
\end{equation}
Moreover, if \textnormal{(\ref{eq-l2cut})} holds and $p_n(\alpha_n+\beta_n)$ is increasing, then $\mathcal{F}^\mathcal{P}_c$ has a $(t_n,b_n)$ total variation cutoff, where
\begin{equation}\label{eq-tn_2state}
 t_n=q_n\max_{1\le j\le n}\frac{\log(1+j)}{2p_j(\alpha_j+\beta_j)},\quad b_n=\sqrt{t_nq_n},\quad q_n=\sum_{i=1}^np_i.
\end{equation}
\end{thm}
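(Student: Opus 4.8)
The plan is to pass from the total variation to the $L^2$-distance via Proposition \ref{p-l1l2comp} and then apply the $L^2$-cutoff criterion of Chen, Hsu and Sheu. By Proposition \ref{p-l1l2comp} (whose hypothesis $\sup_n\alpha_n/\beta_n<\infty$ is in force here), $\mathcal{F}^\mathcal{P}_c$ has a total variation cutoff, resp.\ a $(t_n,b_n)$ total variation cutoff, if and only if it has an $L^2$-cutoff, resp.\ a $(t_n,b_n)$ $L^2$-cutoff. Hence it suffices to establish the corresponding two statements in the $L^2$-distance.

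First I would record the relevant data of the coordinate chains. Each $K_n$ is reversible with respect to $\pi_n$ with nontrivial eigenvalue $1-\alpha_n-\beta_n$, so the associated continuous time chain has spectral gap $\lambda_n=\alpha_n+\beta_n$ and, by Lemma \ref{l-2state}, $d^{(c)}_{n,2}(0,t)^2=(\alpha_n/\beta_n)e^{-2\lambda_n t}$. Since $\alpha_n,\beta_n\in(0,1]$ and $\inf_n(\alpha_n\wedge\beta_n)>0$, one has $\lambda_n\asymp1$; and $\inf_m\alpha_m\le\alpha_n/\beta_n\le\sup_m\alpha_m/\beta_m$ gives $\alpha_n/\beta_n\asymp1$. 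These nondegeneracy bounds, together with $p_n\le p_{n+1}$ and, for the window part, $p_n(\alpha_n+\beta_n)$ nondecreasing, are precisely the hypotheses under which \cite[Theorem 4.3]{CHS16} describes the $L^2$-cutoff of the family of product chains $\mathcal{F}^\mathcal{P}_c$: such a cutoff exists if and only if $\sup_{n\ge1}\log(1+n)/p_n=\infty$, i.e.\ (\ref{eq-l2cut}), and when $p_n(\alpha_n+\beta_n)$ is nondecreasing it is a $(t_n,b_n)$ $L^2$-cutoff with $t_n,b_n$ as in (\ref{eq-tn_2state}). Quoting this and transferring back through Proposition \ref{p-l1l2comp} completes the proof. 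As a consistency check, Lemma \ref{l-l2prod} reduces the $L^2$-cutoff to the requirement that $\sum_{i=1}^n(\alpha_i/\beta_i)e^{-2\lambda_i p_i t/q_n}$ pass from $\infty$ to $0$; since $\alpha_i/\beta_i\asymp1$ and $\lambda_i\asymp1$, this transition occurs near $t_n=q_n\max_{1\le j\le n}\log(1+j)/(2p_j\lambda_j)$, a quantity finite for each $n$ and unbounded in $n$ exactly under (\ref{eq-l2cut}), with window of order $\sqrt{t_nq_n}$.

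The main difficulty is bookkeeping rather than conceptual: one must check that the normalizations and monotonicity assumptions of \cite[Theorem 4.3]{CHS16} line up with the present product construction, and that the cutoff time and window it produces agree, in the stated form, with $t_n=q_n\max_{1\le j\le n}\log(1+j)/(2p_j(\alpha_j+\beta_j))$ and $b_n=\sqrt{t_nq_n}$; the assumption that $p_n(\alpha_n+\beta_n)$ is nondecreasing enters by fixing the spectral gap of the $n$th product chain at $p_1(\alpha_1+\beta_1)/q_n$, which is what makes the window formula clean. No estimate beyond Lemmas \ref{l-2state} and \ref{l-l2prod} and Proposition \ref{p-l1l2comp} should be needed.
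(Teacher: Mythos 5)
Your route is exactly the paper's: the paper gives no separate proof, stating only that the theorem ``is a combination of \cite[Theorem 4.3]{CHS16} and Proposition \ref{p-l1l2comp}'', which is precisely your reduction from total variation to the $L^2$-distance followed by the quoted $L^2$-cutoff criterion. Your verification that $\inf_n(\alpha_n\wedge\beta_n)>0$ together with $\alpha_n,\beta_n\le 1$ forces $\sup_n\alpha_n/\beta_n<\infty$, so that Proposition \ref{p-l1l2comp} applies, is the only bookkeeping needed, and it is correct.
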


\begin{rem}
Let $\mathcal{F}^\mathcal{P}$ be the family in Proposition \ref{p-l1l2comp} satisfying
\[
  \inf_{n\ge 1}\alpha_n\wedge\beta_n>0,\quad p_n\le p_{n+1},\quad p_n(\alpha_n+\beta_n)\le p_{n+1}(\alpha_{n+1}+\beta_{n+1})
\]
and let $T_{n,2}^{(c)}$ and $T_{n,\text{\tiny TV}}^{(c)}$ be the mixing times of the $n$th chain in $\mathcal{F}^\mathcal{P}_c$ in the $L^2$-distance and in the total variation.
In \cite{CHS16}, Chen, Hsu and Sheu show that there is $\epsilon_0>0$ such that $T_{n,2}^{(c)}(\mathbf{0},\epsilon)\asymp t_n$ for $\epsilon\in(0,\epsilon_0)$, where $t_n$ is the constant in (\ref{eq-tn_2state}). By using \cite[Proposition 4.1]{CHS16}, Proposition \ref{p-prodmixing} and Lemma \ref{l-2state}, one may select $0<\epsilon_1<\epsilon_0$ such that $T_{n,\text{\tiny TV}}^{(c)}(\mathbf{0},\epsilon)\asymp t_n$ for $\epsilon\in(0,\epsilon_1)$. Note that the spectral gap $\lambda_n$ of the $n$th chain in $\mathcal{F}^\mathcal{P}_c$, which is the smallest nonzero eigenvalue of $I-L_n$, is equal to $p_1(\alpha_1+\beta_1)/q_n\asymp 1/q_n$. As a consequence of Theorem \ref{t-l1cutoff}, we obtain, for $\epsilon\in(0,\epsilon_1)$,
\[
 \mathcal{F}^\mathcal{P}_c\text{ has a total variation cutoff}\quad\Lra\quad T_{n,\text{\tiny TV}}^{(c)}(\mathbf{0},\epsilon)\lambda_n\ra \infty,
\]
and
\[
 \mathcal{F}^\mathcal{P}_c\text{ has a $L^2$-cutoff}\quad\Lra\quad T_{n,2}(\mathbf{0},\epsilon)^{(c)}\lambda_n\ra \infty.
\]
Since Peres conjectured that a cutoff exists if and only if the product of the mixing time and spectral gap tends to infinity, the above equivalences confirm this hypothesis for $\mathcal{F}^\mathcal{P}_c$ in the total variation and in the $L^2$-distance.
\end{rem}

\subsection{Counterexamples to the consistency of cutoffs}\label{ss-noncnsst}

Referring to the setting in (\ref{eq-fp2}),
we give the proof of Theorem \ref{thm:counterexs} in this subsection by providing two examples, which respectively displays that none of cutoffs for $\mathcal{F}_c$ and $\mathcal{F}_c^\mathcal{P}$ implies the other. As cutoffs in the total variation and Hellinger distance are identified by Proposition \ref{p-comp0}, we will discuss those examples in either convenient way.

\subsubsection{$\mathcal{F}_c$ has no cutoff but $\mathcal{F}^\mathcal{P}_c$ presents one}

Consider the following setting. For $i=1,2$, let $\mathcal{F}^{(i)}=(\mathcal{X}_n^{(i)},K_n^{(i)},\pi^{(i)}_n)_{n=1}^\infty$ be a family of irreducible Markov chains, where $\mathcal{X}_n^{(1)}=\mathcal{X}_n^{(2)}=\{0,1,...,n\}$ and
\[
 K_n^{(1)}(j,j+1)=\frac{n-j}{n},\quad K_n^{(1)}(j+1,j)=\frac{j+1}{n},\quad\forall 0\le j<n,
\]
and
\[
 K_n^{(2)}(j,j+1)=K_n^{(2)}(j+1,j)=K_n^{(2)}(0,0)=K_n^{(2)}(n,n)=\frac{1}{2},\quad\forall 0\le j<n.
\]
It is easy to check that $\pi_n^{(1)}(j)=2^{-n}\binom{n}{j}$ and $\pi_n^{(2)}(j)=(n+1)^{-1}$. We use the notations of $d_{n,\text{\tiny TV}}^{(i,c)}$ and $T_{n,\text{\tiny TV}}^{(i,c)}$ to denote the total variation and the corresponding mixing time of the $n$th chain in $\mathcal{F}_c^{(i)}$. It is well-studied that $\mathcal{F}_c^{(1)}$ has a total variation cutoff with cutoff time $\frac{1}{4}n\log n$; $\mathcal{F}_c^{(2)}$
has no cutoff in the total variation but the mixing time satisfies $T_{n,\text{\tiny TV}}^{(2,c)}(\epsilon)\asymp n^2$ for all $\epsilon\in(0,1)$. Let $\mathcal{F}=(\mathcal{X}_n,K_n,\pi_n)_{n=1}^\infty$ be the mixed family of $\mathcal{F}^{(1)}$ and $\mathcal{F}^{(2)}$ in the way that
\[
 (\mathcal{X}_{2n-1},K_{2n-1},\pi_{2n-1})=(\mathcal{X}_n^{(1)},K_n^{(1)},\pi_n^{(1)}),\quad
 (\mathcal{X}_{2n},K_{2n},\pi_{2n})=(\mathcal{X}_n^{(2)},K_n^{(2)},\pi_n^{(2)}).
\]
Since $\mathcal{F}_c^{(2)}$ has no cutoff, $\mathcal{F}_c$ has no cutoff either.

To see a product chain of $\mathcal{F}$ with cutoff, we consider the following sequence
\[
 p_{2n-1}=r^{n-1},\quad p_{2n}=1,\quad \forall n\ge 1,
\]
with $r\in(0,1)$ and write $\mathcal{P}=(p_n)_{n=1}^\infty$. Let $\mathcal{P}_1=(p_{2n-1})_{n=1}^\infty$, $\mathcal{P}_2=(p_{2n})_{n=1}^\infty$ and set
\[
 q_n=\sum_{i=1}^np_i,\quad q^{(1)}_n=\sum_{i=1}^np_{2i-1},\quad q^{(2)}_n=\sum_{i=1}^np_{2i}.
\]
It is obvious that $q_{2n-1}=q^{(1)}_n+q^{(2)}_{n-1}$ and $q_{2n}=q^{(1)}_n+q^{(2)}_n$. To check the existence of cutoff for $\mathcal{F}^\mathcal{P}_c$, we need the following notations. For $n\ge 1$, let $d_{n,\text{\tiny TV}}^{(c)}$ and $d_{n,\text{\tiny TV}}^{(\mathcal{P}_i,c)}$ be the total variation of the $n$th chains in $\mathcal{F}_c^\mathcal{P}$ and $(\mathcal{F}^{(i)})_c^{\mathcal{P}_i}$, and let $T_{n,\text{\tiny TV}}^{(c)}$ and $T_{n,\text{\tiny TV}}^{(\mathcal{P}_i,c)}$ be the corresponding mixing times. As a consequence of Lemma \ref{l-prodmixing}, we have
\begin{equation}\label{eq-d2n-1}
 d_{2n-1,\text{\tiny TV}}^{(c)}(t)\begin{cases}\le d_{n,\text{\tiny TV}}^{(\mathcal{P}_1,c)}\left(\tfrac{q^{(1)}_nt}{q_{2n-1}}\right)
 +d_{n-1,\text{\tiny TV}}^{(\mathcal{P}_2,c)}\left(\tfrac{q^{(2)}_{n-1}t}{q_{2n-1}}\right)\\
 \ge \max\left\{d_{n,\text{\tiny TV}}^{(\mathcal{P}_1,c)}\left(\tfrac{q^{(1)}_nt}{q_{2n-1}}\right),
 d_{n-1,\text{\tiny TV}}^{(\mathcal{P}_2,c)}\left(\tfrac{q^{(2)}_{n-1}t}{q_{2n-1}}\right)\right\}\end{cases}
\end{equation}
and
\begin{equation}\label{eq-d2n-2}
 d_{2n,\text{\tiny TV}}^{(c)}(t)\begin{cases}\le d_{n,\text{\tiny TV}}^{(\mathcal{P}_1,c)}\left(\tfrac{q^{(1)}_nt}{q_{2n}}\right)
 +d_{n,\text{\tiny TV}}^{(\mathcal{P}_2,c)}\left(\tfrac{q^{(2)}_{n}t}{q_{2n}}\right)\\
 \ge \max\left\{d_{n,\text{\tiny TV}}^{(\mathcal{P}_1,c)}\left(\tfrac{q^{(1)}_nt}{q_{2n}}\right),
 d_{n,\text{\tiny TV}}^{(\mathcal{P}_2,c)}\left(\tfrac{q^{(2)}_{n}t}{q_{2n}}\right)\right\}\end{cases}
\end{equation}

Next, we show that $(\mathcal{F}^{(1)})_c^{\mathcal{P}_1}$  has a cutoff. Since $\mathcal{F}_c^{(1)}$ has a cutoff with cutoff time $\frac{1}{4}n\log n$, one has $T_{n,\text{\tiny TV}}^{(1,c)}(\epsilon)\sim\frac{1}{4}n\log n$ for all $\epsilon\in(0,1)$. In some computations, we obtain
\[
 \lim_{n\ra\infty}\frac{T_{n+1,\text{\tiny TV}}^{(1,c)}(\epsilon)/p_{2n+1}}
 {T_{n,\text{\tiny TV}}^{(1,c)}(\epsilon)/p_{2n-1}}=\frac{1}{r}>1,
\]
and
\[
 \log\frac{T_{n,\text{\tiny TV}}^{(1,c)}(\epsilon)}{p_{2n-1}}=\left(\log\frac{1}{r}\right)n+\log n+\log\log n+O(1).
\]
The former implies that $T_{n,\text{\tiny TV}}^{(1,c)}(\epsilon)/p_{2n-1}$ is increasing for $n$ large enough. By Theorem \ref{t-main}, $(\mathcal{F}^{(1)})^{\mathcal{P}_1}_c$ has a cutoff with cutoff time
$\frac{1}{4}q^{(1)}_nr^{1-n}n\log n\sim\frac{r}{4(1-r)}r^{-n}n\log n$.

Now, we show that $\mathcal{F}_c^\mathcal{P}$ has a cutoff with cutoff time $t_n$, where
\[
 t_{2n-1}=\frac{1}{4}q_{2n-1}r^{1-n}n\log n,\quad t_{2n}=\frac{1}{4}q_{2n}r^{1-n}n\log n.
\]
Note that $(\mathcal{F})_c^{\mathcal{P}_1}$ has a cutoff with cutoff time $t_n/q_n$. By (\ref{eq-d2n-1}) and (\ref{eq-d2n-2}), to finish the proof, it suffices to prove
\[
 \lim_{n\ra\infty}d_{n,\text{\tiny TV}}^{(\mathcal{P}_2,c)}\left(\frac{cr}{4}q_n^{(2)}r^{-n}n\log n\right)=0,\quad\forall c>1.
\]
Let $B>0$ be such that $T_{n,\text{\tiny TV}}^{(2,c)}(1/(2e))\le Bn^2$ for all $n\ge 1$. Observe that, for fixed $C>0$, $Cq_n^{(2)}r^{1-n}n\log n>Bn^2q_n^{(2)}\ge q_n^{(2)}\max\{T_{i,\text{\tiny TV}}^{(2,c)}(1/(2e))/p_{2i-1}:1\le i\le n\}$ for $n$ large enough. By Proposition \ref{p-prodmaxtv}, this implies
\begin{align}
 &\limsup_{n\ra\infty}d_{n,\text{\tiny TV}}^{(\mathcal{P}_2,c)}\left(Cq_n^{(2)}r^{1-n}n\log n\right)\notag\\
 \le&1-\exp\left\{-\limsup_{n\ra\infty}\sum_{i=1}^n
 \exp\left\{-\left\lfloor\frac{Cr^{1-n}n\log n}{T_{n,\text{\tiny TV}}^{(2),c}(1/(2e))}\right\rfloor\right\}\right\}\notag\\
 \le&1-\exp\left\{-e\cdot\limsup_{n\ra\infty}
 \sum_{i=1}^n\exp\left\{-\frac{Cr^{1-n}n\log n}{Bi^2}\right\}\right\}\notag\\
 \le&1-\exp\left\{-e\cdot\limsup_{n\ra\infty}n^{1-Cr^{1-n}/(Bn)}\right\}=0.\notag
\end{align}

\subsubsection{$\mathcal{F}_c$ presents a cutoff but $\mathcal{F}^\mathcal{P}_c$ does not}
We will use the chain in Example \ref{ex-Lacoin} to create our counterexample. First of all, we make some analysis on products of chains in (\ref{eq-LacoinK}) and result in a list of observations. As the proofs are somewhat technical, we address all of them in the appendix in order to keep our construction clear.

\begin{lem}\label{l-Lacoin1}
For $n\ge 1$ and $1\le i\le n$, let $p_{n,i}>0$ and $(\mathcal{X}_{n,i},K_{n,i},\pi_{n,i})$ be the Markov chain in \textnormal{(\ref{eq-LacoinK})} with $\beta=0$, $a_{n,i}<b_{n,i}$ and $a_{n,i}+b_{n,i}<1/2$. Consider the family $\mathcal{G}=(\mathcal{X}_n,K_n,\pi_n)_{n=1}^\infty$, where $(\mathcal{X}_n,K_n,\pi_n)$ is the product chain of $(\mathcal{X}_{n,i},K_{n,i},\pi_{n,i})_{i=1}^n$ according to the probability vector $(p_{n,i}/q_n)_{i=1}^n$ and $q_n=p_{n,1}+\cdots+p_{n,n}$. Let $d_{n,H}^{(c)}$ be the Hellinger distance of the $n$th chain in $\mathcal{G}_c$ and set $\hat{p}_n=\min\{p_{n,i}|1\le i\le n\}$.
\begin{itemize}
\item[(1)] If $\sum_{i=1}^na_{n,i}=o(1/n)$, then, for any $C>1$,
\begin{equation}\label{eq-Lacoinprecutoff}
 \lim_{n\ra\infty}d_{n,H}^{(c)}\left(C^{-1}q_nn/\hat{p}_n\right)=1,\quad
 \lim_{n\ra\infty}d_{n,H}^{(c)}\left(2Cq_nn/\hat{p}_n\right)=0.
\end{equation}

\item[(2)] Set $E_{n,\delta}=\{1\le i\le n|p_{n,i}<(1+\delta)\hat{p}_n\}$ and $B_n(\delta)=\sum_{i\in E_{n,\delta}}b_{n,i}$. If it is assumed
\begin{equation}\label{eq-Lacoin2}
 \sum_{i=1}^na_{n,i}=o\left(\frac{1}{n}\right),\quad\max_{1\le i\le n}b_{n,i}=o(1),\quad\max_{1\le i\le n}\frac{a_{n,i}}{b_{n,i}}=O\left(\frac{1}{n}\right),
\end{equation}
then, for $0<\Delta_-<\Delta<\Delta_+<1$,
\begin{equation}\label{eq-Lacoin3}
 1-e^{-B_n(\Delta_-)(1/2+o(1))}\le d_{n,H}^{(c)}\left(\frac{2q_nn}{(1+\Delta)\hat{p}_n}\right)\le 1-e^{-B_n(\Delta_+)(1+o(1))}.
\end{equation}

\end{itemize}
\end{lem}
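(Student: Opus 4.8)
The plan is to push everything down to the coordinate chains using the product formula for the Hellinger distance, and then to analyse directly the continuous-time version of the chain in (\ref{eq-LacoinK}) with $\beta=0$. By Lemma \ref{l-prodmixing}, for the maximum Hellinger distances,
\[
 1-d_{n,H}^{(c)}(t)^2=\prod_{i=1}^n\Bigl(1-d_{n,i,H}^{(c)}(p_{n,i}t/q_n)^2\Bigr),\qquad d_{n,H}^{(c)}(t)\ge\max_{1\le i\le n}d_{n,i,H}^{(c)}(p_{n,i}t/q_n),
\]
where $d_{n,i,H}^{(c)}$ is the maximum Hellinger distance of the $i$th coordinate chain. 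Taking $-\log$ of the first identity and using $u\le-\log(1-u)\le u/(1-u)$ for $u\in[0,1)$, the lemma reduces to controlling $\Sigma_n(t):=\sum_{i=1}^n d_{n,i,H}^{(c)}(p_{n,i}t/q_n)^2$ and its largest summand: $d_{n,H}^{(c)}(t)\to1$ once either $\Sigma_n(t)\to\infty$ or one summand stays bounded away from $0$, and $d_{n,H}^{(c)}(t)\to0$ once $\Sigma_n(t)\to0$ together with $\max_i d_{n,i,H}^{(c)}(p_{n,i}t/q_n)=o(1)$. At product time $t$ the $i$th chain has been run for time $p_{n,i}t/q_n\ge\hat p_nt/q_n$, with equality for the slowest coordinate; thus at $t=C^{-1}q_nn/\hat p_n$ the slowest coordinate has run for $n/C<n$, at $t=2Cq_nn/\hat p_n$ every coordinate has run for $\ge2Cn>2n$, and at $t=2q_nn/((1+\Delta)\hat p_n)$ the $i$th coordinate has run for $2np_{n,i}/((1+\Delta)\hat p_n)$, which is $<2n$ iff $i\in E_{n,\Delta}$, and lies in a fixed compact subinterval of $(n,2n)$ whenever $i\in E_{n,\Delta_-}$.

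For the single-chain analysis, note that $\sum_i a_{n,i}=o(1/n)$ forces each $a_{n,i}=o(1/n)$, so the backward rates are negligible and $\pi_{n,i}$ places all but a $o(1)$ fraction of its mass on $\{n,\dots,2n\}$, and (under the parameters in force) overwhelmingly near the endpoint $2n$. From the worst starting state (state $0$, or any low state) the chain moves ballistically to the right at speed $1+o(1)$ with $O(\sqrt n)$ fluctuations, reaches $n$ at time $n(1+o(1))$, and there a fraction $b_{n,i}(1+o(1))$ of the mass detours into the ``middle'' delay-line $\{n+1,\dots,2n-1\}$, needing an extra $n(1+o(1))$ units of time to reach $2n$. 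This yields three estimates. (a) For $s\le(1-\eta)n$ with $\eta>0$ fixed, $H_{n,i,s}$ from the worst start is concentrated, up to a $\Pr[\text{reached }n]=o(1)$ error, on a set of $\pi_{n,i}$-measure $o(1)$, so $d_{n,i,H}^{(c)}(s)^2=1-o(1)$. (b) For $s$ in a fixed compact subinterval of $(n,2n)$ the only mass misplaced relative to $\pi_{n,i}$ is the detoured $b_{n,i}$-fraction, spread over $O(\sqrt n)$ middle states where $\pi_{n,i}$ is far smaller; evaluating $1-\sum_y\sqrt{H_{n,i,s}(0,y)\pi_{n,i}(y)}$ gives $d_{n,i,H}^{(c)}(s)^2=\tfrac12 b_{n,i}(1+o(1))$, in particular $d_{n,i,H}^{(c)}(s)^2\ge\tfrac12 b_{n,i}(1-o(1))$, while always $d_{n,i,H}^{(c)}(s)^2\le d_{n,i,\textnormal{\tiny TV}}^{(c)}(s)\le b_{n,i}(1+o(1))$. (c) For $s\ge2(1+\eta)n$ with $\eta>0$ fixed, that fraction has arrived and the residual discrepancy is dominated by the super-polynomially small tails of the ballistic traversal (and the relaxation of the fast $n\leftrightarrow 2n$ loop), so $d_{n,i,H}^{(c)}(s)^2=o(b_{n,i})$, indeed $\sum_{i=1}^n d_{n,i,H}^{(c)}(s_i)^2\to0$ for such $s_i$.

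Assembling: for part (1), at $t=C^{-1}q_nn/\hat p_n$ estimate (a) applied to the slowest coordinate $i^\ast$ gives $d_{n,H}^{(c)}(t)\ge d_{n,i^\ast,H}^{(c)}(n/C)\to1$, and at $t=2Cq_nn/\hat p_n$ estimate (c) gives $\Sigma_n(t)\to0$ and $\max_i d_{n,i,H}^{(c)}(\cdot)=o(1)$, whence $d_{n,H}^{(c)}(t)\to0$; this is (\ref{eq-Lacoinprecutoff}). For part (2), at $t=2q_nn/((1+\Delta)\hat p_n)$ the coordinates $i\notin E_{n,\Delta_+}$ have run for $\ge2n(1+\Delta_+)/(1+\Delta)>2n$ and contribute, by (c), a total $o(1)$ to $\Sigma_n(t)$ with $o(1)$ maximal term; the coordinates $i\in E_{n,\Delta_-}$ have run for a time in a fixed compact subinterval of $(n,2n)$ and contribute, by (b), $\sum_{i\in E_{n,\Delta_-}}\tfrac12 b_{n,i}(1+o(1))=\tfrac12 B_n(\Delta_-)(1+o(1))$; the remaining $i\in E_{n,\Delta_+}\setminus E_{n,\Delta_-}$ contribute at most $b_{n,i}(1+o(1))$ each and at least $0$. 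Hence $\tfrac12 B_n(\Delta_-)(1-o(1))\le\Sigma_n(t)\le B_n(\Delta_+)(1+o(1))+o(1)$, and feeding this, with the vanishing of the maximal term, into $\Sigma_n(t)\le-\log(1-d_{n,H}^{(c)}(t)^2)\le\Sigma_n(t)/(1-\max_i d_{n,i,H}^{(c)}(\cdot)^2)$ gives (\ref{eq-Lacoin3}).

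The main obstacle is the single-chain estimate (b): showing, uniformly over the parameters permitted by (\ref{eq-Lacoin2}), that the contribution of the detoured mass to the squared Hellinger distance is $\tfrac12 b_{n,i}(1+o(1))$. This needs a careful description of the transient profile of that mass along the delay-line, a comparison with the small stationary weight there, and the verification that the $O(\sqrt n)$ ballistic fluctuations and the $a_{n,i}$-corrections perturb the exponent only at order $o(1)$; cruder forms of the same analysis, together with the geometric smallness of the left-hand excursions, supply (a), (c) and the bounds in (\ref{eq-Lacoinprecutoff}).
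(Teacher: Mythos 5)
Your reduction is exactly the one the paper uses: Lemma \ref{l-prodmixing} and Proposition \ref{p-prodmixing} turn the problem into bounds on $\Sigma_n(t)=\sum_{i=1}^n d_{n,i,H}^{(c)}(p_{n,i}t/q_n)^2$ together with its largest summand, and your bookkeeping of which coordinates land in $E_{n,\Delta_-}$, in $E_{n,\Delta_+}\setminus E_{n,\Delta_-}$, or outside $E_{n,\Delta_+}$ at time $2q_nn/((1+\Delta)\hat p_n)$ matches the paper's assembly step. The problem is that the entire substance of the lemma lies in the single-coordinate estimates you label (a), (b), (c), and these are only asserted, not proved; you yourself flag (b) as ``the main obstacle.'' In the paper these estimates are the content of the appendix Lemma \ref{l-Lacoin}: one computes $\pi_{n,i}(2n)\in(1-2a_{n,i},1-a_{n,i})$ from reversibility, bounds $K^m(i,2n)$ from below by explicit path events (straight run to $2n$, with or without the detour through the middle segment) and $K^m(0,2n)$ from above (zero for $m\le n$, at most $1-(1-a)^{m-1}b$ for $n<m\le 2n$), Poissonizes, controls the Poisson tails, and finally converts pointwise bounds into Hellinger bounds via the elementary inequalities $\|\mu-\nu\|_H^2\le 1-\mu(x_0)\wedge\nu(x_0)$ and $\|\mu-\nu\|_H^2\ge\tfrac{1}{2}(\mu(A)+\nu(A))-\sqrt{\mu(A)\nu(A)}$ with $x_0=2n$ and $A=\{0,\dots,2n-1\}$. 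Nothing playing this quantitative role appears in your proposal, so as written it is an outline of the paper's argument with its technical core missing.

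Two concrete points inside that gap. First, the lower bound in (b), $d_{n,i,H}^{(c)}(s)^2\ge\tfrac12 b_{n,i}(1-o(1))$ uniformly over the admissible parameters, is precisely where the hypothesis $\max_i a_{n,i}/b_{n,i}=O(1/n)$ must be used: in the paper's bound (\ref{eq-hdlb1}) the competing cross term $\sqrt{a_{n,i}b_{n,i}}\,(1-a_{n,i})^{n}$ is only negligible relative to $b_{n,i}$ because $\sqrt{a_{n,i}/b_{n,i}}\ra 0$; your sketch never engages with this term, nor with the uniformity in $i$. Second, your per-coordinate claim in (c) that $d_{n,i,H}^{(c)}(s)^2=o(b_{n,i})$ for $s\ge 2(1+\eta)n$ overstates what is available: the bound one gets is $2a_{n,i}s+(\text{superexponentially small})=O(b_{n,i})$, not $o(b_{n,i})$; what is actually needed (and true) is only the summed statement $\sum_i d_{n,i,H}^{(c)}(s_i)^2\ra 0$, which follows from $\sum_i a_{n,i}=o(1/n)$, so this is a misstatement rather than a fatal error. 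The fatal issue is simply that (a), (b), (c) — the coordinate-level Hellinger estimates, with uniform control over the parameters allowed by (\ref{eq-Lacoin2}) — are left unproved.
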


\begin{rem}
Lemma \ref{l-Lacoin1}(1) implies that $\mathcal{G}_c$ has a total variation pre-cutoff.
\end{rem}

To build up a criterion on cutoffs from Lemma \ref{l-Lacoin1}, we introduce the following notations. Let $B_n(\delta)$ be the function in Lemma \ref{l-Lacoin1} and set, for any increasing  sequence $\xi=(\xi_n)_{n=1}^\infty$ in $\mathbb{N}$,
\begin{equation}\label{eq-Fxi}
 \overline{F}_{\xi}(\delta)=\limsup_{n\ra\infty}B_{\xi_n}(\delta),\quad
 \underline{F}_{\xi}(\delta)=\liminf_{n\ra\infty}B_{\xi_n}(\delta),
\end{equation}
and, for $c\in[0,\infty]$,
\begin{equation}\label{eq-Deltac}
 \overline{\Delta}_c(\xi):=\sup\{\delta\in(0,1)|\overline{F}_\xi(\delta)=c\},\quad
 \underline{\Delta}_c(\xi):=\sup\{\delta\in(0,1)|\underline{F}_\xi(\delta)=c\},
\end{equation}
where $\sup\emptyset:=0$ and $\inf\emptyset:=1$. If $\xi_n=n$, we simply write $\overline{F},\underline{F},\overline{\Delta}_c,\underline{\Delta}_c$  for $\overline{F}_\xi,\underline{F}_\xi,\overline{\Delta}_c(\xi),\underline{\Delta}_c(\xi)$.

\begin{prop}\label{p-Lacoin2}
Let $\mathcal{G}$ be the family in Lemma \ref{l-Lacoin1} satisfying \textnormal{(\ref{eq-Lacoin2})} and $\overline{\Delta}_c(\xi)$, $\underline{\Delta}_c(\xi)$ be the constants in \textnormal{(\ref{eq-Deltac})}. Then, the following are equivalent.
\begin{itemize}
\item[(1)] For any increasing sequence $\xi$, $\overline{\Delta}_0(\xi)=\overline{\Delta}_\infty(\xi)$ and $\underline{\Delta}_0(\xi)=\underline{\Delta}_\infty(\xi)$.

\item[(2)] For any increasing sequence $\xi$, $\overline{\Delta}_0(\xi)=\overline{\Delta}_\infty(\xi)$ or $\underline{\Delta}_0(\xi)=\underline{\Delta}_\infty(\xi)$.

\item[(3)] $\mathcal{G}_c$ presents a total variation cutoff.
\end{itemize}

In particular, if $\underline{\Delta}_0=\overline{\Delta}_\infty=\Delta$, then $\mathcal{G}_c$ has a total variation cutoff with cutoff time $2(1+\Delta)^{-1}q_nn/\hat{p}_n$.
\end{prop}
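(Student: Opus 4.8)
The plan is to work with the Hellinger distance, which is legitimate by Proposition \ref{p-comp0}, and to translate the two-sided estimates of Lemma \ref{l-Lacoin1} into the language of the monotone functions $\overline F_\xi,\underline F_\xi$. Set $\tau_n=q_nn/\hat p_n$ (so $\tau_n\ge n^2\to\infty$) and, for $\Delta\in(0,1)$, $s_n(\Delta)=2\tau_n/(1+\Delta)$, a decreasing bijection of $(0,1)$ onto $(\tau_n,2\tau_n)$. By the two limits in \textnormal{(\ref{eq-Lacoinprecutoff})} and Lemma \ref{l-cutoff}, every Hellinger mixing time $T^{(c)}_{n,H}(\epsilon)$ lies in $[(1-o(1))\tau_n,(2+o(1))\tau_n]$, a cutoff (equivalently in total variation and Hellinger) is the same as $T^{(c)}_{n,H}(\epsilon)\sim T^{(c)}_{n,H}(\delta)$ for all $\epsilon,\delta\in(0,1)$, and only the window $[\tau_n,2\tau_n]$ is relevant. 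I will also use the standard fact that a family has a cutoff iff every increasing subsequence admits a further subsequence along which a cutoff holds (cf.\ \cite{CSal10}, as in the proof of Proposition \ref{p-prodseqtv}).

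The heart of the argument is the analysis along ``Helly subsequences''. Applying Helly's selection theorem to the nondecreasing maps $\delta\mapsto\arctan B_{\xi_n}(\delta)$, every increasing $\xi$ has a subsequence $\xi'$ with $B_{\xi'_n}(\delta)\to F(\delta)\in[0,\infty]$ for all but countably many $\delta$, where $F$ is nondecreasing; at these continuity points $\overline F_{\xi'}=\underline F_{\xi'}=F$, so $\overline\Delta_c(\xi')$ and $\underline\Delta_c(\xi')$ both reduce to the corresponding threshold of $F$ (the conventions $\sup\emptyset=0$, $\inf\emptyset=1$ absorbing the degenerate cases $F\equiv0$ and $F\equiv\infty$). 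For such a $\xi'$, Lemma \ref{l-Lacoin1}(2) together with monotonicity of $\delta\mapsto B_{\xi'_n}(\delta)$ gives, at every continuity point $\Delta$,
\[
 1-e^{-\frac12 F(\Delta)}\ \le\ \liminf_n d^{(c)}_{\xi'_n,H}(s_{\xi'_n}(\Delta))\ \le\ \limsup_n d^{(c)}_{\xi'_n,H}(s_{\xi'_n}(\Delta))\ \le\ 1-e^{-F(\Delta)} .
\]
Hence, using \textnormal{(\ref{eq-Lacoinprecutoff})} for $\Delta\notin(0,1)$: if $F$ is \emph{sharp}, i.e.\ $F\in\{0,\infty\}$ everywhere, with jump point $\Delta^\ast\in[0,1]$, then $d^{(c)}_{\xi'_n,H}(s_{\xi'_n}(\delta))\to0$ for $\delta<\Delta^\ast$ and $\to1$ for $\delta>\Delta^\ast$, so $\mathcal G_c|_{\xi'}$ has a cutoff with cutoff time $s_{\xi'_n}(\Delta^\ast)$ (or $\tau_{\xi'_n}$, resp.\ $2\tau_{\xi'_n}$, at the endpoints); whereas if $F\in(0,\infty)$ on a nondegenerate interval, then $d^{(c)}_{\xi'_n,H}(s_{\xi'_n}(\delta))$ stays asymptotically inside a fixed subinterval of $(0,1)$ over a time span $\asymp\tau_{\xi'_n}$, so $T^{(c)}_{\xi'_n,H}(\epsilon)/T^{(c)}_{\xi'_n,H}(\delta)\not\to1$ for suitable $\epsilon,\delta$ and there is no cutoff.

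With this in hand the equivalences follow quickly. $(1)\Rightarrow(2)$ is trivial. For $(2)\Rightarrow(3)$: given any increasing $\xi$, pass to a Helly subsequence $\xi'$; since $\overline F_{\xi'}=\underline F_{\xi'}=F$, the alternative in (2) applied to $\xi'$ forces $F$ sharp, hence $\mathcal G_c|_{\xi'}$ has a cutoff, so $\mathcal G_c$ has a cutoff. For $(3)\Rightarrow(1)$, take the contrapositive: if, say, $\overline\Delta_0(\xi)<\overline\Delta_\infty(\xi)$ for some $\xi$, then $\overline F_\xi\in(0,\infty)$ on $(\overline\Delta_0(\xi),\overline\Delta_\infty(\xi))$; choosing $\delta_1<\delta_2$ there, a subsequence realizing $\limsup_nB_{\xi_n}(\delta_1)=\overline F_\xi(\delta_1)\in(0,\infty)$, and then a Helly refinement $\xi'$, one gets $F_{\xi'}\in(0,\infty)$ on $[\delta_1,\delta_2]$, so $\mathcal G_c|_{\xi'}$ — hence $\mathcal G_c$ — has no cutoff; the case $\underline\Delta_0(\xi)<\underline\Delta_\infty(\xi)$ is symmetric (using $\liminf_nB_{\xi_n}(\delta_2)=\underline F_\xi(\delta_2)$). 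Finally, if the cutoff holds and moreover $\underline\Delta_0=\overline\Delta_\infty=\Delta$, then $(3)\Rightarrow(1)$ gives $\overline\Delta_0=\overline\Delta_\infty$ and $\underline\Delta_0=\underline\Delta_\infty$, so all four constants equal $\Delta$; thus $B_n(\delta)\to0$ for $\delta<\Delta$ and $B_n(\delta)\to\infty$ for $\delta>\Delta$ along the whole sequence, and the displayed bounds give a cutoff with cutoff time $s_n(\Delta)=2(1+\Delta)^{-1}q_nn/\hat p_n$.

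The delicate step is the quantitative half of the second paragraph: extracting mixing-time asymptotics from the bounds of Lemma \ref{l-Lacoin1}(2), which differ by a factor $2$ in the exponent. The resolution is precisely that this factor is harmless once one restricts to Helly subsequences — a sharp $F$ makes both bounds collapse to $0$ or $1$ away from the single jump, while a genuinely intermediate $F$ produces an honest plateau whose width in time is $\asymp\tau_n$ — so the whole proof is organised around Helly selection and the subsequence characterisation of cutoffs rather than around computing $T^{(c)}_{n,H}(\epsilon)$ exactly.
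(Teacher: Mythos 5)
Your proof of the three equivalences is correct and follows the same overall strategy as the paper (pass to the Hellinger distance via Proposition \ref{p-comp0}, feed the bounds of Lemma \ref{l-Lacoin1} into the subsequence characterisation of cutoffs from \cite{CSal10}), but the extraction device is genuinely different: you run everything through Helly selection applied to $\delta\mapsto\arctan B_{\xi_n}(\delta)$, so that along the refined subsequence $\overline F_{\xi'}=\underline F_{\xi'}=F$ and the dichotomy ``$F$ jumps from $0$ to $\infty$ at a single point'' versus ``$F$ takes a finite nonzero value on a nondegenerate interval'' cleanly separates cutoff from plateau. The paper instead builds the subsequences by hand: for (2)$\Rightarrow$(3) it picks $\overline\delta_n\downarrow\overline\Delta$ and indices with $B_{\xi_{k_n}}(\overline\delta_n)>n$ (so $B\to\infty$ above the threshold along the subsequence, while $B\to 0$ below it along the whole sequence), and for (3)$\Rightarrow$(1) it picks $0<A<B<1$ with $\inf_n B_{\xi''_n}(A)>0$ and $\sup_n B_{\xi''_n}(B)<\infty$ and reads off a plateau from (\ref{eq-Lacoin3}). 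Your route buys a uniform treatment of both implications and of the $\limsup$/$\liminf$ cases at once; the paper's is more elementary and makes the quantitative use of (\ref{eq-Lacoinprecutoff}) and (\ref{eq-Lacoin3}) explicit. Your observation that the factor $2$ discrepancy between the two exponents in (\ref{eq-Lacoin3}) is immaterial (sharp $F$ collapses both bounds, intermediate $F$ only needs ``bounded away from $0$ and $1$'') is exactly the point, and your handling of the endpoint cases $\Delta^\ast\in\{0,1\}$ via (\ref{eq-Lacoinprecutoff}) matches the paper's.

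The one genuine gap is the final ``in particular'' sentence. The statement asks you to \emph{derive} the cutoff (and its time) from the hypothesis $\underline\Delta_0=\overline\Delta_\infty=\Delta$ alone, whereas your last paragraph begins ``if the cutoff holds and moreover $\underline\Delta_0=\overline\Delta_\infty=\Delta$'', i.e.\ you only identify the cutoff time conditionally on a cutoff you have not established. Note that Remark \ref{r-Lacoin} only gives $\overline\Delta_0\le\underline\Delta_0=\Delta=\overline\Delta_\infty\le\underline\Delta_\infty$, so the hypothesis does not by itself force all four constants in (\ref{eq-Deltac}) to coincide, and hence does not obviously place you in the situation (condition (1) or (2)) from which your — or the paper's — cutoff argument proceeds; the paper disposes of this step with the one-line assertion that it ``follows immediately from (2)'', which is exactly the point your write-up leaves open. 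If one strengthens the hypothesis to all four thresholds being equal to $\Delta$ (the situation in every application in the paper, where $\overline F=\underline F$), then $B_n(\delta)\to 0$ for $\delta<\Delta$ and $B_n(\delta)\to\infty$ for $\delta>\Delta$ along the full sequence, and your displayed bounds give the cutoff with time $2(1+\Delta)^{-1}q_nn/\hat p_n$ directly; as written, however, your argument does not prove the sentence as stated, and you should either supply an argument that $\underline\Delta_0=\overline\Delta_\infty$ forces condition (2), or flag that the stronger hypothesis is what is actually being used.
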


\begin{rem}\label{r-Lacoin}
The monotonicity of $\overline{F},\underline{F}$ and the relation of $\underline{F}\le\overline{F}$ are clear from their definitions. These observations result in $\overline{\Delta}_0\le\overline{\Delta}_\infty$, $\underline{\Delta}_0\le\underline{\Delta}_\infty$, $\overline{\Delta}_0\le\underline{\Delta}_0$, and $\overline{\Delta}_\infty\le\underline{\Delta}_\infty$
\end{rem}

Concerning families without subfamilies presenting cutoffs, one may derive a proof similar to that of Proposition \ref{p-Lacoin2} to achieve the following corollary.

\begin{cor}\label{c-Lacoin}
Referring to the setting in Proposition \ref{p-Lacoin2}, the following are equivalent.
\begin{itemize}
\item[(1)] For any increasing sequence $\xi$, $\overline{\Delta}_0(\xi)<\overline{\Delta}_\infty(\xi)$ and $\underline{\Delta}_0(\xi)<\underline{\Delta}_\infty(\xi)$.

\item[(2)] For any increasing sequence $\xi$, $\overline{\Delta}_0(\xi)<\overline{\Delta}_\infty(\xi)$ or $\underline{\Delta}_0(\xi)<\underline{\Delta}_\infty(\xi)$.

\item[(3)] No subfamily of $\mathcal{G}_c$ has a total variation cutoff.
\end{itemize}
\end{cor}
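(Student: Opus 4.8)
The plan is to mirror the proof of Proposition~\ref{p-Lacoin2}, with every equality $\overline\Delta_0=\overline\Delta_\infty$ (resp.\ $\underline\Delta_0=\underline\Delta_\infty$) replaced by the corresponding strict inequality — which by Remark~\ref{r-Lacoin} is its only alternative — and with ``presents a cutoff'' replaced by ``has a subfamily presenting a cutoff''. By Proposition~\ref{p-comp0}, total variation and Hellinger cutoffs agree for a family and for all of its subfamilies, so it suffices to argue with the Hellinger distances $d^{(c)}_{n,H}$; for these, Lemma~\ref{l-Lacoin1}(2) gives, along $t_n(\delta):=2q_nn/((1+\delta)\hat p_n)$ and for $0<\Delta_-<\delta<\Delta_+<1$, the sandwich
\[
 1-e^{-B_n(\Delta_-)(1/2+o(1))}\ \le\ d^{(c)}_{n,H}\big(t_n(\delta)\big)\ \le\ 1-e^{-B_n(\Delta_+)(1+o(1))},
\]
while Lemma~\ref{l-Lacoin1}(1) controls the two end scales. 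Since $\delta\mapsto B_n(\delta)$ is nondecreasing and $\delta\mapsto t_n(\delta)$ is decreasing (and $d^{(c)}_{n,H}$ is nonincreasing), these convert the $0$/$1$ behaviour of $d^{(c)}_{\xi_n,H}$ along rescalings of $t_{\xi_n}(\cdot)$ into the size of $B_{\xi_n}(\delta)$, hence into the thresholds $\overline\Delta_0(\xi),\overline\Delta_\infty(\xi),\underline\Delta_0(\xi),\underline\Delta_\infty(\xi)$. Feeding this into Proposition~\ref{p-Lacoin2} applied to $\mathcal G_\xi$ (whose hypotheses pass to every subfamily) gives the single fact the whole argument rests on: $(\mathcal G_\xi)_c$ has a cutoff iff for every subsequence $\zeta$ of $\xi$ one has $\overline\Delta_0(\zeta)=\overline\Delta_\infty(\zeta)$ or $\underline\Delta_0(\zeta)=\underline\Delta_\infty(\zeta)$.

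I would then prove $(1)\Rightarrow(2)\Rightarrow(3)\Rightarrow(1)$. The implication $(1)\Rightarrow(2)$ is trivial, and $(1)\Rightarrow(3)$ also is: under (1), taking $\zeta=\xi$ breaks the criterion above for every $\xi$, so no $(\mathcal G_\xi)_c$ has a cutoff. For $(2)\Rightarrow(1)$ and $(3)\Rightarrow(1)$ I argue by contraposition, assuming (1) fails at some $\xi_0$, so $\overline\Delta_0(\xi_0)=\overline\Delta_\infty(\xi_0)$ or $\underline\Delta_0(\xi_0)=\underline\Delta_\infty(\xi_0)$. The crucial monotonicity point, from $\overline F_\zeta\le\overline F_\xi$ whenever $\zeta\subseteq\xi$ together with Remark~\ref{r-Lacoin}, is that the ``$\overline\Delta$-equality'' is inherited by subsequences: $\overline\Delta_0(\xi_0)=\overline\Delta_\infty(\xi_0)$ forces $\overline\Delta_0(\zeta)=\overline\Delta_\infty(\zeta)$ for all $\zeta\subseteq\xi_0$, via $\overline\Delta_0(\xi_0)\le\overline\Delta_0(\zeta)\le\overline\Delta_\infty(\zeta)\le\overline\Delta_\infty(\xi_0)$. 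Hence if (1) fails with an $\overline\Delta$-equality at $\xi_0$, the criterion above makes $(\mathcal G_{\xi_0})_c$ a cutoff subfamily, refuting (3), and also refuting (2). If (1) fails only with an $\underline\Delta$-equality at $\xi_0$, one first checks from monotonicity of $\delta\mapsto\underline F_{\xi_0}(\delta)$ that this forces $\underline\Delta_0(\xi_0)=\underline\Delta_\infty(\xi_0)=0$, then passes by a diagonal extraction to $\zeta\subseteq\xi_0$ along which $B_{\zeta_n}(q)$ converges in $[0,\infty]$ at every rational $q$; off the countable jump set one gets $\underline F_\zeta=\overline F_\zeta$, so the four thresholds of $\zeta$ coincide pairwise, and reapplying the criterion to $\mathcal G_\zeta$ (and using heredity) again yields a cutoff subfamily and the failure of (2). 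If instead (1) holds, (2) follows at once.

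I expect the last extraction to be the main obstacle: a single witnessing sequence guarantees only one of the two threshold equalities, and passing to a subsequence enlarges $\overline\Delta_0$, shrinks $\overline\Delta_\infty$ (and dually for $\underline\Delta$), so the diagonal subsequence must be chosen to collapse the relevant gap without reopening the other. This is precisely where the ``or''-form of Proposition~\ref{p-Lacoin2}(2), the sharp asymptotics of Lemma~\ref{l-Lacoin1}(2) (Lemma~\ref{l-Lacoin1}(1) pins the time scale only up to a bounded factor and cannot detect which threshold is active), and the monotonicity bookkeeping of Remark~\ref{r-Lacoin} are all needed — exactly as in the proof of Proposition~\ref{p-Lacoin2}. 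The heredity of cutoffs under subfamilies, and the characterization that a family has a cutoff iff every subfamily of it has a further subfamily with one, are used as black boxes throughout.
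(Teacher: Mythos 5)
Your overall skeleton (reduce to the Hellinger distance via Proposition \ref{p-comp0}, use Lemma \ref{l-Lacoin1}, apply Proposition \ref{p-Lacoin2} to the subfamilies $\mathcal G_\xi$, and handle the nontrivial implications by contraposition from a failure of (1) at some $\xi_0$) is the right one, but your ``crucial monotonicity point'' is false, and both contrapositive steps rest on it. Although (\ref{eq-Deltac}) is written with a supremum, Remark \ref{r-Lacoin} (which asserts $\overline{\Delta}_\infty\le\underline{\Delta}_\infty$) and the computations in the proof of Proposition \ref{p-Lacoin3} (where $\overline{\Delta}_\infty=\beta\in(0,1)$) show that $\overline{\Delta}_\infty(\xi)$ must be read as $\inf\{\delta\in(0,1):\overline F_\xi(\delta)=\infty\}$ with $\inf\emptyset=1$; under the literal sup-reading the quantity would only take the values $0,1$ and Proposition \ref{p-Lacoin2} itself would be false. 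With the correct reading, passing to a subsequence $\zeta\subseteq\xi$ gives $\overline F_\zeta\le\overline F_\xi$, hence $\{\overline F_\zeta=\infty\}\subseteq\{\overline F_\xi=\infty\}$ and $\overline{\Delta}_\infty(\zeta)\ge\overline{\Delta}_\infty(\xi)$: your inequality $\overline{\Delta}_\infty(\zeta)\le\overline{\Delta}_\infty(\xi_0)$ is reversed, and the $\overline{\Delta}$-equality is \emph{not} inherited by subsequences. Concretely, within the hypotheses (\ref{eq-Lacoin2}) one can arrange $B_n(\delta)\to\infty$ for all $\delta$ along even $n$, while along odd $n$ one has $B_n(\delta)\to0$ for $\delta<1/3$, $B_n(\delta)\to1$ for $1/3<\delta<2/3$, $B_n(\delta)\to\infty$ for $\delta>2/3$ (three groups of coordinates with $p_{n,i}\in\{\hat p_n,\tfrac43\hat p_n,\tfrac53\hat p_n\}$ and suitable $b_{n,i}$ of order $n^{-1/2}$). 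Then $\overline{\Delta}_0=\overline{\Delta}_\infty=0$ for the full sequence, yet $\overline{\Delta}_0(\mathrm{odd})=1/3<2/3=\overline{\Delta}_\infty(\mathrm{odd})$, and by Proposition \ref{p-Lacoin2} the full family has no cutoff although the $\overline{\Delta}$-equality holds for it; so your conclusion that $(\mathcal G_{\xi_0})_c$ itself has a cutoff does not follow. The auxiliary claims in your $\underline{\Delta}$-case come from the same misreading: the equality $\underline{\Delta}_0(\xi_0)=\underline{\Delta}_\infty(\xi_0)$ does not force the common value $0$, and the rational diagonal extraction only equalizes $\overline F_\zeta$ and $\underline F_\zeta$; it does not make the $0$-threshold meet the $\infty$-threshold, which is what the criterion requires.

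What the argument actually needs --- and what the paper means by ``a proof similar to that of Proposition \ref{p-Lacoin2}'' --- is the subsequence extraction from the (2)$\Rightarrow$(3) half of that proof, applied to the single sequence $\xi_0$ at which (1) fails. Say $\overline{\Delta}_0(\xi_0)=\overline{\Delta}_\infty(\xi_0)=\overline{\Delta}$: choose $\overline\delta_n\downarrow\overline{\Delta}$ with $\overline F_{\xi_0}(\overline\delta_n)=\infty$ and indices of $\xi_0$ along which $B(\overline\delta_n)>n$, obtaining $\xi'\subseteq\xi_0$ with $\underline F_{\xi'}(\delta)=\infty$ for $\delta>\overline{\Delta}$ while $\overline F_{\xi'}(\delta)\le\overline F_{\xi_0}(\delta)=0$ for $\delta<\overline{\Delta}$ (the case $\overline{\Delta}=1$ being handled by Lemma \ref{l-Lacoin1}(1) as in the paper). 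Lemma \ref{l-Lacoin1} and Proposition \ref{p-comp0} then give a total variation cutoff for $(\mathcal G_{\xi'})_c$, which refutes (3); and the same construction shows all four thresholds of $\xi'$ equal $\overline{\Delta}$, so both equalities hold at $\xi'$, which refutes (2). Combined with your correct easy implications (1)$\Rightarrow$(2) and (1)$\Rightarrow$(3), this closes the cycle. In short, the extraction you identified as ``the main obstacle'' cannot be bypassed by heredity; it is exactly the step that carries the content of the corollary.
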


We are now ready to state our example. Let $(\mathcal{X}_{n,i},K_{n,i},\pi_{n,i})$ and $(\mathcal{X}_n,K_n,\pi_n)$ be Markov chains in Lemma \ref{l-Lacoin1} satisfying
\[
 \max_{1\le i\le n}a_{n,i}=O\left(\frac{1}{n^2}\right),\quad \frac{1}{Cn}\le b_{n,i}\le \frac{C}{n},\quad \hat{p}_n\sim \check{p}_n,
\]
where $C>1$, $\check{p}_n=\max\{p_{n,i}|1\le i\le n\}$ and $\hat{p}_n=\min\{p_{n,i}|1\le i\le n\}$. Clearly, (\ref{eq-Lacoin2}) is fulfilled and the functions in (\ref{eq-Fxi}) satisfy
\[
 C^{-1}\le \underline{F}(\delta)\le\overline{F}(\delta)\le C,\quad\forall 0<\delta<1.
\]
By Corollary \ref{c-Lacoin}, no subfamily of $\mathcal{G}_c$ presents a total variation cutoff. Let $T_{n,\text{\tiny TV}}^{(c)}$ be the total variation mixing time of the $n$th chain in $\mathcal{G}_c$. It is easy to see from Lemmas \ref{l-Lacoin1} and \ref{l-comp2} that $T_{n,\text{\tiny TV}}^{(c)}(1/4)\asymp q_nn/\hat{p}_n$. Let $\mathcal{R}=(r_n)_{n=1}^\infty$, where $r_n=(q_nn/\hat{p}_n)\exp\{-n^\alpha\}$, and write
\[
 \log\frac{T_{n,\text{\tiny TV}}^{(c)}(1/4)}{r_n}=n^\alpha+O(1).
\]
Since $(n+1)^\alpha-n^\alpha\ge n^{\alpha-1}$, the above logarithm is increasing for $n$ large enough. As a result of Theorem \ref{t-main}, no subfamily of $\mathcal{G}^\mathcal{R}_c$ has a total variation cutoff.

Let $\xi_n=n(n+1)/2$ and $\mathcal{F}=(\mathcal{Y}_n,L_n,\nu_n)_{n=1}^\infty$, where
\[
 (\mathcal{Y}_{\xi_n+i},L_{\xi_n+i},\nu_{\xi_n+i})
 =(\mathcal{X}_{n+1,i},K_{n+1,i},\pi_{n+1,i}),\quad\forall 1\le i\le n+1,\,n\ge 0.
\]
By Proposition \ref{p-Lacoin2}, it is easy to see that $\mathcal{F}_c$ has a total variation cutoff with cutoff time $n$. Set $s_n=r_1+\cdots+r_n$, $u_{\xi_n+i}=p_{n+1,i}r_{n+1}/q_{n+1}$, $H_{n,i,t}=e^{-t(I-K_{n,i})}$, $H_{n,t}=e^{-t(I-K_n)}$ and $\widetilde{H}_{n,t}=e^{-t(I-L_n)}$. For simplicity, we write $\bigotimes_{i=1}^nA_i$ for the tensor product of matrices $A_1,...,A_n$. It is clear that $s_n=u_1+\cdots+u_{\xi_n}$. This implies
\begin{align}
 \bigotimes_{i=1}^{\xi_n}\widetilde{H}_{i,u_it/s_n}&=\bigotimes_{m=1}^n\left(
 \bigotimes_{i=1}^mH_{m,i,u_{\xi_{m-1}+i}t/s_n}\right)\notag\\
 &=\bigotimes_{m=1}^n\left(\bigotimes_{i=1}^mH_{m,i,p_{m,i}r_mt/(q_ms_n)}\right) =\bigotimes_{m=1}^nH_{m,r_mt/s_n}.\notag
\end{align}
By setting $\mathcal{U}=(u_n)_{n=1}^\infty$, the above identity implies that the subfamily of $(\mathcal{F}^\mathcal{U})_c$ indexed by $\xi$ is exactly $(\mathcal{G}^\mathcal{R})_c$ and, hence, has no cutoff in the total variation, as desired.

\section*{Acknowledgements}
We thank Laurent Saloff-Coste for recalling us that the Hellinger distance was used in the proof of Kakutani's dichotomy theorem. The first author is partially supported by grant MOST 104-2115-M-009-013-MY3 and by NCTS. The second author is partially supported by the Grant-in-Aid for Scientific Research (A) 25247007.

\appendix
\section{Auxiliary results}

\begin{lem}\label{l-analytic2}
Let $(s_n)_{n=1}^\infty$ and $\{\lambda_{n,i}|1\le i\le k_n,n\ge 1\}$ be a sequence and a triangular array of positive reals and set
\[
 F(c)=\lim_{m\ra\infty}\limsup_{n\ra\infty}\sum_{i=1}^{k_n-m}e^{-c\lambda_{n,i}s_n},
 \quad\forall c>0.
\]
Suppose $F(c_0)<\infty$ for some $c_0>0$. Then, either $F(c)>0$ for all $c>c_0$ or $F(c)=0$ for all $c>c_0$.
\end{lem}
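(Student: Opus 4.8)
The plan is to combine the monotonicity of $F$ with a H\"older interpolation inequality. First I would record two elementary facts. Since $\lambda_{n,i}s_n>0$, the map $c\mapsto e^{-c\lambda_{n,i}s_n}$ is non-increasing, hence each partial sum $\sum_{i=1}^{k_n-m}e^{-c\lambda_{n,i}s_n}$ is non-increasing in $c$ and therefore $F$ is non-increasing on $(0,\infty)$. Moreover, for fixed $c$ the quantity
\[
 g_c(m):=\limsup_{n\to\infty}\sum_{i=1}^{k_n-m}e^{-c\lambda_{n,i}s_n}
\]
is non-increasing in $m$, since increasing $m$ discards nonnegative terms, and it decreases to $F(c)$; in particular $g_c(m)\ge F(c)$ for every $m$. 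Because $F\ge 0$ and $F$ is non-increasing, the conclusion can fail only if there exist $c_0<c_1<c_2$ with $F(c_1)>0$ and $F(c_2)=0$, so I would fix such $c_1,c_2$ and aim for a contradiction.

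Write $c_1=\theta c_0+(1-\theta)c_2$ with $\theta=(c_2-c_1)/(c_2-c_0)\in(0,1)$. Then $e^{-c_1\lambda_{n,i}s_n}=\bigl(e^{-c_0\lambda_{n,i}s_n}\bigr)^{\theta}\bigl(e^{-c_2\lambda_{n,i}s_n}\bigr)^{1-\theta}$, and H\"older's inequality with exponents $1/\theta$ and $1/(1-\theta)$ gives, for every $n$ and $m$,
\[
 \sum_{i=1}^{k_n-m}e^{-c_1\lambda_{n,i}s_n}\le\left(\sum_{i=1}^{k_n-m}e^{-c_0\lambda_{n,i}s_n}\right)^{\theta}\left(\sum_{i=1}^{k_n-m}e^{-c_2\lambda_{n,i}s_n}\right)^{1-\theta}.
\]
Given $\epsilon>0$, use $F(c_0)<\infty$ and $g_{c_0}(m)\downarrow F(c_0)$ to pick $m_0$ with $g_{c_0}(m)\le F(c_0)+1$ for all $m\ge m_0$, and use $F(c_2)=0$ to pick $m_2$ with $g_{c_2}(m)\le\epsilon$ for all $m\ge m_2$. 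Taking $m=\max\{m_0,m_2\}$ and applying $\limsup_n(a_nb_n)\le(\limsup_na_n)(\limsup_nb_n)$, valid here since both factors now have finite $\limsup$, the displayed inequality yields
\[
 F(c_1)\le g_{c_1}(m)\le\bigl(F(c_0)+1\bigr)^{\theta}\epsilon^{1-\theta}.
\]
Letting $\epsilon\downarrow 0$ forces $F(c_1)=0$, contradicting $F(c_1)>0$, which completes the argument.

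The one delicate point is that one cannot simply set $m=0$ in the interpolation step: $g_{c_0}(0)=\limsup_n\sum_{i=1}^{k_n}e^{-c_0\lambda_{n,i}s_n}$ may be infinite even though $F(c_0)<\infty$, which would turn the right-hand side into a meaningless $\infty\cdot 0$. The hypothesis $F(c_0)<\infty$ is used precisely to produce a finite uniform bound $F(c_0)+1$ on $g_{c_0}(m)$ for all large $m$, while the elementary bound $g_{c_1}(m)\ge F(c_1)$, valid for every $m$, allows us to push $m$ up freely without weakening the left-hand side. Beyond this bookkeeping no further estimates are needed.
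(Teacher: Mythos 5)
Your argument is correct, and it is genuinely different from the paper's. The paper proves the dichotomy by extracting a subsequence $g_j=f_{n_j,m_j}$ that realizes $F(c_1)$, using $F(c_0)<\infty$ to get $\sup_j g_j(c_0)<\infty$, and then invoking the compactness--analyticity lemma of Chen and Saloff-Coste (Lemma \ref{l-analytic}, \cite[Lemma 3.2]{CSal10}) to obtain a locally uniform limit $g$ analytic on $(c_0,\infty)$; monotonicity plus the identity theorem then forces $g>0$, hence $F\ge g>0$, on $(c_0,\infty)$. You bypass all of that machinery: the identity $c_1=\theta c_0+(1-\theta)c_2$ and H\"older's inequality express the log-convexity of $c\mapsto\sum_i e^{-c\lambda_{n,i}s_n}$, and you push this three-point interpolation through the $\limsup$ in $n$ and the limit in $m$, using $F(c_0)<\infty$ exactly where it is needed, namely to get a finite uniform bound on $g_{c_0}(m)$ for large $m$ and so avoid an $\infty\cdot 0$ product; the inequality $F(c_1)\le g_{c_1}(m)$ for every $m$ lets you send $\epsilon\downarrow 0$ and conclude $F(c_1)=0$, the desired contradiction. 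Your route is more elementary and self-contained, and it in fact yields a quantitative statement (a log-convexity-type bound $F(c_1)\le\lim_{\epsilon\downarrow0}(F(c_0)+1)^{\theta}\epsilon^{1-\theta}$ whenever $F(c_2)=0$) of which the dichotomy is an immediate corollary; the paper's analytic-limit approach has the advantage of reusing Lemma \ref{l-analytic}, which the paper needs anyway for related statements such as Lemma \ref{l-analytic3}.
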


To prove Lemma \ref{l-analytic2}, we need the following fact.

\begin{lem}{\rm(\cite[Lemma 3.2]{CSal10})}\label{l-analytic}
For $n\ge 1$, let $f_n$ be a function defined by
\[
 f_n(t)=\sum_{i=1}^\infty a_{n,i}e^{-t\lambda_{n,i}},\quad\forall t\ge 0,
\]
where $a_{n,i}\ge 0$ and $\lambda_{n,i+1}\ge\lambda_{n,i}>0$ for $i\ge 1$ and $n\ge 1$. Suppose $\sup_n f_n(0)<\infty$. Then, for any sequence of positive reals $(t_n)_{n=1}^\infty$, there is a subsequence $(t_{k_n})_{n=1}^\infty$ such that the sequence $g_n(c):=f_{k_n}(ct_{k_n})$ converges uniformly on any compact subset of $(0,\infty)$ to an analytic function on $(0,\infty)$.
\end{lem}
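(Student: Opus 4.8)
The plan is to recognize each rescaled function $f_n(\,\cdot\,t_n)$ as a Laplace transform of a finite positive measure of uniformly bounded mass and then run a normal-families argument. Set $M:=\sup_n f_n(0)=\sup_n\sum_{i\ge 1}a_{n,i}<\infty$, which is exactly the standing hypothesis. Given the sequence $(t_n)_{n=1}^\infty$ of positive reals, define, for $z$ in the closed right half-plane $\{z\in\mathbb{C}:\operatorname{Re}z\ge 0\}$,
$h_n(z):=\sum_{i\ge 1}a_{n,i}e^{-z\,t_n\lambda_{n,i}}$, so that $h_n(c)=f_n(ct_n)$ for $c>0$. Since $t_n\lambda_{n,i}>0$ and $\operatorname{Re}z\ge 0$, we have $|a_{n,i}e^{-z t_n\lambda_{n,i}}|\le a_{n,i}$, so by the Weierstrass $M$-test (with majorant $\sum_i a_{n,i}\le M$) the series converges uniformly on the closed half-plane; hence $h_n$ is continuous there, holomorphic on the open half-plane $\mathbb{H}:=\{\operatorname{Re}z>0\}$, and satisfies $|h_n|\le M$ throughout.

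Next I would invoke Montel's theorem: the family $\{h_n\}_{n\ge 1}$ is uniformly bounded on $\mathbb{H}$, hence normal there, so there exist an increasing sequence $(k_n)_{n=1}^\infty$ of indices and a function $h$ holomorphic on $\mathbb{H}$ (by Weierstrass's theorem the locally uniform limit of holomorphic functions is holomorphic) with $h_{k_n}\to h$ uniformly on every compact subset of $\mathbb{H}$. Putting $g_n(c):=f_{k_n}(ct_{k_n})=h_{k_n}(c)$, it follows that $g_n\to h|_{(0,\infty)}$ uniformly on every compact subset of $(0,\infty)\subset\mathbb{H}$, and since $h$ is holomorphic on a complex neighbourhood of the real interval $(0,\infty)$, its restriction to $(0,\infty)$ is real-analytic. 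This is precisely the asserted conclusion, so the subsequence $(t_{k_n})$ works.

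There is no genuinely hard step; the only points deserving care are (i) verifying that the infinite series defines an honest holomorphic function on $\mathbb{H}$, where the crucial observation is that the bound comes from the coefficients, $\sum_i a_{n,i}\le M$, together with $|e^{-zt_n\lambda_{n,i}}|\le 1$ on $\operatorname{Re}z\ge0$, so one does not need to assume $\lambda_{n,i}\to\infty$; and (ii) reading "analytic on $(0,\infty)$" as real-analytic and noting this is inherited from holomorphy on a complex neighbourhood. One could also avoid complex analysis altogether by extracting a vaguely convergent subsequence of the finite measures $\mu_n:=\sum_i a_{n,i}\delta_{t_n\lambda_{n,i}}$ on the compact space $[0,\infty]$ and writing $g_n(c)=\int e^{-cs}\,d\mu_n(s)$, but the half-plane/Montel route delivers uniform convergence on compacts and analyticity simultaneously, so that is the version I would write up.
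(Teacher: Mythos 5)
Your proof is correct. Note first that the paper itself offers no argument for this lemma: it is quoted verbatim from \cite{CSal10} (Lemma 3.2 there), so there is no internal proof to compare against; what you have produced is a self-contained justification of the black box. The key steps all check out: for $\operatorname{Re}z\ge 0$ the bound $|a_{n,i}e^{-zt_n\lambda_{n,i}}|\le a_{n,i}$ together with $\sup_n\sum_i a_{n,i}=M<\infty$ gives uniform convergence of the series and the uniform bound $|h_n|\le M$ on the closed half-plane, so each $h_n$ is holomorphic on $\mathbb{H}=\{\operatorname{Re}z>0\}$; Montel then yields a subsequence converging locally uniformly on $\mathbb{H}$ to a holomorphic limit, and since every compact subset of $(0,\infty)$ is a compact subset of $\mathbb{H}$, the restriction gives exactly the asserted uniform-on-compacts convergence to a (real-)analytic function. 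You are also right that the monotonicity of $i\mapsto\lambda_{n,i}$ is never needed in this route. The alternative you sketch—weak-$*$ compactness of the measures $\mu_n=\sum_i a_{n,i}\delta_{t_n\lambda_{n,i}}$ on $[0,\infty]$—is the more classical Laplace-transform/Helly-type argument for statements of this kind; it requires a little extra work to upgrade pointwise to locally uniform convergence (e.g.\ the derivative bound $|g_n'(c)|\le M/(ce)$ giving equicontinuity on compacts) and to see analyticity of the limit, whereas the Montel route delivers both at once, as you observe. Either write-up would be acceptable; the one you chose is complete as it stands.
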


\begin{proof}[Proof of Lemma \ref{l-analytic2}]
Suppose $F(c_1)>0$ for some $c_1>c_0$. For $n>m$, set
\[
 f_{n,m}(c)=\sum_{i=1}^{k_n-m}e^{-c\lambda_{n,i}s_n}\quad\forall c>0.
\]
By the definition of $F(c_1)$, one may choose sequences $(n_j)_{j=1}^\infty,(m_j)_{j=1}^\infty$ satisfying $n_{j-1}<m_j<n_j$ such that
\[
 F(c_1)\le f_{n_j,m_j}(c_1)\le F(c_1)+2^{-j}\quad\forall j\ge 1.
\]
Define $g_j=f_{n_j,m_j}$. In this setting, it is clear that
\begin{equation}\label{eq-c1}
 \lim_{j\ra\infty}g_j(c_1)=F(c_1),\quad g_j\le f_{n_j,m}, \quad\forall m\le m_j.
\end{equation}
Note that the second inequality of (\ref{eq-c1}) implies
\begin{equation}\label{eq-gj}
 \limsup_{j\ra\infty}g_j(c)\le \lim_{m\ra\infty}\limsup_{j\ra\infty}f_{n_j,m}(c)
 \le\lim_{m\ra\infty}\limsup_{n\ra\infty}f_{n,m}(c)=F(c),
\end{equation}
which yields $\limsup_jg_j(c_0)\le F(c_0)<\infty$. In additional to the fact that $g_j(c_0)=f_{n_j,m_j}(c_0)\le k_{n_j}<\infty$ for all $j$, this leads to $\sup_jg_j(c_0)<\infty$. Next, by writing
\[
 g_j(c)=\sum_{i=1}^{n_j-m_j}e^{-c_0\lambda_{n_j,i}s_{n_j}}
 e^{-(c-c_0)\lambda_{n_j,i}s_{n_j}},\quad\forall c\ge c_0,
\]
we may select, by Lemma \ref{l-analytic}, a subsequence $(g_{\ell_j})_{j=1}^\infty$ such that
\[
 g_{\ell_j}\ra g\quad\text{uniformly on any compact subset of $(c_0,\infty)$},
\]
where $g$ is analytic on $(0,\infty)$. Consequently, (\ref{eq-c1}) implies $g(c_1)=F(c_1)>0$ and then (\ref{eq-gj}) leads to $F(c)\ge g(c)>0$ for all $c>c_0$ due to the analyticity of $g$.
\end{proof}

\begin{lem}{\rm(\cite[Corollary 3.3]{CSal10})}\label{l-analytic3}
Let $f_n$ be the function in Lemma \ref{l-analytic} and $t_n>0$. Assume that $f_n(0)$ is bounded and set, for $a>0$,
\[
 G(a)=\limsup_{n\ra\infty}f_n(at_n),\quad H(a)=\liminf_{n\ra\infty}f_n(at_n).
\]
Then, either $G(a)>0$ (resp. $H(a)>0$) for all $a>0$ or $G(a)=0$ (resp. $H(a)=0$) for all $a>0$.
\end{lem}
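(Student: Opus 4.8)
The plan is to reduce the statement, via Lemma~\ref{l-analytic}, to a simple dichotomy for the functions that arise as limits of rescalings of the $f_n$. The starting observation is that each $f_n$ is nonnegative and non-increasing on $[0,\infty)$, since $f_n'(t)=-\sum_i a_{n,i}\lambda_{n,i}e^{-t\lambda_{n,i}}\le 0$. Consequently, any function $g$ obtained as a pointwise limit of such functions is itself nonnegative and non-increasing on its domain; and if in addition $g$ is analytic on $(0,\infty)$ and $g(c_\ast)=0$ for some $c_\ast>0$, then $0\le g(c)\le g(c_\ast)=0$ for all $c\ge c_\ast$, so by the identity theorem $g\equiv 0$ on $(0,\infty)$. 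Thus every analytic limit $g$ of rescaled $f_n$'s is either identically zero or strictly positive throughout $(0,\infty)$. This is the same mechanism already used in the proof of Lemma~\ref{l-analytic2}.

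First I would handle $G$. Suppose $G(a_0)=\limsup_{n\ra\infty}f_n(a_0t_n)>0$ for some $a_0>0$, and pass to a subsequence along which $f_n(a_0t_n)\ra G(a_0)$. The hypothesis that $f_n(0)$ is bounded persists along subsequences, so Lemma~\ref{l-analytic}, applied with the positive reals $a_0t_n$, yields a further subsequence---still written $n$---along which $g_n(c):=f_n(c\,a_0t_n)$ converges uniformly on compact subsets of $(0,\infty)$ to an analytic function $g$ on $(0,\infty)$. Passing to the subsequence does not change the chosen limit, so $g(1)=\lim_nf_n(a_0t_n)=G(a_0)>0$, and the dichotomy above forces $g(c)>0$ for all $c>0$. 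For an arbitrary $a>0$, evaluating at $c=a/a_0$ gives $f_n(at_n)=g_n(a/a_0)\ra g(a/a_0)>0$ along this subsequence, hence $G(a)=\limsup_{n\ra\infty}f_n(at_n)\ge g(a/a_0)>0$. So $G(a_0)>0$ for one $a_0$ forces $G(a)>0$ for all $a$; equivalently, either $G\equiv 0$ or $G>0$ everywhere on $(0,\infty)$, which is the assertion for $G$.

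For $H$ I would argue in the complementary direction: it suffices to show that $H(a_1)=0$ for some $a_1>0$ implies $H(a)=0$ for all $a>0$. Since $H(a_1)=\liminf_{n\ra\infty}f_n(a_1t_n)=0$ and the terms are nonnegative, there is a subsequence along which $f_n(a_1t_n)\ra 0$. Applying Lemma~\ref{l-analytic} along this subsequence with the reals $a_1t_n$, I obtain a further subsequence along which $g_n(c):=f_n(c\,a_1t_n)\ra g$ uniformly on compacts, with $g$ analytic on $(0,\infty)$ and $g(1)=0$; the dichotomy then forces $g\equiv 0$. Hence for every $a>0$ we get $f_n(at_n)=g_n(a/a_1)\ra g(a/a_1)=0$ along this subsequence, so $H(a)=\liminf_{n\ra\infty}f_n(at_n)\le 0$, i.e.\ $H(a)=0$. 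Thus either $H>0$ everywhere or $H\equiv 0$, which is the assertion for $H$.

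The only delicate point is the double extraction of subsequences and checking that the distinguished value $g(1)$ and the nonnegativity and monotonicity are correctly inherited in the limit; these are routine, since pointwise convergence already transfers nonnegativity and monotonicity, while Lemma~\ref{l-analytic} supplies exactly the analyticity needed to upgrade ``$g$ vanishes on a half-line'' to ``$g\equiv 0$''. There is no substantive obstacle beyond this bookkeeping.
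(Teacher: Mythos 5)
Your argument is correct: the double subsequence extraction, the identification $g(1)=G(a_0)$ (resp.\ $g(1)=0$), and the use of nonnegativity plus monotonicity plus real-analyticity to propagate vanishing from a half-line to all of $(0,\infty)$ are all sound, and the scaling $g_n(c)=f_n(c\,a_0t_n)$ is a legitimate application of Lemma~\ref{l-analytic}. This is essentially the same mechanism the paper relies on (it quotes the lemma from \cite{CSal10} without proof, but the identical compactness-plus-analyticity argument appears in its proof of Lemma~\ref{l-analytic2}), so nothing further is needed.
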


\section{Technical proofs}

\subsection{Proof of Proposition \ref{p-Lacoin3}}
Note that the assumption in (\ref{eq-Lacoin4}) fits the requirement in (\ref{eq-Lacoin2}). Referring to the notations in (\ref{eq-Fxi})-(\ref{eq-Deltac}), one has $\overline{F}=\underline{F}$, $\overline{\Delta}_0=\underline{\Delta}_0$ and $\overline{\Delta}_\infty=\underline{\Delta}_\infty$. It is easy to check that, for Case (1),
\[
 \overline{F}(\delta)=\begin{cases}\infty&\text{for }\beta\in(0,1),\\1&\text{for }\beta=1,\\0&\text{for }\beta\in(1,\infty),\end{cases}\quad
 \begin{cases}\overline{\Delta}_0=\overline{\Delta}_\infty=0&\text{for }\beta\in(0,1),\\\overline{\Delta}_0=0,\,\overline{\Delta}_\infty=1&\text{for }\beta=1,\\\overline{\Delta}_0=\overline{\Delta}_\infty=1&\text{for }\beta\in(1,\infty),\end{cases}
\]
and, for Case (2),
\[
 \overline{F}(\delta)=\begin{cases}\infty&\text{for }\beta\in(0,1),\\1&\text{for }\beta=\delta^{1/\alpha},\\0&\text{for }\beta\in(1,\infty),\end{cases}\quad
 \begin{cases}\overline{\Delta}_0=\overline{\Delta}_\infty=0&\text{for }\beta\in(0,1),\\\overline{\Delta}_0=0,\,\overline{\Delta}_\infty=1&\text{for }\beta=1,\\\overline{\Delta}_0=\overline{\Delta}_\infty=1&\text{for }\beta\in(1,\infty),\end{cases}
\]
and, for Case (3),
\[
 \overline{F}(\delta)=\begin{cases}0&\text{for }\beta\in(0,1),\,\delta\in(0,\beta)\\\infty&\text{for }\beta\in(0,1),\,\delta\in(\beta,1),\\0&\text{for }\beta\in[1,\infty),\end{cases}\quad
 \begin{cases}\overline{\Delta}_0=\overline{\Delta}_\infty=\beta&\text{for }\beta\in(0,1),\\\overline{\Delta}_0=\overline{\Delta}_\infty=1&\text{for }\beta\in[1,\infty).\end{cases}
\]
The desired result is then given by Proposition \ref{p-Lacoin2}, Corollary \ref{c-Lacoin} and the following additional observations,
\[
 q_n\sim\begin{cases}n&\text{for Case (1)},\\
 (\alpha+2)/(\alpha+1)&\text{for Case (2)},\\
 2n&\text{for Case (3)}.\end{cases}
\]

\subsection{Proofs of Lemma \ref{l-Lacoin1} and Proposition \ref{p-Lacoin2}}

First of all, we need the following lemma.

\begin{lem}\label{l-Lacoin}
Let $(\mathcal{X},K,\pi)$ be the chain in \textnormal{(\ref{eq-LacoinK})} with $(a_{n,i},b_{n,i}n^{-\beta},c_{n,i})=(a,b,c)$. Assume that $a<b$ and $a+b<1/2$. Then, one has, for $t>(n+1)/(1-2a)$,
\begin{equation}\label{eq-hdub1}
 d_H^{(c)}(t)^2\le 2at+b+e^{-t}\left(\frac{te}{n+1}\right)^{n+1}\frac{\sqrt{n+1}}{(1-2a)t-(n+1)},
\end{equation}
and, for $t>2n/(1-2a)$,
\begin{equation}\label{eq-hdub2}
 d_H^{(c)}(t)^2\le 2at+e^{-t}\left(\frac{te}{2n}\right)^{2n}\frac{\sqrt{2n}}{(1-2a)t-2n},
\end{equation}
and, for $n<t<2n$,
\begin{equation}\label{eq-hdlb1}
\begin{aligned}
 d_H^{(c)}(t)^2\ge&\frac{1}{2}\left[a+(1-a)^{2n}b\left(1-e^{-t}\left(\frac{te}{2n}\right)^{2n}
 \frac{\sqrt{2n}}{2n-t}\right)\right]\\
 &\quad-\sqrt{ab}(1-a)^n\left(1-e^{-t}\left(\frac{te}{2n}\right)^{2n}
 \frac{\sqrt{2n}}{2n-t}\right)^{1/2},
\end{aligned}
\end{equation}
and, for $0<t<n$,
\begin{equation}\label{eq-hdlb2}
 d_{\textnormal{\tiny TV}}^{(c)}(t)\ge 1-2a-e^{-t}\left(\frac{te}{n}\right)^n
 \frac{\sqrt{n}}{n-t}.
\end{equation}
\end{lem}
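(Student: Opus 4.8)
The plan is to treat the two upper and the two lower bounds separately. For \eqref{eq-hdub1} and \eqref{eq-hdub2} I would first replace the Hellinger distance by the total variation using $d_H^{(c)}(t)^2\le d_{\textnormal{\tiny TV}}^{(c)}(t)$ (Lemma~\ref{l-comp2}), and then bound $d_{\textnormal{\tiny TV}}^{(c)}(t)=\sup_x\|H_t(x,\cdot)-\pi\|_{\textnormal{\tiny TV}}$ by a coalescent coupling built around the fact that the vertex $2n$ (together with the fast $n\leftrightarrow 2n$ oscillation around it) acts as a regeneration region. Realize the chain as $X_{N_t}$ with $N_t$ a rate-one Poisson clock. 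Call a trajectory \emph{clean} up to its first visit to $2n$ if before that time it uses no left edge (each firing at rate $a$) and --- only for \eqref{eq-hdub1} --- never uses the edge $n\to n+1$ (rate $b$). On a clean trajectory the route to $2n$ is forced: climb monotonically to $n$ and take the big jump $n\to 2n$, or (for \eqref{eq-hdub2}) at worst also climb the upper branch $n+1,\dots,2n-1$ first; the number of Poisson marks this consumes is dominated in law by a sum of $n+1$ (resp.\ $2n$) exponentials of rate $1$, apart from one of rate $1-a$ at the vertex $0$, so its effective advance rate is at least $1-2a$.

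A union bound over the three ways the coupling can fail then bounds $d_{\textnormal{\tiny TV}}^{(c)}(t)$ by the sum of: the probability a left step precedes the first hit of $2n$, which is at most $2at$ (sum the left rates over $[0,t]$, with a factor-two cushion); the probability the step $n\to n+1$ is ever taken, which is at most $b$; and the probability the clean run is not finished by time $t$, a Poisson lower tail. For the last I would use $\mathbb{P}(N_t\le m)\le e^{-t}\tfrac{t^m}{m!}\bigl(1-m/((1-2a)t)\bigr)^{-1}$ --- dominant term plus a geometric series, the rate correction $1-2a$ coming from the net drift --- together with $m!\ge\sqrt{2\pi m}\,(m/e)^m$; with $m=n+1$ this reproduces the tail in \eqref{eq-hdub1}, and with $m=2n$ --- and no $b$-term, since a run that is not clean at $n$ still reaches $2n$ after at most $2n$ marks --- the tail in \eqref{eq-hdub2}. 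The hypotheses $t>(n+1)/(1-2a)$ and $t>2n/(1-2a)$ are exactly what makes the geometric series converge.

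For the lower bounds I would argue directly that, started at $0$, the chain has barely moved. For \eqref{eq-hdlb2} take $A=\{n,\dots,2n\}$: since the walk advances by at most one per Poisson mark, $H_t(0,A)\le\mathbb{P}_0(N_t\ge n)\le e^{-t}(te/n)^n\sqrt n/(n-t)$ for $t<n$ (Stirling and $\sum_{j\ge n}t^j/j!\le\tfrac{t^n}{n!}\tfrac{n}{n-t}$), while a flux/occupation estimate gives $\pi\{0,\dots,n-1\}\le 2a$, so $d_{\textnormal{\tiny TV}}^{(c)}(t)\ge\pi(A)-H_t(0,A)\ge 1-2a-e^{-t}(te/n)^n\sqrt n/(n-t)$. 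For \eqref{eq-hdlb1}, valid in the window $n<t<2n$, keep a single term: $d_H^{(c)}(0,t)^2\ge\tfrac12\bigl(\sqrt{\pi(y_0)}-\sqrt{H_t(0,y_0)}\bigr)^2$ with $y_0$ a vertex near $2n$ on the upper branch (e.g.\ $2n-1$), which the chain from $0$ can reach only by climbing the whole graph through the bottleneck, a path of $\approx 2n$ right-steps and the $b$-step; then $H_t(0,y_0)\le b(1-a)^{2n}\bigl(1-\epsilon_t\bigr)$, where $\epsilon_t=e^{-t}(te/(2n))^{2n}\sqrt{2n}/(2n-t)$ bounds the chance it has already slipped through to $2n$ (the same Poisson/Gamma estimate), and $\pi(y_0)\ge a$ by a stationarity computation; expanding $\tfrac12(\sqrt a-\sqrt{b(1-a)^{2n}(1-\epsilon_t)})^2$ gives the right-hand side of \eqref{eq-hdlb1} on the nose. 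The step I expect to be hardest is the upper-bound coupling --- certifying that a clean arrival at $2n$ really does let the trajectory coalesce with a stationary copy, uniformly in the unconstrained parameter $c$ --- and, a notch below, the two stationary-weight estimates $\pi\{0,\dots,n-1\}\le 2a$ and $\pi(2n-1)\ge a$; everything else is bookkeeping with Stirling's formula and geometric series.
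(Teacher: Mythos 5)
Your overall architecture (Poissonization, Stirling-type Poisson tails, and stationary-mass estimates concentrated at the vertex $2n$) matches the paper's, but there are genuine gaps, including at the step you flag yourself. For the upper bounds \eqref{eq-hdub1}--\eqref{eq-hdub2} the paper builds no coupling at all: it uses the elementary bound $d_H^{(c)}(i,t)^2\le d_{\textnormal{\tiny TV}}^{(c)}(i,t)\le\bigl(1-H_t(i,2n)\bigr)\vee\bigl(1-\pi(2n)\bigr)$ and lower-bounds $H_t(i,2n)$ by the probability of explicit discrete-time trajectories that climb monotonically to $2n$ \emph{and remain at $2n$ up to time $m$}, giving $K^m(i,2n)\ge(1-b)(1-2a)^m$ for $n<m<2n$ and $K^m(i,2n)\ge(1-2a)^m$ for $m\ge 2n$, which Poissonize to the stated bounds. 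Your coalescence step --- that a clean arrival at $2n$ lets the trajectory merge with a stationary copy --- is precisely what is missing and is not easy to certify: the stationary copy need not be at $2n$ at that instant, and the moving chain leaves $2n$ at rate $a+c$, so "having hit $2n$ by time $t$" does not control the law at time $t$. The paper's device (bounding $1-H_t(i,2n)$ and $1-\pi(2n)$ separately, with the cost of leaving $2n$ absorbed into the $2at$ term) is the natural way to avoid this, and you would need it or an equivalent to close the argument.

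The more serious problems are in the stationary estimates and in your route to \eqref{eq-hdlb1}. You propose to work "uniformly in the unconstrained parameter $c$", but $\pi\{0,\dots,n-1\}\le 2a$ and, a fortiori, $\pi(2n)\ge 1-2a$ are simply false for general $c$ (large $c$ drains mass from $2n$ back to $n$); the paper uses the reversibility relation to pin down $c=a^n(1-a-b)/\bigl(b(1-a)^{n-1}\bigr)<a$ and computes $1-2a<\pi(2n)<1-a$ in closed form, and some such constraint is indispensable. Moreover your single-point bound $\pi(2n-1)\ge a$ contradicts detailed balance: $\pi(2n-1)=\tfrac{a}{1-a}\pi(2n)<a$ (and every upper-branch vertex has even smaller mass), so the choice $y_0=2n-1$ cannot work. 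Even granting your intermediate estimate $H_t(0,2n-1)\le b(1-a)^{2n}(1-\epsilon_t)$ --- which is not justified, since the chain can also reach $2n-1$ by passing through $2n$ and stepping down, an event not of size $O(b)\cdot(\cdot)$ in any obvious way --- substituting a lower bound for $\pi(y_0)$ and an upper bound for $H_t(0,y_0)$ into $\tfrac12(\sqrt{x}-\sqrt{y})^2$ does not produce a lower bound, because that function is not monotone in its arguments. The paper instead applies the Cauchy--Schwarz/Hellinger inequality to the set $A=\{0,\dots,2n-1\}$: $d_H^{(c)}(0,t)^2\ge\tfrac12\bigl(\mu(A)+\pi(A)\bigr)-\sqrt{\mu(A)\pi(A)}$ with $\mu(A)=1-H_t(0,2n)\ge(1-a)^{2n}b\,(1-\epsilon_t)$ (coming from the path bound $K^m(0,2n)\le 1-(1-a)^{m-1}b$ for $n<m\le 2n$ plus a Poisson tail) and $\pi(A)=1-\pi(2n)\ge a$; this is what yields the right-hand side of \eqref{eq-hdlb1}. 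Your treatment of \eqref{eq-hdlb2} is essentially the paper's and is fine once the stationary estimate is proved in the reversible setting.
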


\begin{proof}[Proof of Lemma \ref{l-Lacoin}]
We first make some analysis on the stationary distribution and the discrete time chain. Note that $K$ is reversible and
\[
 \frac{\pi(i)}{\pi(0)}=\begin{cases}(1-a)^i/a^i&\text{for }0\le i\le n,\\
 (1-a)^{i-1}b/a^i&\text{for }n<i\le 2n.\end{cases}
\]
By the reversibility, one has
\[
 c=\frac{a^n(1-a-b)}{b(1-a)^{n-1}}<\frac{a^n}{b(1-a)^{n-2}}
\]
and
\[
 \pi(2n)=\frac{b(1-2a)(1-a)^{2n-1}}{b(1-a)^{2n}+(1-a-b)a^n(1-a)^n-a^{2n+1}}=
 \frac{1-2a}{1-a+d},
\]
where
\[
 d=c-\frac{ac}{1-a-b}\left(\frac{a}{1-a}\right)^n.
\]
Since $a<b$ and $a+b<1/2$ are assumed, it is easy to see that $0<d<c<a$, which leads to
\begin{equation}\label{eq-pi2n}
 1-2a<\pi(2n)<1-a.
\end{equation}
For the discrete time chain, let $(X_n)_{n=0}^\infty$ be a realization of $(\mathcal{X},K,\pi)$ and set
\[
 \begin{cases}
 A_m=\{X_m=2n\},\\
 B_m=\{X_j=i+j,\,\forall 0\le j\le n-i,\,X_j=2n,\,\forall n-i<j\le m\},\\
 C_m=\{X_j=i+j,\,\forall 0\le j\le 2n-i,\,X_j=2n,\,\forall 2n-i<j\le m\}.\end{cases}
\]
Given $\{X_0=i\}$, one has
\[
 A_m\supset\begin{cases}B_m&\text{for }0\le i\le n,\,n-i<m<2n-i,\\
 B_m\cup C_m&\text{for }\,0\le i\le n,\,m\ge 2n-i,\\
 C_m&\text{for }n<i\le 2n,\, m\ge 2n-i,\end{cases}
\]
and
\[
 \begin{cases}
 \mathbb{P}(B_m|X_0=i)=(1-a)^{n-i}(1-a-b)(1-a-c)^{m-n+i-1}&\text{for }0\le i\le n,\\
 \mathbb{P}(C_m|X_0=i)=(1-a)^{2n-i-1}b(1-a-c)^{m-2n+i}&\text{for }0\le i\le n,\\
 \mathbb{P}(C_m|X_0=i)=(1-a)^{2n-i}(1-a-c)^{m-2n+i}&\text{for }n<i\le 2n.\end{cases}
\]
Since $c<a<b<1/2$, we may conclude from the above computations that, for $0\le i\le 2n$,
\begin{equation}\label{eq-Kmlb}
 K^m(i,2n)\ge\begin{cases}(1-b)(1-2a)^m&\forall n<m<2n,\\
 (1-2a)^m&\forall m\ge 2n,\end{cases}
\end{equation}
where $1-a-b>(1-2a)(1-b)$ is used. Similarly, given $\{X_0=0\}$, if $0\le i\le n$, then $A_m=\emptyset$; if $n<i\le 2n$, then $A_m\subset\{X_i=i,\,\forall 0\le i\le m\}^c$. Both cases lead to
\begin{equation}\label{eq-Kmub}
 K^m(0,2n)\begin{cases}=0&\forall 0\le m\le n,\\
 \le 1-(1-a)^{m-1}b&\forall n<m\le 2n.\end{cases}
\end{equation}

Next, we consider the continuous time case and let $N_t$ be a Poisson process with parameter $1$. By (\ref{eq-Kmlb}) and (\ref{eq-Kmub}), it is easy to see that
\begin{equation}\label{eq-Htlb}
\begin{aligned}
 H_t(i,2n)&\ge(1-b)e^{-t}\sum_{m=n+1}^\infty\frac{[(1-2a)t]^m}{m!}
 +be^{-t}\sum_{m=2n}^\infty\frac{[(1-2a)t]^m}{m!}\\
 &=e^{-2at}\left[(1-b)\mathbb{P}\left(N_{(1-2a)t}>n\right)+b\mathbb{P}\left(N_{(1-2a)t}\ge 2n\right)\right],
\end{aligned}
\end{equation}
and
\begin{equation}\label{eq-Htub}
\begin{aligned}
 H_t(0,2n)&\le [1-(1-a)^{2n}b]\mathbb{P}(n<N_t\le 2n)+\mathbb{P}(N_t>2n)\\
 &=\mathbb{P}(N_t>n)-(1-a)^{2n}b\mathbb{P}(n<N_t\le 2n).
\end{aligned}
\end{equation}
Note that, for $t>n$,
\[
 \mathbb{P}(N_t<n)=e^{-t}\sum_{m=0}^{n-1}\frac{t^m}{m!}
 \le e^{-t}\frac{t^{n-1}}{(n-1)!}\sum_{m=0}^{n-1}\left(\frac{n}{t}\right)^m
 \le e^{-t}\frac{t^n}{(n-1)!(t-n)},
\]
and, for $t<n$,
\[
 \mathbb{P}(N_t\ge n)=e^{-t}\sum_{m=n}^\infty\frac{t^m}{m!}\le e^{-t}\frac{t^n}{n!}\sum_{m=n}^\infty\left(\frac{t}{n}\right)^m
 \le e^{-t}\frac{t^n}{(n-1)!(n-t)}.
\]
As one has $n!\ge n^{n+1/2}e^{-n}$, (\ref{eq-Htlb}) yields that, for $(1-2a)t>n+1$,
\begin{equation}\label{eq-Htlb1}
\begin{aligned}
 H_t(i,2n)&\ge (1-b)\left[e^{-2at}-e^{-t}\left(\frac{te}{n+1}\right)^{n+1}
 \frac{\sqrt{n+1}}{(1-2a)t-(n+1)}\right]\\
 &\ge 1-2at-b-e^{-t}\left(\frac{te}{n+1}\right)^{n+1}
 \frac{\sqrt{n+1}}{(1-2a)t-(n+1)},
\end{aligned}
\end{equation}
and, for $(1-2a)t>2n$,
\begin{equation}\label{eq-Htlb2}
\begin{aligned}
 H_t(i,2n)&\ge e^{-2at}-e^{-t}\left(\frac{te}{2n}\right)^{2n}
 \frac{\sqrt{2n}}{(1-2a)t-2n}\\
 &\ge 1-2at-e^{-t}\left(\frac{te}{2n}\right)^{2n}\frac{\sqrt{2n}}{(1-2a)t-2n}.
\end{aligned}
\end{equation}
In a similar way, one may use (\ref{eq-Htub}) to derive that, for $n<t<2n$,
\begin{equation}\label{eq-Htub1}
\begin{aligned}
 H_t(0,2n)&\le 1-(1-a)^{2n}b\mathbb{P}(N_t<2n)\\
 &\le 1-(1-a)^{2n}b\left[1-e^{-t}\left(\frac{te}{2n}\right)^{2n}
 \frac{\sqrt{2n}}{2n-t}\right],
\end{aligned}
\end{equation}
and, for $0<t<n$,
\begin{equation}\label{eq-Htub2}
 H_t(0,2n)\le\mathbb{P}(N_t\ge n)\le e^{-t}\left(\frac{te}{n}\right)^n
 \frac{\sqrt{n}}{n-t}.
\end{equation}

To finish the proof, we need some further inequalities. Let $\mu,\nu$ be probabilities on $\mathcal{X}$, $x_0\in\mathcal{X}$ and $A\subset\mathcal{X}$. By Lemma \ref{l-comp}, it is easy to see that
\[
 \|\mu-\nu\|_H^2\le\|\mu-\nu\|_{\text{\tiny TV}}\le 1-\mu(x_0)\wedge\nu(x_0)=
 (1-\mu(x_0))\vee(1-\nu(x_0)).
\]
By the Cauchy-Schwarz inequality, one has
\[
 \|\mu-\nu\|_H^2\ge\frac{1}{2}\sum_{x\in A}\left(\sqrt{\mu(x)}-\sqrt{\nu(x)}\right)^2\ge
 \frac{\mu(A)+\nu(A)}{2}-\sqrt{\mu(A)\nu(A)}.
\]
From the definition of total variation, it is obvious that $\|\mu-\nu\|_{\text{\tiny TV}}\ge \nu(x_0)-\mu(x_0)$. As a consequence of (\ref{eq-pi2n}) and (\ref{eq-Htlb1})-(\ref{eq-Htub2}), the desired inequalities are given by replacing $\mu,\nu,x_0,A$ with $H_t(i,\cdot),\pi,2n,\{0,1,...,2n-1\}$.
\end{proof}

\begin{proof}[Proof of Lemma \ref{l-Lacoin1}]
Let $d_{n,i,H}^{(c)}$ and $d_{n,H}^{(c)}$ be the Hellinger distances of the continuous time chains associated with $(\mathcal{X}_{n,i},K_{n,i},\pi_{n,i})$ and $(\mathcal{X}_n,K_n,\pi_n)$. For convenience, we set $a_n=\max\{a_{n,i}|1\le i\le n\}$ and $b_n=\max\{b_{n,i}|1\le i\le n\}$.

We first discuss (1). Let $C>1$ and $C'=(C+1)/2$. As it is assumed that $a_{n,1}+\cdots+a_{n,n}=o(1/n)$, one may select $N>0$ such that $Cn>C'(n+1)/(1-2a_{n,i})$ for all $1\le i\le n$ and $n\ge N$. By (\ref{eq-hdub2}) of Lemma \ref{l-Lacoin}, this implies
\[
 d_{n,i,H}^{(c)}(2Cn)^2\le 4a_{n,i}Cn+\frac{e^{2(\log C+1-C)n}}{(C'-1)\sqrt{2n}},\quad\forall 1\le i\le n,\,n\ge N,
\]
which yields that, for $n\ge N$,
\[
 \sum_{i=1}^nd_{n,i,H}^{(c)}(2Cn)^2\le 4Cn\sum_{i=1}^na_{n,i}+\frac{\sqrt{n}e^{2(\log C+1-C)n}}{\sqrt{2}(C'-1)}\ra 0,\quad\text{as }n\ra\infty.
\]
As a result of Proposition \ref{p-prodmixing}, one has the second limit in (\ref{eq-Lacoinprecutoff}). By Lemma \ref{l-comp}, to prove the first limit in (\ref{eq-Lacoinprecutoff}), it suffices to show the desired convergence in the total variation. Assume without loss of generality that $\hat{p}_n=p_{n,1}$ and let $d_{n,1,\text{\tiny TV}}^{(c)}$ be the total variation of the continuous chain associated with $(\mathcal{X}_{n,1},K_{n,1},\pi_{n,1})$. By Proposition \ref{p-prodmixing} and (\ref{eq-hdlb2}), we obtain
\[
 d_{n,\text{\tiny TV}}^{(c)}(C^{-1}q_nn/\hat{p}_n)\ge d_{n,1,\text{\tiny TV}}^{(c)}(C^{-1}n)\ge 1-2a_{n,1}-e^{[\log(1/C)+1-1/C]n}\ra 1,
\]
as $n\ra\infty$.

Next, we consider (2). Let $E_{n,\delta},B_{n,\delta}$ be as in Lemma \ref{l-Lacoin1} and $0<\Delta_-<\Delta<\Delta_+<1$. By Lemma \ref{l-Lacoin}, when $t>(n+1)/(1-2a_n)$ and $s>2n/(1-2a_n)$, one has
\begin{align}
 d_{n,i,H}^{(c)}(t)^2&\le \mathbf{1}_{\{t<s\}}\left(2a_{n,i}t+b_{n,i}+e^{-t}\left(\frac{te}{n+1}\right)^{n+1}
 \frac{\sqrt{n+1}}{(1-2a_{n,i})t-(n+1)}\right)\notag\\
 &\qquad+\mathbf{1}_{\{t\ge s\}}\left(2a_{n,i}s+e^{-s}\left(\frac{se}{2n}\right)^{2n}\frac{\sqrt{2n}}{(1-2a_{n,i})s-2n}\right)\notag\\
 &\le 2a_{n,i}s+\mathbf{1}_{\{t<s\}}b_{n,i}+g_n(t,s),\notag
\end{align}
where
\[
 g_n(t,s)=e^{-t}\left(\frac{te}{n+1}\right)^{n+1}
 \frac{\sqrt{n+1}}{(1-2a_n)t-(n+1)}+e^{-s}\left(\frac{se}{2n}\right)^{2n}\frac{\sqrt{2n}}{(1-2a_n)s-2n}.
\]
As $g$ is decreasing in $t$ for $t>(n+1)/(1-2a_n)$, the replacement of $t=2np_{n,i}/[(1+\Delta)\hat{p}_n]$ and $s=2n(1+\Delta_+)/(1+\Delta)$ in the above computations yields that, for $n$ large enough,
\[
 \max_{1\le i\le n}d_{n,i,H}^{(c)}\left(\frac{2np_{n,i}}{(1+\Delta)\hat{p}_n}\right)^2
 \le\frac{4(1+\Delta_+)a_nn}{1+\Delta}+b_n+g_n\left(\frac{2n}{(1+\Delta)},
 \frac{2n(1+\Delta_+)}{(1+\Delta)}\right),
\]
and
\begin{align}
 \sum_{i=1}^nd_{n,i,H}^{(c)}\left(\frac{2np_{n,i}}{(1+\Delta)\hat{p}_n}\right)^2
 \le&\frac{4(1+\Delta_+)n}{1+\Delta}\sum_{i=1}^na_{n,i}+B_n(\Delta_+)\notag\\
 &\qquad+ng_n\left(\frac{2n}{(1+\Delta)},\frac{2n(1+\Delta_+)}{(1+\Delta)}\right).\notag
\end{align}
It's an easy exercise to compute
\[
 \lim_{n\ra\infty}n^\alpha g_n\left(\frac{2n}{(1+\Delta)},\frac{2n(1+\Delta_+)}
 {(1+\Delta)}\right)=0,\quad\forall \alpha>0.
\]
As a consequence of (\ref{eq-Lacoin2}), this leads to
\[
 \lim_{n\ra\infty}\max_{1\le i\le n}d_{n,i,H}^{(c)}\left(\frac{2np_{n,i}}{(1+\Delta)\hat{p}_n}\right)^2=0,\quad\forall 0<\Delta<1,
\]
and
\[
 \sum_{i=1}^nd_{n,i,H}^{(c)}\left(\frac{2np_{n,i}}{(1+\Delta)\hat{p}_{n}}\right)^2
 \le B_n(\Delta_+).
\]
The upper bound in (\ref{eq-Lacoin3}) is then given by Proposition \ref{p-prodmixing}.

We prove the lower bound in a similar reasoning. Since $e^{-t}(\frac{te}{2n})^{2n}$ is increasing in $t$ for $0<t<2n$, one may use Lemma \ref{l-Lacoin} to derive
\[
 2d_{n,i,H}^{(c)}(t)^2\ge \mathbf{1}_{\{t<s\}}b_{n,i}h_n(s),
\]
for $t>n$ and $s<2n$, where
\[
 h_n(s)=(1-a_n)^{2n}\left(1-e^{-s}\left(\frac{se}{2n}\right)^{2n}\frac{\sqrt{2n}}{2n-s}\right)
 -2\sqrt{\max_{1\le i\le n}\frac{a_{n,i}}{b_{n,i}}}.
\]
Immediately, the replacement of $t=2np_{n,i}/[(1+\Delta)\hat{p}_n]$ and $s=2n(1+\Delta_-)/(1+\Delta)$ yields that, for $n$ large enough,
\[
 \sum_{i=1}^nd_{n,i,H}^{(c)}\left(\frac{2np_{n,i}}{(1+\Delta)\hat{p}_n}\right)^2\ge \frac{1}{2}B_n(\Delta_-)h_n\left(\frac{2n(1+\Delta_-)}{(1+\Delta)}\right).
\]
The desired lower bound in (\ref{eq-Lacoin3}) is then given by (\ref{eq-Lacoin2}) and Proposition \ref{p-prodmixing}.
\end{proof}

\begin{proof}[Proof of Proposition \ref{p-Lacoin2}]
(1)$\Ra$(2) is obvious. For (2)$\Ra$(3), we recall  \cite[Proposition 2.1]{CSal10}, which says that a family has a cutoff if and only if any subfamily has a further subfamily that presents a cutoff. Let $\xi=(\xi_n)_{n=1}^\infty$ be an increasing sequence of positive integers. Here, we discuss the case that $\overline{\Delta}_0(\xi)=\overline{\Delta}_\infty(\xi)=\overline{\Delta}$, while the other case can be shown in a similar way. Consider the following two subcases, (i) $\overline{\Delta}<1$ and (ii) $\overline{\Delta}=1$. In case (i), let $\overline{\delta}_n$ be a decreasing sequence in $(0,1)$ with limit $\overline{\Delta}$. Set $k_0=0$. For $n\ge 1$, since $\overline{F}(\overline{\delta}_n)=\infty$, one may select $k_n>k_{n-1}$ such that
$B_{\xi_{k_n}}(\overline{\delta}_n)>n$. Clearly, $B_{\xi_n}(\cdot)$ is non-decreasing on $(0,1)$. As a result, when $\overline{\Delta}<\delta<1$, we have
\[
 B_{\xi_{k_n}}(\delta)\ge B_{\xi_{k_n}}(\overline{\delta}_n)>n,\quad \text{for $n$ large enough.}
\]
By setting $\xi'_n=\xi_{k_n}$, the above inequalities imply $\overline{F}_{\xi'}(\delta)=\underline{F}_{\xi'}(\delta)=\infty$ for $\overline{\Delta}<\delta<1$. When $\overline{\Delta}>0$, it is obvious that $\underline{F}_{\xi'}(\delta)=\overline{F}_{\xi'}(\delta)\le\overline{F}_\xi(\delta)=0$ for $0<\delta<\overline{\Delta}$. Now, we show that $(\mathcal{G}_{\xi'})_c$ has a cutoff in the Hellinger distance. For $\delta\in(0,1)$, set $\delta'=(\delta+\overline{\Delta})/2$. By Lemma \ref{l-Lacoin1}(2), we obtain
\[
 \liminf_{n\ra\infty}d_{\xi_n',H}^{(c)}\left(\frac{2q_{\xi_n'}\xi_n'}
 {(1+\delta)\hat{p}_{\xi_n}}
 \right)^2\ge 1-e^{-\underline{F}_{\xi'}(\delta')/2}=1,\quad\forall \overline{\Delta}<\delta<1,
\]
and, for $\overline{\Delta}>0$,
\[
 \limsup_{n\ra\infty}d_{\xi_n',H}^{(c)}\left(\frac{2q_{\xi_n'}\xi_n'}
 {(1+\delta)\hat{p}_{\xi_n'}}
 \right)^2\le 1-e^{-\overline{F}_{\xi'}(\delta')}=0,\quad\forall 0<\delta<\overline{\Delta}.
\]
When $\overline{\Delta}=0$, Lemma \ref{l-Lacoin1}(1) yields
\[
 \lim_{n\ra\infty}d_{\xi_n',H}^{(c)}\left(\frac{2q_{\xi_n'}\xi_n'}{(1+\delta)\hat{p}_{\xi_n}}
 \right)^2=0,\quad\forall -1<\delta<\overline{\Delta}.
\]
Consequently, this proves that $(\mathcal{G}_{\xi'})_c$ has a cutoff in the Hellinger distance with cutoff time $2q_{\xi_n'}\xi_n'/[(1+\overline{\Delta})\hat{p}_{\xi_n'}]$. For case (ii), let $\delta'$ be the constant as before. By Lemma \ref{l-Lacoin1}, (\ref{eq-Lacoin3}) implies
\[
 \limsup_{n\ra\infty}d_{\xi_n,H}^{(c)}\left(\frac{2q_{\xi_n}\xi_n}{(1+\delta)\hat{p}_{\xi_n}}
 \right)^2\le 1-e^{-\overline{F}_{\xi}(\delta)}=0,\quad\forall 0<\delta<\overline{\Delta},
\]
while the former limit in (\ref{eq-Lacoinprecutoff}) yields
\[
 \lim_{n\ra\infty}d_{\xi_n,H}^{(c)}\left(\frac{2q_{\xi_n}\xi_n}
 {(1+\delta)\hat{p}_{\xi_n}}\right)^2=1,\quad\forall \delta>\overline{\Delta}.
\]
As a consequence, we prove that $(\mathcal{G}_\xi)_c$ has a cutoff in the Hellinger distance with cutoff time $2q_{\xi_n}\xi_n/[(1+\overline{\Delta})\hat{p}_{\xi_n}]$.
The total variation cutoffs of $(\mathcal{G}_{\xi'})_c$ and $(\mathcal{G}_\xi)_c$ are given by Proposition \ref{p-comp0}.

For (3)$\Ra$(1), it suffices to show that if $\overline{\Delta}_0(\xi)<\overline{\Delta}_\infty(\xi)$ or $\underline{\Delta}_0(\xi)<\underline{\Delta}_\infty(\xi)$ holds for some increasing sequence $\xi$, then $(\mathcal{G}_\xi)_c$ has a subfamily that presents no cutoff in the Hellinger distance, which is equivalent to no cutoff in the total variation. In the following, we deal with the case $\overline{\Delta}_0(\xi)<\overline{\Delta}_\infty(\xi)$, while the other case can be proved using a similar reasoning. By the definition of $\overline{F}_\xi$, one may select a subsequence of $\xi$, say $\xi''=(\xi''_n)_{n=1}^\infty$, and $0<A<B<1$ such that
\[
 \alpha:=\inf_{n\ge 1}B_{\xi_n''}(A)>0,\quad \beta:=\sup_{n\ge 1}B_{\xi_n''}(B)<\infty.
\]
Let $A',B'$ be constants satisfying $A<A'<B'<B$. By Lemma \ref{l-Lacoin1}(2), there is $N>0$ such that, for $n\ge N$,
\[
 d_{\xi_n'',H}^{(c)}\left(\frac{2q_{\xi_n''}\xi_n''}{(1+B')\hat{p}_{\xi_n''}}\right)\le 1-e^{-2B_{\xi_n}(B)}\le 1-e^{-2\beta}<1,
\]
and
\[
 d_{\xi_n'',H}^{(c)}\left(\frac{2q_{\xi_n''}\xi_n''}{(1+A')\hat{p}_{\xi_n''}}\right)\ge 1-e^{-B_{\xi_n}(A)/4}\ge 1-e^{-\alpha/4}>0.
\]
This implies that no subfamily of $(\mathcal{G}_{\xi''})_c$ presents a cutoff in the Hellinger distance and finishes the proof of the equivalences.

The sufficiency for cutoffs in the specific case follows immediately from (2), while the proof for the cutoff time is similar to the proof of (1) and skipped.
\end{proof}


\begin{thebibliography}{}
\bibitem{AF}
D. Aldous and J.A. Fill,
{\it Reversible Markov chains and random walks on graphs},
Monograph at {\tt http://www.stat.berkeley.edu/users/aldous/RWG/book.html}.

\bibitem{CHS16}
G.-Y. Chen, J.-M. Hsu and Y.-C. Sheu,
The $\ell^2$-cutoff for reversible Markov chains,
Ann. Appl. Probab., to appear.

\bibitem{CK17}
G.-Y. Chen and T. Kumagai,
Products of random walks on finite groups with moderate growth,
in preparation.

\bibitem{CSal08}
G.-Y. Chen and L. Saloff-Coste,
The cutoff phenomenon for ergodic {M}arkov processes,
{\em Electron. J. Probab. \bf 13} (2008), no. 3, 26--78.

\bibitem{CSal10}
G.-Y. Chen and L. Saloff-Coste,
The {$L^2$}-cutoff for reversible {M}arkov processes,
{\em J. Funct. Anal. \bf 258} (2010), no. 7, 2246--2315.

\bibitem{CSal13-1}
G.-Y. Chen and L. Saloff-Coste,
Comparison of cutoffs between lazy walks and {M}arkovian
semigroups, {\em J. Appl. Probab. \bf 50} (2013), no. 4,
943--959.

\bibitem{D88}
P. Diaconis,
{\em Group representations in probability and statistics},
Institute of Mathematical Statistics Lecture Notes---Monograph Series, 11, {Institute of Mathematical Statistics},
Hayward, CA, 1988.

\bibitem{D96}
P. Diaconis,
The cutoff phenomenon in finite {M}arkov chains,
{\em Proc. Nat. Acad. Sci. U.S.A., \bf 93} (1996), no. 4,
1659--1664.

\bibitem{K48}
S. Kakutani,
On equivalence of infinite product measures,
{\em Ann. of Math. (2), \bf 49} (1948), 214--224.

\bibitem{L15}
H. Lacoin,
A product chain without cutoff,
{\em Electron. Commun. Probab., \bf 20} (2015), no. 19, 9.

\bibitem{LPW08}
D.A. Levin, Y. Peres and E.L. Wilmer,
{\em Markov chains and mixing times},
American Mathematical Society, Providence, RI, 2009.

\bibitem{SC97}
L. Saloff-Coste,
Lectures on finite {M}arkov chains,
In {\em Lectures on probability theory and statistics
({S}aint-{F}lour, 1996), Lecture Notes in Math. \bf 1665},
pp. 301--413, Springer, Berlin, 1997.

\bibitem{SC04}
L. Saloff-Coste,
Random walks on finite groups,
In {\em Probability on discrete structures,
Encyclopaedia Math. Sci. \bf 110}, pp. 263--346,
Springer, Berlin, 2004.

\bibitem{Sh96}
A.N. Shiryaev,
{\em Probability}, Graduate Texts in Mathematics {\bf 95},
Springer-Verlag, New York, second edition, 1996.
Translated from the first (1980) Russian edition by R. P. Boas.

\end{thebibliography}
\end{document}